\theoremstyle{definition}
\newtheorem{ntn}{Notation}[section]
\theoremstyle{plain}
\newtheorem{lem}[ntn]{Lemma}
\newtheorem{prp}[ntn]{Proposition}
\newtheorem{thm}[ntn]{Theorem}
\newtheorem{cor}[ntn]{Corollary}
\theoremstyle{definition}
\newtheorem{exa}[ntn]{Example}
\numberwithin{equation}{section}
\newcommand{\N}{\mathbb{N}}
\newcommand{\z}{\mathbb{Z}}
\newcommand{\q}{\mathbb{Q}}
\newcommand{\C}{\mathbb{C}}
\newcommand{\Pp}{\mathbb{P}}
\newcommand{\F}{\mathbb{F}}
\newcommand{\EE}{\mathcal{E}}
\newcommand{\Aa}{\mathcal{A}}
\newcommand{\BB}{\mathcal{B}}
\newcommand{\GG}{\mathcal{G}}
\newcommand{\RR}{\mathcal{R}}
\newcommand{\WW}{\mathcal{W}}
\newcommand{\II}{\mathcal{I}}
\newcommand{\RP}{\mathcal{RP}}
\newcommand{\PP}{\mathcal{P}}
\newcommand{\RB}{\mathcal{RB}}
\newcommand{\OO}{\mathcal{O}}
\newcommand{\KK}{\mathcal{K}}
\newcommand{\LL}{\mathcal{L}}
\newcommand{\ppp}{\mathfrak{p}}
\newcommand{\mmm}{\mathfrak{m}}
\newcommand{\ff}{{F^\times}}
\newcommand{\rr}{{R^\times}}
\renewcommand{\aa}{{A^\times}}
\newcommand{\tors}{{{\rm Tor}_1^{\z}}}
\newcommand{\Lan}{\langle \langle}
\newcommand{\Ran}{\rangle \rangle}
\newcommand{\half}{{\Big[\frac{1}{2}\Big]}}
\newcommand{\mt}{\mapsto}
\newcommand{\lan}{\langle}
\newcommand{\ran}{\rangle}
\newcommand{\se}{\subseteq}
\newcommand{\arr}{\rightarrow}
\newcommand{\larr}{\longrightarrow}
\newcommand{\harr}{\hookrightarrow}
\newcommand{\two}{\twoheadrightarrow}
\newcommand{\stabe}{{\rm Stab}}
\newcommand{\GL}{\mathit{{\rm GL}}}
\newcommand{\SL}{\mathit{{\rm SL}}}
\newcommand{\Ind}{{\rm Ind}}
\renewcommand{\char}{{\rm char}}
\newcommand{\coker}{{\rm coker}}
\newcommand{\im}{{\rm im}}
\newcommand{\ind}{{\rm ind}}
\newcommand{\Hom}{{\rm Hom}}
\newcommand{\inc}{{\rm inc}}
\newcommand{\id}{{\rm id}}
\newcommand{\Aut}{{\rm Aut}}
\newcommand{\tor}{{\rm tors}}
\newcommand{\sep}{{\rm sep}}
\newcommand{\Gal}{{\rm Gal}}
\newcommand {\mtx}[2]
{\left(\!\!\!
\begin{array}{cc}
#1    \\
#2 
\end{array}
\!\!\!\right)}
\newcommand {\mtxx}[4]
{\left(\!\!
\begin{array}{cc}
\!\!#1 & \!\!#2   \\
\!\!#3 & \!\!#4
\end{array}
\!\!\!\right)}
\newtheoremstyle{athm}
  {}
  {}
  {\itshape}
  {}
  {\scshape}
  {}
  {.5em}
  {\thmnote{#3}}
\theoremstyle{athm}
\newtheorem*{athm}{}
\begin{document}

\title[The homology of $\SL_2$ of discrete valuation rings]
{The homology of $\SL_2$ of discrete valuation rings 
}
\author{Kevin Hutchinson, Behrooz Mirzaii, Fatemeh Y. Mokari}
\address{School of Mathematics and Statistics,
 University College Dublin, Belfield, Dublin 4, Ireland}
\email{kevin.hutchinson@ucd.ie}
\address{Institute of Mathematical and Computer Sciences (ICMC), University of Sao Paulo, Sao Carlos, Brazil}
\email{bmirzaii@icmc.usp.br}
\address{Institute of Mathematical and Computer Sciences (ICMC), University of Sao Paulo, Sao Carlos, Brazil}
\email{f.mokari61@gmail.com}
\begin{abstract}
Let $A$ be a discrete valuation ring with field of fractions $F$ and (sufficiently large) residue field $k$. We prove that 
there is a natural exact sequence
\[
\begin{array}{c}
H_3(\SL_2(A),\z\half)\to H_3(\SL_2(F),\z\half)\to \RP_1(k)\half\to 0,
\end{array}
\]
where $\RP_1(k)$ is the refined scissors congruence group of $k$. Let $\Gamma_0(\mmm_A)$ denote the congruence 
subgroup consisting of matrices in $\SL_2(A)$ whose lower off-diagonal entry lies in the maximal ideal $\mmm_A$. We 
also prove that there is an exact sequence
\[
\begin{array}{c}
0\to \overline{\PP}(k)\half\to H_2(\Gamma_0(\mmm_A),\z\half)\to H_2(\SL_2(A),\z\half)\to I^2(k)\half\to 0,
\end{array}
\]
where $I^2(k)$ is the second power of the fundamental ideal of the Grothendieck-Witt ring $\mathrm{GW}(k)$ and 
$\overline{\PP}(k)$ is a certain quotient of the  scissors congruence group (in the sense of Dupont-Sah)  $\PP(k)$ of $k$.  
\end{abstract}
\maketitle

\section*{Introduction}
The purpose of this article is to study the low-dimensional homology of certain linear groups and to demonstrate what we 
hope is interesting behaviour. 

Let $A$ be a discrete valuation ring with field of fractions $F$ and residue field $k$. $k$ may be finite, but for the validity of 
our proofs should be \emph{sufficiently large}: if $|k|=p^d$, then $d(p-1)>6$. Our first main result (Theorem \ref{mainthm} below) 
is:
\begin{athm}[{\bf Theorem A}.]\label{thm1}
There is a natural exact sequence 
\[
\begin{array}{c}
H_3(\SL_2(A),\z\half) \arr H_3(\SL_2(F),\z\half) \arr \RP_1(k)\half \arr 0,
\end{array}
\]
where $\RP_1(k)$ is the \emph{refined scissors congruence group} of the field $k$. 
\end{athm}

Dupont and Sah defined the scissors congruence group $\PP(F)$ of a field $F$ (also called the \emph{pre-Bloch group} of $F$) in 
\cite{dupont-sah1982}. It is an abelian group given by generators and relations.  They  related $\PP(\C)$ and its subgroup $\BB(\C)$ to
$H_3(\SL_2(\C),\z)$. In fact, there is a natural isomorphism $H_3(\SL_2(\C),\z)\cong K_3^\ind(\C)$, the indecomposable $K_3$ of $\C$. 
Some time later, Suslin showed in \cite{suslin1991} how to generalize the result of Dupont and Sah to arbitrary (infinite) fields, identifying 
$K_3^\ind(F)$ with $\BB(F)$ modulo a certain well-understood torsion subgroup (for a precise statement, see Theorem \ref{bloch-wigner} 
below). 

However, when the field $F$ is not quadratically closed, the natural surjective homomorphism $H_3(\SL_2(F),\z)\to K_3^\ind(F)$ has a 
nontrivial, and often quite large, kernel (which we denote by $H_3(\SL_2(F),\z)_0$). To give an analogous description of $H_3(\SL_2(F),\z)$ 
one must replace the scissors congruence group $\PP(F)$ of Dupont-Sah with the \emph{refined scissors congruence group} $\RP_1(F)$ 
(and its subgroup $\RB(F)$), as shown by the first author in \cite{hutchinson-2013}. The refined scissors congruence group of a 
commutative ring $A$ is defined by a presentation analogous to $\PP(A)$ but as a module over  the group ring 
$\RR_A:=\z[A^\times/(A^\times)^2]$ rather than as an abelian group. The homology groups $H_\bullet(\SL_2(A),\z)$ are naturally 
$\RR_A$-modules and this module structure plays a central role in all of our calculations. Thus, for example,  the exact sequence 
of {\sf Theorem~A} is a sequence of $\RR_A$-modules.

{\sf Theorem A} generalizes the main result of \cite{hutchinson2017}, where a (somewhat more precise) result was proved in the 
case of complete discrete valuations (with residue characteristic not equal $2$). We note furthermore that there is an analogous 
behaviour of second homology groups, replacing the functor $\RP_1$ with the first Milnor-Witt $K$-theory functor 
$K_1^{\mathrm{MW}}$: Suppose that $1/2\in A$ and that $k$ is infinite. Then there is a natural short exact sequence
\[
\begin{array}{c}
0\arr H_2(\SL_2(A),\z) \arr H_2(\SL_2(F),\z) \arr K_1^{\mathrm{MW}}(k) \arr 0.
\end{array}
\]
This follows from the facts that $H_2(\SL_2(R),\z)$ can be identified naturally with $K_2^{\mathrm{MW}}(R)$ for  local domains $R$ 
with infinite residue field (by Schlichting \cite{schlichting}) and that there is an exact localization sequence in Milnor-Witt $K$-theory 
\[
\begin{array}{c}
0\arr K_2^{\mathrm{MW}}(A)\arr K_2^{\mathrm{MW}}(F)\arr K_1^{\mathrm{MW}}(k) \arr 0
\end{array}
\]
for regular local rings $A$ containing $1/2$ by the main result of \cite{GSZ}. 

We do not know if we should expect the leftmost map in sequence of {\sf Theorem A} to be injective in general. It is certainly injective 
in the case of a complete discrete valuation (by \cite{hutchinson2017}). The general question is related to the Gersten conjecture for $K_3$. 

Our second main result concerns the calculation of the second homology of the congruence group 
\[
\Gamma_0(\mmm_A):= \left\{ \mtxx{a}{b}{c}{d} \in \SL_2(A)\ | \ c\in \mmm_A\right\}
\]
where $\mmm_A$ is the maximal ideal of the discrete valuation ring $A$. We show the following (for a more precise statement see 
Theorem \ref{barP} below):

\begin{athm}[{\bf Theorem B.}]\label{thm2}
The inclusion $\Gamma_0(\mmm_A)\to\SL_2(A)$ induces an exact sequence (of $\RR_A$-modules)
\[
\begin{array}{c}
0\arr\overline{\PP}(k)\half \arr H_2(\Gamma_0(\mmm_A),\z\half)\arr H_2(\SL_2(A),\z\half)\arr I^2(k)\half\arr 0.
\end{array}
\]
\end{athm}
Here $I^2(k)$ denotes the second power of the fundamental ideal $I(k)$ of the Grothendieck-Witt ring $\mathrm{GW}(k)$ 
of the field $k$ and $\overline{\PP}(k)$ is a certain quotient of the scissors congruence group $\PP(k)$. 

The map $H_2(\SL_2(A),\z)\cong K_2^{\mathrm{MW}}(A)\to I^2(A)\to I^2(k)$ is well-known from Milnor-Witt $K$-theory. We 
give an explicit formula for the map $\overline{\PP}(k)\half\to H_2(\Gamma_0(\mmm_A),\z\half)$: Let 
$d:A^\times\to \Gamma_0(\mmm_A)$ denote the inclusion $a\mapsto \mathrm{diag}(a,a^{-1})$. The map  sends a generator   
$[\bar{a}]$ of $\PP(k)$, $a\in A^\times\setminus \{1\}$,  to $1/2$ the Pontryagin product $d(a)\wedge d(1-a)$. This element in 
turn is known to map to $[a][1-a]\in K_2^{\mathrm{MW}}(A)\half\cong H_2(\SL_2(A),\z\half)$ (see, for example,
\cite[Corollary 4.2]{hutchinson2016}),
which is of course  $0$ by the Steinberg relation in Milnor-Witt $K$-theory. However, the kernel of the quotient map 
$\PP(k)\to\overline{\PP}(k)$ is in general small and often trivial. In particular, we show how to calculate $\overline{\PP}(k)$ 
in the case where $F$ is a global field (and hence $k$ is finite). For example, when $F=\q$ and $A=\z_{(p)}$ ($p\geq 11$) 
we have  $\overline{\PP}(\F_p)\half=\PP(\F_p)\half$ when $p\not\equiv 2\pmod{3}$ and this group is cyclic of order $(p+1)'$ 
where $m'$ is the odd part of the integer $m$. 
~\\
~\\
{\bf Remark.}
We have had to state all of our main results over $\z\half$ because of $2$-torsion ambiguities in several of the 
fundamental results that we rely on, and because our methods of proof (eg. the character-theoretic local-global principle) 
require us to invert $2$. We do not know what modifications we should expect to have to make to the results presented if 
we want them to be  valid over $\z$ instead. 

\subsection*{Overview of the article}
In Section \ref{scg} we review the relevant facts about scissors congruence groups and their relation to the third homology 
of $\SL_2$, and to indecomposable $K_3$. 
 
In Section \ref{sec:RSCG} we prove Theorem~A (Theorem \ref{mainthm} below). We first must define the $\RR_A$-homomorphism
$\Delta_\pi:H_3(\SL_2(F),\z)\to \RP_1(k)$ (depending on a choice of uniformizer $\pi$). Up to some known results about $K_3$, 
Theorem~A can be reduced to an exact sequence involving scissors congruence groups 
(Theorem \ref{thm:4.2} ). This in turn is proved using a character-theoretic local-global principle  for modules over group rings 
$\z[G]$ where $G$ is a (multiplicative) elementary abelian $2$-group (recalled from \cite{hutchinson-2017}). 

Our second main theorem, Theorem B (Theorem \ref{barP} below) is proved by a careful comparison of  the Mayer-Vietoris homology 
exact sequence associated to the amalgamated product  decomposition $\SL_2(F)\cong \SL_2(A)*_{\Gamma_0(\mmm_A)}\SL_2(A)$ 
with a spectral sequence relating the homology of $\SL_2(A)$ to (refined) scissors congruence groups. 
Section \ref{mv} reviews the basic facts about the  amalgamated product decomposition and the associated long exact Mayer-Vietoris 
sequence. In particular, we detail how this is a sequence of $\RR_F$-modules, because this module structure plays an essential  role 
in our subsequent calculations.

In Section \ref{connecting}, we give the technical details of the proof of Theorem B. First, we use Theorem A to identify 
$\im(\delta)$ with $\overline{\PP}(k)\half$ (as an $\RR_F$-module). Here $\delta: H_3(\SL_2(F),\z\half)\to H_2(\Gamma_0(\mmm_A),\z\half)$ 
is the connecting homomorphism of the Mayer-Vietoris sequence. The remainder of the section is devoted to the explicit formula for $\delta$ 
and the calculation of  kernel and cokernel of $H_2(\Gamma_0(\mmm_A),\z\half)\to H_2(\SL_2(A),\z\half)$. 
Both of these require calculations with spectral 
sequences associated to the action of $\GL_2(A)$ on complexes of vectors. Indeed, the connecting homomorphism $\delta$ is shown to be 
essentially identifiable with a $d^3$-differential from such a spectral sequence. 

Finally, in Section \ref{global} we calculate $\overline{\PP}(k)\half$  in the case that $F$ is a global field. We use this to obtain more 
precise calculations for the groups  $\SL_2(\z_{(p)})\subset\SL_2(\q)$. 

\subsection*{Terminology and Notation}    
In this article all rings are commutative, except possibly group rings, and have an identity element. For a ring $A$, $A^\times$ will denote 
its  group of units.  For an abelian group or module $M$,  $M\half$ denotes $M\otimes_\z\z\half$. 

~\\
{\bf Aknowledgements.} 
BM and FM would like to acknowledge the support of S\ ~ao Paulo Research Foundation FAPESP (Funda\c{c}\~ao de Amparo \`a Pesquisa 
do Estado de S\~ao Paulo) for their visit to University College Dublin during the year 2019 (Grant Numbers 2017/26310-7 and 
2018/03561-7 respectively), which part of this work is done. BM and FM also  would like to thank KH and University College Dublin for their 
hospitality and for very friendly working environment.

\section{Review: Scissors congruence groups and the third homology of  \texorpdfstring{$\SL_2$}{Lg}}\label{scg}

\subsection{Classical scissors congruence groups and a Bloch-Wigner exact sequence}

For a ring $A$, let 
\[
\WW_A:=\{a\in \aa:1-a \in \aa \}.
\]
The {\it scissors congruence group} $\PP(A)$ of $A$ is the quotient of
the free abelian group generated by symbols $[a]$, $a\in \WW_A$,
by the subgroup generated by elements
\[
X_{a,b}:=[a] -[b]+\bigg[\frac{b}{a}\bigg]-\bigg[\frac{1- a^{-1}}{1- b^{-1}}\bigg]
+ \bigg[\frac{1-a}{1-b}\bigg],
\]
where $a, b, a/b  \in \WW_A$. Let
\[
S_\z^2(\aa):=(\aa \otimes \aa)/\lan a\otimes b+b\otimes a:a,b \in \aa\ran.
\]
We denote the elements of $\PP(A)$ and $S_\z^2(\aa)$ represented by $[a]$ 
and $a\otimes b$ again by $[a]$ and $a\otimes b$, respectively. 
By direct computation one sees that 
\[
\lambda: \PP(A) \arr S_\z^2(\aa), \ \ \ \ [a] \mapsto a \otimes (1-a),
\]
is a well-defined homomorphism. 
The kernel of $\lambda$ is called the {\it Bloch group} of $A$ and is
denoted by $\BB(A)$. 

In fact we have the following: If $A$ is a field or a local ring whose residue field has more than
five elements, then we have the exact sequence
\[
0\arr \BB(A) \arr \PP(A) \arr S_\z^2(\aa) \arr K_2^M(A) \arr 0,
\]
(see \cite[Lemma 4.2]{mirzaii2017}), where $K_2^M(A)$ is the second Milnor $K$-group of $A$.

Recall that for a local ring  $A$ there is a natural homomorphism of graded commutative rings 
$K_\bullet^M(A) \to K_\bullet(A)$, from Milnor $K$-theory 
to $K$-theory. The \textit{indecomposable $K_3$ of $A$}, $K_3^\ind(A)$, is defined to be the cokernel 
of the map $K_3^M(A)\to K_3(A)$.  
Over a local ring (or more generally a ring with many units) the Bloch group
and  the indecomposable part of the third $K$-group are deeply connected.

\begin{thm}[Bloch-Wigner exact sequence]\label{bloch-wigner}
Let $A$ be either a field or a local domain whose residue field 
has at least $11$ elements. Then we have a natural exact sequence 
\[
0 \arr \tors(\mu(A),\mu(A))^\sim \arr K_3^\ind (A) \arr \BB(A) \arr 0,
\]
where $\tors(\mu(A),\mu(A))^\sim$ is the unique nontrivial extension of
$\tors(\mu(A),\mu(A))$ by $\z/2$ if $\char(A)\neq 2$ and is equal to 
$\tors(\mu(A),\mu(A))$ if $\char(A)= 2$.
\end{thm}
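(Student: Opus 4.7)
The plan is to adapt Suslin's original strategy from \cite{suslin1991}, which treated the case of an infinite field, to the local ring setting using the fact that $A$ has \emph{many units} when $|k|\ge 11$. The key geometric input is the action of $\SL_2(A)$ on the complex $C_\bullet(A^2)$ whose $n$-chains are free abelian on $(n{+}1)$-tuples of unimodular vectors in $A^2$ in ``general position'' (any two distinct vectors form a basis). Since $A$ has sufficiently many units, an argument of the Nesterenko--Suslin--Guin type shows $C_\bullet(A^2)\to \z$ is a resolution, and the equivariant hyperhomology spectral sequence $E^1_{p,q}=H_q(\stabe_{\SL_2(A)}(e_0,\dots,e_p),\z)\Rightarrow H_{p+q}(\SL_2(A),\z)$ will be the main computational device.

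My first task is to identify the $E^1$-terms along the bottom edge with twisted homology of the diagonal torus (whose $H_0$ gives the group ring $\z[\aa]$) and to read off, via the explicit $d^1$-differentials and the presentation of $\PP(A)$ by the cross-ratio relations, a surjection onto $\BB(A)$ from (a quotient of) $H_3(\SL_2(A),\z)$. The second task is to invoke Suslin's homological stability results together with his injectivity theorem for the Hurewicz-type map $K_3^\ind(A)\to H_3(\SL(A),\z)/\{\text{Milnor part}\}$; stability for $\SL_2\to \SL_3\to\cdots$ under the many-units hypothesis allows one to replace $H_3(\SL(A),\z)$ with $H_3(\SL_2(A),\z)$ modulo a controlled piece.

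The third and most delicate task is to pin down the kernel. The element $-I\in\SL_2(A)$ acts trivially on $C_\bullet(A^2)$ (it acts on each vector by $-1$, which is a scalar), and this forces a copy of $H_3(\{\pm 1\},\z)$, suitably twisted with $H_\bullet$ of $\mu(A)$, to sit in the low-degree part of the spectral sequence. Unravelling the $E^2$-page along the anti-diagonal $p+q=3$ and analyzing the residual differentials produces $\tors(\mu(A),\mu(A))$ in the kernel; the nontrivial $\z/2$-extension when $\char(A)\ne 2$ comes from the Pontryagin-product structure on $H_\bullet(\mu_2,\z)$ interacting with $H_3(\mu(A),\z)$, and can be recognised concretely because the class $-1\in\aa$ has order $2$ in $K_1^{\mathrm{MW}}$-like relations.

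The main obstacle will be the last step: carefully identifying the extension class and showing it is the \emph{unique nontrivial} extension. This requires a naturality argument — reducing to the prime field (or a small well-understood subring) where the extension can be computed explicitly, and then using functoriality of the exact sequence in $A$ to transport the answer. A secondary difficulty is controlling the higher differentials $d^2,d^3$ in the spectral sequence, which a priori could contribute extra terms; here one needs the many-units hypothesis to ensure that suitable averaging (or Suslin's trick with the action of $\aa$ by conjugation of diagonal matrices) kills these contributions away from $2$-torsion, so that they interact cleanly only with the $\mu$-torsion piece.
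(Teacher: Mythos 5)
Your outline correctly captures the broad spine of Suslin's approach (the chain complex of configurations of lines, the equivariant hyperhomology spectral sequence, identification of the Bloch group on the bottom edge via the cross-ratio relation, and an analysis of the kernel via the homology of tori), and it is true that the ``many units'' hypothesis on the local ring $A$ is exactly what makes the acyclicity and the low-degree spectral-sequence analysis go through (the paper's Lemma \ref{hutchinson1} makes this precise).

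There is, however, a genuine gap at the point where you pass from $\SL_2$ to $K$-theory. You set up the spectral sequence for the action of $\SL_2(A)$ on $L_\bullet(A^2)$ and then assert that ``stability for $\SL_2\to\SL_3\to\cdots$'' lets you compare $H_3(\SL(A),\z)$ with $H_3(\SL_2(A),\z)$ up to a controlled piece. This is false in the form you need it: the stabilization map $H_3(\SL_2(F),\z)\to H_3(\SL_3(F),\z)$ is in general \emph{not} injective, and its kernel is precisely the (often large) group $H_3(\SL_2(F),\z)_0$; controlling it is the entire point of the refined theory (cf.\ Proposition \ref{hutchinson4} and Corollary \ref{hutchinson5}). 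Moreover, with the $\SL_2$-action the bottom edge of the spectral sequence produces the \emph{refined} scissors congruence group $\RP(A)$, viewed as an $\RR_A$-module, not the classical $\PP(A)$ -- the orbit set of general-position triples and quadruples under $\SL_2$ is indexed by square classes in $\GG_A$. To recover $\PP(A)$, $\BB(A)$, and ultimately $K_3^\ind(A)$ from the refined objects you must take $\GG_A$-coinvariants, and that passage is only clean after inverting $2$ (Proposition \ref{hutchinson2}); in particular the $2$-primary part of the claimed kernel $\tors(\mu(A),\mu(A))^\sim$ would be invisible, so the integral statement of the theorem is exactly what this route fails to see. The standard fix (Suslin, and Mirzaii for local rings) is to run the spectral sequence for $\GL_2(A)$, where the orbit analysis gives $\PP(A)$ directly and where homological stability $H_3(\GL_2)\to H_3(\GL_3)\to\cdots$ does hold in the needed range, allowing the comparison with $H_3(\GL(A),\z)$ and hence with $K_3(A)$.

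A secondary point: you locate the $\tors(\mu(A),\mu(A))^\sim$ kernel in the fact that $-I$ acts trivially on the complex of lines. That observation is correct (the chains are tuples of rank-one submodules, not of vectors) but it is not really the mechanism; the $\tors$-terms arise from the K\"unneth/universal-coefficient structure of $H_\bullet$ of the diagonal torus $\aa\times\aa$ appearing as a stabilizer, and the $\z/2$-extension when $\char(A)\ne 2$ is detected by the Pontryagin square on the $2$-torsion of $\mu(A)$. Finally, note that the paper itself does not reprove this theorem; it cites Suslin for infinite fields, Hutchinson for finite fields, and Mirzaii for local domains -- and the finite-field case in particular is not covered by the many-units/spectral-sequence argument you outline, since $L_\bullet(A^2)$ is not acyclic far enough out when $|k|$ is small.
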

\begin{proof}
The case of infinite fields  has been proved in \cite{suslin1991} and the case of finite fields has been settled in \cite{hutchinson-2013}.
The case of local rings has been dealt in \cite{mirzaii2017}.
\end{proof}

A Bloch-Wigner exact sequence also exists over a ring with many units 
\cite{mirzaii2011}, \cite{mirzaii-mokari2015}.

\subsection{Refined scissors congruence groups}

Let $A$ be a ring. Let $\GG_A:=\aa/(\aa)^2$ and set $\RR_A:=\z[\GG_A]$.
The element of $\GG_A$ represented by $a \in \aa$ is denoted by $\lan a \ran$.
We set $\Lan a\Ran:=\lan a\ran -1\in \RR_A$.

Let $\RP(A)$ be the quotient of the free $\RR_A$-module generated by symbols 
$[a]$, $a\in \WW_A$, by the $\RR_A$-submodule generated by elements
\[
Y_{a,b}:=[a]-[b]+\lan a\ran\bigg[\frac{b}{a}\bigg]-
\lan a^{-1}-1\ran\Bigg[\frac{1-a^{-1}}{1-b^{-1}}\Bigg] - 
\lan 1-a\ran\Bigg[\frac{1-a}{1-b}\Bigg],
\]
where $a,b,a/b \in \WW_A$. We have a natural surjective map $\RP(A)\arr \PP(A)$ and from the definition it follows immediately that
\[
\PP(A)\simeq \RP(A)_{\GG_A}=H_0(\GG_A, \RP(A)).
\]

Let $\II_A$ be the augmentation ideal of the group ring $\RR_A$.
By direct, but tedious, computation one can show that the map
\[
\lambda_1: \RP(A) \arr \II_A^2, \ \ \ \ \ \ [a] \mapsto \Lan a \Ran\Lan 1-a\Ran
\]
is a well-defined $\RR_A$-homomorphism. If we consider $S_\z^2(\aa)$ as
a trivial $\GG_A$-module, then the map
\[
\lambda_2: \RP(A) \arr S_\z^2(\aa), \ \ \ \ \ [a] \mapsto a \otimes (1-a),
\]
is a homomorphism of $\RR_A$-modules. In fact $\lambda_2$ is
the composite $\RP(A) \arr \PP(A) \overset{\lambda}{\arr} S_\z^2(\aa)$.

The {\it refined scissors congruence group of $A$} is defined as the $\RR_A$-module 
\[
\RP_1(A):=\ker(\lambda_1:\RP(A) \arr \II_A^2).
\]
The {\it refined Bloch group} of $A$ is defined as the $\RR_A$-module 
$\RB(A):=\ker(\lambda_2|_{\RP_1(A)})$ (see \cite{hutchinson2013}, \cite{hutchinson-2013}).

\begin{prp}$($\cite[Proposition 2.9]{hutchinson2017}$)$\label{hutchinson2}
Let $A$ be a ring. Then 
\par {\rm (i)} $\RP_1(A) \arr \PP(A)$ induces the isomorphism
$
\begin{array}{c}
\RP_1(A)\bigg[\frac{1}{2}\bigg]_{\GG_A}\simeq \PP(A)\bigg[\frac{1}{2}\bigg],
\end{array}
$
\par {\rm (ii)} $\RB(A) \arr \BB(A)$ induces the isomorphism
$
\begin{array}{c}
\RB(A)\bigg[\frac{1}{2}\bigg]_{\GG_A}\simeq \BB(A)\bigg[\frac{1}{2}\bigg],
\end{array}
$
\par {\rm (iii)} $\RB(A) \arr \RP_1(A)$ induces the isomorphism
$
\begin{array}{c}
\II_A\RB(A)\bigg[\frac{1}{2}\bigg]\simeq \II_A\RP_1(A)\bigg[\frac{1}{2}\bigg].
\end{array}
$
\end{prp}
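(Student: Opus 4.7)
The plan is to leverage two consequences of the fact that $\GG_A$ is an elementary abelian $2$-group. First, for every $\GG_A$-module $M$ and every $i \geq 1$, $H_i(\GG_A, M)\half = 0$: the group $\GG_A$ is a filtered union of finite $2$-subgroups $H$, each $H_i(H, M)$ is annihilated by $|H|$ by the transfer-corestriction argument, and group homology commutes with such filtered colimits. Second, since $\II_A/\II_A^2 \cong \GG_A$ is $2$-torsion, $\II_A\half = \II_A^2\half$ as ideals of $\RR_A\half$, so $\II_A\half$ is idempotent and every $(\II_A^n)_{\GG_A}\half$ vanishes.

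For (i), I apply the long exact sequence of $H_*(\GG_A, -)$ to $0 \to \RP_1(A) \to \RP(A) \to \im(\lambda_1) \to 0$. The $H_1$ term vanishes after inverting $2$ by the first fact, and $\RP(A)_{\GG_A} = \PP(A)$ by definition, so one obtains a short exact sequence
\[
0 \to \RP_1(A)\half_{\GG_A} \to \PP(A)\half \to \im(\lambda_1)\half_{\GG_A} \to 0.
\]
A second application of the same vanishing, now to $0 \to \im(\lambda_1) \to \II_A^2 \to \II_A^2/\im(\lambda_1) \to 0$, identifies $\im(\lambda_1)\half_{\GG_A}$ with a submodule of $(\II_A^2)_{\GG_A}\half = 0$. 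The main subtlety lies here: right-exactness of coinvariants alone does not compare $\im(\lambda_1)_{\GG_A}$ with $(\II_A^2)_{\GG_A}$, so the $H_1$-vanishing must be invoked twice.

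For (ii), I compare the exact sequence $0 \to \RB(A) \to \RP_1(A) \to J \to 0$, where $J := \im(\lambda_2|_{\RP_1(A)}) \se S_\z^2(\aa)$, with the exact sequence $0 \to \BB(A) \to \PP(A) \to J' \to 0$, where $J' := \im(\lambda)$, via the natural map $\RP_1(A) \to \PP(A)$. Applying $(-)\half_{\GG_A}$ to the top row preserves exactness by the first fact, and since $J$ sits inside the trivial $\GG_A$-module $S_\z^2(\aa)$, its coinvariants collapse to $J\half$ itself. The middle vertical arrow is an isomorphism by (i), and (i) further forces $\RP_1(A)\half \to \PP(A)\half$ to be surjective, so $J\half = \lambda(\PP(A))\half = J'\half$. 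The $5$-lemma now delivers (ii).

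For (iii), the map $\lambda_2$ is $\RR_A$-linear with target the trivial $\GG_A$-module $S_\z^2(\aa)$, so $\lambda_2$ annihilates $\II_A \cdot \RP_1(A)$, yielding $\II_A \RP_1(A) \se \RB(A)$; multiplying by $\II_A$ gives $\II_A^2 \RP_1(A) \se \II_A \RB(A)$. Combined with the idempotency $\II_A\half = \II_A^2\half$ from the second fact, this forces
\[
\II_A \RP_1(A)\half \;=\; \II_A^2 \RP_1(A)\half \;\se\; \II_A \RB(A)\half \;\se\; \II_A \RP_1(A)\half,
\]
and all inclusions must be equalities.
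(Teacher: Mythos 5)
Your proof is correct, and it is built on exactly the right structural observation: because $\GG_A$ is an elementary abelian $2$-group, the higher homology $H_i(\GG_A,-)\half$ vanishes for $i\geq1$ (your transfer-plus-filtered-colimit argument is sound), and $\II_A\half=\II_A^2\half$. The first of these is precisely the content of the paper's Lemma \ref{lem:exact} (exactness of $(-)_{\GG_A}$ on $\RR_A\half$-modules), so your derivation sits squarely in the framework the paper itself invokes. The paper cites this proposition from \cite{hutchinson2017} without reproducing the argument, so a line-by-line comparison isn't available; but all three parts check out.

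A few remarks worth recording. In (i), you correctly note the subtlety that right-exactness of coinvariants does not by itself give $\im(\lambda_1)\half_{\GG_A}\hookrightarrow(\II_A^2)\half_{\GG_A}$: the second application of the $H_1$-vanishing (to $0\to\im(\lambda_1)\to\II_A^2\to\II_A^2/\im(\lambda_1)\to0$) is genuinely needed. In (ii), your identification $J\half=J'\half$ relies on the factorization $\lambda_2=\lambda\circ(\RP_1(A)\to\PP(A))$ together with the surjectivity of $\RP_1(A)\half\to\PP(A)\half$ that (i) provides; both hold, and the four/five-lemma then closes the argument. In (iii), the inclusion $\II_A\RP_1(A)\se\RB(A)$ is immediate from $\lambda_2$ landing in a trivial $\GG_A$-module, and the idempotency $\II_A\half=\II_A^2\half$ sandwiches the modules as you say. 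As a side note, (iii) can also be deduced from (i), (ii) and the snake lemma applied to $\II_A M=\ker(M\to M_{\GG_A})$, using $J\half=J'\half$; your direct argument is shorter and cleaner.
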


Over finite fields the Bloch group and the refined Bloch group are the same. 

\begin{prp}$($\cite[Section 7]{hutchinson-2013}$)$\label{hutchinson3}
If $k$ is a finite field (with at least $4$ elements), then $\GG_k$ acts trivially on $\RB(k)$. In 
particular $\RB(k)\simeq \BB(k)$. Moreover
\[
\begin{array}{c}
\RB(k)\half=\RP_1(k)\half\simeq \PP(k)\half=\BB(k)\half.
\end{array}
\]
\end{prp}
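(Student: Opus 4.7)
My plan is to establish the proposition's chain of equalities by exploiting the fact that $\kk$ is cyclic of order $|k|-1$ when $k$ is finite. The main tools are Proposition \ref{hutchinson2}, the four-term exact sequence $0\to \BB(k)\to\PP(k)\to S_\z^2(\kk)\to K_2^M(k)\to 0$ recorded in Section \ref{scg}, and the Bloch--Wigner sequence (Theorem \ref{bloch-wigner}).

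First I would handle the Dupont--Sah side after inverting $2$. Since $\kk$ is cyclic, the symmetric square $S_\z^2(\kk)$ is cyclic of order at most $2$, so $S_\z^2(\kk)\half=0$; the four-term sequence then gives $\BB(k)\half=\PP(k)\half$. Because $\lambda_2:\RP(k)\arr S_\z^2(\kk)$ factors through $\PP(k)\overset{\lambda}{\arr} S_\z^2(\kk)$, its restriction to $\RP_1(k)$ vanishes after inverting $2$, so $\RB(k)\half=\RP_1(k)\half$. It then remains to prove $\RP_1(k)\half=\PP(k)\half$, which by Proposition \ref{hutchinson2}(i) amounts to showing that $\GG_k$ acts trivially on $\RP_1(k)\half$.

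The trivial-action statement is the heart of the matter. In characteristic $2$, $|\kk|$ is odd and $\GG_k=1$, so there is nothing to check; I therefore assume odd characteristic, in which $\GG_k=\{1,\lan\epsilon\ran\}\simeq\z/2$ for a fixed non-square $\epsilon$. Since $|\GG_k|=2$ is invertible in $\z\half$, the module $\RP_1(k)\half$ decomposes into $\pm 1$ eigenspaces $M^+\oplus M^-$, with $M^+\simeq \RP_1(k)\half_{\GG_k}\simeq\PP(k)\half$ by Proposition \ref{hutchinson2}(i); the task reduces to proving $M^-=0$, or equivalently that $\Lan\epsilon\Ran$ annihilates $\RP_1(k)\half$. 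My approach is to pass through the identification $\RB(k)\half=\RP_1(k)\half$ (already established) and exploit the Bloch--Wigner sequence: it identifies $\BB(\F_q)$, up to a small $2$-torsion piece, with the finite cyclic group $K_3^\ind(\F_q)$. Combined with Proposition \ref{hutchinson2}(ii) this constrains $\RB(k)\half$ to be a finite cyclic extension of $\BB(k)\half$ by $M^-$. I would then force $M^-=0$ either by a direct combinatorial manipulation of the defining relations $Y_{a,b}$, exploiting the finiteness of $\WW_k$ to average instances of $Y_{a,b}$ over $b$ in cosets of $(\kk)^2$, or by importing the homological input that $\GG_k$ acts trivially on $H_3(\SL_2(\F_q),\z\half)$ together with the fact that this group maps onto $\RB(\F_q)\half$.

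Once trivial $\GG_k$-action on $\RP_1(k)\half$ is established, the displayed chain $\RB(k)\half=\RP_1(k)\half\simeq\PP(k)\half=\BB(k)\half$ is assembled. The integral statement $\RB(k)\simeq\BB(k)$ with trivial $\GG_k$-action is then obtained by combining this $\half$-result with a separate analysis of the $2$-primary torsion of $\RB(k)$, again via Theorem \ref{bloch-wigner} and the known $K$-theory of $\F_q$. The genuine obstacle is the vanishing of $M^-$: pinning $\RB(k)$ down tightly enough to exclude a non-trivial sign part is where all the combinatorial and homological content sits, and it depends essentially on the finiteness and cyclicity of $\kk$, which is exactly why the hypothesis ``$k$ finite with at least $4$ elements'' is required.
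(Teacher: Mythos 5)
Your reductions are sound as far as they go: cyclicity of $k^\times$ gives $S_\z^2(k^\times)\half=0$, whence $\BB(k)\half=\PP(k)\half$ from the four-term sequence and $\RB(k)\half=\RP_1(k)\half$ since $\lambda_2$ factors through $\lambda$; and Proposition~\ref{hutchinson2}(i) shows the remaining identification $\RP_1(k)\half\simeq\PP(k)\half$ is equivalent to triviality of the $\GG_k$-action on $\RP_1(k)\half$, i.e.\ to vanishing of the $(-1)$-eigenspace $M^-$ (a direct summand, not an ``extension'' as you write, since $|\GG_k|\leq 2$ is inverted). You correctly single this out as the heart of the matter, and that is precisely where the proof stops. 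You name two strategies for forcing $M^-=0$ and carry out neither. The combinatorial averaging of the $Y_{a,b}$ is only gestured at. The homological route is essentially circular in this setting: by Theorem~\ref{refined-bloch-wigner} the surjection $H_3(\SL_2(k),\z\half)\twoheadrightarrow\RB(k)\half$ is $\RR_k$-equivariant with kernel a trivial module, so over $\z\half$ trivial $\GG_k$-action on $H_3(\SL_2(\F_q),\z\half)$ is equivalent to, not a priori easier than, trivial action on $\RB(k)\half$; importing it requires its own proof, which is exactly what the cited Section~7 of \cite{hutchinson-2013} provides.

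You also overlook a free shortcut for half the cases. When $q\equiv 3\pmod 4$, $-1$ is a nonsquare in $\F_q$, so $\GG_k=\{1,\lan -1\ran\}$ and Theorem~\ref{hutchinson7} already gives trivial action on $\RP_1(k)\half$, so $M^-=0$ with no further work. The genuine content of the proposition is therefore the case $q\equiv 1\pmod 4$, where $-1$ is a square, $\lan -1\ran=1$, and Theorem~\ref{hutchinson7} says nothing about the nontrivial class $\lan\epsilon\ran$. Without supplying an actual argument in that case (the original source does it via explicit analysis of $\SL_2(\F_q)$), your proposal is a correct reduction to the key claim, not a proof of it.
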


For more results on finite fields we refer the reader to \cite[Section 7]{hutchinson-2013}.

\subsection{A refined Bloch-Wigner exact sequence}
For any ring $A$, $H_\bullet(\SL_2(A),\z)$ is naturally a module over the ring $\RR_A$ as follows: Given $a\in A^\times$, 
choose $M\in \GL_2(A)$ with determinant $a$. Then $\lan a \ran \cdot z:= (C_M)_* (z)$ where $C_M$ denotes conjugation 
by $M$. As we will see, this module structure plays a central role in all of our calculations below.

For any local ring, whose residue field has at least three elements, there is a natural 
homomorphism of $\RR_A$-modules 
\[
H_3(\SL_2(A),\z) \arr K_3^\ind (A),
\]
(in which $K_3^\ind (A)$ has the trivial $\RR_A$-module structure).
This is surjective if $A$ is an infinite field \cite[Section 5]{hutchinson-tao2009}.

Throughout this article, we will say that a field $F$ is {\it sufficiently large} if either $F$ is infinite or $|F|=p^d$ with $(p-1)d>6$. 
Thus $F$ is sufficiently large if and only if $|F|\notin  \{2, 3, 4, 5, 7, 8, 9, 16, 27, 32,64\}$.

\begin{thm}$($\cite[Theorem 3.22]{hutchinson2017}$)$\label{refined-bloch-wigner}
Let $A$ be a local domain with sufficiently large residue field.
Then there is a commutative diagram of  $\RR_A$-modules with exact rows (where all terms in the lower row are trivial $\RR_A$-modules):
\[
\begin{tikzcd}
0 \ar[r]& \tors(\mu(A),\mu(A))\bigg[\frac{1}{2}\bigg]\ar[r]\ar[d, "="] 
& H_3(\SL_2(A), \z\bigg[\frac{1}{2}\bigg]) \ar[r]\ar[d]& \RB(A)\bigg[\frac{1}{2}\bigg] \ar[r]\ar[d] &0\\
0 \ar[r]& \tors(\mu(A),\mu(A))\bigg[\frac{1}{2}\bigg]\ar[r] &K_3^\ind(A)\bigg[\frac{1}{2}\bigg]\ar[r]&\BB(A)\bigg[\frac{1}{2}\bigg]\ar[r]&0.\\
\end{tikzcd}
\]
\end{thm}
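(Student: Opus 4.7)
My plan is to construct the upper row by refining Suslin's original approach to the Bloch--Wigner sequence (which produces the lower row, i.e.\ Theorem \ref{bloch-wigner}) so as to keep track of the $\RR_A$-module structure throughout, and then to deduce the identification of the kernel on the upper row by comparing vertically with the already-known lower row. Commutativity of the diagram will be automatic from naturality of all the constructions.

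First, I would set up the spectral sequence coming from the action of $\SL_2(A)$ on the complex $C_\bullet(A^2)$ whose $p$-th term is generated by $(p+1)$-tuples of unimodular vectors in $A^2$ in general position (meaning all relevant $2\times 2$ minors are units in $A$). Because the overgroup $\GL_2(A)$ acts on the same complex, conjugation by elements of determinant $a\in\aa$ induces the $\RR_A$-action on each page, so the whole sequence lives in $\RR_A$-modules. Stabilizers of generic triples are (extensions of) diagonal tori, and the corresponding piece of the $E^1$-page reproduces the defining presentation of $\RP(A)$; the $d^1$-differential recovers the map $\lambda_1\colon \RP(A)\to \II_A^2$, yielding an edge-type $\RR_A$-homomorphism $H_3(\SL_2(A),\z\half)\to\RP_1(A)\half$. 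I would show that its image lies in $\RB(A)\half$ by composing with $\lambda_2$ and identifying the composite with a classical spectral-sequence boundary into $K_2^M(A)\half$, which vanishes on third homology classes.

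For exactness I would compare with the lower row through the natural vertical maps $H_3(\SL_2(A),\z\half)\to K_3^\ind(A)\half$ and $\RB(A)\half\to\BB(A)\half$. Taking $\RR_A$-coinvariants and using Proposition \ref{hutchinson2}(ii) to identify $\RB(A)\half_{\GG_A}\cong\BB(A)\half$, I would obtain a morphism of short exact sequences whose left vertical arrow is the identity on $\tors(\mu(A),\mu(A))\half$ (a trivial $\RR_A$-module) and whose right-hand coinvariants square is an isomorphism. A diagram chase, combined with the surjectivity of $H_3(\SL_2(A),\z\half)\to K_3^\ind(A)\half$ (from \cite{hutchinson-tao2009}) and the five lemma, would then give surjectivity of the upper right map and pin down its kernel as exactly $\tors(\mu(A),\mu(A))\half$.

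The main obstacle, as I see it, is showing that the edge homomorphism $H_3(\SL_2(A),\z\half)\to\RB(A)\half$ is surjective \emph{at the refined level}, and not just after passing to $\RR_A$-coinvariants. This requires controlling higher differentials in the spectral sequence and analyzing the stabilizer contributions appearing in the $E^1$-page, and relies essentially on the hypothesis that the residue field of $A$ is sufficiently large: this is the point where Suslin-style rigidity and homological stability for $\SL_2$ of rings with many units must be invoked, in order to kill off contributions from small tori that would otherwise reduce the target from $\RP_1(A)\half$ to merely $\PP(A)\half$.
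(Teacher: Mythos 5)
The paper does not prove this result: it cites it directly as \cite[Theorem 3.22]{hutchinson2017}. The only hint the paper gives about the cited proof is the remark inside the proof of Proposition \ref{diagram1} that the top row arises from an exact sequence $H_3(T_A,\z\half)\to H_3(\SL_2(A),\z\half)\to\RB(A)\half\to 0$, itself extracted from the $\SL_2$-spectral sequence of Section \ref{spect-seq} specialized to $R=A$. Your spectral-sequence setup does match that machinery: $E^2_{3,0}\cong\RP_1(A)$, $d^1_{3,0}$ is $\lambda_1$, and the differentials out of $E^r_{3,0}$ land in modules built from $\II_A^2$ and $S_\z^2(\aa)$, so that $E^\infty_{3,0}$ identifies with $\RB(A)\half$ after inverting $2$.

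However, your proposed identification of the kernel is circular. You want to feed the top row through the vertical comparison with the classical Bloch--Wigner row and finish with a diagram chase plus the five lemma. The middle and right vertical arrows are surjective but not injective, so a raw five-lemma chase only controls $\GG_A$-coinvariants: it yields $\ker\bigl(H_3\to\RB(A)\half\bigr)_{\GG_A}\cong\tors(\mu(A),\mu(A))\half$, not the kernel itself. The facts that would upgrade this to the theorem --- that taking $\GG_A$-coinvariants of the top row produces the bottom row (Proposition \ref{hutchinson4}) and that $\II_A H_3(\SL_2(A),\z\half)\cong\II_A\RB(A)\half$ (Corollary \ref{hutchinson5}) --- are explicitly deduced in this paper \emph{from} Theorem \ref{refined-bloch-wigner}, so they are not available as inputs to its proof. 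A non-circular argument has to compute the low-filtration pieces $E^\infty_{0,3}$, $E^\infty_{1,2}$, $E^\infty_{2,1}$ directly, identify $\ker\bigl(H_3(\SL_2(A),\z\half)\to\RB(A)\half\bigr)$ with the image of $H_3(T_A,\z\half)$, and then identify that image with $\tors(\mu(A),\mu(A))\half$ via the Pontryagin ring of $T_A\cong\aa$; commutativity with the bottom row is then verified afterwards rather than used as leverage. Relatedly, the "surjectivity at the refined level" that you rightly flag as the main obstacle is not supplied by appeals to homological stability or rigidity; it comes out of the same direct control of the higher differentials that your sketch leaves unresolved.
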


Using this and taking $\GG_A$-coinvariants on the top row, one deduces:

\begin{prp}\label{hutchinson4}$($\cite[Corollary 3.23]{hutchinson2017}$)$
Let $A$ be a local domain with sufficiently large residue field. Then 
\[
\begin{array}{c}
H_3(\SL_2(A),\z\half)_{\GG_A} \simeq K_3^\ind (A)\half.
\end{array}
\]
In particular we have the exact sequence
\[
\begin{array}{c}
0 \arr \II_AH_3(\SL_2(A),\z\half) \arr H_3(\SL_2(A),\z\half) \arr 
K_3^\ind (A)\half \arr 0.
\end{array}
\]
\end{prp}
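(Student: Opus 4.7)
The plan is to deduce this directly from Theorem \ref{refined-bloch-wigner} by applying the right-exact functor $(-)_{\GG_A}=H_0(\GG_A,-)$ to the top row of the commutative diagram, and comparing with the bottom row (whose $\RR_A$-module structure is already trivial).

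First, I apply $(-)_{\GG_A}$ to the top exact sequence of Theorem \ref{refined-bloch-wigner}. Right-exactness yields the exact sequence
\[
\tors(\mu(A),\mu(A))\half_{\GG_A}\arr H_3(\SL_2(A),\z\half)_{\GG_A}\arr \RB(A)\half_{\GG_A}\arr 0.
\]
The left-hand term is unchanged by passing to coinvariants, because $\tors(\mu(A),\mu(A))\half$ already carries trivial $\RR_A$-module structure: the vertical map in Theorem \ref{refined-bloch-wigner} is the identity onto a submodule of $K_3^\ind(A)\half$, which is a trivial $\RR_A$-module. Next I invoke Proposition \ref{hutchinson2}(ii), which gives the natural isomorphism $\RB(A)\half_{\GG_A}\simeq \BB(A)\half$. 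Combining these, the diagram after taking coinvariants becomes
\[
\begin{tikzcd}
\tors(\mu(A),\mu(A))\half \ar[r]\ar[d,"="] & H_3(\SL_2(A),\z\half)_{\GG_A} \ar[r]\ar[d] & \BB(A)\half \ar[r]\ar[d,"="] & 0 \\
0 \ar[r] & \tors(\mu(A),\mu(A))\half \ar[r] & K_3^\ind(A)\half \ar[r] & \BB(A)\half \ar[r] & 0.
\end{tikzcd}
\]
Commutativity of the left-hand square, together with injectivity of the bottom-row composite, forces the top-left horizontal map to be injective as well. Hence the top row is short exact, and the five lemma yields the desired isomorphism $H_3(\SL_2(A),\z\half)_{\GG_A}\simeq K_3^\ind(A)\half$.

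For the ``in particular'' statement, I use the general fact that for any $\RR_A$-module $M$ the kernel of the natural surjection $M\two M_{\GG_A}=M/\II_A M$ is precisely $\II_A M$. Applying this with $M=H_3(\SL_2(A),\z\half)$ and substituting the isomorphism just established produces the exact sequence
\[
0\arr \II_A H_3(\SL_2(A),\z\half)\arr H_3(\SL_2(A),\z\half)\arr K_3^\ind(A)\half\arr 0,
\]
as claimed.

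The proof is essentially a diagram chase, so there is no serious obstacle; the only subtle point is ensuring that the left-hand term of the top row survives unchanged after passing to coinvariants, for which one needs that $\tors(\mu(A),\mu(A))\half$ carries trivial $\GG_A$-action. This is guaranteed by the commutative diagram of Theorem \ref{refined-bloch-wigner}, where the left vertical arrow is the identity onto a submodule of a trivial $\RR_A$-module.
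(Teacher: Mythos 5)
Your argument is correct and is exactly the approach the paper takes (the paper's one-line indication says precisely to take $\GG_A$-coinvariants on the top row of Theorem \ref{refined-bloch-wigner} and invoke Proposition \ref{hutchinson2}(ii)). The diagram chase and the observation that the $\RR_A$-module structure on $\tors(\mu(A),\mu(A))\half$ is trivial are the right way to fill in the details.
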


Note that for any $\RR_A$-module $M$, we have $\II_AM=\ker{(M\to M_{\GG_A})}$.

Let $H_3(\SL_2(A),\z)_0$ be the kernel of the map $H_3(\SL_2(A),\z) \arr K_3^\ind (A)$.
Theorem \ref{refined-bloch-wigner} and Propositions \ref{hutchinson2} and \ref{hutchinson4} imply that:

\begin{cor}\label{hutchinson5} 
Let $A$ be a local domain with sufficiently large residue field. Then
\[
\begin{array}{c}
H_3(\SL_2(A),\z\half)_0=\II_AH_3(\SL_2(A),\z\half)  \simeq \II_A\RP_1(A)\half  =\II_A\RB(A)\half.
\end{array}
\]
\end{cor}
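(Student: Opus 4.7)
The plan is to chain together the three cited results, obtaining each of the two equalities and the middle isomorphism separately.

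For the first equality $H_3(\SL_2(A),\z\half)_0 = \II_A H_3(\SL_2(A),\z\half)$: by definition $H_3(\SL_2(A),\z\half)_0$ is the kernel of the surjection to $K_3^\ind(A)\half$, and Proposition \ref{hutchinson4} identifies this kernel with $\II_A H_3(\SL_2(A),\z\half)$ via the isomorphism $H_3(\SL_2(A),\z\half)_{\GG_A} \simeq K_3^\ind(A)\half$. So step one is immediate.

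For the second equality $\II_A \RP_1(A)\half = \II_A \RB(A)\half$: this is exactly the content of Proposition \ref{hutchinson2}(iii), which says the inclusion $\RB(A) \harr \RP_1(A)$ induces an isomorphism on the submodules $\II_A(-)\half$.

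The main step is the middle isomorphism $\II_A H_3(\SL_2(A),\z\half) \simeq \II_A \RB(A)\half$. I will read it off the top row of Theorem \ref{refined-bloch-wigner}, which is the short exact sequence of $\RR_A$-modules
\[
0 \arr T \arr H_3(\SL_2(A),\z\half) \overset{\phi}{\arr} \RB(A)\half \arr 0,
\]
with $T = \tors(\mu(A),\mu(A))\half$ a trivial $\RR_A$-module. Surjectivity of the restriction $\phi|_{\II_A H_3}\colon \II_A H_3(\SL_2(A),\z\half) \arr \II_A\RB(A)\half$ follows by lifting a generator $(\lan a\ran-1)z$ of the target through $\phi$ to $(\lan a\ran-1)\tilde z \in \II_A H_3(\SL_2(A),\z\half)$. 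For injectivity I need $\II_A H_3 \cap T = 0$; equivalently, that the composite $T \arr H_3(\SL_2(A),\z\half) \arr H_3(\SL_2(A),\z\half)_{\GG_A}$ is injective. But under the identification $H_3(\SL_2(A),\z\half)_{\GG_A}\simeq K_3^\ind(A)\half$ of Proposition \ref{hutchinson4}, this composite is exactly the left-hand vertical arrow of the diagram in Theorem \ref{refined-bloch-wigner} followed by the inclusion $T \harr K_3^\ind(A)\half$ of its bottom row; commutativity of that diagram makes this composite injective, as required.

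The only point needing a little care is that last diagram-chase; everything else is formal from the stated results. Assembling the three steps yields the chain of equalities and isomorphism asserted in the corollary.
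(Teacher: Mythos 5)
Your proof is correct and uses exactly the ingredients the paper cites for this corollary (Theorem \ref{refined-bloch-wigner}, Propositions \ref{hutchinson2} and \ref{hutchinson4}), so it is essentially the same approach. One small remark: for the middle isomorphism you can avoid the diagram chase for injectivity entirely by invoking Lemma \ref{lem:exact}, which says that $M\mapsto \II_A M$ is an exact functor on $\RR_A\half$-modules; applying it to the top row of Theorem \ref{refined-bloch-wigner} gives $0\to \II_A T\to \II_A H_3(\SL_2(A),\z\half)\to \II_A\RB(A)\half\to 0$, and $\II_A T=0$ because $T$ is a trivial $\RR_A$-module, so the isomorphism is immediate. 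Your route works too — it just re-derives a piece of that exactness by hand.
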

\smallskip

\subsection{Special elements in \texorpdfstring{$\PP(A)$}{Lg}  and \texorpdfstring{$\RP(A)$}{Lg} }

Let $A$ be a  local ring. For an element $a \in \WW_A$, let $\{a\}:=[a]+[a^{-1}] \in \PP(A)$.
If $U_{1,A}:=1+\mmm_A$, then $\aa=\WW_A \cup U_{1,A}$. For $u\in U_{1,A}$, we define
$\{u\}:=\{ua\}-\{a\}$, for any $a\in \WW_A$. It is easy to see that this definition is well-defined. By a direct  
calculation one can show that the map 
\[
\psi:\aa \arr \PP(A), \ \ \  a\mapsto \{a\},
\]
is a homomorphism and $(\aa)^2$ is in its kernel, i.e. $\{ab\}=\{a\}+\{b\}$ and  $\{c^2\}=0$ for all 
$a,b,c \in\aa$. Let
\[
\widetilde{\PP}(A):=\PP(A)/\KK_A,
\]
where $\KK_A$ is the subgroup of $\PP(A)$ generated by the elements $\psi(a)=\{a\}$, $a\in \aa$.
Since $\lambda(\{a\})=(-a)\otimes a$, we have the natural homomorphism
\[
\tilde{\lambda}:\widetilde{\PP}(A) \arr \widetilde{S}_\z^2(\aa), 
\]
where $\widetilde{S}_\z^2(\aa):=S_\z^2(\aa)/\lan (-a)\otimes a|a\in \aa\ran$. We set
$\widetilde{\BB}(A):=\ker(\tilde{\lambda})$.

We now consider two different liftings to $\RP(A)$ of the family of  elements $\{a\}$ in $\PP(A)$. For $a\in \WW_A$, let
\[
\psi_1(a):=[a]+\lan -1\ran [a^{-1}] \ \ \ \ \ \text{and} \ \ \ \ \ \
\psi_2(a):=\lan 1-a\ran\Big(\lan a\ran[a]+[a^{-1}]\Big).
\]
For $u\in U_{1,A}=\aa \backslash \WW_A$, we define 
\[
\psi_i(u):=\psi_i(ua)-\lan u\ran\psi_i(a), 
\]
where $a\in \WW_a$. By \cite[Lemma 4.2]{hutchinson2017} this is independent of the choice of $a\in \WW_A$, 
so the definition is well defined. Now one can show \cite[Proposition 4.3]{hutchinson2017} that 
the maps
\[
\psi_i:\aa \arr \RP(A), \ \ \ \ a\mapsto \psi_i(a), \ \ i=1,2
\]
define $1$-cocycles; i.e. we have $\psi_i(ab)=\lan a\ran \psi_i(b)+\psi_i(a)$, for all $a,b\in \aa$. 
For basic properties of $\psi_i(a)$ see \cite[Section 4]{hutchinson2017} 
and \cite[Section 3]{hutchinson2013}.

Let $\KK_A^{(i)}$,  $i=1,2$, denote the $\RR_A$-submodule of $\RP(A)$ generated by the set 
$\{\psi_i(a)| a\in \aa\}$. One can show that 
\[
\lambda_1(\psi_i(a))=p_{-1}^+\Lan a\Ran=\Lan -a\Ran \Lan a\Ran,
\]
where $p_{-1}^+:=\lan -1\ran+1\in \RR_A$. Thus $\lambda_1(\KK_A^{(i)})=p_{-1}^+\II_A
\subseteq \II_A^2$.
Moreover $\ker(\lambda_1|_{\KK_A^{(i)}})$ is annihilated by $4$ \cite[Lemma 4.6]{hutchinson2017}. Let
\[
\widetilde{\RP}(A):=\RP(A)/\KK_A^{(1)}.
\]
Then 
\[
\tilde{\lambda}_1:\widetilde{\RP}(A) \arr \II_A^2/p_{-1}^+\II_A, \ \ \ \ \ \text{and} \ \ \ \ \ 
\tilde{\lambda}_2:\widetilde{\RP}(A) \arr \widetilde{S}_\z^2(\aa)
\]
induced by $\lambda_1$ and $\lambda_2$ respectively, are well-defined $\RR_A$-homomorphism. 
We set
\[
\widetilde{\RP}_1(A):=\ker({\tilde{\lambda}}_1), \ \ \ \widetilde{\RB}(A):=\ker{(\tilde{\lambda}}_2).
\]
It is easy to see that  $\RP(A) \arr \PP(A)$ induces the natural maps
\[
\widetilde{\RP}(A) \arr \widetilde{\PP}(A), \  \  \  \  \  \widetilde{\RP}_1(A) \arr \widetilde{\PP}(A), \  \  \  \  \ 
\widetilde{\RB}(A) \arr \widetilde{\BB}(A).
\]

\begin{prp}\label{hutchinson6}$($\cite[Corollary 4.7]{hutchinson2017}$)$
Let $A$ be a local ring such that its residue field has at least $4$ elements. Then the 
natural maps $\RP_1(A) \arr \widetilde{\RP}_1(A)$ and $\RB(A) \arr \widetilde{\RB}(A)$ are surjective 
with kernel annihilated by $4$. In particular 
\[
\begin{array}{c}
\RP_1(A)\half \simeq \widetilde{\RP}_1(A)\half, \ \ \ \RB(A)\half \simeq \widetilde{\RB}(A)\half.
\end{array}
\]
\end{prp}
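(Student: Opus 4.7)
The plan is to deduce both statements from a diagram chase based on the defining short exact sequence
\[
0 \to \KK_A^{(1)} \to \RP(A) \to \widetilde{\RP}(A) \to 0
\]
together with two explicit computations of $\lambda_1$ and $\lambda_2$ on $\KK_A^{(1)}$. The first, $\lambda_1(\KK_A^{(1)}) = p_{-1}^+\II_A$, is already recorded in the excerpt. The second is a short manipulation in $S_\z^2(\aa)$: using $1-a^{-1} = -(1-a)\cdot a^{-1}$ multiplicatively and $a^{-1}\otimes x = -(a\otimes x)$ bilinearly, one computes $\lambda_2(\psi_1(a)) = (-1)\otimes a + a\otimes a = (-a)\otimes a$, so that $\lambda_2$ maps $\KK_A^{(1)}$ onto $\langle (-a)\otimes a \mid a\in\aa\rangle = \ker(S_\z^2(\aa)\to \widetilde{S}_\z^2(\aa))$, exactly paralleling the $\lambda_1$ statement.

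For the surjectivity of $\RP_1(A)\to \widetilde{\RP}_1(A)$, given $\bar{x}\in \widetilde{\RP}_1(A)$ I would pick any lift $x\in \RP(A)$. The condition $\tilde\lambda_1(\bar{x})=0$ translates into $\lambda_1(x)\in p_{-1}^+\II_A$, so by the first identity there is $y\in\KK_A^{(1)}$ with $\lambda_1(y)=\lambda_1(x)$, and then $x-y\in \RP_1(A)$ has the same image $\bar{x}$ in $\widetilde{\RP}_1(A)$. The kernel of $\RP_1(A)\to \widetilde{\RP}_1(A)$ is exactly $\RP_1(A)\cap \KK_A^{(1)} = \ker(\lambda_1|_{\KK_A^{(1)}})$, which is annihilated by $4$ by Lemma 4.6 of \cite{hutchinson2017}.

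The argument for $\RB(A)\to \widetilde{\RB}(A)$ follows the same template, but now both $\lambda_1$ and $\lambda_2$ must be controlled simultaneously. For surjectivity I would lift $\bar{x}\in\widetilde{\RB}(A)$ to $x\in \RP(A)$, use the identity $\lambda_2(\psi_1(a))=(-a)\otimes a$ to choose $y\in \KK_A^{(1)}$ with $\lambda_2(y)=\lambda_2(x)$, and then correct the residual $\lambda_1$-discrepancy by an element of $\ker(\lambda_2|_{\KK_A^{(1)}})$ modulo the $4$-torsion supplied by Lemma 4.6. The kernel of $\RB(A)\to \widetilde{\RB}(A)$ sits inside $\ker(\lambda_1|_{\KK_A^{(1)}})$ and so is again annihilated by $4$. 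The step I expect to be most delicate is precisely this simultaneous adjustment: one has to verify that the generators $\psi_1(a)$ of $\KK_A^{(1)}$ provide enough flexibility in the combined target $p_{-1}^+\II_A \oplus \langle(-a)\otimes a\rangle$ to effect both corrections up to $4$-torsion. Everything else reduces to a routine snake-lemma-style chase once the two identities $\lambda_1(\psi_1(a)) = p_{-1}^+\Lan a\Ran$ and $\lambda_2(\psi_1(a)) = (-a)\otimes a$ are in hand.
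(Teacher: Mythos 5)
The computation $\lambda_2(\psi_1(a)) = (-a)\otimes a$ is correct (it is also the image of the known identity $\lambda(\{a\}) = (-a)\otimes a$ under the projection $\RP(A)\to\PP(A)$), and your argument for $\RP_1(A)\to\widetilde{\RP}_1(A)$ is complete: lift, match $\lambda_1$ using $\lambda_1(\KK_A^{(1)})=p_{-1}^+\II_A$, and read off the kernel as $\ker(\lambda_1|_{\KK_A^{(1)}})$.

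The gap you flag in the $\RB$ case is, however, genuine and not minor. Having lifted $\bar x$ to $z\in\RP_1(A)$ (which you can, by the first part), the obstruction to finding a further correction inside $\RB(A)$ is exactly whether $\lambda_2(z)\in\lambda_2(\KK_A^{(1)})$ lies in $\lambda_2\bigl(\ker(\lambda_1|_{\KK_A^{(1)}})\bigr)$. Equivalently, in your formulation, you need $\lambda_1\bigl(\ker(\lambda_2|_{\KK_A^{(1)}})\bigr)=p_{-1}^+\II_A$. The elements of $\ker(\lambda_2|_{\KK_A^{(1)}})$ that the generators $\psi_1(a)$ readily supply are $2\psi_1(a)$ and $\Lan b\Ran\psi_1(a)$ (both kill $\lambda_2$ because $(-a)\otimes a$ is $2$-torsion in $S_\z^2(\aa)$ and the $\GG_A$-action there is trivial), and under $\lambda_1$ these only hit $2p_{-1}^+\II_A + p_{-1}^+\II_A^2 = p_{-1}^+\II_A^2$ (using $\Lan a\Ran^2 = -2\Lan a\Ran$), which is a proper submodule of $p_{-1}^+\II_A$ in general. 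So the "simultaneous adjustment" is not a routine refinement of the first argument; one either needs a nonobvious identity producing additional elements of $\ker(\lambda_2|_{\KK_A^{(1)}})$, or a different structural input from \cite{hutchinson2017}. What you do get cheaply from your two identities is the weaker fact that the cokernel of $\RB(A)\to\widetilde{\RB}(A)$ is killed by $2$ (because $\lambda_2(\KK_A^{(1)})$ is $2$-torsion, so $2z\in\RB(A)$), and this together with the kernel bound already gives the $\z\half$-isomorphism stated "in particular" — but it does not establish the asserted integral surjectivity.
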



Let $A$ be a local ring. As in \cite[Lemma 1.3]{suslin1991} one can show that the 
element $[a]+[1-a] \in \BB(A)$ is independent of the choice of $a\in \WW_A$. We denote this constant by
\[
c_A:=[a]+[1-a], \ \ \  a\in \WW_A.
\]
This constant has order dividing $6$.

In \cite[Lemma 4.9]{hutchinson2017}, it has been shown that 
$[a]+\lan -1\ran[1-a]+\Lan1-a\Ran \psi_1(a)$ is in  $\RB(A)$ and is independent of the choice of $a\in \WW_A$.
We denote this constant by 
\[
C_A:=[a]+\lan -1\ran[1-a]+\Lan1-a\Ran \psi_1(a), \ \ \ a\in \WW_A. 
\]
Under the homomorphism $\RP(A) \arr \PP(A)$, $C_A$ maps to $c_A$.
One can show \cite[Lemma 4.9]{hutchinson2017} that 
\[
3C_A=\psi_1(-1)\ \ \  \text{and}\ \ \  6C_A=0.
\]

\subsection{Refined scissors congruence group with generators and relations}

The following theorem gives a description of the structure of $\RP_1(A)$.

\begin{thm}\label{hutchinson7} $($\cite[Corollary 4.4]{hutchinson-2017}, \cite[Proposition 5.4]{hutchinson2017}$)$
Let $A$ be either a field with at least four elements or a local ring whose residue field has more than
$10$ elements. Then
\[
\begin{array}{c}
\RP_1(A)\half = e_{-1}^+\RP(A)\half,
\end{array}
\]
where $e_{-1}^+:=p_{-1}^+/2=(\lan -1\ran +1)/2$. In particular $\lan -1\ran\in \RR_A$ acts trivially on $\RP_1(A)\half$ (and hence also  
on $H_3(\SL_2(A),\z\half)$).
\end{thm}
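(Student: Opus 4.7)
The claim is an equality of $\RR_A\half$-submodules of $\RP(A)\half$. My plan is to reduce, via Proposition~\ref{hutchinson6}, to the quotient $\widetilde{\RP}(A)=\RP(A)/\KK_A^{(1)}$ (where the map $\widetilde{\lambda}_1$ targets the smaller module $\II_A^2/p_{-1}^+\II_A$), and to prove the analogous equality $\widetilde{\RP}_1(A)\half=e_{-1}^+\widetilde{\RP}(A)\half$ by establishing the two inclusions and then pulling back through the isomorphism of Proposition~\ref{hutchinson6}.

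The inclusion $e_{-1}^+\widetilde{\RP}(A)\half\subseteq\widetilde{\RP}_1(A)\half$ is direct. For $a\in\WW_A$, compute
$\widetilde{\lambda}_1(e_{-1}^+[a])=\frac{1}{2}p_{-1}^+\Lan a\Ran\Lan 1-a\Ran\in p_{-1}^+\II_A^2\subseteq p_{-1}^+\II_A$,
which is zero in the quotient $\II_A^2/p_{-1}^+\II_A$. By $\RR_A$-linearity this extends to all of $e_{-1}^+\widetilde{\RP}(A)\half$.

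The reverse inclusion amounts to showing that $\widetilde{\lambda}_1$ restricted to the antiinvariant summand $e_{-1}^-\widetilde{\RP}(A)\half$ is injective after inverting $2$. In $\widetilde{\RP}(A)$, the defining relation $\psi_1(a)=0$ reads $[a]+\langle -1\rangle[a^{-1}]=0$, so $\langle -1\rangle[a]=-[a^{-1}]$ and consequently $e_{-1}^\pm[a]=\frac{1}{2}([a]\mp[a^{-1}])$ provides explicit generators of the two eigenspace summands. I would verify that the induced map on $e_{-1}^-\widetilde{\RP}(A)\half$ has trivial kernel by using $Y_{a,b}$ relations to rewrite any element in the kernel as a combination on which the antiinvariant part is visibly zero; the ``sufficiently large'' hypothesis on the residue field of $A$ is needed to supply enough admissible pairs $(a,b)\in\WW_A\times\WW_A$ for this chain of reductions to close up.

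Combining the two inclusions with the idempotent decomposition $\widetilde{\RP}(A)\half=e_{-1}^+\widetilde{\RP}(A)\half\oplus e_{-1}^-\widetilde{\RP}(A)\half$ gives the equality in $\widetilde{\RP}$, and Proposition~\ref{hutchinson6} transfers it back to $\RP$. The ``in particular'' assertion about $H_3(\SL_2(A),\z\half)$ is then immediate: by Corollary~\ref{hutchinson5} the non-$K$-theoretic part $H_3(\SL_2(A),\z\half)_0$ sits inside $\II_A\RB(A)\half\subseteq\RP_1(A)\half$, on which $\langle -1\rangle$ has just been shown to act trivially, while the $K$-theoretic quotient $K_3^{\mathrm{ind}}(A)\half$ is already a trivial $\RR_A\half$-module by Proposition~\ref{hutchinson4}. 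The main obstacle is the reverse inclusion: establishing injectivity of $\widetilde{\lambda}_1$ on the antiinvariant summand is the combinatorially delicate step, since one must disentangle the antiinvariant portion of $\widetilde{\RP}(A)$ via the $Y_{a,b}$ relations, and the density of available relations is exactly what the residue-field size hypothesis guarantees.
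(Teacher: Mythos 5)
The proposal correctly identifies the logical skeleton: after splitting $\widetilde{\RP}(A)\half=e_{-1}^+\widetilde{\RP}(A)\half\oplus e_{-1}^-\widetilde{\RP}(A)\half$, the claim reduces to $\widetilde{\lambda}_1$ vanishing on the plus summand (your computation does establish this) together with $\widetilde{\lambda}_1$ being injective on the minus summand. But that second half is the entire content of the theorem, and you do not give an argument for it. ``Use the $Y_{a,b}$ relations to rewrite any kernel element as a combination on which the antiinvariant part is visibly zero'' is a restatement of the goal, not a method: you exhibit no reduction procedure, no terminating invariant, and no mechanism by which the size hypothesis on the residue field makes such a chain of rewritings close up. Injectivity of a map like $\widetilde{\lambda}_1$ on a summand is usually produced by constructing a section (or a retraction of $e_{-1}^-\widetilde{\RP}(A)\half$ onto its image), and nothing in the sketch points toward such a construction. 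As written, the core step is a black box, so the proof is not complete.

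There is also a second, independent gap in the passage back from $\widetilde{\RP}$ to $\RP$. Proposition~\ref{hutchinson6} gives an isomorphism $\RP_1(A)\half\cong\widetilde{\RP}_1(A)\half$ along the quotient $q\colon\RP(A)\half\twoheadrightarrow\widetilde{\RP}(A)\half$; together with $\RP_1(A)\half\cap\KK_A^{(1)}\half=0$ this does yield the one inclusion $\RP_1(A)\half\subseteq e_{-1}^+\RP(A)\half$. But the reverse inclusion $e_{-1}^+\RP(A)\half\subseteq\RP_1(A)\half$ does not transfer, because $e_{-1}^+$ does not annihilate $\ker q=\KK_A^{(1)}\half$. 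Indeed, $\lambda_1\half$ restricts to an isomorphism $\KK_A^{(1)}\half\to p_{-1}^+\II_A\half=e_{-1}^+\II_A\half$ (the kernel of $\lambda_1|_{\KK_A^{(1)}}$ is $4$-torsion, as quoted in the paper), so $\lan -1\ran$ acts trivially on $\KK_A^{(1)}\half$ and hence $e_{-1}^+\KK_A^{(1)}\half=\KK_A^{(1)}\half$, which is nonzero whenever $\GG_A/\lan -1\ran$ is nontrivial. Thus $e_{-1}^+\RP(A)\half\to e_{-1}^+\widetilde{\RP}(A)\half$ has a nonzero kernel and the step ``Proposition~\ref{hutchinson6} transfers it back to $\RP$'' is not a valid deduction: any correct argument has to control $\lambda_1$ on $\RP(A)\half$ directly, not only modulo $\KK_A^{(1)}$. (Since the paper cites this result from \cite{hutchinson-2017} and \cite{hutchinson2017} without reproducing a proof, there is no internal argument against which to compare your route; but the two gaps above are real.)
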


It follows from this theorem that $\RP_1(A)\half $ is a quotient of $\RP(A)\half$ and hence
admits a simple explicit presentation as a $\RR_A$-module.

\begin{prp}\label{g(a)}
For a ring $A$ let $\RP'(A)$ be the $\RR_A$-module
with generators denoted by symbols $[a]'$, $a\in \WW_A$, subject to the following relations:
\par {\rm (i)} $\displaystyle[a]'-[b]'+\lan a\ran\bigg[\frac{b}{a}\bigg]'-
\lan a^{-1}-1\ran\Bigg[\frac{1-a^{-1}}{1-b^{-1}}\Bigg]'- 
\lan 1-a\ran\Bigg[\frac{1-a}{1-b}\Bigg]'=0$ for all $a,b, a/b\in \WW_A$,
\par {\rm (ii)} $\lan-1\ran [a]'=[a]'$ for all $a\in \WW_A$,
\par {\rm (iii)} $[a]'+[a^{-1}]'=0$ for all $a\in\WW_A$.\\
If $A$ is either a field with at least four elements or a local ring where
its residue field has more that $10$ elements, then the $\RR_A$-module homomorphism
\[
\begin{array}{c}
\RP'(A)\half \arr \RP_1(A)\half, \ \ \ \ [a]'\mt \frac{1}{2}g(a)
\end{array}
\]
is an isomorphism, where $g(a):=p_{-1}^+[a]+\Lan1-a\Ran \psi_1(a)$.
\end{prp}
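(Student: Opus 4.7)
I plan to establish the isomorphism by constructing mutually inverse $\RR_A$-module homomorphisms $\phi\colon \RP'(A)\half \to \RP_1(A)\half$ and $\psi\colon \RP_1(A)\half \to \RP'(A)\half$. The forward map $\phi$ is the proposed assignment $[a]' \mapsto \tfrac{1}{2}g(a)$. The inverse $\psi$ is obtained by restricting to $\RP_1(A)\half \subseteq \RP(A)\half$ the $\RR_A$-linear extension $\tilde\psi\colon \RP(A)\half \to \RP'(A)\half$ of $[a] \mapsto [a]'$; this extension is manifestly well-defined because relation (i) of $\RP'(A)$ is precisely the Dupont--Sah relation $Y_{a,b}$ of $\RP(A)$.

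The verification that $\phi$ is well-defined breaks into four steps. First, that $g(a) \in \RP_1(A)$: a direct computation gives $\lambda_1(g(a)) = p_{-1}^+ \Lan a\Ran \Lan 1-a\Ran + \Lan 1-a\Ran\lambda_1(\psi_1(a)) = 0$, using the formula $\lambda_1(\psi_1(a)) = -p_{-1}^+\Lan a\Ran$ obtained from the definition of $\psi_1$. Second, relation (ii) holds in the image: this is immediate from Theorem \ref{hutchinson7}, which asserts that $\langle -1\rangle$ acts trivially on $\RP_1(A)\half$. Third and fourth, relations (iii) and (i) hold in the image: both are handled uniformly. Since $\tfrac{1}{2}g(a) \in \RP_1(A)\half$ and the natural map $\RP_1(A)\half \to \widetilde{\RP}_1(A)\half$ is an isomorphism by Proposition \ref{hutchinson6}, it suffices to check vanishing in the quotient $\widetilde{\RP}(A)\half = \RP(A)\half/\KK_A^{(1)}\half$, where all $\psi_1$-terms die. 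There, relation (iii) reduces to the identity $p_{-1}^+([a]+[a^{-1}]) = p_{-1}^+\psi_1(a) = 0$, and relation (i) reduces to $p_{-1}^+ Y_{a,b} = 0$ in $\RP(A)\half$.

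The mutual inverse property is then straightforward. For $\psi\circ\phi$, substitute $g(a)$ into $\psi$; using relations (ii) and (iii), the image of $\psi_1(a)$ under $\tilde\psi$ becomes $[a]' + \langle -1\rangle[a^{-1}]' = [a]' + [a^{-1}]' = 0$, so $\psi\phi([a]') = \tfrac{1}{2}p_{-1}^+[a]' = e_{-1}^+[a]' = [a]'$ (invoking (ii) once more). For $\phi\circ\psi$, write $x = \sum_i c_i[a_i] \in \RP_1(A)\half$ as an element of $\RP(A)\half$; the discrepancy $\phi\psi(x) - x = \tfrac{1}{2}\sum_i c_i \Lan 1-a_i\Ran \psi_1(a_i)$ lies in $\RP_1(A)\half$ as the difference of two such elements and maps to zero in $\widetilde{\RP}(A)\half$, so by Proposition \ref{hutchinson6} it is zero in $\RP_1(A)\half$.

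The conceptual heart of the argument, and its main obstacle, is the interplay among the three descriptions of $\RP_1(A)\half$: as $\ker(\lambda_1)$ by definition, as $e_{-1}^+\RP(A)\half$ via Theorem \ref{hutchinson7}, and as $\widetilde{\RP}_1(A)\half$ via Proposition \ref{hutchinson6}. The element $g(a)$ is engineered as a lift of $e_{-1}^+[a]$ from $\widetilde{\RP}(A)\half$ back to $\RP(A)\half$ that literally belongs to the submodule $\RP_1(A)\half$, rather than only up to $\KK_A^{(1)}\half$. The most delicate check is that of relation (i) for the twisted symbols $\tfrac{1}{2}g(\cdot)$, but the passage via Proposition \ref{hutchinson6} to $\widetilde{\RP}(A)\half$ reduces it to the already-known $Y_{a,b}$ identity in $\RP(A)$.
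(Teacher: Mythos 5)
Your proof is correct and complete. The paper itself does not prove Proposition \ref{g(a)} but simply cites \cite[Corollary 4.5]{hutchinson-2017} and \cite[Remark 5.6]{hutchinson2017}, so your argument supplies a self-contained verification rather than reproducing the paper's own steps. The structure you use --- building the inverse $\psi$ from the evident $[a]\mapsto[a]'$ on $\RP(A)\half$, checking well-definedness of $\phi$ by passing through the isomorphism $\RP_1(A)\half\simeq\widetilde{\RP}_1(A)\half$ of Proposition \ref{hutchinson6} (so that the $\psi_1$-terms in $g(a)$ die and only $p_{-1}^+Y_{a,b}$ remains), and then verifying the two composites via $\RP_1(A)\half=e_{-1}^+\RP(A)\half$ from Theorem \ref{hutchinson7} --- is the natural route given the tools the paper has set up. One small point worth recording: your formula $\lambda_1(\psi_1(a))=-p_{-1}^+\Lan a\Ran=\Lan -a\Ran\Lan a\Ran$ is correct, whereas the paper's displayed line $\lambda_1(\psi_i(a))=p_{-1}^+\Lan a\Ran=\Lan -a\Ran\Lan a\Ran$ carries a sign slip in its middle term (the two outer expressions there differ by a sign); this does not affect the paper's subsequent use of the identity since $\lambda_1(\KK_A^{(i)})=p_{-1}^+\II_A$ is insensitive to the sign, but your correction is needed for the cancellation $\lambda_1(g(a))=0$ to come out as stated. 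The only exposition gap is that your displayed discrepancy $\phi\psi(x)-x=\tfrac12\sum_ic_i\Lan 1-a_i\Ran\psi_1(a_i)$ has already silently absorbed the term $-e_-^{-1}x$, which vanishes precisely because $x\in e_{-1}^+\RP(A)\half$; making that invocation of Theorem \ref{hutchinson7} explicit would tighten the write-up.
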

\begin{proof}
 See \cite[Corollary 4.5]{hutchinson-2017} and
\cite[Remark 5.6]{hutchinson2017}.
\end{proof}

\subsection{The key identity}\label{key}

We recall the following \emph{key identity}:

\begin{lem}$($\cite[Theorem 3.12]{hutchinson2013}, \cite[Theorem 4.14]{hutchinson2017}$)$\label{lem:key}
Let $B$ be a field or a local ring whose residue field has at least $5$ elements. Then for any $a\in B^\times$ we have 
\[
2\Lan a \Ran C_{B}=\psi_1(a)-\psi_2(a)
\]
in $\RP(B)$.
\end{lem}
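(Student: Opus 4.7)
The plan is to reduce the identity to a direct computation in $\RP(B)$ for $a \in \WW_B$ and then bootstrap to all of $B^\times$ using the cocycle property of $\psi_1$ and $\psi_2$. Recall that, by the basic $\RR_B$-algebra identity $\Lan xy\Ran = \lan x\ran \Lan y\Ran + \Lan x\Ran$ in $\RR_B$, the map $a \mapsto 2\Lan a\Ran C_B$ from $B^\times$ to $\RP(B)$ is itself a $1$-cocycle in the same sense that $\psi_1$ and $\psi_2$ are. Hence it suffices to verify the identity on a generating set; in particular, once it is established on $\WW_B$, the extension $\psi_i(u) := \psi_i(ub) - \lan u\ran\psi_i(b)$ to $u \in U_{1,B}$ (any $b \in \WW_B$ with $ub \in \WW_B$ works, which is possible since the residue field has $\geq 5$ elements) automatically forces the identity for $u \in U_{1,B}$ as well.

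So fix $a \in \WW_B$. By the independence of $C_B$ from the choice of representative in $\WW_B$ (the statement preceding this lemma), we may take
\[
C_B = [a] + \lan -1\ran[1-a] + \Lan 1-a\Ran\,\psi_1(a).
\]
Expanding $\psi_1(a) = [a] + \lan -1\ran[a^{-1}]$ and $\psi_2(a) = \lan 1-a\ran(\lan a\ran[a] + [a^{-1}])$ and collecting terms in the $\RR_B$-basis spanned by $[a]$, $[1-a]$, $[a^{-1}]$, the desired identity becomes an explicit equation with coefficients in $\RR_B = \z[\GG_B]$. The hard part is then to exhibit this equation as a consequence of the defining five-term relation $Y_{a,b}$.

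The key step I would carry out is to feed carefully chosen pairs into $Y_{a,b}$. In particular, applying $Y_{a,b}$ with $b = 1-a$ (respectively $b = a^{-1}$, and further with the pairs coming from the $S_3$-orbit $\{a,\, 1-a,\, 1/a,\, 1-1/a,\, 1/(1-a),\, (a-1)/a\}$) produces linear $\RR_B$-relations among $[a]$, $[a^{-1}]$, $[1-a]$, $[(1-a)^{-1}]$, $[1-a^{-1}]$ and $[a/(a-1)]$. Combining two or three such instances (and using the multiplicativity $\lan x\ran\lan y\ran = \lan xy\ran$ to simplify coefficients) I expect to obtain exactly the equality of the two sides computed in the previous paragraph. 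Equivalently, I would translate both sides into the quotient $\widetilde{\RP}_1(B)$ using Proposition \ref{hutchinson6} and \cite[Lemma 4.9]{hutchinson2017} to kill the $\psi_1$--terms, reducing the verification to a $3$-variable equation among $\{[a],[a^{-1}],[1-a]\}$ that can then be lifted back to $\RP(B)$.

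The main obstacle is the bookkeeping in the five-term computation: the relation $Y_{a,b}$ has coefficients $\lan a\ran$, $\lan a^{-1}-1\ran$, $\lan 1-a\ran$, and matching these with the coefficients arising on the two sides of the key identity requires judicious use of the multiplicative identities $\lan a\ran\lan 1-a\ran = \lan a(1-a)\ran$ and $\Lan xy\Ran = \lan x\ran\Lan y\Ran + \Lan x\Ran$. Once the $\WW_B$--case is pinned down, the cocycle extension to $U_{1,B}$ outlined above is formal: writing $u = (ub)\cdot b^{-1}$ with $b \in \WW_B$ chosen so that $ub \in \WW_B$, subtracting the verified identity for $b$ from that for $ub$ yields the identity for $u$.
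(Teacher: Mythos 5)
This lemma is not proved in the paper; it is recalled verbatim with citations to \cite[Theorem 3.12]{hutchinson2013} and \cite[Theorem 4.14]{hutchinson2017}, so there is no in-paper proof to compare against. Assessing your proposal on its own merits: the outer framing is correct, but the core is missing, and the suggested shortcut is not valid.

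The reduction to $a\in\WW_B$ via the cocycle property is sound. Both $a\mapsto\psi_1(a)-\psi_2(a)$ and $a\mapsto 2\Lan a\Ran C_B$ satisfy $f(ab)=\lan a\ran f(b)+f(a)$ (the latter because $\Lan ab\Ran=\lan a\ran\Lan b\Ran+\Lan a\Ran$ in $\RR_B$ and $C_B$ is a fixed element), and two such cocycles agreeing on $\WW_B$ agree on the subgroup $\WW_B$ generates, which is all of $B^\times$ since every $u\in U_{1,B}$ is a quotient of two elements of $\WW_B$. That part is fine and is exactly why the lemma only needs to be checked on $\WW_B$.

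The gap is that you never verify the identity on $\WW_B$, which is the entire content of the lemma. ``Feeding carefully chosen pairs into $Y_{a,b}$'' and ``I expect to obtain exactly the equality'' is a description of a computation, not a computation. The five-term manipulations needed here are delicate enough (they also underlie the well-definedness of $C_B$ itself) that the claim cannot be accepted on faith.

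The alternative route you sketch is also broken. You propose to pass to $\widetilde{\RP}(B)=\RP(B)/\KK_B^{(1)}$, verify the (now simpler) identity there, and ``lift back'' to $\RP(B)$. Identities do not lift along quotient maps with nonzero kernel: knowing two elements agree modulo $\KK_B^{(1)}$ only tells you they differ by an element of $\KK_B^{(1)}$, which in general is nonzero. Moreover Proposition \ref{hutchinson6} only gives $\RP_1(B)\half\simeq\widetilde{\RP}_1(B)\half$ after inverting $2$ (the kernel is $4$-torsion, not zero), whereas the identity in the lemma is an integral statement in $\RP(B)$. So that detour cannot recover the integral identity even in principle. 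In short: keep the cocycle reduction, discard the quotient shortcut, and actually carry out the five-term manipulation on $\WW_B$ — or simply defer to the cited references as the paper does.
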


\begin{cor} \label{cor:key} 
If $B$ is a local ring whose residue field has at least $5$ elements,
then for all $a\in\WW_{B}$, we have 
\[
\Lan a \Ran C_B=\lan a-1\ran \Lan-a\Ran[a]
\]
in $\widetilde{\RP}(B)$.
\end{cor}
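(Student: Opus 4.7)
The plan is to derive the identity directly from Lemma \ref{lem:key} (the key identity) combined with the relation $3C_B=\psi_1(-1)$ stated just before Proposition \ref{hutchinson7}, using the defining feature of $\widetilde{\RP}(B)=\RP(B)/\KK_B^{(1)}$: every element $\psi_1(b)$, $b\in B^\times$, is zero in $\widetilde{\RP}(B)$.

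First, I would reduce the key identity $2\Lan a\Ran C_B = \psi_1(a) - \psi_2(a)$ modulo $\KK_B^{(1)}$. Since $\psi_1(a)\in\KK_B^{(1)}$, this becomes $2\Lan a\Ran C_B \equiv -\psi_2(a)$ in $\widetilde{\RP}(B)$. Multiplying $3C_B=\psi_1(-1)$ by $\Lan a\Ran$ and again passing to $\widetilde{\RP}(B)$ gives $3\Lan a\Ran C_B \equiv 0$. Subtracting the two congruences yields
\[
\Lan a\Ran C_B \;=\; 3\Lan a\Ran C_B - 2\Lan a\Ran C_B \;\equiv\; \psi_2(a) \pmod{\KK_B^{(1)}}.
\]

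It then remains to rewrite $\psi_2(a)$ in the required form. By definition $\psi_2(a)=\lan 1-a\ran\bigl(\lan a\ran[a]+[a^{-1}]\bigr)$, and the vanishing of $\psi_1(a)=[a]+\lan -1\ran[a^{-1}]$ in $\widetilde{\RP}(B)$ yields $[a^{-1}] = -\lan -1\ran[a]$. Substituting gives
\[
\psi_2(a) \;=\; \lan 1-a\ran\bigl(\lan a\ran-\lan -1\ran\bigr)[a].
\]
A short calculation in the group ring shows $\lan 1-a\ran(\lan a\ran-\lan -1\ran) = \lan a-a^2\ran - \lan a-1\ran = \lan a-1\ran(\lan -a\ran - 1) = \lan a-1\ran\Lan -a\Ran$, which is the desired coefficient.

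There is no real obstacle here; the only thing to watch is the sign bookkeeping in the step $[a^{-1}]=-\lan -1\ran[a]$ and the identity $\lan a-1\ran\lan -a\ran=\lan a-a^2\ran$ in $\GG_B$. The proof is short because the bulk of the work has already been done in Lemma \ref{lem:key} and in establishing $3C_B=\psi_1(-1)$; the corollary merely combines these ingredients with the quotient definition of $\widetilde{\RP}(B)$.
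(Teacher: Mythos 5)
Your proof is correct and follows essentially the same route as the paper: both use the key identity together with $3C_B=\psi_1(-1)$ (hence $3C_B=0$ in $\widetilde{\RP}(B)$) to get $\Lan a\Ran C_B=\psi_2(a)$, then rewrite $\psi_2(a)$ using $\psi_1(a)=0$ and the group-ring calculation $\lan 1-a\ran(\lan a\ran-\lan -1\ran)=\lan a-1\ran\Lan -a\Ran$. The only cosmetic difference is that the paper phrases the first step as $2C_B=-C_B$ while you phrase it as $3\Lan a\Ran C_B - 2\Lan a\Ran C_B$; these are the same manipulation.
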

\begin{proof} Since $3C_B+\psi_1(-1)=0$, we have $2C_{B}=-C_{B}$ in $\widetilde{\RP}(B)$. Thus 
$\Lan a\Ran C_{B}=\psi_{2}(a)$ in $\widetilde{\RP}(B)$.
However, $0=\psi_1(a)=[a]+\lan-1\ran[a^{-1}]$ in $\widetilde{\RP}(B)$. Hence $[a^{-1}]=-\lan{-1}\ran[a]$. Thus in 
$\widetilde{\RP}(B)$ we have 
\begin{eqnarray*}
\psi_{2}(a)&=& \lan{1-a}\ran\left(\lan{a}\ran[a]+[a^{-1}]\right)\\
&=& \lan{1-a}\ran \left(\lan{a}\ran[a]-\lan{-1}\ran[a]\right)\\
&=& \lan{a-1}\ran(\lan{-a}\ran -1)[a]=\lan{a-1}\ran\Lan-a\Ran[a].
\end{eqnarray*}
\end{proof}

We will need the following refinement of \cite[Lemma 6.1]{hutchinson2017}.

\begin{lem}\label{lem:slr}
Let $B$ be a local ring with maximal ideal $\mmm_B$ and residue field $k$. 
Let $\LL_{B}$ denote the $\RR_B$-submodule of $\RP({B})$ generated by the elements
 $[au]-[a]$ and $\Lan u\Ran C_{B}$, $a\in \WW_B$, $u\in U_{1,B}=1+\mmm_B$. Then there 
 is a short exact sequence of $\RR_{B}$-modules
\[
0\to \LL_{B}\to \widetilde{\RP}(B)\to \widetilde{\RP}(k)\to 0.
\]
\end{lem}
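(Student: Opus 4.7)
The plan is to identify the natural reduction map $\pi\colon\widetilde{\RP}(B)\to\widetilde{\RP}(k)$ induced by $B\twoheadrightarrow k$ as the cokernel of the inclusion $\LL_B\hookrightarrow\widetilde{\RP}(B)$. First I would verify that $\pi$ is a well-defined surjective $\RR_B$-module homomorphism: every $\bar a\in\WW_k$ lifts to some $a\in\WW_B$ (both $a$ and $1-a$ lift to units of $B$), the relations $Y_{a,b}$ and the cocycle values $\psi_1(a)$ descend to their counterparts over $k$, and $\widetilde{\RP}(k)$ is made an $\RR_B$-module via $\RR_B\to\RR_k$. The inclusion $\LL_B\subseteq\ker\pi$ is immediate: $\overline{au}=\bar a$ kills $[au]-[a]$, and $\bar u=1$ for $u\in U_{1,B}$ sends $\Lan u\Ran$ to $0$ in $\RR_k$, killing $\Lan u\Ran C_B$.

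The heart of the proof is the reverse inclusion, which I would obtain by proving the stronger statement that $\Lan u\Ran\cdot\widetilde{\RP}(B)\subseteq\LL_B$ for every $u\in U_{1,B}$; this forces the $\RR_B$-action on $\widetilde{\RP}(B)/\LL_B$ to factor through $\RR_k$. Since $\widetilde{\RP}(B)$ is $\RR_B$-generated by the symbols $[a]$, $a\in\WW_B$, it suffices to show $\Lan u\Ran[a]\in\LL_B$. The main tool is Corollary~\ref{cor:key}, which gives $\Lan a\Ran C_B=\lan a-1\ran\Lan -a\Ran[a]$ in $\widetilde{\RP}(B)$. Multiplying by $\Lan u\Ran$ and invoking $\Lan u\Ran C_B\in\LL_B$, then dividing by the unit $\lan a-1\ran$, yields the first approximation
\[
\Lan u\Ran\Lan -a\Ran[a]\in\LL_B\qquad(\star)
\]
for all $u\in U_{1,B}$ and $a\in\WW_B$. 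To kill the residual factor $\Lan -a\Ran$, I would apply the corollary to $a':=au^{-1}\in\WW_B$. Modulo $\LL_B$ one has $\Lan a'\Ran C_B=\Lan a\Ran C_B$ (their difference is $\lan a\ran\Lan u\Ran C_B$) and $[a']=[a]$, so $\lan a'-1\ran\Lan -a'\Ran[a]\equiv\lan a-1\ran\Lan -a\Ran[a]$ in $\widetilde{\RP}(B)/\LL_B$. The factorisation $a-u=(a-1)w$ with $w:=1+(1-u)/(a-1)\in U_{1,B}$ gives $\lan a'-1\ran=\lan a-1\ran\lan wu\ran$ and $\Lan -a'\Ran=\Lan -a\Ran+\lan -a\ran\Lan u\Ran$; a brief expansion then produces
\[
\lan -awu\ran\Lan u\Ran[a]=-\Lan wu\Ran\Lan -a\Ran[a]\quad\text{in }\widetilde{\RP}(B)/\LL_B.
\]
The right side lies in $\LL_B$ by $(\star)$ applied with $\mu=wu\in U_{1,B}$, and multiplying by the unit $\lan -awu\ran^{-1}$ gives $\Lan u\Ran[a]\in\LL_B$.

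With this in hand, the $\RR_B$-action on $\widetilde{\RP}(B)/\LL_B$ descends to an $\RR_k$-action, and I would construct a two-sided inverse $\sigma\colon\widetilde{\RP}(k)\to\widetilde{\RP}(B)/\LL_B$ to the induced map $\bar\pi$. On generators set $\sigma([\bar a]):=[a]$ for any chosen lift $a\in\WW_B$; independence of the lift is clear from $[au]-[a]\in\LL_B$, the relations $Y_{\bar a,\bar b}$ lift (taking representatives with $\bar a\neq\bar b$, which forces $a/b\in\WW_B$) to $Y_{a,b}=0$ in $\RP(B)$, and $\psi_1(\bar a)$ lifts to $\psi_1(a)\in\KK_B^{(1)}$. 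Both composites are visibly identities, so $\ker\pi=\LL_B$, which yields the desired exact sequence. The main obstacle of the argument is precisely the bootstrap from $(\star)$: a single use of Corollary~\ref{cor:key} only controls $\Lan u\Ran\Lan -a\Ran[a]$, and genuinely isolating $\Lan u\Ran[a]$ requires the second application of the corollary at $au^{-1}$ together with the factorisation $a-u=(a-1)w$, which produces the new $U_{1,B}$-element $wu$ through which $(\star)$ is re-invoked.
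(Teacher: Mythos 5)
Your proof is correct and follows essentially the same strategy as the paper's: both arguments apply Corollary~\ref{cor:key} at $a$ and at a $U_{1,B}$-translate of $a$, use the cocycle identity $\Lan xy\Ran=\Lan x\Ran+\lan x\ran\Lan y\Ran$, and extract $\Lan u\Ran[a]\in\LL_B$ via a factorisation of a translate minus one as $(a-1)$ times an element of $U_{1,B}$. The paper works with the translate $au$ rather than $au^{-1}$ and folds your intermediate observation $(\star)$ directly into the manipulation, but these are only bookkeeping differences.
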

\begin{proof}
Clearly the functorial map, $p$ say, $\widetilde{\RP}(B)\to \widetilde{\RP}(k)$ is surjective and 
$\LL_{B}\subseteq\ker(p)$. Let $Q(B):= \widetilde{\RP}(B)/\LL_B$. 

We claim that $U_{1,B}\subset B^\times$ acts trivially on $Q(B)$: Let $a\in \WW_{B}$. 
By Corollary \ref{cor:key}
\[
[a]=\lan{-a}\ran[a]-\lan{a-1}\ran \Lan{a}\Ran C_{B}.
\]
Since for any $u\in U_{1,B}$, $au\in\WW_{B}$, again by  Corollary \ref{cor:key}
\[
[au]=\lan{-au}\ran[au]-\lan{au-1}\ran\Lan au\Ran C_{B}.
\]
However in $Q(B)$ we have
\[
\lan{au-1}\ran\Lan{au}\Ran C_{B}= \lan{a-1}\ran \lan{u'}\ran \Lan{au}\Ran C_{B}= \lan{a-1}\ran\Lan{a}\Ran C_{B},
\]
where $u'= (1-au)/(1-a)\in U_{1,B}$. (Note that in above we use the formula
$\Lan{au}\Ran=\Lan{a}\Ran+\lan{a}\ran\Lan{u}\Ran$ and the fact that for all $w\in U_{1,B}$, 
$\lan{w}\ran C_{B}=C_{B}$ in $Q(B)$.) Thus in $Q(B)$
\[
0=[{a}]-[{au}]=\lan{-a}\ran [{a}]-\lan{-au}\ran [{au}]
\]
for all $a\in \WW_{B}$, $u\in U_{1,B}$. It follows that 
$\lan{-a}\ran[{a}]=\lan{-au}\ran[au]=\lan{-au}\ran[{a}]$ in $Q(B)$ for all $a\in \WW_B, u\in U_{1,B}$. Multiplying both sides by 
$\lan{-a}\ran$, we deduce that 
\[
[{a}]=\lan{u}\ran[{a}]
\] 
in $Q({B})$ for all $u\in U_{1,B}$, $a\in \WW_{B}$, proving the claim.

It follows that the $\RR_{B}$-module structure on $Q(B)$ induces a $\RR_{k}$-module structure, 
since $k^\times \simeq B^\times/U_{1,B}$. 

Thus there is a well-defined $\RR_{k}$-module homomorphism $\widetilde{\RP}({k})\to Q(B)$, 
$[{\bar{a}}]\mapsto [{a}]+\LL_{B}$,  giving an inverse to the map $\bar{p}:Q(B)\to \widetilde{\RP}({k})$. 
\end{proof}

\section{The third homology of  \texorpdfstring{$\SL_2$}{Lg} of a  discrete valuation ring}\label{sec:RSCG}

Throughout this section $A$ will be a discrete valuation ring with maximal ideal $\mmm_A$ and residue field $k$.
Let $F$ be the field of fractions of $A$ and let  $v=v_A:\ff\arr \z$ be the associated discrete valuation.
 We fix a uniformizer
$\pi$ of the valuation, i.e. a generator of $\mmm_A$. Moreover let $U_{n,A}:=1+\mmm_A^n$.

In this section we will prove the following main theorem:

\begin{thm}\label{mainthm}
Let $A$ be a discete valuation ring with  field of fractions $F$ and   sufficiently large residue field $k$.  
There is an $\RR_A$-map $\Delta_\pi: H_3(\SL_2(A),\z)\to \widetilde{\RP}_1(k)$  such that the sequence
\[
\begin{array}{c}
H_3(\SL_2(A),\z\half) \arr H_3(\SL_2(F),\z\half) 
\overset{\Delta_\pi}{\larr} \RP_1(k)\half \arr 0
\end{array}
\]
is an exact sequence of $\RR_A$-modules.
\end{thm}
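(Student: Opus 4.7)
The plan is to construct the specialization map $\Delta_\pi$ via refined scissors congruence groups and then reduce the exact sequence of Theorem~\ref{mainthm} to an analogous exact sequence on $\RP_1(\cdot)\half$, which I would prove using the character-theoretic local-global principle over the group ring $\RR_F\half$.

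First I would construct a residue homomorphism $\partial_\pi: \RP_1(F)\half \to \widetilde{\RP}_1(k)\half$ on the refined scissors congruence groups. Using the explicit generators-and-relations presentation of Proposition~\ref{g(a)}, I would define $\partial_\pi$ on each generator $[a]'$ by a case analysis on the valuations of $a$ and $1-a$: when $v(a)=v(1-a)=0$ send $[a]'\mapsto [\bar a]'$, and when either $v(a)>0$ or $v(1-a)>0$ express the image using the constant $C_k$ together with the cocycle $\psi_1$, invoking the key identity of Lemma~\ref{lem:key} and Corollary~\ref{cor:key} to absorb the boundary contributions in an $\RR_A$-linear fashion. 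Composition with the canonical surjection $H_3(\SL_2(F),\z\half)\twoheadrightarrow \RB(F)\half \hookrightarrow \RP_1(F)\half$ from Theorem~\ref{refined-bloch-wigner} then produces $\Delta_\pi$, and Proposition~\ref{hutchinson6} identifies its target with $\RP_1(k)\half$.

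Second, I would reduce Theorem~\ref{mainthm} to the assertion that the sequence of $\RR_A$-modules
\[
\RP_1(A)\half \arr \RP_1(F)\half \overset{\partial_\pi}{\larr} \RP_1(k)\half \arr 0
\]
is exact. Using the Bloch-Wigner exact sequence (Theorem~\ref{refined-bloch-wigner}) applied to both $A$ and $F$, together with Corollary~\ref{hutchinson5} and Proposition~\ref{hutchinson2}, the torsion terms $\tors(\mu(\cdot),\mu(\cdot))\half$ on both sides match up, and a snake-lemma argument transports the $\RP_1$-sequence (intersected with $\RB\half$) to the corresponding sequence on $H_3(\SL_2(\cdot),\z\half)$; the map $\II_A\RB(A)\half \to \II_A\RB(F)\half$ provides the leftmost arrow of Theorem~\ref{mainthm} after accounting for the coinvariant quotient.

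Third, for the exactness of the $\RP_1$-sequence itself I would apply the character-theoretic local-global principle recalled from \cite{hutchinson-2017}: after inverting $2$ and extending scalars suitably, the group algebra $\RR_F\half$ decomposes as a product indexed by characters $\chi: F^\times/(F^\times)^2 \to \{\pm 1\}$, so exactness may be checked one $\chi$-eigencomponent at a time. In each such component, Proposition~\ref{g(a)} gives explicit $\RR_F$-generators $g(a)$ of $\RP_1(F)\half$, and the surjectivity of $\partial_\pi$ together with the equality $\ker(\partial_\pi)=\im(\RP_1(A)\half\to \RP_1(F)\half)$ reduces to constructing explicit pre-images, which is possible because $k$ sufficiently large provides enough units of prescribed residue class. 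The main obstacle in the whole plan is the first step: verifying that $\partial_\pi$ is well-defined on the three-term relations $Y_{a,b}$ and is genuinely $\RR_A$-linear is a long case analysis on the valuations of $a$, $b$, $1-a$, $1-b$, $a/b$, $(1-a)/(1-b)$, and $(1-a^{-1})/(1-b^{-1})$, where the key identity and the constant $C_k$ do most of the work in reconciling the various cases.
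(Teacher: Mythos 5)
Your proposal has the right ingredients in outline (a specialization map on refined scissors groups, a reduction to an exactness statement at the $\RP_1$-level, and a character-theoretic local-global principle), which is indeed the architecture of the paper's proof. But the first step contains a concrete error that makes the rest of the argument collapse, and the transition back to $H_3$ omits a crucial $K$-theoretic input.

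\textbf{The wrong component of the specialization map.} Your prescription ``when $v(a)=v(1-a)=0$ send $[a]'\mapsto[\bar a]'$'' describes the \emph{reduction} component $\rho_0\circ S_v$ (the map the paper calls $\delta_0$), not the \emph{residue} component $\rho_\pi\circ S_v$ (the paper's $\delta_\pi$). By the definition of $\rho_\pi$, the unit $1=u\pi^0$ has even $\pi$-exponent, so $\rho_\pi(1\otimes[\bar a])=0$; hence the correct map $\delta_\pi$ must send $[a]$ to $0$ when $v(a)=v(1-a)=0$, and picks up nonzero contributions only from the $\pi$-part of the $\GG_F$-action (e.g.\ $\delta_\pi(\lan\pi\ran[a]-[a])=[\bar a]$). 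With your $\partial_\pi$, the composite $\RP_1(A)\half\to\RP_1(F)\half\to\RP_1(k)\half$ is the reduction map, which is \emph{surjective} (since $\WW_A\twoheadrightarrow\WW_k$), so your proposed sequence is not even a complex, and a fortiori the induced sequence on $H_3$ fails $\Delta_\pi\circ j_*=0$. The correct $\delta_\pi$ does kill the image of $\RP_1(A)$ precisely because of the even/odd distinction in $\rho_\pi$.

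\textbf{Missing the $K$-theory input.} Your Step 2 reduces Theorem~\ref{mainthm} to an exactness assertion for the $\RP_1$-groups via the refined Bloch--Wigner sequence and a snake-lemma argument, but the identification $\II_AH_3(\SL_2(A),\z\half)\simeq\II_A\RP_1(A)\half$ of Corollary~\ref{hutchinson5} works only on the augmentation-ideal pieces, not on the whole $H_3$; the remaining quotient is $K_3^\ind(A)\half$. The paper's proof therefore first establishes the exact sequence $\II_A\RP_1(A)\half\to\II_F\RP_1(F)\half\overset{\delta_\pi}{\to}\RP_1(k)\half\to 0$ (Theorem~\ref{thm:4.2}), and then a diagram chase back to $H_3$ requires the surjectivity of $K_3^\ind(A)\to K_3^\ind(F)$ (Lemma~\ref{lem:kind}, itself a consequence of the $K$-theory and Milnor $K$-theory localization sequences together with Matsumoto's theorem). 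Without this surjectivity the diagram chase cannot be completed; your proposal makes no mention of it, and the vague ``torsion terms match up plus snake lemma'' does not supply a substitute.

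Your Step 3 (character-theoretic local-global principle, reducing to explicit generators and hunting for pre-images) is the right idea and is indeed how Theorem~\ref{thm:4.2} is proved, but it cannot be carried out until the map is the genuine $\delta_\pi$ and the statement is formulated at the level of the augmentation-ideal submodules.
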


\subsection{Induction of modules}

From the natural maps $A\harr F$ and $A\two k$ we obtain the homomorphisms of groups
\[
\GG_A \harr \GG_F, \ \ \ \  
\GG_A \two \GG_k. 
\]
Thus any $\RR_F$-module or any $\RR_k$-module has a natural $\RR_A$-module structure.
For any $\RR_k$-module $M$ we define the induced $\RR_F$-module
\[
\Ind_k^F M:=\RR_F\otimes_{\RR_A} M.
\]
Note that, since $M$ is an $\RR_A$-$\RR_k$-bimodule, it follows that  $\Ind_k^F M$ is an $\RR_k$-module in a natural way: 
$\lan \bar{a}\ran.(x\otimes m)=
x\otimes \lan \bar{a}\ran m=\lan a\ran x\otimes m$.

\begin{lem}$($\cite[Lemma 5.4]{hutchinson-2017}$)$\label{lem:decom}
For any $\RR_k$-module $M$, the $\RR_k$-homomorphisms
\[
\rho_0: \Ind_k^F M \arr M, \ \ \ \ \lan u\pi^r\ran \otimes m\mapsto \lan \bar{u}\ran m, 
\]
and 
\[
\rho_\pi: \Ind_k^F M \arr M, \ \ \ \lan u\pi^r\ran \otimes m \mt
\begin{cases}
\lan \bar{u}\ran m & \text{if $r$ is odd}\\
0 & \text{if $r$ is even,}
\end{cases}
\]
for all $r\in \z, u\in \aa$, induce the isomorphism of $\RR_k$-modules (and so $\RR_A$-modules)
\[
\Ind_k^F M\overset{(\rho_0,\rho_\pi)}{\overset{\simeq}{\larr}} M\oplus M, \ \ \ \ 
\II_F\Ind_k^F M\overset{(\rho_0,\rho_\pi)}{\overset{\simeq}{\larr}} \II_k M\oplus M.
\]
\end{lem}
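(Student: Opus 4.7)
The plan is to exploit the canonical splitting of multiplicative groups
\[
F^\times\simeq A^\times\times\langle\pi\rangle,\qquad u\pi^r\longleftrightarrow(u,r),
\]
coming from the valuation. Passing modulo squares, and using the DVR-specific identity $(F^\times)^2\cap A^\times=(A^\times)^2$ (immediate since $v(x^2)=2v(x)$ forces $v(x)=0$ whenever $x^2\in A^\times$), this yields $\GG_F\simeq\GG_A\oplus\z/2$, with the extra $\z/2$ generated by $\lan\pi\ran$. Hence $\RR_F=\RR_A\cdot 1\,\oplus\,\RR_A\cdot\lan\pi\ran$ as $\RR_A$-modules, and tensoring with $M$ over $\RR_A$ gives a canonical direct-sum decomposition
\[
\Ind_k^F M=(1\otimes M)\oplus(\lan\pi\ran\otimes M);
\]
every element has a unique expression $1\otimes m+\lan\pi\ran\otimes m'$. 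This is the scaffolding against which I will measure $(\rho_0,\rho_\pi)$.

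First I would verify that $\rho_0$ and $\rho_\pi$ are well defined: if $u\pi^r$ and $v\pi^s$ represent the same class in $\GG_F$, then $r\equiv s\pmod 2$ and $u/v\in(A^\times)^2$, so $\lan\bar u\ran=\lan\bar v\ran$ in $\GG_k$ and both defining formulas agree. The $\RR_A$-bilinearity on $\RR_F\times M$ is then immediate, as is the induced $\RR_k$-linearity on the tensor product. Evaluating on the two distinguished generators of the above decomposition yields
\[
(\rho_0,\rho_\pi)(1\otimes m)=(m,0),\qquad(\rho_0,\rho_\pi)(\lan\pi\ran\otimes m)=(m,m),
\]
so in these coordinates $(\rho_0,\rho_\pi)$ is represented by a unipotent $2\times 2$ matrix, hence is an isomorphism with explicit inverse $(m_0,m_\pi)\mapsto 1\otimes(m_0-m_\pi)+\lan\pi\ran\otimes m_\pi$.

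For the augmentation-ideal statement, I would use that $\II_F\Ind_k^F M$ is generated as an abelian group by elements $\Lan u\pi^r\Ran y$, where $u\in A^\times$, $r\in\z$, and $y$ is either $1\otimes m$ or $\lan\pi\ran\otimes m$. Since $\lan\pi\ran$ has order $2$ in $\GG_F$ only the parity of $r$ matters, and one checks case by case that the images under $(\rho_0,\rho_\pi)$ are all of the form $(\Lan\bar u\Ran m,*)$ for some $*\in M$, so the image is certainly contained in $\II_k M\oplus M$. Conversely, taking differences of these elements (for instance $(\Lan\bar u\Ran m,\lan\bar u\ran m)-(\Lan\bar u\Ran m,\Lan\bar u\Ran m)=(0,m)$) already produces all of $0\oplus M$, while the case $y=1\otimes m$ with $r$ even gives $(\Lan\bar u\Ran m,0)$, so the image is exactly $\II_k M\oplus M$. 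The only real subtlety throughout is the well-definedness check in the second paragraph; everything else is linear algebra on the rank-$2$ decomposition of $\RR_F$ over $\RR_A$.
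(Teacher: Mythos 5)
Your proof is correct. The key observation — that the valuation gives a splitting $F^\times\simeq A^\times\times\langle\pi\rangle$, hence $\GG_F\simeq\GG_A\oplus\z/2$, hence $\RR_F$ is free over $\RR_A$ with basis $\{1,\lan\pi\ran\}$ — reduces everything to the $2\times 2$ unipotent matrix computation you carry out, and the explicit inverse you write down is right. The well-definedness check in your second paragraph (including the DVR-specific point that $(F^\times)^2\cap A^\times=(A^\times)^2$, and the implicit fact that $\pi\notin(F^\times)^2$ because $v(\pi)$ is odd) is the only place requiring care, and you handle it correctly. For the augmentation-ideal statement, your case analysis of $\Lan u\pi^r\Ran\cdot(1\otimes m)$ and $\Lan u\pi^r\Ran\cdot(\lan\pi\ran\otimes m)$ is sound: the image lands in $\II_kM\oplus M$, and the differences you exhibit together with the $r$ even, $y=1\otimes m$ case show it is all of $\II_kM\oplus M$; injectivity is inherited from the first isomorphism. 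The paper itself does not supply a proof here — it simply cites \cite[Lemma 5.4]{hutchinson-2017} — but your freeness-and-basis approach is the natural route to this statement, and I see no gap.
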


\subsection{The specialization homomorphism}

There is a natural surjective specialization introduced and developed in \cite[Section 4.3]{hutchinson2013} 
and \cite[Section 5]{hutchinson-2017} defined as follows:
\[
S_v:\RP(F) \arr \Ind_k^F\widetilde{\RP}(k),
\ \ \ \ \ \ \ 
[a] \mapsto 
\begin{cases}
1\otimes [\bar{a}] &\text{if $v(a)=0$}\\
1\otimes C_k & \text{if $v(a)>0$}\\
-(1\otimes C_k) & \text{if $v(a)<0$}
\end{cases}
\]
This is a map of $\RR_F$-modules. It induces a well-defined map
\[
S_v:\widetilde{\RP}(F) \arr \Ind_k^F\widetilde{\RP}(k).
\]
Furthermore this induces a well-defined specialization 
\[
S_v:\RP_1(F) \arr \Ind_k^F\widetilde{\RP}_1(k).
\]

The composite
\[
\RP_1(F) \overset{S_v}{\larr} \Ind_k^F \widetilde{\RP}_1(k) 
\overset{(\rho_0,\rho_\pi)}{\overset{\simeq}{\larr}} 
\widetilde{\RP}_1(k)\oplus \widetilde{\RP}_1(k)
\]
induces two natural homomorphisms of $\RR_A$-modules
\[
\delta_\pi:=\rho_\pi\circ S_\pi:\RP_1(F) \arr \widetilde{\RP}_1(k) ,\ \ \ \ \ \  
\delta_0:=\rho_0\circ S_\pi:\RP_1(F) \arr \widetilde{\RP}_1(k).
\]
Furthermore the restriction of these maps to $\II_F\RP_1(F)$ induce homomorphisms of $\RR_A$-modules
\[
\delta_\pi:\II_F\RP_1(F) \arr \widetilde{\RP}_1(k) ,\ \ \ \ \ \  
\delta_0:\II_F\RP_1(F) \arr \II_k\widetilde{\RP}_1(k).
\]
By direct computation one sees that the composite 
\[
\II_A \RP_1(A)\arr \II_F\RP_1(F)\overset{\delta_\pi}{\arr} \widetilde{\RP}_1(k)
\]
is trivial. 

The main theorem (Theorem \ref{mainthm}) will follow from the following result (whose proof will occupy 
Subsections \ref{lgp} to \ref{obtain}): 

\begin{thm}\label{thm:4.2}
Let $A$ be a discrete valuation ring with field of fractions $F$ and sufficiently large residue field $k$. Then
\[
\begin{array}{c}
\II_A \RP_1(A)\half \arr \II_F\RP_1(F)
\half\overset{\delta_\pi}{\arr} \RP_1(k)\half \arr 0
\end{array}
\]
is an exact sequence of $\RR_A$-modules.
\end{thm}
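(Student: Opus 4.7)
The plan is to verify surjectivity of $\delta_\pi$ and the vanishing of the composition (both essentially direct), and then to prove exactness in the middle by the character-theoretic local-global principle for modules over $\z\half[\GG]$ with $\GG$ an elementary abelian $2$-group, recalled from \cite{hutchinson-2017} and applied to $\GG_A\subseteq \GG_F$.

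For surjectivity, by Propositions \ref{hutchinson6} and \ref{g(a)}, the target $\RP_1(k)\half\simeq \widetilde{\RP}_1(k)\half$ is generated as an $\RR_k$-module by the elements $\tfrac{1}{2}g(\bar a)$ for $\bar a\in \WW_k$. For each such $\bar a$, I would lift it to a unit $a\in A^\times$ reducing to $\bar a$ and construct an explicit preimage in $\RP_1(F)\half$, built from $g(a\pi)$ together with $g$-contributions of valuation zero. Applying $S_v$ term-by-term and then $\rho_\pi$: the $[a\pi]$-piece (valuation one, specializing to $1\otimes C_k$) survives, while the valuation-zero pieces are killed; using Lemma \ref{lem:key} and Corollary \ref{cor:key} to rewrite $\psi_1-\psi_2$ in terms of $C_F$, the surviving contributions assemble to $\tfrac{1}{2}g(\bar a)$. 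The composition in the middle is zero, as noted in the excerpt: every $\RR_A$-generator of $\II_A\RP_1(A)\half$ is of the form $\Lan u\Ran[a]$ with $u\in A^\times$, $a\in \WW_A$, both of $v$-valuation zero, and $S_v$ carries it into the even-valuation summand of $\Ind_k^F\widetilde{\RP}(k)$, which is killed by $\rho_\pi$.

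The main obstacle is showing $\ker(\delta_\pi)\subseteq \im(\II_A\RP_1(A)\half \to \II_F\RP_1(F)\half)$. For this I would invoke the local-global principle: after inverting $2$, $\RR_F\half$ is a product of copies of $\z\half$ indexed by characters $\chi\colon \GG_F\to \{\pm 1\}$, and every $\RR_F\half$-module splits as the direct sum of its $\chi$-isotypic components, so exactness can be verified component-by-component. The characters split into two classes according to $\chi(\pi)=\pm 1$. For $\chi(\pi)=+1$, $\chi$ pulls back from $\GG_A$, the $\chi$-component of $\rho_\pi$ vanishes on the image of $S_v$ by Lemma \ref{lem:decom}, and I would show the $\chi$-component of $\II_F\RP_1(F)\half$ is generated by the image of its counterpart from $\II_A\RP_1(A)\half$; here Lemma \ref{lem:slr} is used to lift elements of $\widetilde{\RP}(k)$ back to $\widetilde{\RP}(A)$ modulo the submodule $\LL_A$ (which itself is built from elements already supported in the $A$-image). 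For $\chi(\pi)=-1$, the claim becomes that the $\chi$-component of $\delta_\pi$ is injective; in this sector I would produce a normal form for elements of the $\chi$-component of $\II_F\RP_1(F)\half$ as $\RR_k$-combinations of $g$-symbols of the form $g(a\pi)$ (using Proposition \ref{g(a)} and Corollary \ref{cor:key}), on which $\delta_\pi$ acts essentially as the identity into $\widetilde{\RP}_1(k)\half$.

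The hardest step is this last one: producing the normal form in the $\chi(\pi)=-1$ sector and checking injectivity there. It relies on careful bookkeeping with the $\psi_i$-cocycle relations, Lemma \ref{lem:key}, Corollary \ref{cor:key}, and the fact (Theorem \ref{hutchinson7}) that $\lan -1\ran$ acts trivially on $\RP_1(B)\half$ for $B=A,F$, which controls how the $\RR_F$-action interacts with the $\widetilde{\RP}(k)$-coefficients on the target side of $S_v$.
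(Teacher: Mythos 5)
Your high-level instinct to use the character-theoretic local-global principle is indeed what the paper does, but both the bookkeeping and the structure of the argument differ in ways that matter, and your plan contains a concrete error that would block the $\chi(\pi)=+1$ branch.

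First, the sequence in the theorem is a sequence of $\RR_A$-modules, not $\RR_F$-modules: the first term $\II_A\RP_1(A)\half$ carries no natural $\GG_F$-action. So one must apply the local-global principle over $\GG_A$, not $\GG_F$; the role of $\chi(\pi)=\pm1$ only enters afterwards, when one uses Lemma \ref{lem:decom2} to split the quotient $(\II_F\RP_1(F)\half)_\chi$ into $\chi_+$ and $\chi_-$ pieces. Also, the principle (Theorem \ref{local-global}) is not a product decomposition of $\RR_F\half$ into copies of $\z\half$, nor does it split a module into a direct sum of $\chi$-isotypic summands; it works with the quotients $M_\chi = M/\RR^\chi M$ and is a faithfulness statement, valid because these functors are exact (Lemma \ref{lem:exact}). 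Your statement of it would in any case fail for infinite $\GG_F$.

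Second, and more seriously: your plan for the $\chi(\pi)=+1$ branch hinges on "$\rho_\pi$ vanishes on the image of $S_v$" in that sector. This is false for $\delta_\pi=\rho_\pi\circ S_v$, because $\rho_\pi$ (unlike $\rho_\pi'$) is only an $\RR_A$-homomorphism, not an $\RR_F$-homomorphism, so it is not the projection onto the $e_-^\pi$-summand. For example, $\rho_\pi\bigl(e_+^\pi(\Lan u\pi\Ran\otimes m)\bigr)=\tfrac{1}{2}\Lan\bar u\Ran m\neq 0$. Only the variant $\delta_\pi'=\rho_\pi'\circ S_v$ (which appears later in the paper and is genuinely $\RR_F$-linear) kills the $e_+^\pi$-part, and these two maps are not the same; the theorem is stated for $\delta_\pi$. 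So the proposed dichotomy ($\delta_\pi=0$ on $\chi_+$, $\delta_\pi$ injective on $\chi_-$) does not hold as stated. What the paper actually does is reduce the whole theorem, via the commutative diagram of Lemma \ref{lem:comm} and Corollary \ref{cor:commd} built from Lemma \ref{lem:slr}, to a \emph{single} surjectivity statement for $\alpha\colon\II_A\LL_A^+\half\to\II_F\LL_v^+\half$ between the kernels of the two specialization maps, and then applies local-global over $\GG_A$ to that one map. The resulting task — showing that $([u]_{\chi_+},0)$ and $(0,[u]_{\chi_-})$ are "obtainable" for all $u\in U_{1,A}$ — is where the real work lies (Lemmas \ref{lem:un}--\ref{lem:even1}, a careful case analysis on $\ell(u)=v(1-u)$ and $\chi(u)$ that exploits well-chosen $\widetilde{\RP}(F)$-relations and the key identity \ref{cor:key}); your "normal form in the $\chi(\pi)=-1$ sector" is a placeholder for exactly this content and would need to be fleshed out along similar lines.
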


\subsection{Characters and a local global principal}\label{lgp}

Here we review  a character-theoretic local-global principle for modules over 
elementary $2$-torsion abelian groups, developed in \cite[Section 3]{hutchinson2017}. This theory will be used
extensively in the proof of Theorem \ref{thm:4.2}. 

Let $G$ be a multiplicative  abelian group in which $g^2=1$ for all $g\in G$. Set $\RR:=\z[G]$.
The group $\widehat{G}:=\Hom(G, \{\pm1\})$ is called the group of characters of $G$.

For a character $\chi\in \widehat{G}$, let $\RR^{\chi}$ be the ideal of $\RR$ generated by the set
$\{g-\chi(g)|g\in G\}$. In fact $\RR^{\chi}$ is the kernel of the ring homomorphism $\rho_\chi:\RR\arr \z$ sending
$g\in G$ to $\chi(g)$ for any $g\in G$. Let $\RR_\chi:=\RR/\RR^{\chi}$, which is isomorphic to $\z$.

If $M$ is an $\RR$-module, we let  $M_{\chi}:=M/\RR^{\chi}M$. Furthermore, given $m\in M$ we will denote its image in $M_\chi$ by $m_\chi$.
 
In particular, if $\chi_0$ is the trivial character, then  $\RR^{\chi_0}$ is the augmentation ideal $\II_G$
and $M_{\chi_0}=M_G$.
Now we list some basic, but useful, properties of the ideals $\RR^\chi$ which we will use:
\par (i) For any  $\chi\in \widehat{G}$,  $\RR^{\chi}\half=(\RR^{\chi})^2\half$.
\par (ii) If $\chi_1,\chi_2\in \widehat{G}$ are distinct, then $\RR^{\chi_1}\half + \RR^{\chi_2}\half=\RR\half$.
\par (iii) If $\chi_1,\chi_2\in \widehat{G}$ are distinct and if $M$ is a $\RR$-module, then  
$\RR^{\chi_1}M\half \cap \RR^{\chi_2}M\half =\RR^{\chi_1}\RR^{\chi_2}M\half$.

\begin{lem}$($\cite[Corollaries 3.7, 3.8]{hutchinson2017}$)$\label{lem:exact}
The functors $M\mt \RR^{\chi}M$ and $M\mt M_{\chi}$ are exact on the category of $\RR\half$-modules.
In particular the functors $M\mt M_G$ and $M\mt \II_GM$ are exact on the category of $\RR\half$-modules.
\end{lem}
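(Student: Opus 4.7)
My plan is to exhibit $\RR^\chi\half$ as a filtered union of ideals each generated by a single idempotent in $\RR\half$, and then deduce exactness by reducing to projections onto direct summands of each module.

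The starting observation is that for each $g\in G$, the element $f_g:=(1-\chi(g)g)/2\in\RR\half$ is idempotent: since $g^2=1$ and $\chi(g)^2=1$, direct expansion gives $f_g^2=f_g$. Moreover $f_g=-\chi(g)(g-\chi(g))/2$ lies in $\RR^\chi\half$, and conversely every generator $g-\chi(g)$ of $\RR^\chi$ is a unit multiple of $f_g$ after inverting $2$, so the collection $\{f_g\}_{g\in G}$ generates $\RR^\chi\half$ as an ideal. For each finite subset $T\subseteq G$, form the complementary idempotent
\[
e_T:=\prod_{g\in T}\frac{1+\chi(g)g}{2}=\prod_{g\in T}(1-f_g).
\]
The elementary identity $(f_1,\ldots,f_n)=(1-\prod_i(1-f_i))$ for idempotents in a commutative ring then gives $(1-e_T)\RR\half=\sum_{g\in T}f_g\RR\half$. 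As $T$ runs over the finite subsets of $G$, these ideals form a directed system whose union is exactly $\RR^\chi\half$.

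Since $e_T$ and $1-e_T$ are orthogonal idempotents summing to $1$, every $\RR\half$-module $M$ splits as $M=e_TM\oplus(1-e_T)M$; hence the functors $M\mapsto(1-e_T)M$ and $M\mapsto M/(1-e_T)M=e_TM$ are exact, being projections onto direct summands. Passing to the filtered colimit over $T$ yields $\RR^\chi M=\varinjlim_T(1-e_T)M$ and $M_\chi=M/\RR^\chi M=\varinjlim_T M/(1-e_T)M$, and each of these is a colimit of exact functors; since filtered colimits are exact in the category of $\RR\half$-modules, both functors in question are exact. The main technical point to watch is that $\RR^\chi\half$ is typically not principal when $G$ is infinite, so one cannot simply produce a single idempotent presenting it and the colimit reduction is essential. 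Specializing to the trivial character $\chi_0$, for which $\RR^{\chi_0}=\II_G$ and $M_{\chi_0}=M_G$, yields the final clause.
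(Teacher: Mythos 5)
Your proof is correct. The paper itself does not prove this lemma but simply cites it from \cite[Corollaries 3.7, 3.8]{hutchinson2017}, so there is no internal argument to compare against; your write-up supplies a complete and valid proof. All the key steps check out: $f_g = (1-\chi(g)g)/2 = -\chi(g)\,\tfrac{g-\chi(g)}{2}$ is idempotent because $g^2=1=\chi(g)^2$, so $\RR^\chi\half$ is generated by the idempotents $f_g$; the identity $\sum_{g\in T} f_g\RR\half = (1-e_T)\RR\half$ for finite $T$ reduces to the two-element case $(f_1,f_2) = (f_1+f_2-f_1f_2)$, which one verifies by noting $f_i\,(f_1+f_2-f_1f_2)=f_i$; the inclusions $(1-e_T)M\subseteq(1-e_{T'})M$ for $T\subseteq T'$ are genuine natural transformations of functors since any $\RR\half$-linear map respects multiplication by $1-e_T$; and filtered colimits are exact on module categories. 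This is precisely the mechanism needed to handle the case of an infinite elementary abelian $2$-group $G$, where, as you observe, $\RR^\chi\half$ need not be generated by a single idempotent — so exhibiting it as a filtered union of principal idempotent ideals, each inducing a direct-sum decomposition of every module, is exactly the right reduction.
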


Here is the  character-theoretic local-global principle:

\begin{thm}$($\cite[Proposition 3.10]{hutchinson2017}$)$\label{local-global}
Let $f:M\arr N$ be a homomorphism of $\RR\half$-modules. For any $\chi\in \widehat{G}$, 
let $f_\chi$ be the induced homomorphism $M_\chi \arr N_\chi$.
\par {\rm (i)} $f$ is injective if and only if $f_\chi$ is injective for all $\chi\in \widehat{G}$.
\par {\rm (ii)} $f$ is surjective if and only if $f_\chi$ is surjective for all $\chi\in \widehat{G}$.
\par {\rm (iii)} $f$ is an isomorphism if and only if $f_\chi$ is an isomorphism for all $\chi\in \widehat{G}$.
\end{thm}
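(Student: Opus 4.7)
The plan is to reduce all three parts to a single key lemma: \emph{for any $\RR\half$-module $M$, if $M_\chi=0$ for every $\chi\in \widehat{G}$ then $M=0$}. The ``only if'' directions are immediate from Lemma \ref{lem:exact}: exactness of $M\mapsto M_\chi$ transfers injectivity, surjectivity, and bijectivity of $f$ to every $f_\chi$. For the converses, assume each $f_\chi$ is injective; applying Lemma \ref{lem:exact} to $0\to\ker(f)\to M\xrightarrow{f} N$ yields $\ker(f)_\chi=\ker(f_\chi)=0$ for all $\chi$, so the key lemma forces $\ker(f)=0$. The surjective case is dual, using $\coker(f)_\chi=\coker(f_\chi)=0$, and (iii) is the conjunction of (i) and (ii).

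The key lemma is most transparent for finite $G$. Since $|G|$ is then a power of $2$ and hence invertible in $\z\half$, the orthogonal idempotents $e_\chi:=|G|^{-1}\sum_{g\in G}\chi(g)g\in \RR\half$ satisfy $\sum_\chi e_\chi=1$ and $e_\chi e_{\chi'}=\delta_{\chi,\chi'}e_\chi$, decomposing $M=\bigoplus_\chi e_\chi M$. Using $\chi(g)-\chi'(g)=\pm 2\in(\z\half)^\times$ for a suitable $g$ when $\chi'\neq\chi$, one checks $\RR^\chi e_{\chi'}M = e_{\chi'}M$ for $\chi'\neq\chi$ and $\RR^\chi e_\chi M=0$; hence $M_\chi \cong e_\chi M$, and $M_\chi=0$ for all $\chi$ forces every summand, and so $M$ itself, to vanish.

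The main obstacle is the infinite case, relevant because $G=\ff/(\ff)^2$ is typically infinite and the full collection $\{e_\chi\}$ no longer lies in $\RR\half$. My plan is to retain the \emph{local} idempotents $e_g^\pm:=(1\pm g)/2\in \RR\half$ (for single $g\in G$), which still give $M=e_g^+M\oplus e_g^-M$ and satisfy $e_g^{-\chi(g)}\in \RR^\chi\half$ for every $\chi$, and to combine them with the iterated form of (iii), namely $\bigcap_{i=1}^n\RR^{\chi_i}M\half=\bigl(\prod_{i=1}^n\RR^{\chi_i}\bigr)M\half$ for any finite collection of distinct characters. (The necessary coprimality of $\prod_{i<n}\RR^{\chi_i}$ with $\RR^{\chi_n}$ follows by induction from (ii) via the standard coprimality calculus: if $I_1+J=\RR\half$ and $I_2+J=\RR\half$ then $I_1I_2+J=(I_1+J)(I_2+J)\subseteq I_1I_2+J$.) This should allow cutting any element $m\in\bigcap_\chi \RR^\chi M$ down to a finite piece of $G$ at a time, reducing to the finite case above and forcing $m=0$.
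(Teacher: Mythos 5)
Your reduction to the key lemma (``$M_\chi=0$ for all $\chi$ forces $M=0$''), the finite-$G$ argument via the orthogonal idempotents $e_\chi$, and the ``only if'' directions via Lemma~\ref{lem:exact} are all correct. The gap is in the infinite case, which is the one that matters since $G=\GG_F$ is typically infinite. The tools you list for it---the one-generator decompositions $M=e_g^+M\oplus e_g^-M$ and the iterated coprimality $\bigcap_{i\le n}\RR^{\chi_i}M\half=\bigl(\prod_{i\le n}\RR^{\chi_i}\bigr)M\half$---are purely finitary and cannot by themselves force an element $m$ of the \emph{infinite} intersection $\bigcap_{\chi}\RR^{\chi}M\half$ to vanish. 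Indeed, for any finite family $\chi_1,\dots,\chi_n$ the ideal $\prod_i\RR^{\chi_i}\half=\bigcap_i\RR^{\chi_i}\half$ is still nonzero: it contains $g-1$ for any $g\neq 1$ in the finite-index, hence nontrivial, subgroup $\bigcap_i\ker\chi_i\leq G$. So ``cutting $G$ down one finite piece at a time'' never reaches the finite case, and the lemma is not a formal consequence of properties (i)--(iii).

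The missing input is either the compactness of the profinite group $\widehat G$, or equivalently the identification of the maximal ideals of $\RR\half$ (a filtered union of the finite products $\z\half[H]\cong\prod_{\psi\in\widehat H}\z\half$ over finite subgroups $H\leq G$). Concretely, from $m\in\RR^\chi M\half$ one extracts for each $\chi$ a \emph{finite} product of idempotents $E_\chi=\prod_{g\in T_\chi}\tfrac{1+\chi(g)g}{2}\in\RR\half$ with $E_\chi m=0$ and $\rho_\chi(E_\chi)=1$. One then either notes that the clopen sets $\{\psi:\rho_\psi(E_\chi)=1\}$ cover the compact space $\widehat G$, so that a finite subcover gives $\prod_{j=1}^n(1-E_{\chi_j})=0$ (characters separate elements of $\RR\half$) and hence $1\in(E_{\chi_1},\dots,E_{\chi_n})\subseteq\mathrm{Ann}(m)$; or one notes that $\mathrm{Ann}(m)$ contains, for every $\chi$, an element whose $\rho_\chi$-image is a unit, hence lies in no maximal ideal $\rho_\chi^{-1}(p\z\half)$, so $\mathrm{Ann}(m)=\RR\half$. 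Your outline invokes neither compactness nor the maximal-ideal structure, and without one of them the infinite case does not follow.
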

~\\
Here we fix some notations for later use. For $g\in G$, consider the orthogonal idempotents 
\[
e_+^g:=\frac{g+1}{2},  \ \ \ \ \ e_-^g:=\frac{g-1}{2},
\]
in $\RR\half$.
If $M$ is a $\RR\half$-module, then we have the decomposition 
\[
M=e_+^g M\oplus e_-^gM.
\]

For a ring $A$,  $\GG_A=\aa/(\aa)^2$ is a $2$-torsion group. If $M$ is a $\RR_A\half$-module we set
\[
M^+:=e_+^{-1}M=M/ e_-^{-1}M, \ \ \ \  M^-:=e_-^{-1}M=M/e_+^{-1} M.
\]

\subsection{The reduction and the specialization maps}

Let $\LL_{v}$ denote the $\RR_{F}$-submodule of $\widetilde{\RP}({F})$ generated by the elements $[{u}]$, $u\in U_{1,A}$. 

\begin{lem}\label{lem:comm}
There is a commutative diagram of $\RR_{A}$-modules with exact rows and columns
\[
\begin{tikzcd}
&&&0\ar[d]&\\
0\ar[r]&\LL_{A}\ar[r]\ar[d]&\widetilde{\RP}({A})\ar[r]\ar[d]&\widetilde{\RP}({k})\ar[r]\ar[d]&0\\
0\ar[r]&\LL_{v}\ar[r]&\widetilde{\RP}({F})\ar[r, "S_v"]&\Ind_k^{F}\widetilde{\RP}({k})\ar[r]\ar[d, "\delta_\pi"]&0\\
&&&\widetilde{\RP}({k})\ar[d]&\\
&&&0&\\
\end{tikzcd}
\]
Furthermore, the lower row in this diagram is a sequence of $\RR_{F}$-modules. 
\end{lem}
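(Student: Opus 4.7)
The plan is to assemble the top row from Lemma \ref{lem:slr}, the right column from Lemma \ref{lem:decom}, and devote the main argument to exactness of the middle row. Note first that for any $a\in\WW_A$ we automatically have $\bar a\in\WW_k$ (the condition $1-a\in\aa$ forces $1-\bar a\ne 0$), so commutativity of both upper squares is functoriality. The vertical map $\LL_A\to\LL_v$ is well-defined because each generator of $\LL_A$ (in the sense of Lemma \ref{lem:slr}) lands in $\LL_v$: the relation $\Lan u\Ran C_A=\lan u-1\ran\Lan -u\Ran[u]$ (Corollary \ref{cor:key} applied in $\widetilde{\RP}(F)$ with $a=u\in U_{1,A}\subset\WW_F$) lies in $\LL_v$, and $[au]-[a]\in\LL_v$ by the computation below. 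The right column is split exact by Lemma \ref{lem:decom}: under $(\rho_0,\rho_\pi)$ the inclusion $m\mapsto 1\otimes m$ is $m\mapsto(m,0)$, and $\delta_\pi=\rho_\pi$.

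Surjectivity of $S_v$ is immediate: for $a\in\aa$ with $\bar a\in\WW_k$, $S_v([a])=1\otimes[\bar a]$, and these elements generate $\Ind_k^F\widetilde{\RP}(k)$ over $\RR_F$. The inclusion $\LL_v\subseteq\ker(S_v)$ uses the convention $[\bar 1]=0$ in $\widetilde{\RP}(k)$: for $u\in U_{1,A}$, $S_v([u])=1\otimes[\bar u]=0$.

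The heart of the argument is $\ker(S_v)\subseteq\LL_v$, for which I will construct an explicit left inverse, i.e., an $\RR_F$-homomorphism $T:\Ind_k^F\widetilde{\RP}(k)\to\widetilde{\RP}(F)/\LL_v$ satisfying $T\circ\overline{S_v}=\id$. By the universal property of $\Ind_k^F=\RR_F\otimes_{\RR_A}(-)$, it suffices to define an $\RR_A$-map $\widetilde{\RP}(k)\to\widetilde{\RP}(F)/\LL_v$ by $[\bar a]\mapsto [\tilde a]+\LL_v$ for any lift $\tilde a\in\WW_A$ of $\bar a\in\WW_k$. The single nontrivial well-definedness point is independence of the lift, i.e., $[au]-[a]\in\LL_v$ for $a\in\WW_A$, $u\in U_{1,A}$. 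This follows from the five-term relation $Y_{a,au}=0$ in $\RP(F)$: a direct manipulation shows $(1-a^{-1})/(1-(au)^{-1})=u(a-1)/(au-1)$ and $(1-a)/(1-au)=(a-1)/(au-1)$, and writing $au-1=(a-1)\bigl(1+a(u-1)/(a-1)\bigr)$ with $a(u-1)/(a-1)\in\mmm_A$ shows both fractions lie in $U_{1,A}$, so $Y_{a,au}=0$ expresses $[au]-[a]$ as an $\RR_F$-combination of elements of $\LL_v$. The defining relations $Y_{\bar a,\bar b}$ and $\psi_1(\bar a)$ of $\widetilde{\RP}(k)$ then lift, modulo $\LL_v$, to the identical relations in $\widetilde{\RP}(F)$ (where $\psi_1(a)=0$ by the very definition of $\widetilde{\RP}(F)$), so the map is $\RR_A$-linear and well-defined.

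It remains to check $T\circ S_v(x)\equiv x\pmod{\LL_v}$ on each generator $[a]$, $a\in\WW_F$. The case $v(a)=0$ with $\bar a\in\WW_k$ is tautological, and the case $a\in U_{1,A}$ is the identity $0=0$ since $[a]\in\LL_v$. For $v(a)>0$ we have $1-a\in U_{1,A}$, so $[1-a]\in\LL_v$ and the formula $C_F=[a]+\lan-1\ran[1-a]+\Lan 1-a\Ran\psi_1(a)$ (combined with $\psi_1(a)=0$ in $\widetilde{\RP}(F)$) gives $[a]\equiv C_F\pmod{\LL_v}$, matching $T(1\otimes C_k)$. For $v(a)<0$ I reduce to the previous case using $[a]=-\lan-1\ran[a^{-1}]$ in $\widetilde{\RP}(F)$, combined with $\Lan-1\Ran C_F=\lan-2\ran\Lan 1\Ran[-1]=0$ (Corollary \ref{cor:key}) to conclude $\lan-1\ran C_F=C_F$, hence $[a]\equiv-C_F\pmod{\LL_v}$. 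The hardest step I anticipate is the lifting $[au]-[a]\in\LL_v$, because it is the only place where a genuine computation in $\RP(F)$ is forced; once that identity is in hand, the remaining case analysis and diagram chase complete the proof.
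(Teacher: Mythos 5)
Your proof takes a genuinely different route from the paper: the paper disposes of the middle row by citing \cite[Proposition 3.2]{hutchinson2019}, whereas you prove exactness directly by constructing an explicit left inverse $T$ to $\overline{S_v}$. This is worthwhile: it makes the lemma self-contained and makes visible exactly which computations with the five-term relation and with $C_F$ force $\ker(S_v)=\LL_v$. The algebraic details you supply (the identity $au-1=(a-1)(1+a(u-1)/(a-1))$ placing the auxiliary fractions in $U_{1,A}$, the $C_F$-calculations for $v(a)>0$ and $v(a)<0$, the use of $\Lan-1\Ran C_F=0$ from Corollary \ref{cor:key}) all check out.

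There is, however, a real gap in the well-definedness of $T$. You assert that ``the single nontrivial well-definedness point is independence of the lift, i.e.\ $[au]-[a]\in\LL_v$.'' That is not the only thing an $\RR_A$-linear map out of $\widetilde{\RP}(k)$ requires. Since $\widetilde{\RP}(k)$ is an $\RR_A$-module via the surjection $\RR_A\twoheadrightarrow\RR_k$, any $\RR_A$-linear map $g:\widetilde{\RP}(k)\to\widetilde{\RP}(F)/\LL_v$ must have image annihilated by $\ker(\RR_A\to\RR_k)$, which is generated by the elements $\Lan u\Ran$, $u\in U_{1,A}$. So you must also show $\Lan u\Ran [\tilde a]\in\LL_v$ for all $u\in U_{1,A}$, $\tilde a\in\WW_A$. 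This is a statement about the \emph{scalar} action $\lan u\ran[\tilde a]-[\tilde a]$, not about changing the bracketed argument, and it is not the same as $[\tilde a u]-[\tilde a]\in\LL_v$; both are needed, for different reasons (the first for the $\RR_A$-action to factor through $\RR_k$ on the image, the second for the set-theoretic value on a generator to be lift-independent). You do have all the ingredients to close this gap: you have already placed $[au]-[a]$ and $\Lan u\Ran C_F$ in $\LL_v$, and then the same chain as in the proof of Lemma \ref{lem:slr} (rewrite $[a]=\lan-a\ran[a]-\lan a-1\ran\Lan a\Ran C_F$ via Corollary \ref{cor:key}, do the same for $[au]$, subtract, and use $[au]\equiv[a]$ and $\Lan u\Ran C_F\equiv 0$ modulo $\LL_v$ to get $\lan u\ran[a]\equiv[a]$) gives $\Lan u\Ran[a]\in\LL_v$. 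But as written, this step is missing and the claim that lift-independence is the ``single'' nontrivial point is incorrect.
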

\begin{proof}
The top row is Lemma \ref{lem:slr}.  The lower row is \cite[Proposition 3.2]{hutchinson2019}. 
The column exact sequence is Lemma \ref{lem:decom}. The commutativity of the 
diagram and the exactness of the column are immediate from the definitions. 
\end{proof}

If we tensor this diagram with $\z\half$ and then multiply the rows by $e_+^{-1}\in \RR_{A}\half$,
then by Theorem \ref{hutchinson7} and  Proposition \ref{hutchinson6} we obtain the commutative 
diagram with exact rows and column
\[
\begin{tikzcd}
&&&0\ar[d]&\\
0\ar[r]&\LL_{A}^+\half\ar[r]\ar[d]&\RP_1(A)\half\ar[r]\ar[d]&\RP_1(k)\half\ar[r]\ar[d]&0\\
0\ar[r]&\LL_v^+\half\ar[r]&\RP_1(F)\half \ar[r, "S_v"]&\Ind_k^F\RP_1(k)\half\ar[r]\ar[d, "\delta_\pi"]&0\\
&&&\RP_1(k)\half\ar[d]&\\
&&&0&
\end{tikzcd}
\]
Now if we apply the exact functor $\II_{A}\cdot -$ to the top row and the exact functor $\II_{F}\cdot -$ to the 
lower row (see Lemma \ref{lem:exact}) we arrive at the following result.

\begin{cor}\label{cor:commd}
There is a commutative diagram of $\RR_{A}$-modules with exact rows and column
\[
\begin{tikzcd}
&&&0\ar[d]&\\
0\ar[r]&\II_{A}\LL_{A}^+\half\ar[r]\ar[d]&\II_{A}\RP_1(A)\half\ar[r]\ar[d]&\II_{k}\RP_1(k)\half\ar[r]\ar[d]&0\\
0\ar[r]&\II_F\LL_v^+\half\ar[r]&\II_F\RP_1(F)\half \ar[r, "S_v"]&\II_{F}\left(\Ind_k^{F}\RP_1(k)\half\right)\ar[r]\ar[d, "\delta_\pi"]&0\\
&&&\RP_1(k)\half\ar[d]&\\
&&&0&\\
\end{tikzcd}
\]
Furthermore, the lower row is an exact sequence of $\RR_{F}$-modules. 
\end{cor}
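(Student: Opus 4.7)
The plan is to obtain the stated diagram from the commutative diagram immediately preceding the Corollary by applying augmentation-ideal functors to the rows. Since $\GG_A$ and $\GG_F$ are elementary abelian $2$-groups, Lemma \ref{lem:exact} guarantees that the functors $\II_A\cdot -$ and $\II_F\cdot -$ are exact on the categories of $\RR_A\half$-modules and $\RR_F\half$-modules respectively. Applying $\II_A\cdot -$ to the top row and $\II_F\cdot -$ to the middle row of the preceding diagram therefore yields short exact sequences. The $\RR_A$-module structure is preserved throughout; the middle row is a sequence of $\RR_F$-modules because $\II_F\cdot -$ sends $\RR_F$-modules to $\RR_F$-submodules and the horizontal maps there are $\RR_F$-linear by construction. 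Commutativity of the resulting square inherits directly from that of the earlier diagram, since the new horizontal arrows are obtained by restriction.

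The only real step is to verify exactness of the right-hand column
\[
0\to \II_k\RP_1(k)\half\to \II_F\bigl(\Ind_k^F\RP_1(k)\half\bigr)\xrightarrow{\delta_\pi}\RP_1(k)\half\to 0.
\]
For this I would invoke Lemma \ref{lem:decom} with $M=\RP_1(k)\half$, which yields
\[
\II_F\Ind_k^F M\ \overset{(\rho_0,\rho_\pi)}{\cong}\ \II_k M\oplus M .
\]
Under this identification $\delta_\pi=\rho_\pi$ is the second projection, so it is surjective onto $\RP_1(k)\half$ with kernel $\II_k\RP_1(k)\half\oplus 0$. It remains to observe that the natural map $\II_k\RP_1(k)\half\to \II_F\Ind_k^F\RP_1(k)\half$ induced by $m\mapsto 1\otimes m$ lands precisely in this first summand: for $a\in A^\times$ and $m\in\RP_1(k)\half$ the generator $(\lan \bar a\ran -1)m$ is sent to $(\lan a\ran-1)(1\otimes m)$, whose $\rho_0$-component is $(\lan \bar a\ran-1)m$ and whose $\rho_\pi$-component is $0$. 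This is an isomorphism onto the first summand, giving the exactness of the column.

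The principal ``difficulty'' here is bookkeeping rather than mathematical depth: one must be careful to keep track of the different module structures ($\RR_A$-, $\RR_F$-, and $\RR_k$-) and to check that restricting along each horizontal and vertical arrow still makes sense. Once the exact functors $\II_A\cdot -$ and $\II_F\cdot -$ have been applied and Lemma \ref{lem:decom} has been invoked for the column, no further computation is required, and all required properties (exactness of rows, exactness of column, commutativity, $\RR_F$-linearity of the lower row) follow formally from the preceding material.
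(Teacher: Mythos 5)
Your proof is correct and follows essentially the same route as the paper: apply the exact functors $\II_A\cdot-$ and $\II_F\cdot-$ (from Lemma \ref{lem:exact}) to the rows of the preceding diagram, and obtain exactness of the right-hand column from the decomposition $\II_F\Ind_k^F M\cong\II_k M\oplus M$ of Lemma \ref{lem:decom}. The paper elides the column verification and the check that the vertical arrow $\II_k\RP_1(k)\half\to\II_F\Ind_k^F\RP_1(k)\half$ lands in the $\rho_0$-summand; you spell these out, which is a useful clarification but not a departure from the argument.
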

~\\
Given Corollary \ref{cor:commd}, Theorem \ref{thm:4.2} is equivalent to the following: 
\begin{itemize}
\item
The natural $\RR_{A}$-homomorphism  $\alpha:\II_{A}\LL_A^+\half\to\II_{F}\LL_v^+\half$ is surjective. 
 \end{itemize}
 By the character-theoretic local-global principle (Theorem \ref{local-global}), this in turn is equivalent to the statement:
 \begin{itemize}
 \item
The homomorphisms $\alpha_\chi:\Big(\II_{A}\LL_A^+\half\Big)_\chi\to\Big(\II_{F}\LL_v^+\half\Big)_\chi$ are 
surjective for all $\chi\in \widehat{\GG}_A$. 
\end{itemize}

We need the following lemma.

\begin{lem}\label{lem:M+}
Let $M$ be a $\RR_A\half$-module and $N$ a $\RR_F\half$-module. Then 
\par {\rm (i)} $(\II_{A}M)_{\chi_0}=0$, where $\chi_0$ is the trivial character on $\GG_A$.
\par {\rm (ii)} $(M^{+})_{\chi}=0=(N^{+})_{\chi}$ if $\chi \in\widehat{\GG}_A$ and $\chi(-1)=-1$.
\par {\rm (iii)} $(\II_AM^+)_{\chi}\simeq M_{\chi}$ and $(\II_FN^+)_{\chi}\simeq N_{\chi}$ if $\chi \in\widehat{\GG}_A$, 
$\chi\not=\chi_0$ and $\chi(-1)=1$. 
\end{lem}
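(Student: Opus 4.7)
The plan is to establish all three parts directly from the idempotent structure of $\RR_A\half$ together with the listed properties of the ideals $\RR^\chi$ and the exactness of $(-)_\chi$ recorded in Lemma \ref{lem:exact}.

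For (i), since $\RR^{\chi_0}=\II_A$ by definition, one has $(\II_AM)_{\chi_0}=\II_AM/\II_A^2M$. Property (i) of the ideals says $\II_A\half=\II_A^2\half$ as ideals of $\RR_A\half$, and applying this to the $\RR_A\half$-module $M$ gives $\II_AM=\II_A^2M$, so the coinvariants vanish.

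For (ii), I would compute $\rho_\chi(e_+^{-1})=(\chi(-1)+1)/2$, which vanishes precisely when $\chi(-1)=-1$; in that case $e_+^{-1}\in\RR^\chi$, and idempotence gives $m=e_+^{-1}m\in\RR^\chi M^+$ for every $m\in M^+$, so $(M^+)_\chi=0$. The same argument applies to $N^+$ because $e_+^{-1}$ lies in $\RR_A\half\subseteq\RR_F\half$ and $N$ is treated as an $\RR_A$-module by restriction of scalars.

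For (iii), with $\chi(-1)=1$ we have instead $e_-^{-1}\in\RR^\chi$, so the decomposition $M=M^+\oplus M^-$ satisfies $M^-\subseteq\RR^\chi M$. This gives $M_\chi\simeq (M^+)_\chi$ via the inclusion $M^+\hookrightarrow M$ (with inverse induced by $e_+^{-1}$). Next, apply the exact functor $(-)_\chi$ (Lemma \ref{lem:exact}) to the short exact sequence
\[
0\arr\II_AM^+\arr M^+\arr (M^+)_{\chi_0}\arr 0.
\]
The rightmost term is a trivial $\GG_A$-module $T$ which is also a $\z\half$-module, and for any such $T$ and any $\chi\neq\chi_0$ one has $T_\chi=T/2T=0$: choose $g\in\GG_A$ with $\chi(g)=-1$, so $g-\chi(g)=g+1$ acts as $2$ on $T$, giving $\RR^\chi T=2T=T$ (the final equality because $T$ is a $\z\half$-module). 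Hence $(\II_AM^+)_\chi\simeq (M^+)_\chi\simeq M_\chi$. The argument for $N$ is the same after replacing $\II_A$ by $\II_F$ and noting that $(N^+)_{\GG_F}$ is still a trivial $\GG_A$-module via the homomorphism $\GG_A\to\GG_F$.

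The only thing to track carefully is the shift of scalars for the $\RR_F$-module $N$: the coinvariants $(-)_\chi$ always refer to the $\RR_A$-ideal $\RR^\chi$, while the exact sequence $0\to\II_FN^+\to N^+\to(N^+)_{\GG_F}\to 0$ involves the $\RR_F$-structure. Since the idempotents $e_\pm^{-1}$ lie in $\RR_A\half$ and every map in sight is $\RR_F$-linear, and hence $\RR_A$-linear, this is routine bookkeeping rather than a genuine obstacle.
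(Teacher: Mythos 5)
Your proof is correct; parts (i) and (ii) track the paper's argument closely. Part (iii) takes a mildly different route that is worth comparing. The paper proceeds by writing $\II_AM = e_+^{-1}\II_AM\oplus e_-^{-1}\II_AM$, killing the $e_-^{-1}$ summand to get $(\II_AM^+)_\chi=(\II_AM)_\chi$, and then explicitly analyzing the map $\beta:(\II_AM)_\chi\to M_\chi$ induced by inclusion: surjectivity is shown by the computation $\beta(\Lan a\Ran m)=-2m$ for $a$ with $\chi(a)=-1$, and injectivity by invoking the intersection identity $\II_A\RR_A^\chi M=\II_AM\cap\RR_A^\chi M$ (an instance of the listed property of the ideals). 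For the $N$-case the paper then needs the auxiliary fact $\RR_A^\chi\II_FN=\RR_A^\chi N\cap\II_FN$, justified via $\RR_A^\chi\half+\II_F\half=\RR_F\half$. You instead first reduce $M_\chi\cong(M^+)_\chi$ by observing $M^-\subseteq\RR^\chi M$, then apply the exact functor $(-)_\chi$ (Lemma~\ref{lem:exact}) to the short exact sequence $0\to\II_AM^+\to M^+\to(M^+)_{\chi_0}\to 0$ and note that the coinvariants at a nontrivial $\chi$ of a trivial $\GG_A$-module over $\z\half$ vanish (because some $g-\chi(g)$ acts as multiplication by $2$). This packages the surjectivity computation into the vanishing of the cokernel term, replaces the intersection lemma by the exactness of $(-)_\chi$, and, conveniently, the same SES argument handles the $\RR_F$-module $N$ verbatim without needing the extra fact about $\RR_A^\chi\II_FN$ that the paper invokes. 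The two routes rest on essentially the same toolkit; yours is slightly more streamlined, especially in the $N$-case.
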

\begin{proof}
(i) The triviality of $(\II_{A}M)_{\chi_0}$ follows from the fact that $\II_A\half=\II_A^2\half$.
\par (ii) First note that $(M^{+})_{\chi}=(e_+^{-1}M)_{\chi}=e_+^{-1}M/e_+^{-1}\RR_A^\chi M$.
If $\chi(-1)=-1$, then $e_+^{-1}\in \RR_A^\chi\half$. Since  $e_+^{-1}$ is an idempotent, we have 
$e_+^{-1}M=e_+^{-1}e_+^{-1}M\se e_+^{-1}\RR_A^\chi M$. Thus $(M^{+})_{\chi}=0$. Similarly $(N^{+})_{\chi}=0$.
\par (iii) We have the decomposition of $\RR_A$-modules
$\II_AM=e_+^{-1}\II_AM\oplus  e_-^{-1}\II_AM$. Let $\chi\not=\chi_0$ and $\chi(-1)=1$. Then clearly $e_-^{-1}\in \RR_A^\chi\half$.
Thus with a similar argument as in the proof of (ii), we have $(e_-^{-1}\II_AM)_\chi=0$. Therefore
$(\II_AM^+)_\chi=(\II_AM)_\chi$. Now it is easy to show that the natural map
$\beta: (\II_AM)_\chi\arr M_\chi$, induced by inclusion,  is an isomorphism. In fact since $\chi\neq \chi_0$,
there is $a\in \aa$ such that $\chi(a)=-1$. Thus $\Lan a\Ran m=(\lan a\ran-\chi(a))m-2m$ 
and hence $\beta(\Lan a\Ran m)=-2m$. This shows that $\beta$ is surjective. The injectivity of $\beta$ 
follows from the equality $\II_A\RR_A^\chi M=\II_A M\cap \RR_A^\chi M$.

The proof of the isomorphism $(\II_FN^+)_{\chi}\simeq N_{\chi}$ is similar. Just note that here we should
use the fact $\RR_A^\chi\II_F M=\RR_A^\chi M\cap \II_F M$ which follows from the fact that $\RR_A^\chi\half+\II_F\half=\RR_F\half$.
\end{proof}

\subsection{A decomposition lemma}

For any $\chi\in \widehat{\GG}_A$, we define two characters 
$\chi_+,\chi_-\in \widehat{\GG}_F$ (depending on our chosen uniformizer $\pi$) 
as follows:
For $u\in A^\times$, $r\in \z$,
\begin{eqnarray*}
&\chi_+(\pi^ru):= \chi(u)\\
&\chi_-(\pi^ru):=(-1)^r\chi(u).\\
\end{eqnarray*}
For example if $\chi_0 \in \widehat{\GG}_A$ is the trivial character, then ${\chi_0}_+$ is the trivial character
on $\GG_F$ and ${\chi_0}_-=\chi_v$, where $\chi_v\in \widehat{\GG}_F$ is the character associated to the valuation $v$
on $F$ given by $\chi_v(a)=(-1)^{v(a)}$.

\begin{lem} \label{lem:decom2}
Let $M$ be a $\RR_F\half$-module. Let $\chi\in  \widehat{\GG}_A$. Then the homomorphism
\[
{M}_{\chi}\to {M}_{\chi_+}\oplus{M}_{\chi_-}, \quad {m}_{\chi}\mapsto ({m}_{\chi_+},{m}_{\chi_-})
\]
is an isomorphism of $\RR_{A}$-modules.
\end{lem}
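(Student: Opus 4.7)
The plan is to exploit the direct product decomposition $\GG_F \cong \GG_A \times \langle\bar\pi\rangle$, where $\bar\pi \in \GG_F$ is the class of our chosen uniformizer. Since $v(\pi) = 1$, $\pi$ cannot be a square in $F^\times$, so $\bar\pi$ has order exactly $2$; writing $F^\times = A^\times \times \langle\pi\rangle$ and reducing modulo squares immediately gives the product structure. At the level of group rings this says $\RR_F = \RR_A[\bar\pi]/(\bar\pi^2-1)$, and by construction $\chi_+$ and $\chi_-$ are precisely the two extensions of $\chi$ to $\GG_F$, distinguished by $\chi_\pm(\bar\pi) = \pm 1$.

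First I would verify the map is well defined: for $a \in \GG_A$, $\chi_+(a) = \chi_-(a) = \chi(a)$, so $\RR_A^\chi \subseteq \RR_F^{\chi_+} \cap \RR_F^{\chi_-}$, and the product map $M \to M_{\chi_+} \oplus M_{\chi_-}$ descends to $M_\chi$. Next I would identify the two targets as quotients of $M_\chi$. Because $\RR_F^{\chi_+}$ is generated inside $\RR_F$ by $\RR_A^\chi$ together with $\bar\pi - 1$ (and similarly for $\chi_-$ with $\bar\pi + 1$), reduction in stages gives natural isomorphisms
\[
M_{\chi_+} \cong M_\chi/(\bar\pi - 1)M_\chi, \qquad M_{\chi_-} \cong M_\chi/(\bar\pi + 1)M_\chi.
\]
Note that $M_\chi$ retains a $\RR_F\half$-module structure, since $\RR_A^\chi$ is an ideal of $\RR_A$ and multiplication by $\bar\pi$ commutes with multiplication by $\GG_A$; on $M_\chi$ the element $\bar\pi$ satisfies $\bar\pi^2 = 1$.

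Now I would invoke the orthogonal idempotents $e_\pm^{\bar\pi} = (1 \pm \bar\pi)/2 \in \RR_F\half$ to decompose
\[
M_\chi = e_+^{\bar\pi}M_\chi \oplus e_-^{\bar\pi}M_\chi.
\]
On $e_+^{\bar\pi}M_\chi$, $\bar\pi$ acts as $+1$, so $\bar\pi - 1$ acts as $0$ and $\bar\pi + 1$ acts as $2$, which is invertible; dually on $e_-^{\bar\pi}M_\chi$. Consequently the two quotients above are identified with $e_+^{\bar\pi}M_\chi$ and $e_-^{\bar\pi}M_\chi$, respectively, and the map of the lemma becomes the canonical splitting $m \mapsto (e_+^{\bar\pi}m,\, e_-^{\bar\pi}m)$, which is tautologically an isomorphism. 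The $\RR_A$-linearity is automatic, since $\GG_A$ acts through $\chi$ on every summand in sight.

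There is no genuine obstacle here: the entire argument is formal once one has the factorization $\GG_F = \GG_A \times \langle\bar\pi\rangle$ available together with $\tfrac{1}{2}$. The only point that deserves a line of verification is the claim that $\RR_F^{\chi_+}$ is generated by $\RR_A^\chi$ together with $\bar\pi - 1$, but this is immediate from the product decomposition of $\GG_F$ and the recipe $\chi_+(u\pi^r) = \chi(u)$ defining $\chi_+$.
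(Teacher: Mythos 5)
Your proof is correct and, at its core, the same as the paper's. Both arguments rest on the orthogonal idempotents $e_\pm^\pi$ and the key identity showing $\RR_F^{\chi_+}$ is generated by $\RR_A^\chi$ together with $\lan\pi\ran-1$ (the paper writes $\lan u\pi\ran-\chi(u)=2\lan u\ran e_-^\pi+(\lan u\ran-\chi(u))$, which is exactly your observation). The only organizational difference is that the paper decomposes $M=e_+^\pi M\oplus e_-^\pi M$ first and then passes to $\chi$-coinvariants, whereas you pass to $M_\chi$ first and then decompose; since these operations commute, the arguments are interchangeable.
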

\begin{proof}
As a $\RR_{F}$-module we have the decomposition $M=e_+^\pi M\oplus e_-^{\pi}M$. Thus
\[
M_\chi=(e_+^\pi M)_\chi\oplus (e_-^{\pi}M)_\chi.
\]
We show that $(e_+^\pi M)_\chi\simeq M_{\chi_+}$. The proof of $(e_-^\pi M)_\chi\simeq M_{\chi_-}$ is similar. We have
\begin{align*}
(e_+^\pi M)_\chi& =(M/e_-^{\pi}M)_\chi=(M/e_-^{\pi}M)/\RR_A^\chi(M/e_-^{\pi}M) \\
&=(M/e_-^{\pi}M)/((\RR_A^\chi M+e_-^{\pi}M)/e_-^{\pi}M)\\
& \simeq M/(\RR_A^\chi M+e_-^{\pi}M).
\end{align*}
Since $M_{\chi_+}=M/\RR_F^{\chi_+}M$, we must  show that
$\RR_F^{\chi_+}M=\RR_A^\chi M+e_-^{\pi}M$. The ideal $\RR_F^{\chi_+}$
is generated by $\lan u\pi\ran -\chi(u)$ and $\lan u\ran -\chi(u)$, $u\in \aa$.
Clearly $\RR_A^\chi M+e_-^{\pi}M \se \RR_F^{\chi_+}M$.  Since
$\lan u\pi\ran -\chi(u)=2\lan u\ran e_-^\pi+(\lan u\ran -\chi(u))$,
we see that $\RR_F^{\chi_+}M\se \RR_A^\chi M+e_-^{\pi}M$. 
\end{proof}

Let $N:=\LL_v\half$.  By Lemma \ref{lem:decom2}, 
$(\II_FN^+)_{\chi_0}\simeq (\II_FN^+)_{{\chi_0}_+}\oplus (\II_FN^+)_{{\chi_0}_-} $, where $\chi_0\in \widehat{\GG}_A$
is the trivial character. Since ${\chi_0}_+$ is the trivial character on $\GG_F$,  $(\II_FN^+)_{{\chi_0}_+}=0$ 
(Lemma \ref{lem:M+}~(i)). Moreover ${\chi_0}_-=\chi_v$ and thus $(\II_FN^+)_{{\chi_0}_-}=(\II_FN^+)_{\chi_v}=\II_F(N^+)_{\chi_v}$.
By the proof of \cite[Theorem 3.7]{hutchinson2019}, $(\LL_v^+\half)_{\chi_v}=0$. Therefore 
\[
\begin{array}{c}
(\II_F\LL_v^+\half)_{\chi_0}=0.
\end{array}
\]
On the other hand $(I_A\LL_A^+\half)_{\chi_0}=0$. Thus by applying Lemma \ref{lem:M+} and Lemma \ref{lem:decom2}, 
Theorem \ref{thm:4.2} is equivalent to  the following statement:
\begin{itemize}
\item
 The homomorphisms
${\alpha}_{\chi}: (\LL_A\half)_\chi \mt (\LL_v\half)_\chi=(\LL_v\half)_{\chi_+} \oplus (\LL_v\half)_{\chi_-}$
are surjective for all $\chi\in \widehat{\GG}_A$ satisfying $\chi\not=\chi_0$ and $\chi(-1)=1$.
\end{itemize}

Since the $\RR_{F}$-module $\LL_{v}$ is generated by the elements $[{u}]$, $u\in U_{1,A}$, it follows that for any 
$\psi\in \widehat{\GG}_F$,  $(\LL_v\half)_\psi$ is generated as a $\z$-module by the elements $[u]_{\psi}$ for 
$u\in U_{1,A}$. Thus Theorem \ref{thm:4.2} is entirely equivalent to the following:

\begin{prp} \label{prop:4.2}
For all $u\in U_{1,A}$ and for all $\chi\in \widehat{\GG}_A$ satisfying $\chi\not=\chi_0$ and $\chi(-1)=1$, the elements 
$([u]_{\chi_+},0)$ and $(0,[u]_{\chi_-})$ lie in $ \im(\alpha_{\chi})$. 
\end{prp}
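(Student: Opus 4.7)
The plan is to exhibit, for each $\chi\neq\chi_0$ with $\chi(-1)=1$ and each $u\in U_{1,A}$, explicit elements of $\LL_A\half$ whose images under $\alpha$ realize the two vectors $([u]_{\chi_+},0)$ and $(0,[u]_{\chi_-})$ in $(\LL_v\half)_{\chi_+}\oplus(\LL_v\half)_{\chi_-}$. Two families of generators of $\LL_A$ provide the raw material: $[au]-[a]$ and $\Lan u\Ran C_A$.

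First I would compute the images of these generators in $\widetilde{\RP}(F)$. For $[au]-[a]$, the five-term relation $Y_{a,au}$ yields
\[
\alpha([au]-[a]) = \lan a\ran[u] - \lan a^{-1}-1\ran[w_1] - \lan 1-a\ran[w_2],
\]
where $w_1:=u(a-1)/(au-1)$ and $w_2:=(1-a)/(1-au)$ both lie in $U_{1,A}$ (a direct valuation check). For $\Lan u\Ran C_A$, Corollary \ref{cor:key} applied over $F$ (using $u\in\WW_F$) gives
\[
\alpha(\Lan u\Ran C_A) = \lan u-1\ran\Lan -u\Ran[u].
\]
Writing $u-1=\pi^n v$ with $v\in\aa$ and using $\chi(-1)=1$, projecting this identity into $(\LL_v\half)_\chi$ places the pair
\[
\bigl(\chi(v)(\chi(u)-1)[u]_{\chi_+},\ \chi(v)(\chi(u)-1)(-1)^n[u]_{\chi_-}\bigr)
\]
into $\im(\alpha_\chi)$.

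When $\chi(u)=-1$ the above is a unit multiple of $([u]_{\chi_+},(-1)^n[u]_{\chi_-})$; by exhibiting a second $u'\in U_{1,A}$ with $\chi(u')=-1$ but $v(u'-1)$ of opposite parity to $n$—and identifying $[u']_\chi$ with $[u]_\chi$ modulo $\im(\alpha_\chi)$ through a $Y$-relation of the first family—we obtain a second linearly independent pair, from which $([u]_{\chi_+},0)$ and $(0,[u]_{\chi_-})$ follow by a $\z\half$-linear combination. The principal obstacle will be the case $\chi(u)=1$, which occurs uniformly for every $u\in U_{1,A}$ whenever $\chi$ factors through $\GG_A\to\GG_k$: here the second family contributes nothing, and a short computation shows $\chi(w_1)=\chi(w_2)=1$ as well, so naive substitution in the $Y$-relation does not reduce to the first case. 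The resolution is to exploit the $\z[\lan\pi\ran]$-module structure of the target by averaging $Y_{a,au}$ over systematically varied $a\in\WW_A$—whose availability is guaranteed by $k$ being sufficiently large—and to combine these relations with the exactness of $0\to\LL_v\to\widetilde{\RP}(F)\to\Ind_k^F\widetilde{\RP}(k)\to 0$ to produce enough independent $\z\half$-relations in the two-dimensional target $(\LL_v)_{\chi_+}\oplus(\LL_v)_{\chi_-}$ to decouple both coordinates of $[u]$. Verifying the nondegeneracy of the resulting linear system is the technical heart of the proof.
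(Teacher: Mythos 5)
Your opening calculations are correct and match the paper's toolkit: the image of $[au]-[a]$ under the five-term relation, with $w_1=u(a-1)/(au-1)$ and $w_2=(1-a)/(1-au)$ (both in $U_{1,A}$ by a valuation check), and the image of $\Lan u\Ran C_A$ via Corollary \ref{cor:key} applied over $F$, giving $\bigl(\chi(v)(\chi(u)-1)[u]_{\chi_+},\,(-1)^n\chi(v)(\chi(u)-1)[u]_{\chi_-}\bigr)\in\im(\alpha_\chi)$. These are the right raw materials and you have computed them correctly.

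However, your plan for the case $\chi(u)=-1$ does not work as stated, and the gap is conceptual, not cosmetic. By Lemma \ref{lem:u1}~(ii) (equivalently, Corollary \ref{cor:key}), whenever $\chi(u)=-1$ one has $[u]_{\chi_\delta}=\chi_\delta(1-u)(C_F)_{\chi_\delta}$. Substituting this into the pair you produced shows that it is always equal (up to a unit) to $\bigl((C_F)_{\chi_+},(C_F)_{\chi_-}\bigr)$, \emph{independently of $u$ and of the parity of $\ell(u)$}: the factor $(-1)^n$ you were hoping to use to vary the sign cancels against the $(-1)^n$ hidden inside $\chi_-(1-u)$. Consequently, choosing a second $u'$ with $\ell(u')$ of opposite parity returns the \emph{same} element, not an independent one, and no $\z\half$-linear combination decouples the two coordinates. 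In the paper this decoupling, namely that $\bigl((C_F)_{\chi_+},0\bigr)$ and $\bigl(0,(C_F)_{\chi_-}\bigr)$ lie in $\im(\alpha_\chi)$, is precisely the content of Lemma \ref{lem:bconst}, and its proof requires a genuinely new input: a carefully chosen five-term relation involving three parameters $u,w,z\in U_{1,A}$ with prescribed parities of $\ell$ and values of $\chi$, exploiting the fact that $\chi_-(1-z)=-\chi_+(1-z)$ when $\ell(z)$ is odd.

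You have also mislocated the principal difficulty. If $\chi$ factors through $\GG_A\to\GG_k$ then $\chi|_{U_{1,A}}=1$, and by Lemma \ref{lem:un} both $(\LL_v\half)_{\chi_+}$ and $(\LL_v\half)_{\chi_-}$ are zero, so there is nothing to prove; this is the \emph{easy} case, not the obstacle. The hard cases are those where $\chi$ is nontrivial on every $U_{n,A}$ but the particular $u$ at hand satisfies $\chi(u)=1$. For these, the paper's Lemmas \ref{lem:odd} and \ref{lem:even1} reduce to the $\chi=-1$ case by judicious choices in the five-term relation (the substitution $a=(1-w)/(1-uw)$ for an auxiliary $w$ with $\chi(w)=-1$ and $\ell(w)=\ell(uw)=\ell(u)$). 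Your incidental observation that $\chi(w_1)\chi(w_2)=\chi(u)$ (since $w_1w_2\equiv u$ mod squares) is correct, but it does \emph{not} force $\chi(w_1)=\chi(w_2)=1$: by varying $a$ one can arrange $\chi(w_1)=\chi(w_2)=-1$, which is exactly what makes the reduction possible. Your final paragraph, proposing to ``average $Y_{a,au}$'' and verify ``nondegeneracy of the resulting linear system,'' is an acknowledgment of a gap rather than a proof; without the explicit constructions of Lemmas \ref{lem:odd}, \ref{lem:bconst} and \ref{lem:even1} (or an equivalent), the argument does not go through.
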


Proposition \ref{prop:4.2} (and hence Theorem \ref{thm:4.2}) follows from Lemmas \ref{lem:un} and \ref{lem:even1}  
and Corollaries \ref{cor:odd1} and \ref{cor:-1}  below.

\begin{lem}\label{lem:un} 
Let $\chi\in  \widehat{\GG}_A$. Suppose that $\chi|_{U_{n,A}}=1$ for some $n\geq 1$, where $U_{n,A}=1+\mmm_A^n$. Then
$(\LL_v\half)_{\chi_+} = (\LL_v\half)_{\chi_-}=0$.
\end{lem}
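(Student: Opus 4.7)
The plan is to show that each generator $[u]$, $u\in U_{1,A}$, of $\LL_v$ already vanishes in the larger quotient $(\widetilde{\RP}(F)\half)_{\chi_\pm}$; by exactness of the character-quotient functor on $\RR_F\half$-modules (Lemma~\ref{lem:exact}) applied to the inclusion $\LL_v\hookrightarrow\widetilde{\RP}(F)$, the induced map $(\LL_v\half)_{\chi_\pm}\hookrightarrow(\widetilde{\RP}(F)\half)_{\chi_\pm}$ is injective, so this is enough to conclude $(\LL_v\half)_{\chi_\pm}=0$.  The hypothesis $\chi|_{U_{n,A}}=1$ enters in the form $\Lan w\Ran\in \RR_F^{\chi_\pm}$ for every $w\in U_{n,A}$.

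My central tool is the key identity (Corollary~\ref{cor:key}):
\[
\Lan a\Ran C_F=\lan a-1\ran\Lan -a\Ran[a]\quad\text{in }\widetilde{\RP}(F),\ a\in\WW_F,
\]
supplemented by $C_F=[a]+\lan -1\ran[1-a]$, which holds in $\widetilde{\RP}(F)$ because $\psi_1(a)=0$.  I would first treat $u\in U_{n,A}$.  Applying the key identity at $a=u$ and reducing modulo $\chi_\pm$, the left-hand side vanishes and, since $\lan u-1\ran$ is a unit in the quotient, we obtain $\Lan -u\Ran[u]\equiv 0$.  When $\chi(-1)=-1$ we have $\Lan -u\Ran\equiv -2$, hence $[u]\equiv 0$ after inverting $2$.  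When $\chi(-1)=+1$ the identity collapses, and I would apply it instead at $a=1-u=\pi^m\alpha$: this yields a congruence $(\chi_\pm(\pi^m\alpha)-1)C_F\equiv(\chi_\pm(\pi^m\alpha)-1)[\pi^m\alpha]$, and combined with $C_F=[u]+[\pi^m\alpha]$ it forces $[u]\equiv 0$ whenever $\chi_\pm(\pi^m\alpha)\ne 1$.  The residual subcase is handled by choosing $\beta\in A^\times$ with $\chi(\beta)=-1$ (available since $\chi\neq\chi_0$; the trivial character case is covered by Lemma~\ref{lem:M+}(i)), applying the key identity at $a=\pi^m\alpha\beta$, and transferring the resulting vanishing back to $[u]$ via the five-term relation $Y_{u,1-\pi^m\alpha\beta}$.

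To extend the vanishing from $U_{n,A}$ to arbitrary $u\in U_{1,A}$ I would perform descending induction on $m=v(u-1)$.  For $u\in U_{m,A}\setminus U_{m+1,A}$ with $1\le m<n$, choose $w\in U_{n',A}$, $w\ne 1$, with $n'$ sufficiently large; a direct computation shows that each of the three auxiliary ratios $uw/u$, $(1-u^{-1})/(1-(uw)^{-1})$, $(1-u)/(1-uw)$ in the five-term relation $Y_{u,uw}$ lies in $U_{n'-m,A}\subseteq U_{n,A}$.  Hence $Y_{u,uw}$ is a relation entirely inside $\LL_v$; modulo $\chi_\pm$, using the already-established vanishing on $U_{n,A}$, it reduces to $[u]\equiv[uw]$, and coset-by-coset propagation within $U_{m,A}/U_{n,A}$ together with a parallel argument for cosets not containing the identity then forces $[u]\equiv 0$.

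The principal technical difficulty is the $\chi(-1)=+1$ base subcase, where the key identity at $u\in U_{n,A}$ carries no information; closing it requires the delicate orchestration of the identity at several auxiliary points together with the $\lan\pi\ran$-action, whose eigenvalue distinguishes the two quotients $\chi_+$ and $\chi_-$.
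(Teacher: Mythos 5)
Your overall strategy is reasonable and (I believe) in the spirit of the source the paper actually cites, but the proof as written has genuine gaps and also takes a quite different route from the paper. The paper's proof is a one-line citation: it observes that $\chi_\delta|_{U_{n,A}}=1$ and then appeals to the proof of \cite[Theorem 3.11]{hutchinson2019}, noting that the hypothesis $U_{n,A}\subset U_{1,A}^2$ there was only used to deduce $\chi|_{U_{n,A}}=1$. You instead attempt a self-contained proof via Corollary~\ref{cor:key} and the five-term relation. The opening reduction (injectivity of $(\LL_v\half)_{\chi_\delta}\to(\widetilde{\RP}(F)\half)_{\chi_\delta}$ via Lemma~\ref{lem:exact}, so that it suffices to kill each generator $[u]$ in the larger quotient) is correct, as is the base case when $\chi(-1)=-1$, and the case $\chi(-1)=+1$, $\chi_\delta(1-u)=-1$ via the key identity at $a=1-u$.

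The argument does not close, however, at exactly the two points you would need to push through. First, in the residual base subcase ($\chi(-1)=1$ and $\chi_\delta(1-u)=1$) the device you describe does not give vanishing: the key identity at $a=\pi^m\alpha\beta$ only yields $C_F\equiv\pm[\pi^m\alpha\beta]$, and the five-term relation $Y_{u,\,1-\pi^m\alpha\beta}$ produces the congruence $0\equiv[u]-[1-\pi^m\alpha\beta]+[(1-\pi^m\alpha\beta)/u]-[(1-\pi^m\alpha\beta)/(u\beta)]-[\beta^{-1}]$, which involves the uncontrolled terms $[\beta^{-1}]$ and $[(1-\pi^m\alpha\beta)/(u\beta)]$; these are not generators of $\LL_v$ (their arguments are not in $U_{1,A}$) and nothing you have established makes them vanish, so no conclusion about $[u]$ follows. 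Second, the induction step is inconclusive: for $\ell(u)=m<n$ and $w\in U_{n',A}$ with $n'$ large, the relation $Y_{u,uw}$ modulo $\chi_\delta$ does reduce (once the auxiliary terms land in $U_{n,A}$) to $[u]\equiv[uw]$, but $uw$ has the same $\ell$-invariant as $u$ and lies in the same deep coset, so this only says $[u]$ is constant on fine cosets; it never reduces $[u]$ to a class already known to vanish, and ``coset-by-coset propagation'' does not by itself bridge from that to $[u]\equiv 0$. Finally, the appeal to Lemma~\ref{lem:M+}(i) for the trivial character is a misapplication: that lemma concerns $(\II_A M)_{\chi_0}$, whereas $\LL_v\half$ is not of the form $\II_A M$; in any case, within the paper Lemma~\ref{lem:un} is only ever invoked for $\chi\neq\chi_0$ with $\chi(-1)=1$, so the trivial-character branch is a red herring.
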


\begin{proof}
For $\delta\in \{ +,-\}$ the characters $\chi_\delta \in \widehat{\GG}_F$ have the property that $\chi_\delta|_{U_{n,A}}=1$. 
The lemma now follows from (the proof of) \cite[Theorem 3.11]{hutchinson2019}. (The reader may readily verify that 
the hypothesis $U_{n,A}\subset U_{1,A}^2$ in this theorem is only used in the proof to deduce that $\chi|_{U_{n,A}}=1$ for any 
given $\chi$.)
\end{proof}

\subsection{Obtainable elements}\label{obtain}

For the remainder of this section we fix $\chi\in \widehat{\GG}_A$ satisfying 
\begin{enumerate}
\item $\chi\not=\chi_0$,
\item $\chi(-1)=1$,
\item For all $n\geq 1$ there exists $u\in U_{n,A}$ with $\chi(u)=-1$.  
\end{enumerate}

Let us say that $u\in U_{1,A}$ is \emph{obtainable} if $([u]_{\chi_+},0)$ and $(0,[u]_{\chi_-})$ both lie in $ \im({\alpha}_{\chi})$. 
We must show that all $u\in U_{1,A}$ are obtainable. We begin with some  elementary observations: 

For $u\in U_{1,A}$ we set $\ell(u):= v(1-u)\in \N$. Thus $\ell(u)=n$ if and only if $u\in U_{n,A}\setminus U_{n+1,A}$.

\begin{lem}\label{lem:u1} 
{\rm (i)} For any $u\in U_{1,A}$ with $\ell(u)=n$ there exists $w\in U_{1,A}$ satisfying
 $w\equiv u\mod{U_{n+1,A}}$ and $\chi(w)=-\chi(u)$.
\par {\rm (ii)} For any $u\in U_{1,A}$ such that $\chi(u)=-1$ we have (for $\delta\in \{ +,-\}$)  
\[
\begin{array}{c}
[u]_{\chi_\delta}=\chi_\delta(1-u){\left( C_{F}\right)}_{\chi_\delta} \mbox{ in } (\LL_v\half)_{\chi_\delta}.
\end{array}
\]
\par {\rm (iii)} If $u\in U_{1,A}$ and if $\chi(u)=1$ and $\chi_{\delta}(1-u)=-1$ for some $\delta\in \{ +,-\}$, then $[u]_{\chi_\delta}=0$. 
\par {\rm (iv)} If $u\in U_{1,A}$, then
\[
\chi_-(1-u)=
\left\{
\begin{array}{ll}
\chi_+(1-u),& \ell(u) \mbox{ even}\\
-\chi_+(1-u),& \ell(u) \mbox{ odd.}\\
\end{array}
\right.
\]
\end{lem}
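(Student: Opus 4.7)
My plan is as follows.

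Parts (i) and (iv) should be essentially combinatorial. For (i), I would apply the standing hypothesis (3) on $\chi$ to produce some $v\in U_{n+1,A}$ with $\chi(v)=-1$, and then take $w:=uv$; this lies in $U_{1,A}$, satisfies $w/u=v\in U_{n+1,A}$, and $\chi(w)=\chi(u)\chi(v)=-\chi(u)$. For (iv), I would write $1-u=\pi^{\ell(u)}\tau$ with $\tau\in A^\times$ and simply read off $\chi_+(1-u)=\chi(\tau)$ and $\chi_-(1-u)=(-1)^{\ell(u)}\chi(\tau)$ from the definitions of $\chi_\pm$ given at the start of Subsection (on the decomposition lemma).

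The substantive parts are (ii) and (iii), and both should be deduced from Corollary \ref{cor:key}. For (ii), I would apply that corollary with $a=u\in\WW_F$ (valid since $u\neq 1$) to obtain
\[
\langle\langle u\rangle\rangle\, C_F=\langle u-1\rangle\langle\langle -u\rangle\rangle\,[u] \quad \text{in } \widetilde{\RP}(F).
\]
Since $u$ is a unit of $A$, $\chi_\delta(u)=\chi(u)=-1$ and $\chi_\delta(-1)=\chi(-1)=1$ for both $\delta\in\{+,-\}$; so reduction modulo $\RR_F^{\chi_\delta}$ sends both $\langle\langle u\rangle\rangle$ and $\langle\langle -u\rangle\rangle=\langle\langle -1\rangle\rangle+\langle -1\rangle\langle\langle u\rangle\rangle$ to $-2$, and sends $\langle u-1\rangle=\langle -1\rangle\langle 1-u\rangle$ to $\chi_\delta(1-u)$. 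After inverting $2$ and cancelling $-2$, this gives $(C_F)_{\chi_\delta}=\chi_\delta(1-u)\,[u]_{\chi_\delta}$, and squaring $\chi_\delta(1-u)=\pm1$ recovers the stated form.

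For (iii), I would instead apply Corollary \ref{cor:key} with $a=1-u$, yielding $\langle\langle 1-u\rangle\rangle\, C_F=\langle -u\rangle\langle\langle u-1\rangle\rangle\,[1-u]$. Under the hypotheses $\chi(u)=1$ and $\chi_\delta(1-u)=-1$, the same kind of reduction modulo $\RR_F^{\chi_\delta}$ produces $(C_F)_{\chi_\delta}=[1-u]_{\chi_\delta}$. I would then combine this with the defining identity $C_F=[u]+\langle -1\rangle[1-u]+\langle\langle 1-u\rangle\rangle\psi_1(u)$, which in $\widetilde{\RP}(F)=\RP(F)/\KK_F^{(1)}$ simplifies to $C_F=[u]+\langle -1\rangle[1-u]$; reducing modulo $\RR_F^{\chi_\delta}$ using $\chi_\delta(-1)=1$ and subtracting the two identities yields $[u]_{\chi_\delta}=0$.

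The main subtlety I expect to have to address is that the identities above are naturally produced in $\widetilde{\RP}(F)$, whereas the lemma asserts equalities in $(\LL_v\half)_{\chi_\delta}$. I plan to resolve this by invoking the middle row of Lemma \ref{lem:comm}, which is a short exact sequence of $\RR_F$-modules; after inverting $2$, the functor $(-)_{\chi_\delta}$ remains exact by Lemma \ref{lem:exact}, so the induced map $(\LL_v\half)_{\chi_\delta}\hookrightarrow(\widetilde{\RP}(F)\half)_{\chi_\delta}$ is injective. In particular, in part (ii) the formula produced in the ambient module identifies $(C_F)_{\chi_\delta}$ with an element already lying in the image of $(\LL_v\half)_{\chi_\delta}$, making the stated equation well-defined and valid at that level; and in part (iii) the vanishing in the ambient module transfers to vanishing in the subquotient.
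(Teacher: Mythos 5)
Your proof is correct and follows essentially the same path as the paper's: parts (i) and (iv) are the same direct verifications, and parts (ii) and (iii) are both deduced from Corollary \ref{cor:key} (applied with $a=u$ and $a=1-u$ respectively) by reducing modulo $\RR_F^{\chi_\delta}$ and cancelling a factor of $-2$, exactly as in the paper. The one point you make explicit that the paper leaves tacit is the passage from $\widetilde{\RP}(F)$ to $\LL_v$ via exactness of $(-)_{\chi_\delta}$ on $\RR_F\half$-modules; that is a genuine and worthwhile clarification, but it is not a different method.
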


\begin{proof}
(i) Choose any $u\in U_{n,A}\setminus U_{n+1,A}$. Choose $z\in U_{n+1,A}$ with $\chi(z)=-1$ (see condition (3) listed above). 
Let $w=uz$. Then $w\equiv u\mod{U_{n+1,A}}$ and $\chi(w)=-\chi(u)$. 
\par (ii) By Corollary \ref{cor:key} we have
\[
\Lan{u}\Ran C_{F}=\lan{u-1}\ran\Lan{-u}\Ran[u] \mbox{ in } \widetilde{\RP}({F}).
\]
Thus for any $\psi\in \widehat{\GG}_F$ we have 
\[
(\psi(u)-1){\left(C_{F}\right)}_{\psi}=\psi(u-1)(\psi(-u)-1)[u]_{\psi} \mbox{ in } \widetilde{\RP}(F)_{\psi}.
\]
If $\psi(u)=-1$ and $\psi(-1)=1$, this gives
\[
-2{\left(C_{F}\right)}_{\psi}=-2\psi(1-u)[{u}]_{\psi}.
\] 
Now apply to $\psi=\chi_\delta$ and multiply both sides by $-1/2$.
\par (iii) By (ii) applied to $1-u$, we have $[1-u]_{\chi_\delta}=\left(C_F\right)_{\chi_\delta}$. However,
${\left(C_{F}\right)}_{\chi_\delta}=[{u}]_{\chi_\delta}+[{1-u}]_{\chi_\delta}$. 
\par (iv) We have $1-u=w\pi^{\ell(u)}$ for some $w\in A^\times$. Therefore
\[
\chi_-(1-u)=\chi(w)(-1)^{\ell(u)}=\chi_+(1-u)(-1)^{\ell(u)}.
\]
\end{proof}

We immediately deduce:

\begin{cor}\label{cor:u1}  
Let $u\in U_{1,A}$.
\par {\rm (i)} If $\chi(u)=-1$ and $\ell(u)$ is even, then
\[
\begin{array}{c}
[u]_{\chi}=\pm \left(C_F\right)_{\chi} \mbox{ in } \left(\LL_v\half\right)_{\chi}. 
\end{array}
\]
\par {\rm (ii)} If $\chi(u)=-1$ and $\ell(u)$ is odd, then
\[
\begin{array}{c}
[{u}]_{\chi}= \pm\left(\left(C_{F}\right)_{\chi_+},-{\left(C_{F}\right)}_{\chi_-}\right) \mbox{ in }
\left(\LL_v\half\right)_{\chi} =\left(\LL_v\half\right)_{\chi_+}\oplus \left(\LL_v\half\right)_{\chi_-}.
\end{array}
\]
\par {\rm (iii)} If $\chi(u)=1$ and $\ell(u)$ is even and $\chi_+(1-u)=-1$, then $[{u}]_{\chi}=0$.
\par {\rm (iv)} If $\chi(u)=1$ and $\ell(u)$ is odd, then
\[
[{u}]_{\chi}=\left([{u}]_{\chi_+},[{u}]_{\chi_-}\right)=
\left\{
\begin{array}{ll}
\left([{u}]_{\chi_+},0\right), & \chi_+(1-u)=1\\
\left(0,[{u}]_{\chi_-}\right), & \chi_+(1-u)=-1.
\end{array}
\right.
\]
\end{cor}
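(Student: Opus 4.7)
The plan is to deduce each of the four cases directly from Lemma \ref{lem:u1}, once we invoke the decomposition $(\LL_v\half)_\chi \simeq (\LL_v\half)_{\chi_+} \oplus (\LL_v\half)_{\chi_-}$ supplied by Lemma \ref{lem:decom2}. In each case I would compute the two components $[u]_{\chi_+}$ and $[u]_{\chi_-}$ separately and then reassemble them into $[u]_\chi$.

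First I would handle (i) and (ii), where $\chi(u)=-1$. Here Lemma \ref{lem:u1}(ii) gives the explicit formula $[u]_{\chi_\delta} = \chi_\delta(1-u)(C_F)_{\chi_\delta}$ for both $\delta\in\{+,-\}$. Lemma \ref{lem:u1}(iv) then tells me that $\chi_-(1-u) = (-1)^{\ell(u)}\chi_+(1-u)$, which yields the claimed equalities at once: when $\ell(u)$ is even the two signs agree, producing $[u]_\chi = \pm(C_F)_\chi$; when $\ell(u)$ is odd the signs are opposite, producing $[u]_\chi = \pm\bigl((C_F)_{\chi_+}, -(C_F)_{\chi_-}\bigr)$.

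For (iii) and (iv), where $\chi(u)=1$, I would instead use Lemma \ref{lem:u1}(iii), which kills $[u]_{\chi_\delta}$ whenever $\chi_\delta(1-u) = -1$. In case (iii) the hypothesis $\chi_+(1-u) = -1$ combined with $\ell(u)$ even forces $\chi_-(1-u) = -1$ as well by Lemma \ref{lem:u1}(iv), so both components vanish and $[u]_\chi = 0$. In case (iv) the odd parity of $\ell(u)$ forces $\chi_+(1-u)$ and $\chi_-(1-u)$ to have opposite signs, so exactly one component vanishes, namely the one whose sign is $-1$; the other survives and is simply recorded as $[u]_{\chi_\delta}$, giving the stated dichotomy.

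There is no substantive obstacle: the argument is direct substitution together with the parity formula of Lemma \ref{lem:u1}(iv). The only delicate point is sign bookkeeping, which I would handle by fixing $\chi_+(1-u) = \pm 1$ at the outset and factoring this sign out of both components simultaneously before comparing with $\chi_-(1-u)$.
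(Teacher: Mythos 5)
Your argument is correct and is exactly the deduction the paper intends: the corollary is introduced with ``We immediately deduce'' after Lemma \ref{lem:u1}, and your proof spells out precisely that deduction, combining Lemma \ref{lem:u1}(ii)--(iv) with the decomposition $(\LL_v\half)_\chi \simeq (\LL_v\half)_{\chi_+}\oplus(\LL_v\half)_{\chi_-}$ from Lemma \ref{lem:decom2}.
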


\begin{cor}\label{cor:even-1} 
 Let $u\in U_{1,A}$. If $\ell(u)$ is even and $\chi(u)=-1$, then $[{u}]_{\chi}\in\im({\alpha}_{\chi})$.
\end{cor}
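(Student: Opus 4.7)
My plan is to exhibit an explicit preimage of $[u]_\chi$ under $\alpha_\chi$. Using Corollary \ref{cor:u1}(i), the equality $[u]_\chi = \pm(C_F)_\chi$ in $(\LL_v\half)_\chi$ reduces the task (after the identification $(\II_A\LL_A^+\half)_\chi \simeq (\LL_A\half)_\chi$ of Lemma \ref{lem:M+}(iii), and similarly for $F$) to writing $[u]_\chi$ explicitly as the image of a class in $(\LL_A\half)_\chi$ under the natural map induced by $\LL_A \to \LL_v$.

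The natural candidate is $y := \Lan u\Ran C_A$, which by Lemma \ref{lem:slr} is one of the defining generators of $\LL_A$. One first needs to check that $y$ genuinely lands in $\II_A\LL_A^+\half$. That $y$ is in the $+$-part follows from Theorem \ref{hutchinson7}: $\lan -1\ran$ acts trivially on $\RP_1(A)\half$, in particular on $C_A \in \RB(A)$. That $y$ is in $\II_A\LL_A^+\half$ follows by computing its image in $(\LL_A^+\half)_{\GG_A}$: this group injects into $(\RP_1(A)\half)_{\GG_A}$ by exactness of coinvariants (Lemma \ref{lem:exact}), and there $y$ maps to $\Lan u\Ran \cdot C_A = 0$ because $\Lan u\Ran$ acts trivially in $\GG_A$-coinvariants.

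Now I compute $\alpha(y)$. The image of $C_A \in \widetilde{\RP}(A)$ under the natural map to $\widetilde{\RP}(F)$ is $C_F$, so $\alpha(y) = \Lan u\Ran C_F$. Applying the key identity (Corollary \ref{cor:key}) over $F$ (valid since $u\in \WW_F$ as $u\neq 1$), this equals $\lan u-1\ran \Lan -u\Ran [u] \in \LL_v$. Passing to $(\LL_v\half)_\chi$ via the decomposition $(\LL_v\half)_\chi \cong (\LL_v\half)_{\chi_+}\oplus (\LL_v\half)_{\chi_-}$ from Lemma \ref{lem:decom2}, I evaluate the coefficients componentwise. Since $-u\in A^\times$ with $\chi(-u) = \chi(-1)\chi(u) = -1$, the element $\Lan -u\Ran$ acts as $-2$ on both $(\LL_v\half)_{\chi_+}$ and $(\LL_v\half)_{\chi_-}$. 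Writing $1-u = \pi^{\ell(u)}v'$ for some $v'\in A^\times$, one computes $\chi_+(u-1) = \chi(v')$ and $\chi_-(u-1) = (-1)^{\ell(u)}\chi(v') = \chi(v')$, the last equality using that $\ell(u)$ is even. Therefore $\alpha_\chi([y]_\chi) = -2\chi(v')\,[u]_\chi$ in $(\LL_v\half)_\chi$, and dividing by $-2\chi(v') \in \{\pm 2\}$ (invertible since we work over $\z\half$) places $[u]_\chi$ inside $\im(\alpha_\chi)$, as desired.

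The principal difficulty is not the computation itself but the preliminary bookkeeping verifying that $y$ lives in the correct subquotient $\II_A\LL_A^+\half$, so that the simplifying identifications of Lemma \ref{lem:M+}(iii) actually apply. Once that is settled, the key identity over $F$ and the character decomposition of Lemma \ref{lem:decom2} deliver the result by a direct evaluation of $\chi_\pm$ on the units $-u$ and $u-1$.
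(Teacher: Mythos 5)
Your proof is correct and takes essentially the same route as the paper: apply the key identity to the generator $\Lan u\Ran C_A\in\LL_A$ and evaluate characters (the paper applies it to an arbitrary $x\in U_{1,A}$ with $\chi(x)=-1$ after first invoking Corollary \ref{cor:u1}(i)). The paragraph verifying $y\in\II_A\LL_A^+\half$ is, however, unnecessary: the reduction preceding Proposition \ref{prop:4.2}, via Lemma \ref{lem:M+}(iii), already replaces $\alpha$ by the induced map $(\LL_A\half)_\chi\to(\LL_v\half)_\chi$, so it suffices that $y\in\LL_A$, which holds by definition of $\LL_A$.
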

\begin{proof} 
By Corollary \ref{cor:u1} (i), we only need to show that $\left(C_{F}\right)_{\chi}\in\im({\alpha}_{\chi})$.
Now if $x\in U_{1,A}$ with $\chi(x)=-1$, then $\Lan{x}\Ran C_{A}\in \LL_{A}$ and hence 
\[
{\alpha}_{\chi}\left( \Lan x\Ran C_{A}\right)=-2 {\left(C_{F}\right)}_{\chi}= {\left(C_{F}\right)}_{\chi}=
\left( {\left(C_{F}\right)}_{\chi_+},
{\left(C_{F}\right)}_{\chi_-} \right)\in \im({\alpha}_{\chi}). 
\]
\end{proof}

\begin{lem}\label{lem:odd} 
If $u\in U_{1,A}$ and if $\chi(u)=1$ and $\ell(u)$ is odd, then $[{u}]_{\chi}\in \im({\alpha}_{\chi})$.
\end{lem}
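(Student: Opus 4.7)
The plan is to realize $[u]_\chi$ as the image under $\alpha_\chi$ of a specific generator $[au]-[a]$ of $\LL_A$, with the two resulting ``correction terms'' vanishing automatically by Lemma \ref{lem:u1}(iii).

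By Corollary \ref{cor:u1}(iv), $[u]_\chi$ has exactly one nonzero component in the decomposition $(\LL_v\half)_\chi=(\LL_v\half)_{\chi_+}\oplus(\LL_v\half)_{\chi_-}$, namely the one indexed by the unique sign $\delta\in\{+,-\}$ with $\chi_\delta(1-u)=1$; the other component vanishes by Lemma \ref{lem:u1}(iii). It therefore suffices to show that $[u]_{\chi_\delta}$ lies in the projection of $\im(\alpha_\chi)$ onto $(\LL_v\half)_{\chi_\delta}$.

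For any $a\in\WW_A$, set $u':=1+a(u-1)/(a-1)\in U_{1,A}$. Direct computation gives $1-u'=a(1-u)/(a-1)$ and $1-u/u'=-(1-u)/((a-1)u')$, so both $u'$ and $u/u'$ lie in $U_{1,A}$ with $\ell(u')=\ell(u/u')=n$ and $\chi(u/u')=\chi(u')$. Applying the five-term relation $Y_{a,au}$ in $\widetilde{\RP}(F)$ and using $[v^{-1}]=-[v]$ in $(\LL_v\half)_{\chi_\delta}$ (valid since $\chi_\delta(-1)=1$), one arrives at
\[
[u]_{\chi_\delta}=\chi(a)\cdot\alpha_\chi([au]-[a])_{\chi_\delta}+\chi(1-a)\bigl([u/u']_{\chi_\delta}-\chi(a)\,[u']_{\chi_\delta}\bigr).
\]
The first term is automatically in the image. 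Using $\chi_\delta(1-u)=1$ one computes $\chi_\delta(1-u')=\chi(a)\chi(1-a)$ and $\chi_\delta(1-u/u')=\chi(1-a)\chi(u')$, so by Lemma \ref{lem:u1}(iii) both correction terms vanish as soon as $a$ is chosen so that
\[
\chi(a)=1,\qquad \chi(1-a)=-1,\qquad \chi(u')=1.
\]

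The main obstacle is the simultaneous existence of such $a$. The first two conditions depend only on the residue $\bar a\in k\setminus\{0,1\}$, and are achievable because $k$ is sufficiently large and $\chi\ne\chi_0$ with $\chi(-1)=1$. Having fixed $\bar a$, the explicit formula $u'=1-aw_0\pi^n/(a-1)$ (where $1-u=w_0\pi^n$) shows that a perturbation $a\mapsto a(1+\eta)$ with $\eta\in\mmm_A^M$ modifies $u'$ by multiplication by an element of $U_{n+M,A}$ whose leading coefficient is a fixed unit times $\eta\bmod\mmm_A^{M+1}$. The hypothesis that $\chi$ is nontrivial on every $U_{m,A}$ then provides, for $M$ sufficiently large, the freedom to flip $\chi(u')$ while simultaneously preserving $\chi(a)=1$ and $\chi(1-a)=-1$ via the auxiliary character constraints $\chi(1+\eta)=1$ and $\chi(1-a\eta/(1-a))=1$ on $\eta$; verifying the compatibility of these constraints in full generality is the technical heart of the argument.
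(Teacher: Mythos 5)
Your setup is close to the paper's: with $w:=(1-a)/(1-au)$ your $u'$ equals $w^{-1}$ and your $u/u'$ equals $uw$, so the relation you extract from $Y_{a,au}$ is the same one the paper uses. The problem is the opening reduction: exhibiting $\xi$ with $\alpha_\chi(\xi)_{\chi_\delta}=[u]_{\chi_\delta}$ does \emph{not} show $[u]_\chi=([u]_{\chi_\delta},0)\in\im(\alpha_\chi)$, since the $\chi_{\delta'}$-component of $\alpha_\chi(\xi)$ may be nonzero --- and in your construction it is. Under your constraints $\chi(a)=1$, $\chi(1-a)=-1$, $\chi(u')=1$, one computes for the other sign $\delta'$ (with $\chi_{\delta'}(1-u)=-1$) that $\chi_{\delta'}(1-u')=\chi_{\delta'}(1-u/u')=1$, so Lemma~\ref{lem:u1}(iii) does not kill $[u']_{\chi_{\delta'}}$ or $[u/u']_{\chi_{\delta'}}$; the five-term relation then gives $\alpha_\chi([au]-[a])_{\chi_{\delta'}}=\pm\bigl([u/u']_{\chi_{\delta'}}-[u']_{\chi_{\delta'}}\bigr)$, which there is no reason to vanish. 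So the element you produce has the wrong $\chi_{\delta'}$-component and does not equal $[u]_\chi$.

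In addition, the existence of an $a$ satisfying your three coupled conditions is left to a perturbation sketch whose ``technical heart'' you acknowledge is unverified, and the obstruction is real: $\chi\bigl(u'(a(1+\eta))\bigr)$ is not controlled by $\eta\bmod\mmm_A^{M+1}$ once $\chi$ is nontrivial on $U_{n+M+1,A}$, which it always is by assumption~(3). The paper avoids both difficulties by making the opposite choice: it picks $w$ with $\chi(w)=-1$ (so $\chi(u')=-1$), arranged immediately by Lemma~\ref{lem:u1}(i). Then $\chi(uw)=\chi(w)=-1$, so Lemma~\ref{lem:u1}(ii) expresses both correction terms as explicit multiples of $(C_F)_{\chi_\delta}$, and the identity $1-w=a(1-uw)$ makes these multiples cancel, uniformly in $\delta$. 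That yields $\alpha_\chi([a]-[au])=\chi(a)[u]_\chi$ directly, with no need for the correction terms to vanish individually and no delicate search for $a$.
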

\begin{proof}
Let $n=\ell(u)$. Since $U_{n,A}/U_{n+1,A}\simeq k$ and since $|k|> 2$, there exists $w$ with 
$\ell(w)=\ell(uw)=n$. Furthermore, by 
Lemma \ref{lem:u1} (i), we can choose $w$ such that $\chi(w)=-1$. 
Now let 
\[
a:= \frac{1-w}{1-uw}, \mbox{ so that }w=\frac{1-a}{1-au}.
\]

We claim that $a\in \WW_{A}$: First note that $v(1-w)=\ell(w)=\ell(uw)=v(1-uw)$ and 
hence $a\in A^\times$. Now
\[
1-a = \frac{w-uw}{1-uw}=w\cdot\frac{1-u}{1-uw}
\]
which lies in $A^\times$ since $v(1-u)=\ell(u)=\ell(uw)=v(1-uw)$. This proves the claim. 

For $\delta\in \{ +,-\}$ we have 
\[
0={[{a}]}_{\chi_\delta}-{[{au}]}_{\chi_\delta}+\chi(a){[{u}]}_{\chi_\delta}-\chi(a)\chi(1-a){[{uw}]}_{\chi_\delta}
+\chi(1-a){[{w}]}_{\chi_\delta} 
\]
in ${\RP(F)\half}_{\chi_\delta}$. 

Since $\chi(uw)=\chi(w)=-1$ and since $1-w=a(1-uw)$, we have (by Lemma \ref{lem:u1}) that 
\[
\chi(a){[{uw}]}_{\chi_\delta}+{[{w}]}_{\chi_\delta}=\left(-\chi(a)\chi_\delta(1-uw)+\chi_\delta(1-w)\right){\left(C_{F}\right)}_{\chi_\delta}=0.
\]
Thus ${\left([{a}]-[{au}]\right)}_{\chi_\delta}=\chi(a)[{u}]_{\chi_\delta}$ for each $\delta\in \{ +,-\}$ and hence 
\[
{\alpha}_{\chi}\left( [{a}]-[{au}]\right)=\chi(a){[{u}]}_{\chi}.
\]
\end{proof}

Combining this with  Corollary \ref{cor:u1} (iv), we immediately deduce:

\begin{cor}\label{cor:odd1} 
If $u\in U_{1,A}$ satisfies $\ell(u)$ is odd and $\chi(u)=1$, then $u$ is obtainable.
\end{cor}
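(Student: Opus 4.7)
The plan is to combine the two cited results directly, exploiting the decomposition $(\LL_v\half)_\chi \simeq (\LL_v\half)_{\chi_+}\oplus(\LL_v\half)_{\chi_-}$ provided by Lemma \ref{lem:decom2}. Under the map ${\alpha}_\chi$, an element of $(\LL_A\half)_\chi$ lands in this direct sum, and our task is to produce preimages for both $([u]_{\chi_+},0)$ and $(0,[u]_{\chi_-})$.

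First I would invoke Lemma \ref{lem:odd}, which under the standing hypotheses $\chi(u)=1$ and $\ell(u)$ odd already gives $[u]_\chi\in\im({\alpha}_\chi)$. Written out in coordinates this means $([u]_{\chi_+},[u]_{\chi_-})\in\im({\alpha}_\chi)$. Next I would apply Corollary \ref{cor:u1}~(iv): with the same hypotheses, the coordinate expression for $[u]_\chi$ collapses, either to $([u]_{\chi_+},0)$ if $\chi_+(1-u)=1$, or to $(0,[u]_{\chi_-})$ if $\chi_+(1-u)=-1$. In other words, exactly one of the two coordinates is forced to vanish.

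The conclusion is now immediate in both cases. If $\chi_+(1-u)=1$, then $[u]_{\chi_-}=0$ in $(\LL_v\half)_{\chi_-}$, so $(0,[u]_{\chi_-})=(0,0)\in\im({\alpha}_\chi)$ trivially, while $([u]_{\chi_+},0)=[u]_\chi\in\im({\alpha}_\chi)$ by Lemma \ref{lem:odd}. If $\chi_+(1-u)=-1$, the roles of the two coordinates are swapped: $[u]_{\chi_+}=0$ so $([u]_{\chi_+},0)=(0,0)\in\im({\alpha}_\chi)$, while $(0,[u]_{\chi_-})=[u]_\chi\in\im({\alpha}_\chi)$. Either way, both required elements lie in $\im({\alpha}_\chi)$, so $u$ is obtainable.

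There is essentially no obstacle here — the content is packaged in the preceding lemma and corollary, and the only point to verify is the bookkeeping of the two coordinates. The one thing worth flagging is that Corollary \ref{cor:u1}~(iv) is what upgrades Lemma \ref{lem:odd} from producing the single element $[u]_\chi$ to producing both pure-coordinate elements: without the automatic vanishing of one coordinate we would only know the sum $([u]_{\chi_+},[u]_{\chi_-})$ lies in the image, not the individual summands.
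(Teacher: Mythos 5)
Your argument is correct and is precisely the one the paper intends: the paper's proof is the single sentence "Combining this [Lemma \ref{lem:odd}] with Corollary \ref{cor:u1} (iv), we immediately deduce," and your write-up simply makes the coordinate bookkeeping explicit. In particular, you correctly identify that the role of Corollary \ref{cor:u1}~(iv) is to force one coordinate of $[u]_\chi$ to vanish, so that membership of $[u]_\chi$ in $\im(\alpha_\chi)$ automatically yields membership of both pure-coordinate elements.
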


\begin{lem}\label{lem:bconst} 
The elements $\left( {\left({C_F}\right)}_{\chi_+},0\right)$ and $\left(0, {\left(C_{F}\right)}_{\chi_-}\right)$
lie in $\im({\alpha}_{\chi})$.
\end{lem}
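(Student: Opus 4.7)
The plan is to leverage the fact, already established in the proof of Corollary \ref{cor:even-1}, that $(C_F)_\chi=\bigl((C_F)_{\chi_+},(C_F)_{\chi_-}\bigr)$ lies in $\im(\alpha_\chi)$ (realized by $\Lan x\Ran C_A\in\LL_A$ for any $x\in\aa$ with $\chi(x)=-1$). After inverting $2$, obtaining both $\bigl((C_F)_{\chi_+},0\bigr)$ and $\bigl(0,(C_F)_{\chi_-}\bigr)$ reduces to exhibiting a \emph{second} element of $\im(\alpha_\chi)$ of the form $\bigl((C_F)_{\chi_+},-(C_F)_{\chi_-}\bigr)$, from which the two claimed elements arise as $1/2$ of the sum and difference with $(C_F)_\chi$. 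By Corollary \ref{cor:u1}(ii), the class $[u]_\chi$ is a sign multiple of $\bigl((C_F)_{\chi_+},-(C_F)_{\chi_-}\bigr)$ whenever $u\in U_{1,A}$ satisfies $\chi(u)=-1$ and $\ell(u)$ odd, so the task reduces to producing such a $u$ with $[u]_\chi\in\im(\alpha_\chi)$.

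Existence of such a $u$ is elementary: by condition~(3) choose $x\in U_{2,A}$ with $\chi(x)=-1$ and any $y\in U_{1,A}\setminus U_{2,A}$ (which exists since $U_{1,A}/U_{2,A}\simeq k^+$ is nontrivial); then either $y$ itself or $yx$ has $\chi=-1$ and $\ell=1$.

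With $u$ in hand, set $n:=\ell(u)$ and use Lemma \ref{lem:u1}(i) together with $|k|>2$ to pick $w\in U_{n,A}\setminus U_{n+1,A}$ with $\chi(w)=-1$ and $\ell(uw)=n$; set $a:=(1-w)/(1-uw)$. Mimicking Lemma \ref{lem:odd}, one checks that $a,au\in\WW_A$ and that $(1-a^{-1})/(1-(au)^{-1})=uw$ and $(1-a)/(1-au)=w$. The relation $Y_{a,au}$ in $\widetilde{\RP}(F)$, read in $\chi_\delta$-coinvariants for each $\delta\in\{+,-\}$, rearranges to
\[
\chi(a)\,[u]_{\chi_\delta}\;=\;([au]-[a])_{\chi_\delta}+\chi(a)\chi(1-a)\,[uw]_{\chi_\delta}+\chi(1-a)\,[w]_{\chi_\delta}.
\]
The first summand is $\alpha_\chi$ of an element of $\LL_A$; the second lies in $\im(\alpha_\chi)$ by Corollary \ref{cor:odd1} applied to $uw$ (whose parameters $\chi(uw)=1$ and $\ell(uw)=n$ odd match the hypotheses of Lemma \ref{lem:odd}); and by Lemma \ref{lem:u1}(ii) together with (iv), both $[u]_{\chi_\delta}$ and $[w]_{\chi_\delta}$ are sign multiples of $(C_F)_{\chi_\delta}$, with opposite signs between $\delta=+$ and $\delta=-$.

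Reassembled in $(\LL_v\half)_\chi=(\LL_v\half)_{\chi_+}\oplus(\LL_v\half)_{\chi_-}$, the identity takes the form $c\cdot\bigl((C_F)_{\chi_+},-(C_F)_{\chi_-}\bigr)\in\im(\alpha_\chi)$ with $c=\chi(a)\chi_+(1-u)-\chi(1-a)\chi_+(1-w)$. The crux of the argument, and the step I expect to be the main obstacle, is the coefficient computation. From $a=(1-w)/(1-uw)$ and $1-a=w(1-u)/(1-uw)$ one obtains $\chi(a)=\chi_+(1-w)\chi_+(1-uw)$ and $\chi(1-a)=\chi(w)\chi_+(1-u)\chi_+(1-uw)$; the deliberate choice $\chi(w)=-1$ flips the sign in the second relative to the first, so the two summands of $c$ reinforce rather than cancel, yielding $c=\pm 2\,\chi_+(1-u)\chi_+(1-w)\chi_+(1-uw)$. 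Since $\chi_+$ takes values in $\{\pm 1\}$, $c$ is a unit in $\RR_A\half$, and one concludes $\bigl((C_F)_{\chi_+},-(C_F)_{\chi_-}\bigr)\in\im(\alpha_\chi)$, completing the proof.
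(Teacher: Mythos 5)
Your proposal is correct, and it differs from the paper's proof in a genuinely nontrivial way. The paper chooses $u,w$ of \emph{different} levels ($\ell(u)$ odd, $\ell(w)>\ell(u)$ even, $\chi(u)=\chi(w)=-1$), forms $z=(1-u)/(1-uw)$, and applies the relation $Y_{u,uw}$; because $\ell(z)$ is odd, the quantity $1+\chi_\delta(1-z)$ vanishes for exactly one $\delta$, which directly isolates $((C_F)_{\chi_+},0)$ or $(0,(C_F)_{\chi_-})$ --- but at the cost of splitting into two cases according to the sign of $\chi(z)$. Your route instead fixes a single $u$ with $\chi(u)=-1$ and $\ell(u)$ odd, picks a companion $w$ of the \emph{same} level with $\chi(w)=-1$, and reuses the $a=(1-w)/(1-uw)$, $Y_{a,au}$ mechanism of Lemma~\ref{lem:odd}. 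The decisive observation is that with $\chi(u)=-1$ (instead of $+1$ as in Lemma~\ref{lem:odd}) the two ``error'' terms $[uw]$ and $[w]$ no longer cancel: $[uw]$ is obtainable by Corollary~\ref{cor:odd1} (since $\chi(uw)=1$), while $[w]$ is a known multiple of $(C_F)$, and the flip $\chi(w)=-1$ forces the coefficients to reinforce, producing $c=\pm2$. This exhibits $((C_F)_{\chi_+},-(C_F)_{\chi_-})\in\im(\alpha_\chi)$ in one stroke, and combining with $(C_F)_\chi\in\im(\alpha_\chi)$ (Corollary~\ref{cor:even-1}) over $\z\half$ finishes the job without any case split. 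Your approach is somewhat cleaner and better displays the symmetry between the two target elements; the paper's approach is more hands-on and produces the two elements directly as the two case outcomes. Both are correct and of comparable length.

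One small remark on the write-up: the clause identifying $c$ should be accompanied by the observation (which you implicitly use) that $\chi_\delta$ restricted to $A^\times$ agrees with $\chi$ for both $\delta\in\{+,-\}$, so that $\chi_\delta(a)=\chi(a)$ and $\chi_\delta(1-a)=\chi(1-a)$ (both $a$ and $1-a$ lie in $A^\times$ here); once that is noted, the coefficient computation is exactly as you state.
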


\begin{proof} 
Let $u,w\in U_{1,A}$ satisfy: $\ell(u)$ is odd, $\ell(w)>\ell(u)$ is even and $\chi(u)=\chi(w)=-1$. Thus 
$\ell(uw)=\ell(u)$ is odd and $\chi(uw)=1$. Let $z:=(1-u)/(1-uw)$.

 We claim that $z\in U_{1,A}$ and $\ell(z)=\ell(w)-\ell(u)$ (and hence is odd): On the one hand, 
 $v(1-u)=\ell(u)=\ell(uw)=v(1-uw)$ implies that $z\in A^\times$. On the other hand, 
\[
1-z=\frac{u-uw}{1-uw}=u\cdot\frac{1-w}{1-uw}
\]
and $v(1-w)=\ell(w)>\ell(uw)=\ell(u)=v(1-uw)$. So $v(1-z)=\ell(w)-\ell(u)>0$ proving the claim. 

Let $\delta\in \{+,-\}$. In ${\LL_v\half}_{\chi_\delta}$ we have 
\[
0={[{u}]}_{\chi_\delta}-{[{uw}]}_{\chi_\delta}+\chi(u){[{w}]}_{\chi_\delta}+\chi_\delta(1-u){[{wz}]}_{\chi_\delta}+
\chi_\delta(1-u){[{z}]}_{\chi_\delta}
\]
(using $\chi_\delta(1-u^{-1})=\chi_\delta(u)\chi_\delta(1-u)=-\chi_\delta(1-u)$ since $\chi(u)=-1$).
\bigskip

There are two possibilities: $\chi(z)=-1$ or $\chi(z)=1$:
\bigskip

\textbf{Case 1:} $\chi(z)=-1$. Then we write 
\[
{[{uw}]}_{\chi_\delta}-\chi(u){[{w}]}_{\chi_\delta}-\chi_\delta(1-u){[{wz}]}_{\chi_\delta} =
{[{u}]}_{\chi_\delta}+\chi_\delta(1-u){[{z}]}_{\chi_\delta}. 
\]

Let
\[
X:= ([uw]-\chi(u)[w]-\chi(1-u)[wz])_\chi
\]
and observe that $X\in \im({\alpha}_{\chi})$ by Corollaries \ref{cor:even-1} and \ref{cor:odd1}, since $\ell(w)$ 
is even and $\chi(w)=-1$ and $\ell(uw),\ell(wz)$ are odd and $\chi(uw)=1=\chi(zw)$. Thus
\[
X= \left({[{u}]}_{\chi_+}+\chi_+(1-u){[{z}]}_{\chi_+},{[{u}]}_{\chi_-}+\chi_-(1-u){[{z}]}_{\chi_-}\right) \in \im({\alpha}_{\chi}).
\]
However since $\ell(u)$, $\ell(z)$ are odd and $\chi(u)=\chi(z)=-1$, by Lemma \ref{lem:u1} (ii) we have 
\begin{eqnarray*}
{[u]}_{\chi_\delta}+\chi_\delta(1-u){[z]}_{\chi_\delta}&=& \left(\chi_\delta(1-u)+
\chi_\delta(1-u)\chi_\delta(1-z)\right){\left(C_{F}\right)}_{\chi_\delta}\\
&=& \chi_\delta(1-u)\left(1+\chi_\delta(1-z)\right){\left(C_{F}\right)}_{\chi_\delta}.
\end{eqnarray*}
Again, since $\ell(z)$ is odd, $\chi_-(1-z)=-\chi_+(1-z)$ and hence $1+\chi_\delta(1-z)$ takes the 
value $0$ for one value of $\delta$ and $2$ for the other. It follows that $X$ is either 
$\pm\left( {\left(C_{F}\right)}_{\chi_+},0\right)$ or $\pm\left(0, {\left(C_{F}\right)}_{\chi_-}\right)$ 
and the Lemma follows.

\textbf{Case 2:} $\chi(z)=1$. In this case we have $\chi(wz)=-1$ and $\ell(wz)$ is odd. We write   
\[
{[uw]}_{\chi_\delta}-\chi(u){[w]}_{\chi_\delta}-\chi_\delta(1-u){[z]}_{\chi_\delta} =
{[u]}_{\chi_\delta}+\chi_\delta(1-u){[wz]}_{\chi_\delta} 
\]
and the argument of Case 1 applies by interchanging $z$ and $wz$.  
\end{proof}

Combining Lemma \ref{lem:bconst} with Corollary \ref{cor:u1} (i) and (ii) we immediately deduce:

\begin{cor}\label{cor:-1} 
If $u\in U_{1,A}$ and $\chi(u)=-1$, then $u$ is obtainable.
\end{cor}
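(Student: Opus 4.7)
The plan is to combine Corollary \ref{cor:u1} (which in its two relevant parts identifies $[u]_\chi$ with a specific multiple of $(C_F)_\chi$ under the assumption $\chi(u)=-1$) with Lemma \ref{lem:bconst} (which says that each of the ``separated'' generators $((C_F)_{\chi_+},0)$ and $(0,(C_F)_{\chi_-})$ individually lies in $\im(\alpha_\chi)$). The argument will split on the parity of $\ell(u)$ and is essentially an immediate assembly of previously established facts, so no genuinely new input should be required.

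First I would split into cases. If $\ell(u)$ is even, Corollary \ref{cor:u1}~(i) gives $[u]_\chi=\pm (C_F)_\chi$ in $(\LL_v\half)_\chi$. Under the decomposition isomorphism of Lemma \ref{lem:decom2} this reads
\[
([u]_{\chi_+},[u]_{\chi_-})=\pm\bigl((C_F)_{\chi_+},(C_F)_{\chi_-}\bigr),
\]
and in particular each coordinate is $\pm(C_F)_{\chi_\delta}$. If $\ell(u)$ is odd, Corollary \ref{cor:u1}~(ii) yields
\[
([u]_{\chi_+},[u]_{\chi_-})=\pm\bigl((C_F)_{\chi_+},-(C_F)_{\chi_-}\bigr),
\]
and again each coordinate is a sign times $(C_F)_{\chi_\delta}$.

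Finally, by Lemma \ref{lem:bconst} both of the elements $((C_F)_{\chi_+},0)$ and $(0,(C_F)_{\chi_-})$ belong to $\im(\alpha_\chi)$. Therefore so do $([u]_{\chi_+},0)$ and $(0,[u]_{\chi_-})$ in either parity case, which is exactly the definition of $u$ being obtainable.

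There is no real obstacle here; the corollary is a one-line consequence of Lemma \ref{lem:bconst} and Corollary \ref{cor:u1}, since the whole point of Lemma \ref{lem:bconst} was to prove obtainability for $(C_F)_{\chi_\pm}$ separately and Corollary \ref{cor:u1} reduces the case $\chi(u)=-1$ to exactly those constants. The only minor bookkeeping is to remember the sign twist in the odd-$\ell(u)$ case, which is harmless since $\pm(C_F)_{\chi_-}$ is in $\im(\alpha_\chi)$ iff $(C_F)_{\chi_-}$ is.
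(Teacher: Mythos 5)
Your proof is correct and matches the paper's intent exactly: the paper states Corollary \ref{cor:-1} as an immediate consequence of Lemma \ref{lem:bconst} combined with Corollary \ref{cor:u1}~(i) and (ii), which is precisely the case split on the parity of $\ell(u)$ that you carry out. The sign bookkeeping you note in the odd case is the only subtlety, and you handle it correctly.
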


We deal with the last remaining case:

\begin{lem}\label{lem:even1} 
If $u\in U_{1,A}$ satisfies $\chi(u)=1$ and $\ell(u)$ is even, then $u$ is obtainable.
\end{lem}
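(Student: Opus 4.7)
The plan is to apply the five-term relation $Y_{u,uw'}=0$ in $\RP(F)$ for a suitably chosen auxiliary $w'\in U_{1,A}$ and read off $[u]_\chi$ as a $\z$-linear combination of classes of elements already known to be obtainable. Write $n=\ell(u)$. Since $n$ is even, Lemma \ref{lem:u1}(iv) gives $\chi_+(1-u)=\chi_-(1-u)$; if this common value is $-1$, then Lemma \ref{lem:u1}(iii) forces $[u]_{\chi_+}=[u]_{\chi_-}=0$ and $u$ is obtainable for free. So I assume from now on that $\chi_+(1-u)=\chi_-(1-u)=1$.

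Fix an odd integer $m>n$ and an element $w'\in U_{m,A}\setminus U_{m+1,A}$ with $\chi(w')=-1$; the existence of such a $w'$ is guaranteed by the standing hypothesis that $\chi$ is nontrivial on every $U_{j,A}$, together with the flip trick used in the proof of Lemma \ref{lem:u1}(i). Set $z:=(1-u)/(1-uw')$. From the identities
\[
1-z=\frac{u(1-w')}{1-uw'},\qquad 1-w'z=\frac{1-w'}{1-uw'}
\]
one reads off that $z,w'z\in U_{1,A}$ with $\ell(z)=\ell(w'z)=m-n$, which is odd; moreover $uw'\in U_{1,A}$ with $\ell(uw')=n$ and $\chi(uw')=-1$. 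A direct computation gives $(1-u^{-1})/(1-(uw')^{-1})=w'z$ and $(1-u)/(1-uw')=z$, so the relation $Y_{u,uw'}=0$ rearranges to
\[
[u]=[uw']-\lan u\ran[w']+\lan u^{-1}-1\ran[w'z]+\lan 1-u\ran[z]
\]
in $\widetilde{\RP}(F)$, and both sides lie in $\LL_v$. Projecting to $(\LL_v\half)_{\chi_\delta}$ for $\delta\in\{+,-\}$ and using $\chi(u)=1$, $\chi_\delta(1-u)=1$, and $\chi_\delta(u^{-1}-1)=\chi(u)\chi_\delta(1-u)=1$, all coefficients collapse to $\pm 1$ and I obtain
\[
[u]_{\chi_\delta}=[uw']_{\chi_\delta}-[w']_{\chi_\delta}+[w'z]_{\chi_\delta}+[z]_{\chi_\delta}
\]
with the \emph{same} integer coefficients in the $\chi_+$ and $\chi_-$ components.

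To finish, I verify that each of the four elements $uw',w',w'z,z$ is obtainable: $uw'$ and $w'$ both have $\chi$-value $-1$ and are covered by Corollary \ref{cor:-1}; since $\chi(w'z)=-\chi(z)$, one of $z,w'z$ has $\chi$-value $+1$ (hence is covered by Corollary \ref{cor:odd1}, as its $\ell$-value is odd) and the other has $\chi$-value $-1$ (hence is covered by Corollary \ref{cor:-1}). Because the $\z$-coefficients of the displayed identity agree in the two components, both $([u]_{\chi_+},0)$ and $(0,[u]_{\chi_-})$ are $\z$-linear combinations of elements of $\im(\alpha_\chi)$, completing the proof. The delicate point is the choice of $w'$: one must ensure that no auxiliary term in the five-term relation falls back into the unresolved case, and requiring $\ell(w')$ to be odd and strictly greater than $\ell(u)$ with $\chi(w')=-1$ achieves precisely this, regardless of the \emph{a priori} unknown value of $\chi(z)$.
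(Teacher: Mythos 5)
Your proof is correct, but it takes a genuinely different route from the paper's: you model the auxiliary choice on the construction used in the proof of Lemma \ref{lem:bconst}, taking $w'$ with $\ell(w')$ odd and \emph{large} (greater than $n=\ell(u)$) and $z=(1-u)/(1-uw')$, whereas the paper takes $w$ with $\ell(w)=1$ (\emph{small}, with no constraint on $\chi(w)$) and $z=(1-w)/(1-uw)$. Concretely: with the paper's choice, all four auxiliary terms $w,uw,z,uz$ automatically have odd $\ell$-value ($1,1,n-1,n-1$), so obtainability is immediate from Corollaries \ref{cor:odd1} and \ref{cor:-1} without any $\chi$-bookkeeping. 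With your choice, $uw'$ has $\ell(uw')=n$ even, so you are forced to impose $\chi(w')=-1$ precisely to push $uw'$ into the reach of Corollary \ref{cor:-1} (had you allowed $\chi(w')=1$, $uw'$ would land back in the unresolved even-$\ell$, $\chi=1$ case); you correctly identified and handled this delicate point. Two minor remarks: the preliminary reduction to $\chi_+(1-u)=\chi_-(1-u)=1$, and the observation that the coefficients match across the two components, are both superfluous. Even with $\chi_\delta$-dependent coefficients, the fact that each auxiliary element $x$ is obtainable means that both $([x]_{\chi_+},0)$ and $(0,[x]_{\chi_-})$ lie in $\im(\alpha_\chi)$ individually, so any $\z$-linear combination (with possibly different coefficients in the two slots) stays in $\im(\alpha_\chi)$. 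The paper implicitly uses exactly this and therefore writes its relation with undetermined $\pm$ signs. So your argument carries extra scaffolding the paper avoids, but the logic is sound and the verification of $\ell(z)=\ell(w'z)=m-n$ and of the five-term identity is carried out correctly.
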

\begin{proof} Choose any $w$ with $\ell(w)=1$. Since $\ell(u)\geq 2$, $\ell(uw)=\ell(w)=1$.  Consider
$z:= (1-w)/(1-uw)$. Then $z\in U_{1,A}$ and $\ell(z)=\ell(u)-1$ which is odd. Similarly $uz\in U_{1,A}$ and 
$\ell(uz)=\ell(u)-1$.

Given $\delta\in \{+,-\}$, in ${\LL_v\half}_{\chi_\delta}$ we have relations 
\[
{[u]}_{\chi_\delta}=\pm{[w]}_{\chi_\delta}\pm{[uw]}_{\chi_\delta}\pm{[uz]}_{\chi_\delta}\pm{[z]}_{\chi_\delta}
\]
and since all the terms on the right are obtainable (since they satisfy $\ell(x)$ is odd), it follows that $u$ is 
obtainable also. 
\end{proof}

Thus we have completed the proof of Theorem \ref{thm:4.2}.  

\subsection{Proof of Theorem \ref{mainthm}}
We will need the following result from $K$-theory:
\begin{lem}\label{lem:kind}
Let $A$ be a discrete valuation ring with field of fractions $F$ and residue field $k$. Then the homomorphism 
$K_3^\ind(A)\to K_3^\ind(F)$ is surjective.
\end{lem}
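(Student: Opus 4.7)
The plan is to deduce this from the Quillen localization long exact sequence for the DVR $A$, combined with the standard compatibility between the tame symbol in Milnor $K$-theory and the boundary in Quillen $K$-theory.

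First, the Gersten--Quillen localization sequence yields
$$
K_3(A)\arr K_3(F)\overset{\partial}{\arr}K_2(k)\arr K_2(A),
$$
so $\coker\!\big(K_3(A)\to K_3(F)\big)$ is identified with $\im(\partial)\subseteq K_2(k)$. On the Milnor side, the tame symbol $\tau:K_3^M(F)\to K_2^M(k)$ is surjective (a set-theoretic section sends $\{\alpha,\beta\}$ with $\alpha,\beta\in k^\times$ to $\pm\{\pi,\tilde\alpha,\tilde\beta\}$ for lifts $\tilde\alpha,\tilde\beta\in A^\times$), and Matsumoto's theorem gives $K_2^M(k)\cong K_2(k)$. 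Moreover, the square
$$
\begin{array}{ccc}
K_3^M(F)&\overset{\tau}{\arr}&K_2^M(k)\\
\downarrow & & \downarrow\wr\\
K_3(F)&\overset{\partial}{\arr}&K_2(k)
\end{array}
$$
commutes up to sign.

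Then I apply the snake lemma to
$$
\begin{array}{ccccccc}
K_3^M(A)&\arr&K_3(A)&\arr&K_3^\ind(A)&\arr &0\\
\downarrow & & \downarrow & & \downarrow & & \\
K_3^M(F)&\arr&K_3(F)&\arr&K_3^\ind(F)&\arr &0
\end{array}
$$
to obtain the exact sequence
$$
\coker\!\big(K_3^M(A)\to K_3^M(F)\big)\overset{\sigma}{\arr}\coker\!\big(K_3(A)\to K_3(F)\big)\arr \coker\!\big(K_3^\ind(A)\to K_3^\ind(F)\big)\arr 0.
$$
Under the identifications of the previous paragraph, $\sigma$ is induced by the tame symbol $\tau$ mapping into $K_2(k)$. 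Since $\tau$ surjects onto $K_2(k)$ and $\im(\partial)\subseteq K_2(k)$, it follows that $\sigma$ surjects onto $\im(\partial)=\coker(K_3(A)\to K_3(F))$. Hence the rightmost cokernel vanishes, yielding the desired surjectivity. The main thing to check carefully is the tame-symbol/boundary compatibility, for which a reference must be cited; the remainder is a formal diagram chase.
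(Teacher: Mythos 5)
Your proof is correct and takes essentially the same route as the paper: it also deduces the surjectivity from the two localization sequences (Quillen and Milnor) via Matsumoto's theorem and the surjectivity of the Milnor-theory tame symbol, organized as a diagram chase. The paper phrases it as a single commutative diagram with exact rows and columns rather than invoking the snake lemma by name, but the underlying argument — and the compatibility of the tame symbol with the boundary that you rightly flag as the point needing a citation — is the same.
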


\begin{proof} Consider the following commutative diagram with exact columns:
\[
\begin{tikzcd}
K_3^M(A)\ar[r]\ar[d]&K_3^M(F)\ar[r]\ar[d]&K_2^M(k)\ar[d, "\cong"]\ar[r]&0\\
K_3(A)\ar[r]\ar[d]&K_3(F)\ar[r]\ar[d]&K_2(k)\ar[r]&0\\
K_3^\ind(A)\ar[r]\ar[d]&K_3^\ind(F)\ar[d]&&\\
0&0&&\\
\end{tikzcd}
\]
The rows are exact by the localization exact sequences for $K$-theory and Milnor $K$-theory, together with the fact that the homomorphism 
$K_3^M(F)\to K_2^M(k)$ is surjective. (For a proof, see \cite[V.6.6.2]{weibel2013}.) The indicated right vertical arrow is an isomorphism by
Matsumoto's theorem. The lemma follows immediately. 
\end{proof}

Now let 
\[
\Delta_\pi:H_3(\SL_2(F),\z) \arr \widetilde{\RP}_1(k)
\]
be the composite $H_3(\SL_2(F),\z)\arr \RP_1(F)\overset{\delta_\pi}{\larr} \widetilde{\RP}_1(k)$.
By \cite[Lemma 7.1]{hutchinson2017}, the composite
\[
\begin{array}{c}
H_3(\SL_2(A),\z\half) \arr H_3(\SL_2(F),\z\half) \overset{\Delta_\pi}{\larr} \widetilde{\RP}_1(k)\half
\end{array}
\]
is the zero map.  Thus we have the commutative diagram
\[
\begin{tikzcd}
\II_A\RP_1(A)\bigg[ \frac{1}{2}\bigg]\ar[r]\ar[d]&\II_F\RP_1(F)\bigg[\frac{1}{2}\bigg]\ar[r,"{\delta_\pi}"]
\ar[d]&\RP_1(k)\bigg[ \frac{1}{2}\bigg]\ar[r]\ar[d, "="]&0\\
H_3(\SL_2(A),\z\bigg[ \frac{1}{2}\bigg])\ar[r]\ar[d]&H_3(\SL_2(F),\z\bigg[ \frac{1}{2}\bigg])\ar[r, "{\Delta_\pi}"]\ar[d]&
\RP_1(k)\bigg[\frac{1}{2}\bigg]\ar[r]&0\\
K_3^\ind(A)\ar[r, two heads]\ar[d]&K_3^\ind(F)\ar[d]&&\\
0&0&&\\
\end{tikzcd}
\]
in which the columns are exact (by Proposition \ref{hutchinson4}). The top row is exact (by Theorem \ref{thm:4.2}) and the 
middle row is a complex. Since the bottom horizontal arrow is surjective by Lemma \ref{lem:kind}, a straightforward diagram 
chase establishes the exactness of the middle row. Thus Theorem \ref{mainthm} is proved.

\section{The Mayer-Vietoris exact homology sequence of  \texorpdfstring{$\SL_2(A)$}{Lg}}\label{mv}

As in Section \ref{sec:RSCG}, let $A$ be a discrete valuation ring with maximal ideal $\mmm_A$. Let $k$, $F$, $v=v_A$ 
be the residue field, field of fractions and discrete valuation of $A$. Let $\pi$ be a fixed choice of uniformizer.

In this section we review the relevant facts about the tree associated to the discrete valuation $v$, the resulting decomposition of $\SL_2(F)$ as an amalgamated product and the Mayer-Vietoris exact sequence in homology. We show that Mayer-Vietoris is naturally a sequence of $\RR_F$-modules and explicitly describe this action on the relevant terms of the sequence.    

\subsection{Rank two lattices and the associated tree}
Let $F^2$ be the usual $F$-vector space of dimension $2$.
Any rank two free $A$-submodule of $F^2$ is called a {\it lattice}. 
If $x\in \ff$ and $L$ is a lattice of $F^2$, $Lx$ is also a lattice of $F^2$. 
Thus $\ff$ acts on the set of lattices of $F^2$. 

Two lattices $L$ and $L'$ are said to be
equivalent if there is an element $x\in \ff$ such that $L=L'x$. In other words, two lattices are equivalent if
their orbits under the action of $\ff$ are the same. The set of equivalence classes of lattices of
 $F^2$  will be denoted by $V$. The following review  follows the standard account in  \cite{serre1980}.

Let $L$ and $L'$ be two lattices of $F^2$. By the Invariant Factor Theorem, there is an 
$A$-basis $\{{\bf e}, {\bf f}\}$ of $L$ and integers $a, b$ such that $\{{\bf e}\pi^a, {\bf f}\pi^b\}$ is an
$A$-basis for $L'$. The integers $a,b$ does not depend on the choice of bases
for $L, L'$ and $|a-b|$ depends only on the equivalence classes $\Lambda, \Lambda'$
of $L$ and $L'$. Furthermore $L'\subseteq L$ if and only if $a,b \in \z_{\geq 0}$, in which case
\[
L/L'\simeq A/\pi^aA\oplus A/\pi^bA.
\]

The distance between two classes $\Lambda, \Lambda'\in V$ is defined as
\[
d(\Lambda, \Lambda'):=|a-b|.
\]
If $L$ is a lattice, each class $\Lambda' \in V$ has exactly one representative 
$L'$, satisfying $L'\subseteq L$ and $L'\nsubseteq L\pi$ (or equivalently $L'\subseteq L$
and $L'$ is maximal in $\Lambda'$ with this property). In this case we have $L/L'\simeq A/\pi^n A$,
where $n=d(\Lambda, \Lambda')$. In particular
\par (i) $d(\Lambda, \Lambda')=0$ if and only if $\Lambda=\Lambda'$,
\par (ii) $d(\Lambda, \Lambda')=1$ if and only if there are representatives $L'\subseteq L$
of $\Lambda'$ and $\Lambda$ such that $L/L'\simeq k$.

Two elements $\Lambda, \Lambda'$ of $V$ are siad to be  \textit{adjacent} if $d(\Lambda, \Lambda')=1$. In
this way we can define a combinatorial graph structure on $V$. We denote this graph, whose vertices are the  elements of $V$, by $T_V$. The structure of this graph is known:

\begin{thm}$($\cite[pp. 70-72]{serre1980}$)$\label{tree}
The graph $T_V$ is a tree. Moreover the edges with origin $\Lambda$ correspond
bijectively to the points of $\Pp^1(k)$.
\end{thm}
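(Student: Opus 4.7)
The plan is to prove the two claims in sequence: first establish the bijection between edges with origin $\Lambda$ and $\Pp^1(k)$, which identifies the local structure at each vertex; then use this description together with the distance function $d$ to verify both connectedness and absence of cycles. For the local structure, fix $L\in\Lambda$. As recalled just before the theorem, every class $\Lambda'$ with $d(\Lambda,\Lambda')=1$ contains a unique representative $L'$ with $L\pi\se L'\se L$ and $L/L'\cong k$, so the image $L'/L\pi$ is a $k$-line in $L/L\pi\cong k^2$, i.e.\ a point of $\Pp^1(k)$. Conversely, given a line $V\se L/L\pi$, its preimage $L_V$ satisfies $L\pi\subsetneq L_V\subsetneq L$ with $L/L_V\cong k$ and is the canonical representative of its class inside $L$. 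Distinct lines give distinct $L_V$, and an equivalence $L_{V_1}=L_{V_2}x$ forces $v(x)=0$ (by maximality of each inside $L$), hence $L_{V_1}=L_{V_2}$. This gives the desired bijection, and in particular the valence at every vertex is $|\Pp^1(k)|$.

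For connectedness, given $\Lambda,\Lambda'$ with $d(\Lambda,\Lambda')=n$, pick representatives $L'\se L$ and a basis $\{e,f\}$ of $L$ for which $\{e, f\pi^n\}$ is a basis of $L'$. The intermediate lattices $L_i:= Ae+Af\pi^i$ for $i=0,\ldots,n$ form a chain $L=L_0\supsetneq L_1\supsetneq\cdots\supsetneq L_n=L'$ with each $L_i/L_{i+1}\cong k$, giving an edge-path in $T_V$ of length $n$ from $\Lambda$ to $\Lambda'$.

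For the absence of cycles, the key claim is that every non-backtracking edge-path $\Lambda_0,\Lambda_1,\ldots,\Lambda_n$ satisfies $n=d(\Lambda_0,\Lambda_n)$: a nontrivial closed non-backtracking loop would then have $n\geq 1$ while simultaneously $d(\Lambda_0,\Lambda_n)=0$, a contradiction. Starting from any $L_0\in\Lambda_0$, successively pick $L_{i+1}$ to be the canonical representative of $\Lambda_{i+1}$ inside $L_i$, so that $L_i\pi\se L_{i+1}\se L_i$. I would then prove by induction on $i$ that non-backtracking is equivalent to the existence of a basis $\{e,f\}$ of $L_0$ (allowed to evolve with $i$) such that $L_j=Ae+Af\pi^j$ for all $j\leq i$. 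The crucial observation is that for such a presentation of $L_i$, the quotient $L_i/L_i\pi\cong k^2$ contains a distinguished line, namely the image of $Af\pi^i$, whose preimage in $L_i$ is precisely $L_{i-1}\pi$; hence non-backtracking at step $i$ (i.e.\ $\Lambda_{i+1}\neq\Lambda_{i-1}$) is equivalent to $L_{i+1}/L_i\pi$ being any other line, after which a change of basis $e\leadsto e':= e+\gamma f\pi^i$ (for a suitable lift $\gamma\in A$) puts $L_{i+1}=Ae'+Af\pi^{i+1}$ while preserving the form of all $L_j$ with $j\leq i$. Granted the induction, $L_0/L_n$ has invariant factors $(0,n)$, so $d(\Lambda_0,\Lambda_n)=n$ as required. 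The main obstacle is precisely this inductive step: one must correctly identify the forbidden line in $L_i/L_i\pi$ as the one arising from $L_{i-1}\pi$, check that avoiding it is the sole algebraic content of non-backtracking, and verify that the inductive change of basis is consistent with all previously chosen $L_j$'s.
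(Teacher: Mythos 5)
Your proof is correct, and it follows essentially the same strategy as the cited source (Serre, \emph{Trees}, II.1.1): identify the link of a vertex with $\Pp^1(k)$ via lines in $L/L\pi$, build geodesics from chains $L_i=Ae+Af\pi^i$, and then show that non-backtracking paths realize the distance by an inductive change of basis that pins the forbidden line at step $i$ to the image of $L_{i-1}\pi$ in $L_i/L_i\pi$. The paper itself does not reproduce this argument but defers to Serre, and your write-up is a faithful reconstruction of that argument; the inductive step you flag as the main obstacle does go through exactly as you sketch it, since $Ae'+Af\pi^j=Ae+Af\pi^j$ for $j\le i$ when $e'=e+\gamma f\pi^i$.
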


\subsection{Amalgamated product decomposition}

The action of $G:=\SL_2(F)$ on $V$ has two orbits say with representatives
$\Lambda_0$ and $\Lambda_1$ which are the equivalence classes of $L_0:=A\oplus A$ and 
$L_1:=A\oplus \pi A$, respectively. 

The stabilizer of the vertices $\Lambda_0$, $\Lambda_1$ 
and the edge $(\Lambda_1,\Lambda_0)$ are
\begin{align*}
&{G}_{\Lambda_0}={G}_{L_0}=\{g\in {G}:
g\Lambda_0=\Lambda_0\}=\SL_2(A),\\
& {G}_{\Lambda_1}={G}_{L_1}=\{g\in {G}:
g\Lambda_1=\Lambda_1\}=\SL_2(A)^{g_\pi}
=:g_\pi\SL_2(A)g_\pi^{-1},\\
& {G}_{(\Lambda_1,\Lambda_0)}={G}_{(L_1,L_0)}=\{g \in{G} : 
g(\Lambda_1,\Lambda_0)=(\Lambda_1,\Lambda_0)\}
={G}_{\Lambda_1}\cap {G}_{\Lambda_0}=:\Gamma_0(\mmm_A),
\end{align*}
where $g_\pi={\mtxx 0 {-1} {\pi} 0}$ \cite[Section 1.3, Chap. II]{serre1980}. Observe that
\[
g_\pi\SL_2(A)g_\pi^{-1}=\bigg\{ {\mtxx a {b\pi} {\pi^{-1}c} d}|{\mtxx a b c d}\in \SL_2(A)\bigg\},
\]
\[
\Gamma_0(\mmm_A)=\bigg\{ {\mtxx a b c d}\in \SL_2(A)| c\in \mmm_A\bigg\}.
\]
Note that for $h_\pi:={\mtxx {\pi} 0 0 1}$, we have  ${G}_{\Lambda_1}=
h_\pi^{-1}\SL_2(A)h_\pi=\SL_2(A)^{h_\pi^{-1}}$.
Serre's theory of Trees \cite[Chap. II]{serre1980} 
allows us to deduce:

\begin{thm}(Ihara)\label{amalgamated} The group $\SL_2(F)$ is the sum of the 
subgroups $\SL_2(A)$ and $\SL_2(A)^{g_\pi}$ amalgamated along their common 
intersection $\Gamma_0(\mmm_A)$:
\[
\SL_2(F)= \SL_2(A)\ast_{\Gamma_0(\mmm_A)} \SL_2(A)^{g_\pi}.
\]
\end{thm}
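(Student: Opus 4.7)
I would deduce Ihara's theorem from Serre's general structure theorem for groups acting on trees without inversion (\cite{serre1980}, Chap.~I, Th\'eor\`eme~6): if $G$ acts on a tree $T$ without inversion and $G\backslash T$ is a single edge with endpoints $\bar P,\bar Q$ and edge $\bar y$, then $G\cong G_P *_{G_y} G_Q$ for any lift $(P,Q,y)$ of this edge. Given Theorem \ref{tree} and the stabilizer computations immediately preceding the statement, only two facts remain to be verified: (a) the action of $\SL_2(F)$ on $T_V$ has no inversions, and (b) the quotient graph $\SL_2(F)\backslash T_V$ is the single edge $(\Lambda_1,\Lambda_0)$.

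For (a), I would exploit a parity invariant on $V$. Each class $\Lambda\in V$ has a representative $L=M\cdot L_0$ with $M\in\GL_2(F)$, and the residue $v(\det M)\bmod 2$ depends only on $\Lambda$, since scaling $L$ by $\pi$ multiplies $\det M$ by $\pi^2$. Since elements of $\SL_2(F)$ have determinant $1$, the action of $\SL_2(F)$ preserves this parity. On the other hand, the Invariant Factor description of the distance (recalled before Theorem \ref{tree}) shows that $d(\Lambda,\Lambda')=1$ forces $\Lambda$ and $\Lambda'$ to have opposite parities. Hence no element of $\SL_2(F)$ can swap two adjacent vertices, which rules out inversions.

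For (b), the parity argument already produces at least two vertex orbits, represented by $\Lambda_0$ (parity $0$) and $\Lambda_1$ (parity $1$). To see that there are no others, observe that $\SL_2(A)=G_{\Lambda_0}$ acts on the set of edges emanating from $\Lambda_0$ (which by Theorem \ref{tree} is in bijection with $\Pp^1(k)$) via the reduction $\SL_2(A)\twoheadrightarrow\SL_2(k)$, and $\SL_2(k)$ is transitive on $\Pp^1(k)$; the analogous statement holds at $\Lambda_1$ using $G_{\Lambda_1}=\SL_2(A)^{g_\pi}$. Induction on the distance from $\Lambda_0$ (resp.\ $\Lambda_1$) then shows that the $\SL_2(F)$-orbit of $\Lambda_0$ (resp.\ $\Lambda_1$) exhausts all vertices of even (resp.\ odd) parity. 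Simultaneously, the transitivity of $\SL_2(A)$ on the edges out of $\Lambda_0$ shows that there is a single edge orbit, represented by $(\Lambda_1,\Lambda_0)$.

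With (a) and (b) in hand, Serre's theorem yields the amalgamated product decomposition with factors $\SL_2(A)$ and $\SL_2(A)^{g_\pi}$ amalgamated along $\Gamma_0(\mmm_A)$, using the stabilizer identifications already given. The main obstacle in this argument is the vertex-orbit analysis in (b); however, it reduces cleanly to the two standard ingredients that $\SL_2(k)$ acts transitively on $\Pp^1(k)$ and that parity is a complete $\SL_2(F)$-invariant on $V$, both of which are straightforward.
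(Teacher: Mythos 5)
Your proof is correct, and it takes the same route as the paper, which simply records the orbit structure, computes the stabilizers, and then invokes Serre's theory of trees \cite[Chap.~II]{serre1980}; you have filled in the verification (no inversions, exactly two vertex orbits, one edge orbit) that the paper leaves to the cited reference. Your parity invariant $v(\det M)\bmod 2$ is exactly the invariant Serre uses (it reappears in the paper as the function $\epsilon(g)$ a few lines later), and the transitivity of $\SL_2(k)$ on $\Pp^1(k)$ together with induction on distance is the standard way to pin down the quotient graph, so there is nothing to fault.
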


\subsection{The action of \texorpdfstring{$\GL_2(F)$}{Lg} on the singular complex of the tree}

Let $G$ denote $\SL_2(F)$, as above, and let $\widetilde{G}=\GL_2(F)$. 
The group $\widetilde{G} $ acts transitively on the set of vertices, $V$, of the tree $T_V$. 
The matrix $g_\pi={\mtxx 0 {-1} {\pi} 0}\in \widetilde{G}$ transforms $L_0$ into $L_1$ and 
$L_1$ into $L_0\pi$. Therefore it transforms $\Lambda_0$ to  $\Lambda_1$,
 $\Lambda_1$ to  $\Lambda_0$ and the edge
$(\Lambda_0,\Lambda_1)$ into its opposite $(\Lambda_1,\Lambda_0)$. 
The collection 
\[
E^+:=\{(g\Lambda_1,g\Lambda_0)|g\in G\}
\]
gives a set of oriented edges of $T_V$. Since $T_V$ is contractible, its singular complex
\[
0 \arr \z[E^+] \arr \z[V] \arr \z \arr 0
\]
is an exact sequence of $\z[G]$-modules. We let $\widetilde{G}$ act on $\z[E^+]$ as follows:
\[
g.(\Lambda,\Lambda'):=
\begin{cases}
+(g\Lambda,g\Lambda') & \text{if $(g\Lambda,g\Lambda')\in E^+$}\\
-(g\Lambda,g\Lambda') & \text{if $(g\Lambda',g\Lambda)\in E^+$}.
\end{cases}
\]
For $g\in \widetilde{G}$, let $\epsilon(g)\in\{0,1\}\subset \z$ be defined by 
\[
v\circ \det(g)\equiv \epsilon(g)\!\!\!\!\pmod 2.
\]
In fact we have

\begin{lem}
If $\Lambda \in V$ and $g\in \widetilde{G}=\GL_2(F)$, then
$v\circ \det(g)\equiv d(\Lambda,g\Lambda) \!\pmod 2$.
\end{lem}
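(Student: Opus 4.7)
My plan is to prove the lemma by choosing a basis of $L$ adapted to $gL$ and then computing $\det(g)$ directly in that basis. To begin, I would fix any lattice $L$ in the class $\Lambda$; then $gL$ is a lattice in the class $g\Lambda$, since $g\in \GL_2(F)$ acts as an $F$-linear isomorphism. I would then apply the Invariant Factor Theorem, in the form recalled at the start of Section \ref{mv}, to the pair $(L, gL)$ to produce an $A$-basis $\{{\bf e}_1, {\bf e}_2\}$ of $L$ and integers $a, b \in \z$ such that $\{\pi^a {\bf e}_1, \pi^b {\bf e}_2\}$ is an $A$-basis of $gL$. By definition, $d(\Lambda, g\Lambda) = |a - b|$.

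The key observation is that $\{g{\bf e}_1, g{\bf e}_2\}$ is \emph{also} an $A$-basis of $gL$, so there exists $P \in \GL_2(A)$ satisfying
\[
[g{\bf e}_1\mid g{\bf e}_2] = [\pi^a{\bf e}_1\mid \pi^b{\bf e}_2]\cdot P.
\]
Reading this equation in the $F$-basis $\{{\bf e}_1, {\bf e}_2\}$ of $F^2$, the matrix of the linear map $g$ in that basis equals $\diag(\pi^a, \pi^b)\cdot P$. Since the determinant of a linear endomorphism is independent of the chosen basis, $\det(g) = \pi^{a+b}\det(P)$, and because $\det(P)\in A^\times$, I conclude
\[
v(\det g) = a+b \equiv |a-b| = d(\Lambda, g\Lambda) \pmod 2.
\]

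I do not anticipate any serious obstacle: both ingredients (the Invariant Factor Theorem applied to arbitrary pairs of lattices and the basis-independence of the determinant) are standard, and the only real risk is a book-keeping error in the direction of the change-of-basis matrix $P$. If one were worried about applying the Invariant Factor Theorem to a pair of lattices without an inclusion between them, one could first reduce to the inclusion case by replacing $g$ with $\pi^N g$ for $N$ large enough, noting that this changes $v(\det g)$ by the even integer $2N$ and does not change the equivalence class $g\Lambda$.
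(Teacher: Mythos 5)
Your proof is correct. The paper does not give an argument for this lemma---it simply cites Serre's \emph{Trees} (Corollary, p. 75)---so there is no ``paper proof'' to compare against in detail; your argument is the standard one underlying Serre's statement: apply the Invariant Factor Theorem to the pair $(L, gL)$, note that the matrix of $g$ in the adapted basis is $\diag(\pi^a,\pi^b)P$ with $P\in\GL_2(A)$, and use basis-independence of the determinant to get $v(\det g)=a+b\equiv|a-b|=d(\Lambda,g\Lambda)\pmod 2$. The book-keeping is right: writing $B=[\mathbf{e}_1\mid\mathbf{e}_2]$, the relation $gB=B\,\diag(\pi^a,\pi^b)\,P$ gives $B^{-1}gB=\diag(\pi^a,\pi^b)P$ as claimed. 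The closing remark about reducing to $gL\subseteq L$ by multiplying $g$ by a power of $\pi$ is sound but unnecessary, since the Invariant Factor Theorem as recalled in the paper already applies to an arbitrary pair of lattices.
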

\begin{proof}
See \cite[Corollary, p. 75]{serre1980}.
\end{proof}

Thus if $e \in E^+$, $g \in \widetilde{G}$ we have
$
g.e=
\begin{cases}
+ge & \text{if $\epsilon(g)=0$}\\
-ge & \text{if $\epsilon(g)=1$}
\end{cases}
$.
With this action, the above sequence is a sequence of $\z[\widetilde{G}]$-modules.
Thus the corresponding long exact homology sequence
\[
\cdots\arr H_i(G,\z[E^+]) \arr H_i(G,\z[V])\arr H_i(G,\z)\overset{\delta}{\larr}
H_{i-1}(G, \z[E^+])\arr \cdots
\]
is a sequence of $\widetilde{G}$-modules, letting $\widetilde{G}$ act on $G$ 
by conjugation. Since the restriction of this action to $G$ is trivial, it follows that 
the homology sequence is a sequence of $\ff$-modules, because 
$\widetilde{G}/G\overset{\det}{\simeq} \ff$. Since furthermore diagonal matrices 
act trivially, $(\ff)^2$ acts trivially on the sequence. Therefore the above
long exact sequence is a sequence of $\RR_F$-modules.

\subsection{The Mayer-Vietoris exact sequence}\label{MV0}

Let $G_0=\SL_2(A)$, $G_1:=\SL_2(A)^{g_\pi}$ and  $\Gamma=\Gamma_0(\mmm_A)$.
The group $G_i$ is the stabilizer in $G$ of the $\Lambda_i$, for $i=0,1$ and we have bijective correspondences
\[
G/G_0\sqcup G/G_1 \longleftrightarrow V, \ \ \ \  gG_i \leftrightarrow g \Lambda _i, \  \ i=0,1
\]
and 
\[
G/\Gamma \longleftrightarrow E^+, \ \ \ \  \ g\Gamma \leftrightarrow (g\Lambda _1, g\Lambda_0).
\]
With these identifications, the singular complex of $T_V$ has the form
\[
0 \arr \z[G/\Gamma] \arr \z[G/G_0] \oplus \z[G/G_1] \arr \z \arr 0.
\]
The resulting long exact sequence has the form
\[
\cdots\arr H_i(G, \z[G/\Gamma]) \arr H_i(G, \z[G/G_0])\oplus H_i(G, \z[G/G_1]) \arr H_i(G,\z)  
\]
\[
\overset{\delta}{\larr} H_{i-1}(G, \z[G/\Gamma])\arr \cdots.
\]
Hence by the Shapiro Lemma, we have the Mayer-Vietoris homology exact sequence
\[
\begin{CD}
\cdots\larr H_i(\Gamma,\z) @>{(i_\ast,-i_\ast')}>> H_i(G_0, \z)\oplus H_i(G_1, \z) 
@>{j_\ast+j_\ast'}>>  H_i(G,\z) 
\overset{\delta}{\larr} H_{i-1}(\Gamma,\z) \larr \cdots
\end{CD}
\]
which is an exact sequence of $\RR_F$-modules.  Here $i:\Gamma \arr G_0$, 
$i':\Gamma\arr G_1$, 
$j:G_0 \arr G$ and $j':G_1\arr G$ are the usual inclusion maps. Thus we have

\begin{thm}\label{MV1}
For any discrete valuation ring $A$ we have
the Mayer-Vietoris exact sequence of $\RR_F$-modules
\[
\begin{CD}
\cdots \arr H_i(\SL_2(A),\z)\oplus H_i(\SL_2(A),\z) & \overset{\beta}{\larr} &
H_i(\SL_2(F),\z) \overset{\delta}{\larr} H_i(\Gamma_0(\mmm_A),\z)    \overset{\alpha}{\larr}
\end{CD}
\]
\[
\begin{CD}
 H_{i-1}(\SL_2(A),\z)\oplus H_{i-1}(\SL_2(A),\z) & \overset{\beta}{\larr} &
 H_{i-1}(\SL_2(F),\z)\larr \cdots,
\end{CD}
\]
where $\beta(z,z')=j_\ast(z)+\lan \pi\ran j_\ast(z')$ and $\alpha(x)=(i_\ast(x), -i_\ast (\lan \pi\ran\bullet x))$. Here
$i:\Gamma_0(\mmm_A)\arr \SL_2(A)$ and $j:\SL_2(A) \arr \SL_2(F)$  are the usual inclusions and $\lan \pi\ran\bullet x$
is the action of $\lan \pi\ran$ on $H_n(\Gamma,\z)$ induced by conjugation.
\end{thm}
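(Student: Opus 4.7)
The plan is to derive the theorem directly from the Mayer-Vietoris sequence established in Section \ref{MV0}, combined with the conjugation isomorphism identifying $G_1 = g_\pi \SL_2(A) g_\pi^{-1}$ with $\SL_2(A)$, and a careful tracking of how the maps transform under this identification. Starting from the $\RR_F$-module sequence
\[
\cdots\larr H_i(\Gamma,\z) \overset{(i_\ast,-i_\ast')}{\larr} H_i(G_0,\z)\oplus H_i(G_1,\z) \overset{j_\ast+j_\ast'}{\larr} H_i(G,\z)\overset{\delta}{\larr} H_{i-1}(\Gamma,\z)\larr \cdots
\]
of Section \ref{MV0}, conjugation by $g_\pi$ provides an isomorphism $C_{g_\pi}:\SL_2(A)\overset{\sim}{\arr} G_1$, and hence an induced isomorphism $(C_{g_\pi})_\ast: H_i(\SL_2(A),\z)\overset{\sim}{\arr} H_i(G_1,\z)$; the second summand will be identified with $H_i(\SL_2(A),\z)$ via this map.

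For the formula for $\beta$: after precomposition with $(C_{g_\pi})_\ast$, the map $j'_\ast: H_i(G_1,\z)\to H_i(G,\z)$ is induced by the group homomorphism $\SL_2(A)\to G$, $h\mapsto g_\pi h g_\pi^{-1}$, which factors as the inclusion $j:\SL_2(A)\harr \SL_2(F)=G$ followed by conjugation by $g_\pi\in \GL_2(F)$ on $G$. By the definition of the $\RR_F$-module structure on $H_i(G,\z)$ recalled in Section \ref{scg}, conjugation by a matrix of determinant $a$ realises the action of $\lan a\ran$, so since $\det(g_\pi)=\pi$ one obtains $j'_\ast\circ(C_{g_\pi})_\ast=\lan\pi\ran\cdot j_\ast$, giving $\beta(z,z')=j_\ast(z)+\lan\pi\ran j_\ast(z')$.

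For the formula for $\alpha$: the component $-i'_\ast$, post-composed with $(C_{g_\pi^{-1}})_\ast=(C_{g_\pi})_\ast^{-1}$, is induced by the map $\Gamma\to\SL_2(A)$, $h\mapsto g_\pi^{-1}hg_\pi$. A direct matrix computation using $g_\pi=\mtxx{0}{-1}{\pi}{0}$ shows that $g_\pi^{-1}\Gamma g_\pi=\Gamma$, so this factors as the automorphism $C_{g_\pi^{-1}}:\Gamma\overset{\sim}{\arr}\Gamma$ followed by the inclusion $i:\Gamma\harr\SL_2(A)$. The induced map is thus $i_\ast\circ(C_{g_\pi^{-1}})_\ast$, and $(C_{g_\pi^{-1}})_\ast$ realises the action of $\lan\pi^{-1}\ran=\lan\pi\ran$ on $H_i(\Gamma,\z)$, yielding $\alpha(x)=(i_\ast(x),-i_\ast(\lan\pi\ran\bullet x))$. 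The main bookkeeping point to verify is that the $\RR_F$-action on each rewritten term agrees with the one inherited from the original $\widetilde{G}$-equivariant tree sequence via the Shapiro identifications; the element $\lan\pi\ran$ does not come from the natural $\RR_A$-action on the homology of subgroups of $\SL_2(A)$, but enters precisely through conjugation by the matrix $g_\pi$ that bridges the two $\widetilde{G}$-orbits on vertices of the tree.
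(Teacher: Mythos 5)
Your proposal is correct and essentially reproduces the paper's argument: both pass the Mayer--Vietoris sequence for $G_0$, $G_1$, $\Gamma$ through the conjugation isomorphism induced by $g_\pi$ and read off the $\lan\pi\ran$-twist on $j'_\ast$ and $i'_\ast$ from $\det(g_\pi)=\pi$ together with the fact that $g_\pi$ normalises $\Gamma$. The paper encapsulates this in a single commutative diagram relating $C_\pi|_\Gamma$, $C_\pi|_{G_0}$, and $C_\pi$ on $G$, which is exactly the bookkeeping you flag at the end of your argument.
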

\begin{proof}
Based on what we have discussed in the above, to finish the proof we only need to describe the maps $\alpha$ and $\beta$.
If  $C_\pi:G\arr G$, is the conjugation map $C_\pi(g)= g_\pi g g_\pi^{-1}$, then from the
commutative diagram
\[
\begin{tikzcd}
H_i(\Gamma,\z) \ar[r, " i_\ast "] \ar[d, "{(C_\pi |_\Gamma)}_\ast"] &H_i(G_0,\z) \ar[r, " j_\ast "] 
\ar[d, "{(C_\pi |_{G_0})}_\ast"] & H_i(G,\z) \ar[d, "{C_\pi}_\ast"]\\
H_i(\Gamma,\z) \ar[r, " i_\ast ' "] &H_i(G_1,\z) \ar[r, "j\ast' "]                                   &H_i(G,\z)
\end{tikzcd}
\]
we see that $\lan \pi\ran j_\ast (z)=j_\ast'({C_\pi}_\ast (z))$ and $i'(\lan \pi \ran\bullet x)={C_\pi}_\ast\circ i_\ast(x)$.  
If we replace $x$ with $\lan \pi \ran\bullet x$, then we have $i_\ast'(x)={C_\pi}_\ast\circ i_\ast(\lan \pi \ran\bullet x)$.
This completes the proof.
\end{proof}

\subsection{Explicit action of \texorpdfstring{$\GL_2(F)$}{Lg} on the singular complex of the tree}

Now we make explicit the action of $\widetilde{G}$ on (basis elements of)
$\z[G/G_0] \oplus \z[G/G_1]$ and $\z[G/\Gamma]$. For any $g\in \widetilde{G}$, let
\[
\det(g)=\pi^{2s(g)+\epsilon(g)}u_g,
\]
where $s(g)\in \z$, $\epsilon(g)\in \{0,1\}$ and $u_g\in \aa$.  Then we can write 
\[
g= {\mtxx \pi  0 0 \pi}^{s(g)}R(g){\mtxx {u_g} 0 0 1}g_\pi^{\epsilon(g)} ,
\]
where $R(g)\in G$.
As we have seen before $g_\pi.\Lambda _0=\Lambda_1$ and $g_\pi.\Lambda _1=\Lambda_0$.
So using the identification of the sets  $G/G_0 \sqcup G/G_1 \longleftrightarrow V$, we have 
$g_\pi G_0=G_1$ and $g_\pi G_1=G_0$. So $gG_i=R(g)G_{i+\epsilon(g)}$. More generally
\[
g.(xG_i)=R(gx).G_{i+\epsilon(gx)}.
\]
In particular for $\tilde{u}:= {\mtxx u 0 0 1}$, where $u\in \aa$,  we have $\tilde{u}G_i=G_i$, $i=0,1$, since 
$R(\tilde{u})=I_2$ and $\epsilon(\tilde{u})=0$.
Moreover, $g_\pi(\Lambda_1,\Lambda_0)=(\Lambda_0,\Lambda_1)$ and thus 
$g_\pi.\Gamma=-\Gamma$. Using this, we have  $g.\Gamma=(-1)^{\epsilon(g)}R(g).\Gamma$ 
and so, more generally, 
\begin{eqnarray}\label{eq:gamma}
g.(x\Gamma)=(-1)^{\epsilon(gx)}R(gx).\Gamma.
\end{eqnarray}
So with these explicitly defined actions, the exact sequence
\[
0 \arr \z[G/\Gamma] \arr \z[G/G_0] \oplus \z[G/G_1] \arr \z \arr 0,
\]
with the maps $x\Gamma \mapsto (xG_0,-xG_1)$ and $(nxG_0, myG_1)\mapsto n+m$,
is a sequence of $\widetilde{G}$-modules. Hence the resulting Mayer-Vietoris exact sequence, 
obtained by applying the functor $H_\bullet(G, -)$ is a sequence of $\RR_F$-modules.

\subsection{The action of \texorpdfstring{$\RR_F$}{Lg} on \texorpdfstring{$H_\bullet(G_0,\z)\oplus H_\bullet(G_1,\z)$}{Lg} }

Now we study the action of $\GG_F$ on the terms of the Mayer-Vietoris exact sequence.

\begin{prp}\label{switch}
Let $M=\z[G/G_0]\oplus \z[G/G_1]$ and let $C_\pi':G_1\arr G_0$ be given by $g\mapsto g_\pi^{-1}gg_\pi$.
Then with the isomorphism ${C_\pi'}_\ast:H_\bullet(G_1,\z) \arr H_\bullet(G_0,\z)$,
the action of $\lan \pi \ran\in \RR_F$ on $H_\bullet(G,M)\simeq H_\bullet(G_0,\z)\oplus 
H_\bullet(G_0,\z)$ is to interchange the two factors.
\end{prp}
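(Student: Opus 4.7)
The plan is to compute the action of $g_\pi=\mtxx{0}{-1}{\pi}{0}$ on $M$ explicitly and then transfer the calculation through Shapiro's lemma. Since $\det(g_\pi)=\pi$ and, as established in the preceding subsection, $(\ff)^2$ acts trivially on the Mayer--Vietoris sequence, the $\RR_F$-action of $\lan\pi\ran$ is realized by $g_\pi$, so it suffices to describe that single automorphism.

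First, specializing the formula $g\cdot(xG_i)=R(gx)\,G_{i+\epsilon(gx)}$ from the previous subsection to $g=g_\pi$ and $x\in G$ yields $\epsilon(g_\pi x)=1$ and $R(g_\pi x)=g_\pi x g_\pi^{-1}\in G$, hence
\[
g_\pi\cdot(xG_0)=(g_\pi x g_\pi^{-1})G_1, \qquad g_\pi\cdot(xG_1)=(g_\pi x g_\pi^{-1})G_0.
\]
In particular $g_\pi$ interchanges the two summands of $M$ via $\z$-linear maps $\tau:\z[G/G_0]\to\z[G/G_1]$ and $\tau':\z[G/G_1]\to\z[G/G_0]$. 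These maps are not $G$-equivariant but rather \emph{twist-equivariant}: $\tau(h\cdot m)=C_\pi(h)\cdot\tau(m)$ for all $h\in G$, $m\in\z[G/G_0]$, and similarly for $\tau'$ with $C_\pi'$ in place of $C_\pi$.

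Second, invoking the naturality of Shapiro's isomorphism for a pair (group automorphism, twist-equivariant module map), one obtains the commutative square
\[
\begin{tikzcd}
H_\bullet(G,\z[G/G_0]) \ar[r,"\tau_\ast"] \ar[d,"\cong"'] & H_\bullet(G,\z[G/G_1]) \ar[d,"\cong"] \\
H_\bullet(G_0,\z) \ar[r,"{(C_\pi|_{G_0})_\ast}"'] & H_\bullet(G_1,\z)
\end{tikzcd}
\]
together with the analogous square for $\tau'$ in which $(C_\pi'|_{G_1})_\ast$ replaces $(C_\pi|_{G_0})_\ast$; the vertical maps are the Shapiro isomorphisms.

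Third, note that $C_\pi|_{G_0}:G_0\to G_1$ and $C_\pi'|_{G_1}:G_1\to G_0$ are mutually inverse group isomorphisms (as $C_\pi$ and $C_\pi'$ are mutually inverse automorphisms of $\widetilde{G}$). Consequently, under the stated identification ${C_\pi'}_\ast:H_\bullet(G_1,\z)\xrightarrow{\simeq}H_\bullet(G_0,\z)$, both induced maps $(C_\pi|_{G_0})_\ast$ and $(C_\pi'|_{G_1})_\ast$ become the identity on $H_\bullet(G_0,\z)$, and so the action of $g_\pi$ on $H_\bullet(G_0,\z)\oplus H_\bullet(G_0,\z)$ interchanges the two factors, as claimed.

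The main technical obstacle is the commutative square in the second step: it asserts the naturality of the Shapiro isomorphism under a compatible pair (an automorphism of the ambient group together with a module map equivariant up to that automorphism). This is standard but requires some care; the cleanest verification chooses a $\widetilde{G}$-equivariant free $\z G$-resolution $P_\bullet\to\z$ (for example the bar resolution of $\widetilde{G}$ restricted along $G\hookrightarrow\widetilde{G}$) and explicitly tracks the induced action of $g_\pi$ on $P_\bullet\otimes_G\z[G/G_i]$. Once the resolution and induced chain-level maps are written down, the square commutes by construction.
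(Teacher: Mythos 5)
Your proof follows the same route as the paper's: both pass the $g_\pi$-action through Shapiro's isomorphism at the chain level, and the ``naturality for a twist-equivariant pair'' you invoke in step two is exactly what the paper makes explicit via the chain map $x\otimes m\mapsto xg_\pi^{-1}\otimes g_\pi m$ and its two commutative diagrams of complexes. One small slip to flag: the twist for $\tau'$ is $C_\pi$ (conjugation by $g_\pi$), not $C_\pi'$. Both $\tau$ and $\tau'$ are restrictions of the single $g_\pi$-action, so both satisfy the same twist-equivariance; explicitly, $\tau'(h\cdot xG_1)=(g_\pi h x g_\pi^{-1})G_0=C_\pi(h)\cdot\tau'(xG_1)$. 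This does not break your argument, because what feeds into the Shapiro naturality square is only the restriction of the twist to the stabilizer $G_1$, and there $C_\pi$ and $C_\pi'$ coincide: for $g=g_\pi h g_\pi^{-1}\in G_1$ one has $C_\pi(g)=g_\pi^2 h g_\pi^{-2}=h=C_\pi'(g)$, using that $g_\pi^2=-\pi I_2$ is central in $\GL_2(F)$. So the bottom horizontal map $(C_\pi'|_{G_1})_*$ of your second square is correct after all, but the equivariance formula you wrote to justify it is not; in a setting where the analogue of $g_\pi^2$ fails to be central, the distinction would matter.
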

\begin{proof}
First let study the action of $\GG_F$ on $H_\bullet(G,M)$ on the level of chain complex. 
Let $P_\bullet \arr \z$ be a right resolution of $\z$ over $\widetilde{G}$. 
Let $a\in \ff$ and choose $g\in \widetilde{G}$ with $\det(g)=a$. Then the multiplication 
by $\lan a\ran$ is given at the level of chains by the morphism
\[
P_\bullet \otimes_G M\arr P_\bullet \otimes_G M, \ \ \ \ \  x\otimes m\mapsto  xg^{-1}\otimes gm.
\]
It follows that $u\in \aa$ acts in the standard way, i.e. via conjugation by
$\tilde{u}$,  on each of the factors $H_\bullet(G_0,\z)$ and $H_\bullet(G_1,\z)$. 
Furthermore the isomorphism
\[
H_\bullet(G_1,\z) \overset{\simeq}{\larr} H_\bullet(G_0,\z)
\]
induced by ${C_\pi'}_\bullet :P_\bullet \otimes_{G_1} \z\arr 
P_\bullet \otimes_{G_0}\z $,  $ x\otimes 1\mapsto  xg_\pi\otimes 1$
is an $\aa$-module isomorphism, since $\det (g_\pi\tilde{u}g_\pi^{-1})=\det(\tilde{u})=u$.

Now since $\det(g_\pi)=\pi$, we look at the action of $g_\pi$ on $P_\bullet \otimes_G M$. 
Let $i_1$ and $i_2$ be the
inclusions $\z[G/G_0]\arr M$ and $\z[G/G_1]\arr M$. The diagrams 
\[
\begin{tikzcd}
P_\bullet \otimes_G M \ar[r, "{C_\pi'}_\bullet"]                      & P_\bullet \otimes_G M                                                             
&                                                                                &\\
P_\bullet \otimes_{G} \z[G/G_0] \ar[r]\ar[u, "{i_1}_\bullet"]
& P_\bullet \otimes_G \z[G/G_1]\ar[u, "{i_2}_\bullet"]             
&  x\otimes G_0 \ar[r, mapsto]                              & xg_\pi^{-1}\otimes G_1\\
& P_\bullet \otimes_{G_1} \z \ar[u,"\simeq"]\ar[d,"\simeq"]    &                                                                                 
& xg_\pi^{-1}\otimes 1\ar[u, mapsto]\ar[d,mapsto]\\
P_\bullet \otimes_{G_0} \z \ar[r] \ar[uu]                                 & P_\bullet \otimes_{G_0} \z                                                       
&  x\otimes 1 \ar[r, mapsto, "="] \ar[uu, mapsto] & x\otimes 1
\end{tikzcd}
\]
and 
\[
\begin{tikzcd}
P_\bullet \otimes_G M \ar[r, "{C_\pi'}_\bullet"]                       & P_\bullet \otimes_G M                                                             
&                                                                                             &\\
P_\bullet \otimes_{G} \z[G/G_1] \ar[r]\ar[u, "{i_2}_\bullet"]   
& P_\bullet \otimes_G \z[G/G_0]\ar[u, "{i_1}_\bullet"]            
&  xg_\pi^{-1}\otimes G_1 \ar[r, mapsto]                          & x\otimes G_0\\
P_\bullet \otimes_{G_1} \z \ar[u, "\simeq"]\ar[d,  "\simeq"]   &   
&    xg_\pi^{-1}\otimes 1\ar[u, mapsto]\ar[d, mapsto]     & \\
P_\bullet \otimes_{G_0} \z \ar[r]            & P_\bullet \otimes_{G_0} \z  \ar[uu]                                         
&  x\otimes 1 \ar[r, mapsto, "="]     & x\otimes 1\ar[uu, mapsto]
\end{tikzcd}
\]
commute. This implies the claim.
\end{proof}

\begin{cor}
For any $n$, there is an isomorphism of $\RR_F$-modules  
\[
H_n(G,\z[G/G_0]\oplus \z[G/G_1])\simeq \Ind_{A}^{F} H_n(G_0,\z)=\RR_F \otimes_{\RR_A} H_n(G_0,\z).
\]
 In particular the Mayer-Vietoris exact 
sequence has the following form as an sequence of $\RR_F$-modules
\[
\begin{CD}
\cdots \arr \Ind_A^F H_i(\SL_2(A),\z) & \overset{\beta}{\arr} & H_i(\SL_2(F),\z)\! \overset{\delta}{\arr} H_i(\Gamma_0(\mmm_A),\z)    
\overset{\alpha}{\arr}\Ind_A^F H_{i-1}(\SL_2(A),\z) & \overset{\beta}{\arr} \cdots
\end{CD}
\]
\end{cor}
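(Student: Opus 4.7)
The plan is to combine Shapiro's lemma with Proposition \ref{switch} and an explicit structural description of $\RR_F$ as a free module over $\RR_A$. First I would apply Shapiro's lemma to each summand: since $G_i$ is the stabilizer of $\Lambda_i$ in $G$, one has
\[
H_n(G,\z[G/G_0]\oplus\z[G/G_1])\simeq H_n(G_0,\z)\oplus H_n(G_1,\z),
\]
and then, via the isomorphism ${C_\pi'}_\ast:H_n(G_1,\z)\overset{\simeq}{\arr}H_n(G_0,\z)$ induced by conjugation by $g_\pi$, I would rewrite the right-hand side as $H_n(G_0,\z)\oplus H_n(G_0,\z)$.

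The next step is to read off the $\RR_F$-module structure on this sum. Proposition \ref{switch} provides the essential input: after these identifications, each $\lan u\ran$ with $u\in\aa$ acts as its standard $\RR_A$-action on both summands separately, while $\lan\pi\ran$ interchanges the two copies of $H_n(G_0,\z)$.

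To recognize this as $\Ind_A^F H_n(G_0,\z)$, I would use the fact that the valuation $v:\ff\to\z$ is split by the choice of uniformizer $\pi$, so $\ff\simeq\aa\times\lan\pi\ran$ and hence $\GG_F\simeq\GG_A\times\{1,\lan\pi\ran\}$. Consequently $\RR_F$ is free of rank two as an $\RR_A$-module on the basis $\{1,\lan\pi\ran\}$, and
\[
\Ind_A^F M=\RR_F\otimes_{\RR_A}M\simeq M\oplus M
\]
as $\RR_A$-modules for any $\RR_A$-module $M$; under this decomposition, multiplication by $\lan\pi\ran\in\RR_F$ swaps the two summands (since $\lan\pi\ran^2=1$ in $\GG_F$). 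Matching this with the description supplied by Proposition \ref{switch} yields the desired natural isomorphism of $\RR_F$-modules.

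The ``in particular'' clause then follows by inserting this identification into the Mayer-Vietoris sequence of Theorem \ref{MV1}. There is no real obstacle here: the corollary is an assembly of Shapiro's lemma, Proposition \ref{switch}, and the structural splitting $\RR_F\cong\RR_A\oplus\RR_A\lan\pi\ran$, and all the nontrivial content has already been extracted in Proposition \ref{switch}. The only care required is to verify that the two $\RR_F$-actions on $H_n(G_0,\z)\oplus H_n(G_0,\z)$ agree on generators, which is immediate since both are determined by the action of $\aa$ (by conjugation on each factor) and the swap action of $\lan\pi\ran$.
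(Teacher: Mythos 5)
Your proof is correct and follows the same line the paper leaves implicit: Shapiro's lemma to identify $H_n(G,\z[G/G_0]\oplus\z[G/G_1])$ with $H_n(G_0,\z)\oplus H_n(G_1,\z)$, the isomorphism ${C_\pi'}_\ast$ to collapse the two summands to $H_n(G_0,\z)\oplus H_n(G_0,\z)$, Proposition \ref{switch} for the swap action of $\lan\pi\ran$, and the observation that $\GG_F\simeq\GG_A\times\lan\pi\ran$ makes $\RR_F$ free of rank two over $\RR_A$ with the same swap structure on $\Ind_A^F M$. The paper states the corollary without proof precisely because all the content is in Proposition \ref{switch}; your write-up simply makes the splitting $\RR_F\cong\RR_A\oplus\RR_A\lan\pi\ran$ explicit and checks it against the description there.
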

\smallskip

\subsection{The two actions of \texorpdfstring{$\RR_F$}{Lg} on  \texorpdfstring{$H_n(\Gamma, \z)$}{Lg}}

We begin by noting that there is a natural action of $\RR_F$ on $H_\bullet(\Gamma,\z)$ compatible with the inclusion
$\Gamma \harr G$: Let
\[
\widetilde{\Gamma}:=\ff\Gamma\widetilde{U}\lan g_\pi\ran,
\]
where $\widetilde{U}:=\bigg\{ {\mtxx u 0 0 1}| u \in \aa\bigg\}$. Then $\widetilde{\Gamma}$ is a subgroup of 
$\widetilde{G}$ (observe that $g_\pi^2=-\pi I_2$). Note that if $g\in \widetilde{\Gamma}$, then
\[
g={\mtxx \pi  0 0 \pi}^{s(g)} R(g) {\mtxx {u_g^{-1}} 0 0 1}g_\pi^{\epsilon(g)},
\]
where now $R(g)\in \Gamma$. There is a map of group extensions
\[
\begin{tikzcd}
1 \ar[r] &\Gamma \ar[r] \ar[d] &\widetilde{\Gamma}  \ar[r, "\det"] \ar[d] & \ff \ar[r] \ar[d, "="] & 1\\
1 \ar[r] & G \ar[r]  & \widetilde{G}  \ar[r, "\det"] & \ff \ar[r] & 1.
\end{tikzcd}
\]
Thus $\z[\ff]$ acts on $H_\bullet(\Gamma,\z)$ in such a way that the map $H_\bullet(\Gamma,\z)\to H_\bullet(G,\z)$ is a map of 
$\z[\ff]$-modules. However, clearly $(\ff)^2$ acts trivially also on the first term, so that this is a map of $\RR_F$-modules. 

We denote this action of $\RR_F$ on $H_n(\Gamma,\z)$  by $\lan a\ran \bullet x$ (where $\lan a \ran\in \GG_F$ and 
$x\in H_n(\Gamma,\z)$). We will refer to it as the \emph{natural action} of $\RR_F$ on $H_\bullet(\Gamma,\z)$.

However, this action of $\RR_F$ on $H_n(\Gamma,\z)$ \emph{is not the same as the $\RR_F$-module action associated to 
the Mayer-Vietoris sequence }(as in Theorem \ref{MV1}). We now describe the relationship between these two actions, the 
natural action and the Mayer-Vietoris action:

Given an $\RR_F$-module $M$ and a character $\chi \in \widehat{\GG}_F$, we can define
the $\chi$-twisted $\RR_F$-module $M_{(\chi)}$ by
\[
\lan a \ran \ast_{\chi} m :=\chi(a)\lan a\ran m.
\]

Recall that the discrete valuation $v$ induces a character $\chi_v\in \widehat{\GG}_F$ defined by $\chi_v(a):=(-1)^{v(a)}$.

\begin{thm}\label{twisted}
The action of $\RR_F$ on $H_\bullet(\Gamma,\z)$ which is induced from the Mayer-Vietoris exact sequence
is the $\chi_v$-twist of the natural action, i.e. 
$\lan a\ran x=(-1)^{v(a)}\lan a\ran \bullet x$ for any $\lan a\ran \in \GG_F$ and $x\in H_n(\Gamma,\z)$.
\end{thm}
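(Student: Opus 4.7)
The plan is to compare the two $\RR_F$-actions on $H_n(\Gamma,\z)$ at the chain level, using a single free resolution $P_\bullet\to\z$ of $\z$ over $\z[\widetilde{G}]$ (which is automatically a free resolution over $\z[\widetilde{\Gamma}]$ and $\z[\Gamma]$). The central tool is the explicit chain-level Shapiro isomorphism
\[
\phi: P_\bullet\otimes_G\z[G/\Gamma]\overset{\cong}{\larr} P_\bullet\otimes_\Gamma\z,\qquad p\otimes h\Gamma\longmapsto ph\otimes 1,
\]
under which $H_n(G,\z[G/\Gamma])$ becomes $H_n(\Gamma,\z)$.

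First I would observe that the determinant map $\widetilde{\Gamma}\to\ff$ is surjective (it already hits $\aa$ via $\widetilde{U}$ and hits $\pi$ via $g_\pi$), so for any $a\in\ff$ we may choose a lift $g\in\widetilde{\Gamma}$ with $\det(g)=a$. With such a choice, in the decomposition $g=(\pi I)^{s(g)}R(g)\tilde{u}_gg_\pi^{\epsilon(g)}$ from Subsection 4.5, the factor $R(g)=(\pi I)^{-s(g)}g g_\pi^{-\epsilon(g)}\tilde{u}_g^{-1}$ lies in $\widetilde{\Gamma}\cap G=\Gamma$, since every factor on the right lies in $\widetilde{\Gamma}$ and $R(g)\in G$ by the computation of its determinant.

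Next, compute the two actions on chains. Using $P_\bullet\otimes_\Gamma\z$ to model $H_n(\Gamma,\z)$, the natural action of $g\in\widetilde{\Gamma}$ (conjugation on the normal subgroup $\Gamma$) is $p\otimes 1\mapsto pg^{-1}\otimes 1$; this descends because $g\Gamma g^{-1}=\Gamma$. For the Mayer--Vietoris action, work on $P_\bullet\otimes_G\z[G/\Gamma]$ and apply formula \eqref{eq:gamma}: with $h=e$,
\[
g\cdot(p\otimes\Gamma)=pg^{-1}\otimes g\Gamma=pg^{-1}\otimes(-1)^{\epsilon(g)}R(g)\Gamma=(-1)^{\epsilon(g)}pg^{-1}\otimes R(g)\Gamma.
\]
Transporting through $\phi$ gives $(-1)^{\epsilon(g)}pg^{-1}R(g)\otimes 1$ in $P_\bullet\otimes_\Gamma\z$. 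Because $R(g)\in\Gamma$, the balanced tensor product identifies this with $(-1)^{\epsilon(g)}pg^{-1}\otimes 1$.

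Comparing the two formulas, the Mayer--Vietoris action of $g$ is $(-1)^{\epsilon(g)}$ times the natural action. Finally, $\epsilon(g)\equiv v(\det g)=v(a)\pmod{2}$, so $(-1)^{\epsilon(g)}=\chi_v(a)$, yielding $\lan a\ran x=\chi_v(a)\lan a\ran\bullet x$ as claimed. The main obstacle in executing this plan is the bookkeeping around the Shapiro isomorphism: one must verify that $\phi$ is compatible with the signed $\widetilde{G}$-action on $\z[G/\Gamma]$ introduced in Subsection 4.5 (so that the sign $(-1)^{\epsilon(g)}$ actually survives to the right-hand side), and one must ensure the chosen lift $g\in\widetilde{\Gamma}$ is used consistently for \emph{both} actions so that the factor $R(g)$ can be absorbed into the $\Gamma$-quotient.
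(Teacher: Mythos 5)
Your proof is correct and follows essentially the same route as the paper: both work at the chain level with a single $\z[\widetilde{G}]$-free resolution, transport the signed action on $\z[G/\Gamma]$ from formula (4.1) across the Shapiro isomorphism, and compare with the conjugation action. The only (cosmetic) difference is that the paper chooses the specific lift $g=\tilde{u}g_\pi^{\epsilon}$ for which $R(g)=1$, whereas you take an arbitrary lift in $\widetilde{\Gamma}$ and absorb the resulting factor $R(g)\in\Gamma$ into the balanced tensor product — equally valid, just marginally more bookkeeping.
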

\begin{proof}
We begin by recalling the $\RR_F$-action associated to the Mayer-Vietoris sequence on $H_n(\Gamma,\z)$:
Let $C_\bullet \arr \z$ be a right projective resolution of $\z$ over $\z[\tilde{G}]$.  We can use the complex
 $C_\bullet\otimes_G\z[G/\Gamma]$ to calculate $H_\bullet(G,\z[G/\Gamma])\cong H_\bullet(\Gamma,\z)$. Let $g\in \tilde{G}$. 
Then the action of the class of $\mathrm{det}(g)$ on $H_\bullet(\Gamma,\z)$ is induced from the map
\begin{eqnarray*}
C_\bullet\otimes_G\z[G/\Gamma]&\to &C_\bullet\otimes_G\z[G/\Gamma],\\
 z \otimes x\Gamma& \mapsto &zg^{-1}\otimes g\cdot(x\Gamma)=
(-1)^{\epsilon(gx)}zg^{-1}\otimes R(gx)\Gamma
\end{eqnarray*}
using (\ref{eq:gamma})  above.

Now let $\lan a \ran \in \GG_F$ be represented by $u\pi^\epsilon \in \ff$, where $u\in \aa$ and $\epsilon\in \{0,1\}$.
Let $g=g(a):=\tilde{u}g_\pi^\epsilon \in \widetilde{\Gamma}$. Thus $R(g)=1$ and $g\Gamma=(-1)^\epsilon\Gamma$ and 
hence the Mayer-Vietoris action of $\lan a\ran$ on $H_\bullet(\Gamma,\z)$ is induced by the map 
$z\otimes \Gamma \mapsto (-1)^\epsilon zg^{-1}\otimes \Gamma$. 

The map 
\[
C_\bullet \otimes_\Gamma \z\to C_\bullet\otimes_G\z[G/\Gamma], z\otimes 1\mapsto z\otimes \Gamma
\]
induces an isomorphism on homology by Shapiro's lemma. We conclude the proof of the theorem by noting that for 
any $g\in\tilde{\Gamma}$, the map induced on $H_\bullet(\Gamma,\z)$ by conjugation by $g$ is described at the 
level of the complex $C_\bullet\otimes_\Gamma\z$ by $z\otimes 1\mapsto zg^{-1}\otimes 1$. 
\end{proof}

\begin{exa}
Here we show that the action of $\RR_F$ on $H_1(\Gamma,\z)$ is trivial, provided that $k$ has at least
$4$ elements. 

The natural map
$\theta: \Gamma \arr T_k$ given by ${\mtxx a b c d}\mapsto {\mtxx {\bar a} 0 0 {{\bar a}^{-1}}}$ induces the isomorphism
\[
H_1(\Gamma,\z)\simeq H_1(T_k,\z)\simeq k^\times,
\]
(see  the proof Theorem \ref{exact1}  below).
Since $H_1(\Gamma,\z)=\Gamma/\Gamma'$, it follows that if the diagonal of two matrices in 
$\Gamma$ are the same, then they represent the same element of $H_1(\Gamma,\z)$.

To study the action of $\RR_F$, it is enough to study the action of the elements 
$\lan u\ran$, $u\in \aa$ and $\lan \pi \ran$.
If $x\in H_1(\Gamma,\z)=\Gamma/\Gamma'$ is represented by ${\mtxx a b c d}$, 
then by Theorem \ref{twisted}  we have
\[
\lan u\ran.x= \bigg({\mtxx u 0 0 1} {\mtxx a b c d} {\mtxx {u^{-1}} 0 0 1}\bigg)^{(-1)^{v(u)}} \Gamma'
={\mtxx a {ub} {u^{-1}c} d}\Gamma'={\mtxx a b c d}\Gamma'=x,
\]
and 
\[
\lan \pi\ran.x=\bigg( g_\pi {\mtxx a b c d} g_\pi^{-1}\bigg)^{(-1)^{v(\pi)}} \Gamma'={\mtxx d {-c\pi^{-1}} {-\pi b} a}^{-1}\Gamma'=
{\mtxx d {-b} {-c} a}^{-1}\Gamma'=x.
\]
Thus the action of $\RR_F$ on $H_1(\Gamma,\z)$ is trivial.
\end{exa}

\section{The connecting homomorphism and the structure of  \texorpdfstring{$H_2(\Gamma_0(\mmm_A),\z\half)$}{Lg}}\label{connecting}

In this section we will give an explicit formula for the connecting homomorphism
\[
\begin{array}{c}
\delta:H_3(\SL_2(F),\z\half)=H_3(G,\z)\arr H_2(\Gamma,\z)=H_2(\Gamma_0(\mmm_A),\z\half)
\end{array}
\]
and a description of its image. Moreover we describe the kernel and the cokernel of the natural map
\[
\begin{array}{c}
H_2(\Gamma_0(\mmm_A),\z\half)\to H_2(\SL_2(A),\z\half). 
\end{array}
\]

\subsection{The image of \texorpdfstring{$\delta$}{Lg}}

We will begin by using Theorem \ref{mainthm} to identify $\coker(\beta)\cong\im(\delta)$ as an $\RR_F$-module. In order to do 
this, we will need to compare the homomorphism $\Delta_\pi$, which is a $\RR_A$-module map, with an $\RR_F$-homomorphism 
denoted $\Delta'_\pi$ which we now describe:
 
Let $M$ be a $\RR_k$-module. We equip $M$ with two $\RR_F$-module structure
\[
\lan a\ran m:=\lan \overline{u}_a\ran m, \ \ \ \ \ \ 
\lan a\ran m:=(-1)^{v(a)}\lan \overline{u}_a\ran m,
\]
where $u_a\in \aa$ is the unique element satisfying $a=u_a\pi^{v(a)}$. 
The $\RR_k$-module $M$ equipped with these $\RR_F$-structures are denoted by
\[
M(v) \ \ \ \ \text{and} \ \ \ \ M\{v\},
\]
respectively. 

\begin{exa}
Consider $\PP(k)$ with the trivial $\GG_k$-action. Then $\PP(k)\{v\}$ is
a (nontrivial) $\RR_F$-module with the definition given by
\[
\lan a\ran[x]:=(-1)^{v(a)}[x].
\]
But $\PP(k)(v)$ becomes  trivial as $\RR_F$-module.
\end{exa}

It is easy to see that the natural maps
\[
\rho_0':\Ind_k^FM \arr M(v), \ \ \ \ 
\lan a\ran \otimes m\mapsto \lan \overline{u}_a\ran m
\]
and 
\[
\rho_\pi':\Ind_k^FM \arr M\{v\}\ \ \ \ \lan a\ran \otimes m
\mapsto (-1)^{v(a)}\lan \overline{u}_a\ran m
\]
are $\RR_F$-homomorphisms (both depending on the choice of the uniformizer
$\pi$). 

\begin{lem}$($\cite[Lemma 6.5]{hutchinson-2017}$)$
Let $M$ be an $\RR_k\half$-module. Then there are natural decompositions 
of $\RR_F\half$-modules
\[
\Ind_k^F M \simeq M(v)\oplus M\{v\} \ \ \ \text{and} \ \ \ \II_F 
\Ind_k^F M \simeq (\II_k M)(v)\oplus M\{v\}.
\]
In other words, $\II_F \Ind_k^F M \simeq \II_k M\oplus M$ made into 
an $\RR_F$-module
by letting $\lan \pi\ran$ acts as the identity on the first factor and as 
multiplication by $-1$ on the second factor.
\end{lem}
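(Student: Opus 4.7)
The plan is to use the fact that $A$ is a DVR to split $\RR_F$ as a rank-two free $\RR_A$-algebra, and then to split the resulting tensor product using $\lan\pi\ran$-idempotents after inverting $2$.

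First, since $A$ is a DVR, the valuation gives a split short exact sequence $1 \to \aa \to \ff \to \z \to 1$ (with splitting $r \mapsto \pi^r$). Taking squares mod squares yields a split exact sequence of $\F_2$-vector spaces $1\to \GG_A \to \GG_F \to \z/2 \to 1$ with the quotient generated by the class of $\pi$. Hence $\RR_F$ is a free $\RR_A$-module with basis $\{1,\lan\pi\ran\}$, and so
\[
\Ind_k^F M = \RR_F\otimes_{\RR_A} M \cong M \oplus \lan\pi\ran\otimes M
\]
as $\RR_A$-modules, with $\lan\pi\ran$ acting as the swap of the two summands.

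Next, working over $\z\half$ I would invert $2$ and use the orthogonal idempotents $e_\pm^\pi=(\lan\pi\ran\pm 1)/2$ to decompose $N:=\Ind_k^F M$ into the $\pm 1$-eigenspaces of $\lan\pi\ran$:
\[
N \cong e_+^\pi N \oplus e_-^\pi N
\]
as $\RR_F\half$-modules. It remains to identify the two summands. Writing an arbitrary $a\in\ff$ as $a=u_a\pi^{v(a)}$ gives $\lan a\ran=\lan u_a\ran\lan\pi\ran^{v(a)}$ in $\RR_F$; a direct check then shows that $\lan a\ran$ acts on $e_+^\pi N$ as $\lan \bar u_a\ran$, and on $e_-^\pi N$ as $(-1)^{v(a)}\lan \bar u_a\ran$. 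These are precisely the $\RR_F$-structures defining $M(v)$ and $M\{v\}$, and the stated homomorphisms $\rho_0'$ and $\rho_\pi'$ manifestly realise the projections onto the two eigenspaces (one checks $\rho_0'$ vanishes on $e_-^\pi N$ and $\rho_\pi'$ vanishes on $e_+^\pi N$, and that each restricts to an isomorphism on the surviving summand).

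For the second decomposition, I would apply the exact functor $\II_F\cdot -$ on $\RR_F\half$-modules (Lemma \ref{lem:exact}) to $N\cong M(v)\oplus M\{v\}$. On $M(v)$ the element $\lan\pi\ran$ acts as $1$, so $\II_F$ acts through its image under the surjection $\RR_F\half \to \RR_k\half$, giving $\II_F\cdot M(v) \cong (\II_k M)(v)$. On $M\{v\}$ the element $\lan\pi\ran$ acts as $-1$, so $\Lan\pi\Ran\in \II_F$ acts as multiplication by $-2$, which is a unit after inverting $2$; hence $\II_F\cdot M\{v\} = M\{v\}$. Adding these gives the second isomorphism.

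I do not expect any substantive obstacle: once the splitting $\GG_F = \GG_A \oplus \lan\bar\pi\ran$ is observed, the remainder is bookkeeping with the idempotents $e_\pm^\pi$ and with the formula $\lan a\ran=\lan u_a\ran\lan\pi\ran^{v(a)}$. The only point requiring some care is matching the sign conventions in the definitions of $M(v)$ and $M\{v\}$ against the formulas for $\rho_0'$ and $\rho_\pi'$.
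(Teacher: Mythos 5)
Your proof is correct, and it is essentially the only reasonable argument here. Since $v\colon\ff\to\z$ is split by $r\mapsto\pi^r$, indeed $\GG_F\cong\GG_A\times\lan\bar\pi\ran$ with $\lan\bar\pi\ran$ of order $2$ (no element of odd valuation is a square), so $\RR_F=\RR_A\oplus\RR_A\lan\pi\ran$ is free of rank two over $\RR_A$. This gives $\Ind_k^F M\cong M\oplus\lan\pi\ran\otimes M$, and after inverting $2$ the idempotents $e_\pm^\pi$ split this into the $\pm1$-eigenspaces of $\lan\pi\ran$; the formula $\lan a\ran=\lan u_a\ran\lan\pi\ran^{v(a)}$ then identifies the induced $\RR_F$-structures on the two eigenspaces with $M(v)$ and $M\{v\}$ respectively, and the second displayed isomorphism follows by applying $\II_F\cdot{-}$ summandwise exactly as you say (for $M\{v\}$ one uses that $\Lan\pi\Ran$ acts as $-2$, a unit in $\z\half$; for $M(v)$ one uses that $\RR_F\to\RR_k$, $\lan a\ran\mapsto\lan\bar u_a\ran$, carries $\II_F$ onto $\II_k$). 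Note that the paper itself does not prove this lemma; it is quoted from \cite[Lemma 6.5]{hutchinson-2017}, so there is no in-text argument to compare against. One tiny imprecision: $\rho_0'$ and $\rho_\pi'$ restrict to \emph{twice} the natural identifications $e_\pm^\pi N\cong M$ rather than literally to the projections, but since $2$ is invertible this is harmless and does not affect the conclusion.
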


Now let 
\[
\delta_0':\RP(F) \arr \widetilde{\RP}(k)(v), \ \ \ \text{and} \ \ \ 
\delta_\pi':\RP(F) \arr \widetilde{\RP}(k)\{v\}
\]
be the composite maps
\[
\RP(F) \overset{S_v}{\larr} \Ind_k^F \widetilde{\RP}(k) 
\overset{\rho_0'}{\larr} \widetilde{\RP}(k)(v), \ \  \text{and} \ \ 
\RP(F) \overset{S_v}{\larr} \Ind_k^F \widetilde{\RP}(k) 
\overset{\rho_\pi'}{\larr} \widetilde{\RP}(k)\{v\},
\]
respectively. These maps define well-defined $\RR_F$-homomorphisms
\[
\delta_0':\RP_1(F) \arr \widetilde{\RP}_1(k)(v), \ \ \ \text{and} \ \ \ 
\delta_\pi':\RP_1(F) \arr \widetilde{\RP}_1(k)\{v\}.
\]
The restriction of these maps to $\II_F\RP_1(F)$ induce
homomorphisms of $\RR_F$-modules
\[
\delta_0':\II_F\RP_1(F) \arr \II_k\widetilde{\RP}_1(k)(v),\ \ \ \ \ \  
\delta_\pi':\II_F\RP_1(F) \arr \widetilde{\RP}_1(k)\{v\}.
\]

\begin{prp}\label{2times}
Let $\eta_\pi$ and $\eta_\pi'$ be the composite maps
\[
\II_F\RP(F) \overset{\delta_\pi}{\larr} 
\widetilde{\RP}(k) \arr \widetilde{\PP}(k),\ \ 
\II_F\RP(F) \overset{\delta_\pi'}{\larr} 
\widetilde{\RP}(k)\{v\} \arr \widetilde{\PP}(k)\{v\},
\]
respectively. Then $\eta_\pi'=-2 \eta_\pi$.
\end{prp}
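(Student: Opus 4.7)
The strategy is to reduce the identity $\eta_\pi'=-2\eta_\pi$ to a simple algebraic relation between $\rho_0$, $\rho_\pi$ and $\rho_\pi'$ at the level of underlying abelian groups, and then to use that $\GG_k$ acts trivially on $\widetilde{\PP}(k)$ to kill the stray term.

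The first step is to observe that, as maps of abelian groups $\Ind_k^F\widetilde{\RP}(k)\arr\widetilde{\RP}(k)$ (ignoring the distinct $\RR_F$-structures carried by the targets $\widetilde{\RP}(k)(v)$ and $\widetilde{\RP}(k)\{v\}$), one has the identity
\[
\rho_\pi'=\rho_0-2\rho_\pi.
\]
Evaluating on a generator $\lan u\pi^r\ran\otimes m$ with $u\in\aa$, $r\in\z$, the left-hand side is $(-1)^r\lan\bar u\ran m$, while the right-hand side equals $\lan\bar u\ran m-0=\lan\bar u\ran m$ when $r$ is even and $\lan\bar u\ran m-2\lan\bar u\ran m=-\lan\bar u\ran m$ when $r$ is odd, matching in both cases.

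Composing on the left with $S_v$ and on the right with the quotient $q:\widetilde{\RP}(k)\arr\widetilde{\PP}(k)$, and restricting to $\II_F\RP(F)$, yields $\eta_\pi'=\eta_0-2\eta_\pi$, where $\eta_0:=q\circ\rho_0\circ S_v$. It therefore suffices to show that $\eta_0$ vanishes on $\II_F\RP(F)$. Since $S_v$ is $\RR_F$-linear, it sends $\II_F\RP(F)$ into $\II_F\Ind_k^F\widetilde{\RP}(k)$; by Lemma \ref{lem:decom}, the restriction of $\rho_0$ to this submodule has image in $\II_k\widetilde{\RP}(k)$. But $\widetilde{\PP}(k)$ is a quotient of $\PP(k)$, which is defined as a plain abelian group, so $\GG_k$ acts trivially on it and $\II_k\widetilde{\PP}(k)=0$. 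Hence $\eta_0=0$ on $\II_F\RP(F)$, which gives $\eta_\pi'=-2\eta_\pi$.

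There is no substantial obstacle in this argument. The only point that requires a bit of care is keeping track of the three different $\RR_F$-module structures on what is, as an abelian group, the single group $\widetilde{\RP}(k)$, and recognising that the identity $\rho_\pi'=\rho_0-2\rho_\pi$ is an equation of abelian group homomorphisms (valid at that level only), even though the three maps do not interact uniformly with the $\RR_F$-actions on their targets.
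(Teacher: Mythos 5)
Your proof is correct. The key identity $\rho_\pi'=\rho_0-2\rho_\pi$ (as abelian group maps) is easily verified on the generators $\lan u\pi^r\ran\otimes m$ exactly as you do, and the vanishing of $\eta_0$ on $\II_F\RP(F)$ follows since $S_v$ is $\RR_F$-linear, $\rho_0$ carries $\II_F\Ind_k^F\widetilde{\RP}(k)$ into $\II_k\widetilde{\RP}(k)$ by Lemma \ref{lem:decom}, and $\II_k$ acts as zero on $\widetilde{\PP}(k)$ since $\GG_k$ acts trivially there.

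Your route differs in packaging from the paper's: the paper's proof proceeds by taking a typical generator $\Lan a\Ran[x]\in\II_F\RP(F)$, splitting into the two cases $v(a)$ even or odd, evaluating $\rho_\pi$ and $\rho_\pi'$ directly on $S_v(\Lan a\Ran[x])=\Lan a\Ran\otimes[x]$, and then applying $\tilde\theta$ in each case, using the triviality of the $\GG_k$-action on $\widetilde{\PP}(k)$ at the very end. You instead isolate a universal linear relation among $\rho_0$, $\rho_\pi$, $\rho_\pi'$ valid on all of $\Ind_k^F M$ (for any $\RR_k$-module $M$), which reduces the proposition to the vanishing of the auxiliary map $\eta_0$; that vanishing you deduce cleanly from the second isomorphism of Lemma \ref{lem:decom} rather than by ad hoc computation. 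The computational content is essentially the same — both proofs ultimately parse $\lan u\pi^r\ran$ by the parity of $r$ — but your argument cleanly separates the elementary identity among the three retraction maps from the module-theoretic reason the $\rho_0$-contribution dies, which makes the mechanism more transparent and slightly more reusable. The one point you correctly flag, that the identity $\rho_\pi'=\rho_0-2\rho_\pi$ is only an equality of abelian group homomorphisms because the three targets carry different $\RR_F$-structures, is the same caveat implicit in the paper's proof, and you handle it properly.
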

\begin{proof}
Let $\tilde{\theta}$ be the natural map $\widetilde{\RP}(k) \arr \widetilde{\PP}(k)$.
The maps $\eta_\pi$ and $\eta_\pi'$ are the composite maps
\[
\II_F\RP(F) \overset{S_v}{\larr} \II_F\Ind_k^F\widetilde{\RP}(k) 
\overset{\rho_\pi}{\larr} \widetilde{\RP}(k) 
\overset{\tilde{\theta}}{\larr} \widetilde{\PP}(k), 
\]
and 
\[
\II_F\RP(F) \overset{S_v}{\larr} \II_F\Ind_k^F\widetilde{\RP}(k) 
\overset{\rho_\pi'}{\larr} \widetilde{\RP}(k)\{v\} 
\overset{\tilde{\theta}}{\larr} \widetilde{\PP}(k)\{v\},
\]
respectively.  Let $\Lan a\Ran [x] \in \II_F\RP(F)$ and let  $a=u_a\pi^{v(a)}$, $u_a\in \aa$. 
Then $S_v(\Lan a\Ran [x])=\Lan a\Ran \otimes [x]$. 
If $v(a)$ is even, then 
\[
\rho_\pi(\Lan a\Ran \otimes [x])=0
\]
and
\[
\rho_\pi'(\Lan a\Ran \otimes [x])=\rho_\pi'(\lan a\ran \otimes [x])-
\rho_\pi'(1 \otimes [x])=(-1)^{v(a)}\lan \overline{u}_a\ran [x]- [x]=
\lan \overline{u}_a\ran [x]-[x].
\]
Since the action of $\GG_k$ on $\PP(k)$ is trivial, we have
$\tilde{\theta}(\lan \overline{u}_a\ran [x]-[x])=0$.
If $v(a)$ is odd, then 
\[
\rho_\pi(\Lan a\Ran \otimes [x])=\rho_\pi(\lan a\ran \otimes [x])-
\rho_\pi(1 \otimes [x])=\lan\overline{u}_a\ran [x]-0=\lan\overline{u}_a\ran [x],
\] 
and 
\[
\rho_\pi'(\Lan a\Ran \otimes [x])=\rho_\pi'(\lan a\ran \otimes [x])-
\rho_\pi'(1 \otimes [x])=(-1)^{v(a)}\lan \overline{u}_a\ran [x]-[x]=
-\lan \overline{u}_a\ran [x]-[x].
\] 
Clearly $\tilde{\theta}(\lan\overline{u}_a\ran [x])=[x]$ and 
$\tilde{\theta}(-\lan \overline{u}_a\ran [x]-[x])=-2[x]$. This proves the claim.
\end{proof}

\begin{cor}\label{zeromap1}
The composite
\[
\II_A\RP_1(A) \arr \II_F\RP_1(F) \overset{\delta_\pi'}{\larr} 
\widetilde{\RP}_1(k)\{v\} \arr \widetilde{\PP}(k)\{v\}
\]
is the zero map.
\end{cor}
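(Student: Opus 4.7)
The plan is to deduce this directly from Proposition \ref{2times} together with the already-noted triviality of $\delta_\pi$ on $\II_A\RP_1(A)$. The key observation is that the composite appearing in the corollary is simply the restriction of the map $\eta_\pi'$ to $\II_F\RP_1(F)$, pulled back along $\II_A\RP_1(A)\to \II_F\RP_1(F)$; indeed, the arrow $\widetilde{\RP}_1(k)\{v\}\to \widetilde{\PP}(k)\{v\}$ is (by definition) induced by the natural projection $\widetilde{\RP}(k)\to \widetilde{\PP}(k)$, so composing it with $\delta_\pi'$ recovers $\eta_\pi'$ on $\II_F\RP(F)$.

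By Proposition \ref{2times}, we have the identity $\eta_\pi' = -2\eta_\pi$ on all of $\II_F\RP(F)$. Therefore it suffices to verify that the composite
\[
\II_A\RP_1(A) \larr \II_F\RP_1(F) \overset{\delta_\pi}{\larr} \widetilde{\RP}_1(k) \larr \widetilde{\PP}(k)
\]
vanishes. But this is immediate from the stronger statement, already recorded just before Theorem \ref{thm:4.2}, that $\delta_\pi$ itself vanishes on the image of $\II_A\RP_1(A)$; composing with any further map preserves this vanishing.

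Combining these two ingredients, the composite of the corollary equals $-2$ times a map which is already zero, hence is zero. There is no serious obstacle here; the only point requiring attention is to check carefully that the map $\widetilde{\RP}_1(k)\{v\}\to \widetilde{\PP}(k)\{v\}$ in the statement is indeed induced by the untwisted projection $\widetilde{\RP}(k)\to \widetilde{\PP}(k)$, so that the diagrammatic identification with $\eta_\pi'$ is legitimate; once this is confirmed, the argument reduces to a one-line citation of Proposition \ref{2times} and the vanishing of $\delta_\pi\circ(\II_A\RP_1(A)\to\II_F\RP_1(F))$.
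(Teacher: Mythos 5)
Your proof is correct and follows exactly the paper's own argument: cite Proposition \ref{2times} to get $\eta_\pi' = -2\eta_\pi$, and combine with the already-noted vanishing of $\delta_\pi\circ(\II_A\RP_1(A)\to\II_F\RP_1(F))$. The only thing you spell out at slightly greater length than the paper is the observation that the twist $\{v\}$ affects only the $\RR_F$-module structure and not the underlying abelian-group map $\widetilde{\RP}(k)\to\widetilde{\PP}(k)$, which is a fair point to make explicit.
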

\begin{proof}
This follows immediately from Proposition \ref{2times} and the fact that the 
composite 
\[
\II_A\RP_1(A) \arr \II_F\RP_1(F) \overset{\delta_\pi}{\larr} 
\widetilde{\RP}_1(k)\]
is the zero map.
\end{proof}

\begin{cor}
Let $k$ be a sufficiently large finite field. Then the sequence
\[
\begin{array}{c}
\II_A\RP_1(A)\half \arr \II_F\RP_1(F)\half \overset{\delta_\pi'}{\larr} {\PP}(k)\{v\}\half \arr 0
\end{array}
\]
is exact.
\end{cor}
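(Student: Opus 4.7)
The plan is to derive the corollary directly from Theorem \ref{thm:4.2} together with Proposition \ref{2times}, exploiting the fact that all of the scissors congruence variants of a sufficiently large finite field collapse into a single abelian group after $2$ is inverted.

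First I will assemble the relevant identifications. By Proposition \ref{hutchinson6} the natural map $\RP_1(k)\half \to \widetilde{\RP}_1(k)\half$ is an isomorphism, and by Proposition \ref{hutchinson3} so is $\RP_1(k)\half \to \PP(k)\half$, with $\GG_k$ acting trivially throughout. The subgroup $\KK_k\subset \PP(k)$ is $2$-torsion because its generators $\{a\}=[a]+[a^{-1}]$ satisfy $2\{a\}=\psi(a^2)=0$, so $\PP(k)\half = \widetilde{\PP}(k)\half$; putting these together, the natural map $\widetilde{\RP}_1(k)\half \to \widetilde{\PP}(k)\half$ is also an isomorphism. Twisting by $\chi_v$ does not alter the underlying abelian groups, so these identifications persist to give $\widetilde{\RP}_1(k)\{v\}\half \cong \PP(k)\{v\}\half$ as well.

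Next, I will invoke Proposition \ref{2times}, which asserts $\eta_\pi' = -2\eta_\pi$ for the composites factoring $\delta_\pi$ and $\delta_\pi'$ through $\widetilde{\PP}(k)$ and $\widetilde{\PP}(k)\{v\}$. After the identifications of the previous paragraph, these composites become $\delta_\pi$ and $\delta_\pi'$ themselves, viewed as homomorphisms of abelian groups landing in the common group $\PP(k)\half = \PP(k)\{v\}\half$ (the $\RR_F$-module structures differ, but images and kernels are just abelian subgroups). Since $-2$ is invertible in any $\z\half$-module, multiplication by $-2$ is an abelian-group automorphism of the target, so $\im(\delta_\pi')=\im(\delta_\pi)$ and $\ker(\delta_\pi')=\ker(\delta_\pi)$. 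Combined with Theorem \ref{thm:4.2}, which gives that $\delta_\pi$ is surjective onto $\RP_1(k)\half$ with kernel equal to the image of $\II_A\RP_1(A)\half$, exactness of the desired sequence follows at once.

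The main obstacle, though a mild one, is the bookkeeping needed to verify that the several natural maps among $\RP_1, \widetilde{\RP}_1, \PP, \widetilde{\PP}$ (in both twisted and untwisted forms) commute appropriately with $\delta_\pi$ and $\delta_\pi'$ so that the equation ``$\delta_\pi' = -2\delta_\pi$'' on underlying abelian groups is legitimate; this amounts to a short diagram chase using the defining formulas for $\rho_\pi$ and $\rho_\pi'$ and presents no real difficulty.
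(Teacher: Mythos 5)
Your proposal is correct and takes essentially the same route as the paper: the paper's proof is the one-line citation of Proposition \ref{2times}, Theorem \ref{thm:4.2}, and Proposition \ref{hutchinson3}, and your argument simply spells out the bookkeeping (including the use of Proposition \ref{hutchinson6} and the observation that $\KK_k$ is $2$-torsion) that those citations compress.
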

\begin{proof}
This follows from Proposition \ref{2times}, Theorem \ref{thm:4.2} and Proposition \ref{hutchinson3}.
\end{proof}

We can now define $\Delta'_\pi$: The $\RR_F$-homomorphism
\[
\Delta_\pi':H_3(\SL_2(F),\z) \arr \widetilde{\RP}_1(k)\{v\}
\]
is the composite $H_3(\SL_2(F),\z) \arr \RP_1(F)\overset{\delta_\pi'}{\larr} \widetilde{\RP}_1(k)\{v\}$.

\begin{prp}\label{diagram1}
If $k$ is sufficiently large, then the diagram of $\RR_A$-modules
\[
\begin{tikzcd}
H_3(\SL_2(A),\z\half) \ar[r, "\lan \pi\ran j_\ast"]\ar[d, "j_\ast"]\ar[dr, "p_\ast"]&
\displaystyle\frac{H_3(\SL_2(F),\z\half)}{j_\ast (H_3(\SL_2(A),\z\half))} \ar[d, "\overline{\Delta}_\pi "]\\
H_3(\SL_2(F),\z\half) \ar[r, "\Delta_\pi' "]&  \RP_1(k)\half
\end{tikzcd}
\]
commutes, where $j:\SL_2(A) \arr \SL_2(F)$ is the natural inclusion and $p:\SL_2(A)\arr \SL_2(k)$
is induced by the quotient map $A\arr k$.
\end{prp}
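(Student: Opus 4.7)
My plan begins by identifying $\overline{\Delta}_\pi$: the natural candidate is the unique factorization of $\Delta_\pi$ through the quotient $H_3(\SL_2(F),\z\half)/j_\ast(H_3(\SL_2(A),\z\half))$, which exists because \cite[Lemma 7.1]{hutchinson2017} (already invoked in the proof of Theorem~\ref{mainthm}) asserts that $\Delta_\pi \circ j_\ast = 0$. With this interpretation, commutativity of the whole diagram amounts to two triangle identities: the lower-left triangle requires $\Delta_\pi' \circ j_\ast = p_\ast$, and the upper-right triangle requires $\Delta_\pi \circ (\lan \pi\ran j_\ast) = p_\ast$.

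The key algebraic input is the identity $\rho_\pi' = \rho_0 - 2\rho_\pi$ between $\RR_A$-linear maps $\Ind_k^F M \to M$, which is verified on each generator $\lan u\pi^r\ran \otimes m$ by splitting cases according to the parity of $r$. This yields $\delta_\pi' = \delta_0 - 2\delta_\pi$ on $\RP_1(F)$ and hence the decomposition $\Delta_\pi' = \Delta_0 - 2\Delta_\pi$ of abelian-group homomorphisms, where $\Delta_0$ is defined analogously to $\Delta_\pi$ using $\delta_0$ in place of $\delta_\pi$. Combined with $\Delta_\pi \circ j_\ast = 0$, this reduces the first triangle identity to $\Delta_0 \circ j_\ast = p_\ast$.

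Both remaining identities $\Delta_0 \circ j_\ast = p_\ast$ and $\Delta_\pi \circ (\lan \pi\ran j_\ast) = p_\ast$ are handled by a single functoriality argument. By the naturality of the refined Bloch-Wigner sequence (Theorem~\ref{refined-bloch-wigner}) applied to the inclusion $A\hookrightarrow F$, the image of $j_\ast(x)$ in $\RP_1(F)\half$ agrees with the image of $\tilde x \in \RB(A)\half \subseteq \RP_1(A)\half$; the same functoriality applied to the reduction $A\twoheadrightarrow k$ identifies $p_\ast(x)$ with the reduction of $\tilde x$ to $\RB(k)\half = \RP_1(k)\half$ (Proposition~\ref{hutchinson3}). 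It thus suffices to verify that, restricted to the image of $\RP_1(A)\half$ in $\RP_1(F)\half$, both $\delta_0$ and the map $x \mapsto \delta_\pi(\lan \pi\ran x)$ coincide with the natural reduction $\RP_1(A)\half \to \widetilde{\RP}_1(k)\half \cong \RP_1(k)\half$ (via Proposition~\ref{hutchinson6}). For generators $[a]$ with $a \in \WW_A \cap A^\times$ this is immediate from the definition of $S_v$: one has $S_v([a]) = 1 \otimes [\bar a]$, so $\rho_0(1\otimes[\bar a]) = [\bar a]$, while $\rho_\pi(\lan \pi\ran \otimes [\bar a]) = [\bar a]$ because $v(\pi)=1$ is odd.

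The main obstacle is the bookkeeping with the distinct $\RR_F$-module structures $M(v)$ and $M\{v\}$ on $\widetilde{\RP}_1(k)\half$, and verifying that every arrow in the diagram is genuinely $\RR_A$-equivariant (in particular that $j_\ast(H_3(\SL_2(A),\z\half))$ is stable under the $\RR_A$-action on $H_3(\SL_2(F),\z\half)$, which it is because $\GL_2(A)$-conjugation preserves $\SL_2(A)$). Once these identifications are in place, the calculations themselves are straightforward from the explicit formulas for $S_v$, $\rho_0$, and $\rho_\pi$.
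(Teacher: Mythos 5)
Your proof is correct, and it diverges from the paper's in the treatment of the lower-left triangle. The paper reduces to the level of $\RB$-groups via the surjection $H_3(\SL_2(R),\z\half)\to\RB(R)\half$ and then directly computes $\Delta_\pi'\circ j_\ast$ on generators $x=\sum\lan u\ran[a]$ of $\RB(A)\half$, using the fact that $(-1)^{v(u)}=1$ for $u\in A^\times$. You instead establish the exact identity $\rho_\pi'=\rho_0-2\rho_\pi$ at the level of $\Ind_k^F M\to M$, deduce $\Delta_\pi'=\Delta_0-2\Delta_\pi$ as abelian-group maps, and combine this with the known vanishing $\Delta_\pi\circ j_\ast=0$ to reduce the first triangle to $\Delta_0\circ j_\ast=p_\ast$. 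This decomposition is a sharper form of (and implies) the paper's Proposition~\ref{2times} ($\eta_\pi'=-2\eta_\pi$, which only holds after projecting to $\widetilde{\PP}(k)$), and the auxiliary map $\Delta_0$ never appears in the paper. What your route buys is a unified form for the two remaining triangle identities, both of which then follow from the same observation on generators $[a]$, $a\in\WW_A$ (namely $\rho_0(1\otimes[\bar a])=[\bar a]=\rho_\pi(\lan\pi\ran\otimes[\bar a])$); what the paper's route buys is fewer intermediate constructions. One minor slip to flag: you invoke Proposition~\ref{hutchinson3} to assert $\RB(k)\half=\RP_1(k)\half$, but that proposition requires $k$ to be finite, whereas the statement allows any sufficiently large residue field. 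The argument does not actually need equality here; the containment $\RB(k)\half\subseteq\RP_1(k)\half$ suffices since $p_\ast$ factors through $\RB(k)\half$ by the refined Bloch--Wigner surjection.
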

\begin{proof}
For any local domain $R$
with sufficiently large residue field, we have the exact sequence
\[
\begin{array}{c}
H_3(T_R,\z\half)\arr H_3(\SL_2(R),\z\half) \arr \RB(R)\half \arr 0,
\end{array}
\]
where the left side map is induced by the inclusion $T_R:=\bigg\{{\mtxx a 0 0 {a^{-1}}}|a\in \rr\bigg\} \harr \SL_2(R)$
(see the proof of the refined Bloch-Wigner exact sequence in \cite[Theorem 3.22]{hutchinson2017}).
Thus it is enough to prove the commutativity of the diagram
\[
\begin{tikzcd}
\RB(A)\half \ar[r, "\lan \pi\ran j_\ast"]\ar[d, "j_\ast"]\ar[dr, "p_\ast"]&
\displaystyle\frac{\RB(F)\half }{j_\ast (\RB(A)\half) } \ar[d, "\overline{\delta}_\pi "]\\
\RB(F)\half \ar[r, "\Delta_\pi' "]&  \RP_1(k)\half,
\end{tikzcd}
\]
where $j_\ast:\RB(A) \arr \RB(F)$ is induced by the natural
inclusion map $A \harr F$ and $p_\ast: \RB(A)\arr \RP_1(k)$ is induced by the quotient map $A\arr k$.

If $x=\sum\lan u\ran [a]\in \RB(A)\half $, then $\Delta_\pi'\circ j_\ast (x)=\sum (-1)^{v(u)}\lan \bar{u}\ran[\bar{a}]=
\sum \lan \bar{u}\ran[\bar{a}]=\overline{x}=p_\ast(x)$.  Thus lower triangle of the diagram commutes.
If $\tilde{\beta}:=\lan\pi\ran j_\ast:\RB(A)\arr \RB(F)$, then $\delta_\pi\circ\tilde{\beta}(x)=
\rho_\pi\circ S_v(\lan \pi \ran j_\ast(x))=\rho_\pi\circ S_v(\sum \lan \pi u \ran[a])
= \rho_\pi(\sum \lan \pi u \ran \otimes [\overline{a}])= \sum\lan \overline{u} \ran [\overline{a}]=p_\ast(x)$.
This proves the commutativity of the upper triangle of the diagram.
\end{proof}

We are now in a position to describe the $\RR_F$-module $\coker(\beta)\cong\im(\delta)$:

Let $\overline{\PP}(k)$ denote the cokernel of the composite homomorphism $K_3^\ind(A)\to K_3^\ind(k)\to \PP(k)$. We 
make $\overline{\PP}(k)$ into an $\RR_F$-module by taking the $\chi_v$-twist of the trivial module structure. 

\begin{thm}\label{thm:delta}
There is a natural isomorphism of $\RR_F$-modules 
\[
\im(\delta)\cong \overline{\PP}(k).
\]
\end{thm}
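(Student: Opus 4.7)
The plan is to use Mayer--Vietoris together with Theorem~\ref{mainthm} to identify $\im(\delta)$ with a quotient of $\RP_1(k)\half$, recognise this quotient as $\overline{\PP}(k)\half$ as an abelian group via the refined Bloch--Wigner picture, and finally check $\RR_F$-compatibility through the companion map $\Delta'_\pi$.

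\textbf{Reduction to an abelian-group isomorphism.} By Theorem~\ref{MV1}, $\im(\delta)\cong H_3(\SL_2(F),\z\half)/\im(\beta)$, where $\im(\beta)=j_\ast H_3(\SL_2(A),\z\half)+\lan\pi\ran j_\ast H_3(\SL_2(A),\z\half)$. Theorem~\ref{mainthm} furnishes the $\RR_A$-isomorphism $\overline{\Delta}_\pi\colon H_3(\SL_2(F),\z\half)/j_\ast H_3(\SL_2(A),\z\half)\xrightarrow{\cong}\RP_1(k)\half$, and the upper triangle of Proposition~\ref{diagram1} gives $\overline{\Delta}_\pi(\lan\pi\ran j_\ast(z))=p_\ast(z)$. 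Hence $\im(\delta)\cong\RP_1(k)\half/p_\ast H_3(\SL_2(A),\z\half)$. Using Corollary~\ref{hutchinson5} and the top row of Corollary~\ref{cor:commd} one finds $\II_AH_3(\SL_2(A),\z\half)\cong\II_A\RP_1(A)\half\twoheadrightarrow\II_k\RP_1(k)\half$, and by naturality of the refined Bloch--Wigner map this composite coincides with the restriction of $p_\ast$, so $p_\ast H_3(\SL_2(A),\z\half)\supseteq\II_k\RP_1(k)\half$. Since $\GG_A$ acts trivially on $\PP(k)\half=\RP_1(k)\half/\II_k\RP_1(k)\half$, the induced map $\bar{p}_\ast\colon H_3(\SL_2(A),\z\half)\to\PP(k)\half$ factors through $K_3^\ind(A)\half$ (Proposition~\ref{hutchinson4}) and then through $K_3^\ind(A)\half\to K_3^\ind(k)\half\twoheadrightarrow\BB(k)\half\hookrightarrow\PP(k)\half$ (Theorem~\ref{bloch-wigner}). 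Therefore $\im(\bar{p}_\ast)=\im(K_3^\ind(A)\half\to\PP(k)\half)$, and $\im(\delta)\cong\overline{\PP}(k)\half$ as $\z\half$-modules.

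\textbf{Module structure and the main obstacle.} Consider the $\RR_F$-linear map $\Psi_\pi:=-\tfrac{1}{2}\Delta'_\pi\colon H_3(\SL_2(F),\z\half)\to\widetilde{\RP}_1(k)\{v\}\half$. Since $\Delta'_\pi\circ j_\ast=p_\ast$ (lower triangle of Proposition~\ref{diagram1}) and $\lan\pi\ran$ acts as $-1$ on the twisted target, $\Psi_\pi\circ\beta$ takes values in $p_\ast H_3(\SL_2(A),\z\half)$, so $\Psi_\pi$ descends to an $\RR_F$-linear map $\overline{\Psi}_\pi\colon\coker(\beta)\to\overline{\PP}(k)\half$ carrying the $\chi_v$-twist. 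Writing any $y$ in the image of $H_3(\SL_2(F),\z\half)\to\RP_1(F)\half$ as $y=y_0+z$ with $y_0\in\II_F\RP(F)$ and $z=\sum[b_i]$ a $\z$-linear lift of an element of $\BB(F)\half$, Proposition~\ref{2times} kills the $y_0$ contribution since $\bar{\delta}'_\pi(y_0)+2\bar{\delta}_\pi(y_0)=0$, while $\bar{\delta}_\pi(z)=0$ and $\bar{\delta}'_\pi(z)$ is the image of $z$ under the Bloch-group specialization $\BB(F)\half\to\BB(k)\half$; by Lemma~\ref{lem:kind} this image equals $\im(K_3^\ind(A)\half\to\BB(k)\half)$ and so vanishes in $\overline{\PP}(k)\half$. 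Consequently $\overline{\Psi}_\pi=\overline{\Delta}_\pi$ under the identification of the previous paragraph, and this common map is the desired $\RR_F$-isomorphism. The \emph{main obstacle} is precisely this final compatibility: because $\overline{\Delta}_\pi$ is only $\RR_A$-linear, extracting the $\chi_v$-twist forces one to combine Proposition~\ref{2times} (which applies only on $\II_F\RP(F)$) with the $K$-theoretic surjectivity of Lemma~\ref{lem:kind}, in order to absorb the non-$\II_F$ contribution into $\im(K_3^\ind(A)\half)$.
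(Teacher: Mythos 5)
Your proof is correct and follows essentially the same route as the paper: Theorem~\ref{mainthm} together with Proposition~\ref{diagram1} identify $\im(\delta)\cong\coker(\beta)$ with $\coker\bigl(p_\ast\colon H_3(\SL_2(A),\z\half)\to\RP_1(k)\half\bigr)$, and the surjectivity of $\II_AH_3(\SL_2(A),\z\half)\cong\II_A\RP_1(A)\half\twoheadrightarrow\II_k\RP_1(k)\half$ combined with the refined Bloch--Wigner picture identifies this cokernel with $\overline{\PP}(k)\half$. The paper packages the second step as a single commutative ladder with exact rows (a snake-lemma argument), which is a little tidier than your direct computation of $\im(\bar p_\ast)$, but the content is identical. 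Where you diverge is in the $\RR_F$-module compatibility: the paper simply reads it off from the commutativity of Proposition~\ref{diagram1} (the lower triangle shows $\Delta'_\pi$, which is $\RR_F$-linear with $\chi_v$-twisted target, induces the identification on $\coker(\beta)$), whereas you carry out an explicit comparison of $-\tfrac12\Delta'_\pi$ with $\overline{\Delta}_\pi$ via a decomposition $y=y_0+z$, Proposition~\ref{2times}, and Lemma~\ref{lem:kind}. That verification contains minor imprecisions (e.g.\ ``equals $\im(K_3^\ind(A)\half\to\BB(k)\half)$'' should be ``lies in,'' and there is no canonical specialization $\BB(F)\to\BB(k)$ -- only a well-defined composite through $K_3^\ind(A)$) but the underlying point, that the discrepancy between $\Delta_\pi$ and $\Delta'_\pi$ on non-$\II_F$ contributions is absorbed into the image of $K_3^\ind(A)\half$, is sound and matches what the paper leaves implicit.
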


\begin{proof} Since $\overline{\Delta}_\pi$ is an isomorphism by Theorem \ref{mainthm}, it follows from the definition of $\beta$ 
and Proposition \ref{diagram1} that 
\[
\begin{array}{c}
\im(\delta)\cong \coker(\beta)\cong \coker\left(p_*:H_3(\SL_2(A),\z\half)\to \RP_1(k)\half\right)
\end{array}
\]
as an $\RR_F$-module. Now note that the homomorphism
\[
\begin{array}{c}
\II_AH_3(\SL_2(A),\z\half)\cong \II_A\RP_1(A)\half\to \II_k\RP_1(k)\half
\end{array}
\]
is surjective, since the generators $\Lan \bar{u}\Ran g(x)$ of $\II_k\RP_1(k)\half$ can be lifted by Proposition \ref{g(a)}.  The statement of the theorem 
thus follows from the commutative diagram with exact rows:
\[
\begin{tikzcd}
0\ar[r]&\II_AH_3(\SL_2(A),\z\half)\ar[r]\ar[d, two heads]&H_3(\SL_2(A),\z\half)\ar[r]\ar[d, "p_*"]&K_3^\ind(A)\half\ar[r]\ar[d]&0\\
0\ar[r]&\II_k\RP_1(k)\half\ar[r]&\RP_1(k)\half\ar[r]&\PP(k)\half\ar[r]&0.\\
\end{tikzcd}
\]
\end{proof}

We will require the following corollary in our calculation of $\delta$ below:

\begin{cor}\label{cor:delta}
If we endow $H_2(\Gamma_0(\mmm_A),\z)$ with the natural action of $\GG_F$ then 
\[
\begin{array}{c}
\im(\delta)\subset H_2(\Gamma_0(\mmm_A),\z\half)^{\GG_F}.
\end{array}
\]
In particular, the map $C_\pi$ induced from conjugating by $g_\pi\in \tilde{\Gamma}$  is the identity map on $\im(\delta)$.
\end{cor}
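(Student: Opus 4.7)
The proof is essentially a two-line deduction combining Theorem \ref{thm:delta} and Theorem \ref{twisted}, so I would present it in that order and emphasise how the two $\RR_F$-module structures on $H_\bullet(\Gamma,\z)$ interact.

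First I would recall the two structures. By Theorem \ref{thm:delta}, $\im(\delta)\cong\overline{\PP}(k)$ as an $\RR_F$-module, and by definition the $\RR_F$-action on $\overline{\PP}(k)$ is the $\chi_v$-twist of the trivial action. Thus, writing $\lan a\ran x$ for the Mayer--Vietoris action (the one coming from the long exact sequence of Theorem \ref{MV1}), every $x\in \im(\delta)\half$ satisfies
\[
\lan a\ran x = \chi_v(a)\,x = (-1)^{v(a)} x.
\]
On the other hand, Theorem \ref{twisted} relates the Mayer--Vietoris action to the natural action $\lan a\ran\bullet -$ by
\[
\lan a\ran x = (-1)^{v(a)}\,\lan a\ran\bullet x.
\]

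Combining these two equalities gives $(-1)^{v(a)} x = (-1)^{v(a)}\,\lan a\ran\bullet x$ for every $x\in\im(\delta)$ and every $\lan a\ran\in\GG_F$; since $(-1)^{v(a)}=\pm 1$ is invertible (and we are working with $\z\half$-coefficients), this simplifies to $x=\lan a\ran\bullet x$. Hence the natural $\GG_F$-action fixes every element of $\im(\delta)$, which is exactly the containment $\im(\delta)\subset H_2(\Gamma_0(\mmm_A),\z\half)^{\GG_F}$.

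For the final sentence, I would note that $g_\pi=\mtxx{0}{-1}{\pi}{0}\in\tilde\Gamma$ has $\det(g_\pi)=\pi$, so by the very definition of the natural action of $\RR_F$ on $H_\bullet(\Gamma,\z)$ (from Section \ref{mv}, via the group extension $1\to\Gamma\to\tilde\Gamma\to\ff\to 1$), the map $C_\pi$ induced by conjugation by $g_\pi$ coincides with the natural action of $\lan\pi\ran$. The previous paragraph therefore gives $C_\pi(x)=\lan\pi\ran\bullet x=x$ for all $x\in\im(\delta)$, as required. There is really no obstacle here beyond keeping the two module structures straight; the only subtle point is the factor $\chi_v(a)$, which appears twice and cancels.
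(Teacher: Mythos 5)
Your proof is correct and takes essentially the same approach as the paper: combine Theorem \ref{thm:delta} (Mayer--Vietoris action on $\im(\delta)$ is the $\chi_v$-twist of the trivial action) with Theorem \ref{twisted} (Mayer--Vietoris action is the $\chi_v$-twist of the natural action), so the two twists cancel and the natural action is trivial. The paper compresses this into one sentence; you spell out the two identities explicitly and verify the last claim by noting $\det(g_\pi)=\pi$, which is a sound and slightly more detailed rendering of the same argument.
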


\begin{proof}
On the one hand, by Theorem \ref{thm:delta}, $\im(\delta)$ is isomorphic as an $\RR_F$-module to $\overline{\PP}(k)$
with the $\chi_v$-twist of the trivial $\RR_F$-structure. On the other hand,  by Theorem \ref{twisted}, $\im(\delta)$ is a submodule of 
$H_2(\Gamma_0(\mmm_A),\z\half)$ with the Mayer-Vietoris $\RR_F$-structure, which is the $\chi_v$-twist of the 
natural structure. It follows that $\im(\delta)$ is trivial in the natural $\RR_F$-structure. 
\end{proof}

\subsection{The spectral sequence \texorpdfstring{$E_{\bullet,\bullet}(A,R)$}{Lg}}\label{spect-seq}

Next we turn to the explicit calculation of $\delta$, which by the results above, is essentially a homomorphism 
$\overline{\PP}(k)\to H_2(\Gamma_0(\mmm_A),\z)$. In fact we will show that it can be identified with a 
$d^3$-differential in a certain spectral sequence. 

We begin by describing this spectral sequence. 

First, let us describe the general context. Let $G$ be a group and let $L_\bullet$ be a complex of left $G$-modules: 
\[
L_\bullet: \ \ \cdots \larr L_2 \overset{\partial_2}{\larr} L_1 \overset{\partial_1}{\larr} L_0\larr 0.
\]
The $n$-th hyperhomology group  of $G$ with coefficients in $L_\bullet$, denoted by 
$H_n(G, L_\bullet)$,
is defined as the $n$-th homology of the total complex of the double complex 
$F_\bullet \otimes_{G} L_\bullet$, where $F_\bullet\arr \z$ is a  right 
projective resolution of $\z$ over the group ring $\z[G]$. This double complex induces two 
spectral sequences both converging to the hyperhomology groups $H_\bullet(G, L_\bullet)$, as follow:
\[
\mathsf{E}_{p,q}^2=H_p(G, H_q(L_\bullet))\Rightarrow H_{p+q}(G, L_\bullet)
\]
and 
\[
E_{p, q}^1=H_q(G, L_p) \Rightarrow H_{p+q}(G, L_\bullet)
\]
(see \cite[\S 5, Chap. VII]{brown1994}).
By easy analysis of the spectral sequence $\mathsf{E}_{p,q}^2(G)$ we get:

\begin{lem}\label{homology-complex}
Let $L_\bullet$ be exact for $1\leq  i \leq n$. If $M:=H_0(L_\bullet)$, then 
$H_i(G, L_\bullet)\simeq H_i(G, M)$ for $0 \leq  i \leq n$. 
\end{lem}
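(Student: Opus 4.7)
The plan is to analyze the hyperhomology spectral sequence
\[
\mathsf{E}^2_{p,q} = H_p(G, H_q(L_\bullet)) \Rightarrow H_{p+q}(G, L_\bullet)
\]
already introduced in the excerpt. By the exactness hypothesis, $H_q(L_\bullet) = 0$ for $1 \leq q \leq n$, while $H_0(L_\bullet) = M$. Thus on the $E^2$-page the entire horizontal strip $1 \leq q \leq n$ vanishes, and the row $q=0$ is just $H_p(G,M)$.

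Next I would verify that the entries $\mathsf{E}^r_{p,0}$ with $p \leq n$ survive to $E^\infty$. The differentials leaving such an entry are
\[
d^r : \mathsf{E}^r_{p,0} \longrightarrow \mathsf{E}^r_{p-r,\, r-1},
\]
whose target lies in the vanishing strip when $2 \leq r \leq n+1$, and for $r \geq n+2$ the target is zero because $p-r < 0$ (using $p \leq n$). The differentials entering $\mathsf{E}^r_{p,0}$ come from $\mathsf{E}^r_{p+r,\, 1-r}$, which vanishes for $r \geq 2$ since $1-r<0$. Therefore $\mathsf{E}^\infty_{p,0} = \mathsf{E}^2_{p,0} = H_p(G,M)$ for $0 \leq p \leq n$, while every other $\mathsf{E}^\infty_{p,q}$ with $p+q \leq n$ is zero.

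The convergence of the spectral sequence then provides a filtration of $H_i(G, L_\bullet)$ whose only nonzero graded piece (in total degree $i \leq n$) is $\mathsf{E}^\infty_{i,0} = H_i(G, M)$, giving the desired isomorphism $H_i(G, L_\bullet) \simeq H_i(G, M)$. Since the isomorphism arises from the edge map of the spectral sequence, it is natural. There is no real obstacle here; the only thing to be careful about is bookkeeping the two ranges of $r$ (small $r$ handled by the vanishing strip, large $r$ handled by the index $p-r$ falling off the first quadrant), which together ensure that nothing can truncate the row $q=0$ in the relevant degrees.
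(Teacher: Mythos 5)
Your proof is correct and is exactly the analysis the paper alludes to (the paper simply writes ``By easy analysis of the spectral sequence $\mathsf{E}_{p,q}^2(G)$ we get'' without supplying details). The bookkeeping of outgoing differentials — the vanishing strip $1\le q\le n$ handling $2\le r\le n+1$, and first-quadrant vanishing handling $r\ge n+2$ — together with the triviality of incoming differentials from $q=1-r<0$, is the standard edge-map argument and is carried out correctly.
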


Let $R$ be a ring. A (column) vector 
${\bf u}={\mtx {u_1} {u_2}}\in R^2$ is said to be unimodular if $u_1R+u_2R=R$.  
Equivalently, ${\bf u}={\mtx {u_1} {u_2}}$ is said to be unimodular 
if there exists a vector ${\bf v}={\mtx {v_1} {v_2}}$ such that the matrix 
$({\bf u}, {\bf v}):={\mtxx {u_1} {v_1} {u_2} {v_2}}$
is  an invertible matrix.   Note that the matrix $({\bf u}, {\bf v})$ is invertible 
if and only if ${\bf u}, {\bf v}$ are a basis of $R^2$.

For any non-negative integer $n$, let $L_n(R^2)$ be the free 
abelian group generated by the set of all $(n+1)$-tuples 
$(R{\bf v_0}, \dots, R{\bf v_n})$, where every ${\bf v_i} \in R^2$ is 
unimodular and any two distinct vectors ${\bf v_i}, {\bf v_j}$ are a basis of $R^2$. 

We consider $L_n(R^2)$ as a left $\GL_2(R)$-module (and so $\SL_2(R)$-module) in a natural way.  
If necessary, we convert these actions to a right actions by the definition $m.g:=g^{-1}m$.

Let us define the $n$-th differential operator $\partial_n : L_n(R^2) \arr L_{n-1}(R^2)$, $n\ge 1$, as an alternating sum 
of face operators which throws away the $i$-th component of generators. Hence we have the complex
\begin{displaymath}
L_\bullet(R^2): \ 
\cdots \larr  L_2(R^2) \overset{\partial_2}{\larr}
L_1(R^2) \overset{\partial_1}{\larr} L_0(R^2) \larr 0.
\end{displaymath}
Let $\partial_{0}=\epsilon: L_0(R^2) \arr \z$ be defined by $\sum_i n_i(R{\bf v_{i}}) \mt \sum_i n_i$. 

\begin{lem}$($\cite[Lemma 3.21]{hutchinson2017}$)$\label{hutchinson1}
If $R$ is a local ring with residue field $k$, then 
the complex $L_\bullet(R^2)$ is exact for $1\leq i < |k|$ and $H_0(L_\bullet(R^2))\simeq \z$.
In particular for $0 \leq i < |k|$,
\[
H_i(\SL_2(R), L_\bullet(R^2))\simeq H_i(\SL_2(R),\z).
\]
\end{lem}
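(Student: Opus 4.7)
The plan is to prove this in two independent steps: first establish that the complex $L_\bullet(R^2)$ itself has $H_0 \cong \z$ and is acyclic in degrees $1 \le i < |k|$, and then transfer the result to $\SL_2(R)$-hyperhomology by invoking Lemma \ref{homology-complex}. The second step is automatic once acyclicity is in hand, since Lemma \ref{homology-complex} applied to $L_\bullet(R^2)$ with $M = H_0(L_\bullet(R^2)) = \z$ gives $H_i(\SL_2(R), L_\bullet(R^2)) \cong H_i(\SL_2(R), \z)$ for $0 \le i < |k|$. All the work is in the first step.

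For $H_0$, the augmentation $\epsilon$ is clearly surjective and its kernel is generated by differences $(R\mathbf{v}) - (R\mathbf{w})$ of basic $0$-chains. When $\mathbf{v}, \mathbf{w}$ form a basis of $R^2$, this difference is just $\partial_1(R\mathbf{v}, R\mathbf{w})$. In the remaining case $\bar{\mathbf{v}} = \bar{\mathbf{w}}$ in $\mathbb{P}^1(k)$, I would interpose an auxiliary unimodular $\mathbf{u}$ with residue in $\mathbb{P}^1(k)$ distinct from $\bar{\mathbf{v}}$ (available since $|\mathbb{P}^1(k)| = |k|+1 \ge 2$) and rewrite the difference as a sum of two boundaries of $1$-chains.

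For exactness in degrees $1 \le n < |k|$, my approach is a contracting-homotopy argument by coning on an auxiliary unimodular vector. Given $c \in Z_n(L_\bullet(R^2))$, its support involves only finitely many unimodular vectors, whose reductions form a finite set $\Sigma \subset \mathbb{P}^1(k)$. If $\Sigma$ omits some $\bar{\mathbf{e}} \in \mathbb{P}^1(k)$, lift to $\mathbf{e}$ and define $s_\mathbf{e}(R\mathbf{v}_0, \dots, R\mathbf{v}_n) := (R\mathbf{e}, R\mathbf{v}_0, \dots, R\mathbf{v}_n)$; this produces a valid generator on every term of $c$ (since $\bar{\mathbf{e}}$ differs from each $\bar{\mathbf{v}}_j$), and the standard simplicial identity $\partial s_\mathbf{e}(c) + s_\mathbf{e}(\partial c) = c$ combined with $\partial c = 0$ yields $c = \partial s_\mathbf{e}(c)$.

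The main obstacle lies in the case $\Sigma = \mathbb{P}^1(k)$, which can occur only when $k$ is finite. Here I would first replace $c$ by a homologous cycle whose support misses at least one point of $\mathbb{P}^1(k)$. Fix any $\bar{\mathbf{e}} \in \mathbb{P}^1(k)$ and work with generators of $c$ in which some vector reduces to $\bar{\mathbf{e}}$; the condition $n+1 \le |k|$ leaves at least one unused line in $\mathbb{P}^1(k) \setminus \{\bar{\mathbf{e}}\}$ and allows one to substitute the $\bar{\mathbf{e}}$-vector by another unimodular vector reducing to an unused line, modulo an explicit boundary, while preserving the pairwise-basis property. Iterating this local substitution over the finitely many offending generators yields a homologous cycle whose support avoids $\bar{\mathbf{e}}$, and the coning argument of the previous paragraph then applies. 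This step is where the numerical bound $n < |k|$ is essential; the rest of the argument is purely formal.
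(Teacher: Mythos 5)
The paper does not give its own proof of this lemma---it cites it directly to \cite[Lemma 3.21]{hutchinson2017}---so there is no in-house argument to compare against. Your overall structure is right: reducing the claim to acyclicity of $L_\bullet(R^2)$ in degrees $1\le i<|k|$ and then invoking Lemma \ref{homology-complex} is exactly the intended use of that lemma, and the $H_0$ computation and the coning argument in the case where the support $\Sigma\subsetneq\mathbb{P}^1(k)$ (in particular whenever $k$ is infinite) are both correct, including the homotopy identity $\partial s_{\mathbf{e}}+s_{\mathbf{e}}\partial=\mathrm{id}$.

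The gap is in the finite-$k$, full-support case, and it is a real one. Your ``local substitution'' step does not produce the claimed homologous cycle. If $g=(R\mathbf{v}_0,\dots,R\mathbf{v}_n)$ has $\bar{\mathbf{v}}_i=\bar{\mathbf{e}}$ and you insert $\mathbf{w}$ at position $i$ to form $h=(R\mathbf{v}_0,\dots,R\mathbf{v}_{i-1},R\mathbf{w},R\mathbf{v}_i,\dots,R\mathbf{v}_n)\in L_{n+1}(R^2)$, then
\[
\partial h=(-1)^i g+(-1)^{i+1}g' + E,
\]
where $g'$ is $g$ with $\mathbf{v}_i$ replaced by $\mathbf{w}$ and $E$ is the sum of the remaining $n$ faces. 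Every term of $E$ still contains $\mathbf{v}_i$ (and now also $\mathbf{w}$). So modulo $\partial h$ you replace the single offending generator $g$ by $g'\pm E$, which uses $\bar{\mathbf{e}}$ in $n$ of its terms and whose support is strictly larger. Iterating does not terminate, and the same obstruction appears if you prepend or append $\mathbf{w}$ instead. The fix cannot be generator-by-generator: one must exploit the cycle relation $\partial c=0$ globally (e.g.\ the observation that the $\bar{\mathbf{e}}$-containing faces of $c$ must cancel among themselves, which constrains which cone vectors can be chosen coherently), or else use a more structured argument such as filtering by the number of residue classes used and comparing with the subcomplex of chains avoiding $\bar{\mathbf{e}}$. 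As written, the proposal correctly identifies $n<|k|$ as the crux but does not actually close that case.
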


Let $A\arr R$ be a homomorphism of rings. Then  $L_\bullet(R^2) \arr \z$ is a
complex of (left) $\GL_2(A)$-modules (and so $\SL_2(A)$-modules) in a natural way, where $\z$
is considered as trivial module over $\GL_2(A)$. Thus we have the spectral sequences
\[
E_{p, q}^1(A,R)=H_q(\SL_2(A), L_p(R^2)) \Rightarrow H_{p+q}(\SL_2(A), L_\bullet(R^2)),
\]
\[
\ \EE_{p, q}^1(A,R)=H_q(\GL_2(A), L_p(R^2)) \Rightarrow H_{p+q}(\GL_2(A), L_\bullet(R^2)).
\]

When $A=R$ and $A\arr R$ is the identity map, these spectral sequences 
have been studied extensively in \cite{hutchinson-2013}, \cite{hutchinson2017} 
and \cite{suslin1991},  \cite{mirzaii2011},  \cite{mirzaii2017}. 

We suppose now  that the maps  $\aa \arr \rr$ and $\WW_A \arr \WW_R$ are surjective. We set 
\[
I:=\ker(A\arr R).
\]
(Of course, the case of interest in this article is  is the quotient map
$A \arr A/\mmm_A$, where $A$ is a discrete valuation ring with maximal ideal $\mmm_A$.)

To study the spectral sequence $E_{\bullet,\bullet}(A,R)$, we must study the action 
of $\SL_2(A)$ on the sets of basis of $L_i(R^2)$ for $0\leq i\leq 4$. 
Let
\[
{\bf e_1}:={\mtx 1 0}, \ \ {\bf e_2}:={\mtx 0 1} \in R^2.
\]
It is easy to see that $\SL_2(A)$ acts transitively on 
the sets of generators of $L_i(R^2)$ for $i=0,1$. Here we use the subjectivity 
of the map $\aa \arr \rr$. We choose $({\bf e_1}R)$ and  
$({\bf e_1}R,{\bf e_2}R)$ as representatives of the orbit of the generators
of $L_0(R^2)$ and $L_1(R^2)$, respectively.
The orbits of the action of  $\SL_2(A)$ on $L_2(R^2)$, $L_3(R^2)$ and $L_4(R^2)$ are represented by
\[
\lan a\ran[\ ]:=({\bf e_1}R, {\bf e_2}R, ({\bf e_1}+a {\bf e_2})R), \  \  \  \  \  \  
\lan a\ran\in \GG_R,
\] 
\[
\lan a\ran[x]:=({\bf e_1}R,{\bf e_2}R, ({\bf e_1}+a {\bf e_2})R, 
({\bf e_1}+ax{\bf e_2})R,  \  \  \  \   
\lan a\ran\in \GG_R, x\in \WW_R,
\]
and 
\[
\lan a\ran[x,y]:=({\bf e_1}R,{\bf e_2}R, ({\bf e_1}+a{\bf e_2})R, 
({\bf e_1}+ax{\bf e_2})R, ({\bf e_1}+ay{\bf e_2})R),  \ \  \lan a\ran\in \GG_R, x,y,x/y\in \WW_R,
\]
respectively. Therefore 
\[
L_0(R^2)\simeq \Ind _{\Gamma_0(I)}^{\SL_2(A)}\z, \ \ \ \ \  L_1(R^2)\simeq 
\Ind _{\Gamma_1(I)}^{\SL_2(A)}\z, \ \ \ \ \ 
L_2(R^2)\simeq \bigoplus_{\lan a \ran \in \GG_R} \Ind _{\Gamma_2(I)}^{\SL_2(A)}\z \lan a \ran[\ ],
\]
\[
L_3(R^2)\simeq \bigoplus_{\lan a\ran \in \GG_R}
\bigoplus_{x\in \WW_R}\Ind_{\Gamma_2(I)}^{\SL_2(A)}\z\lan a\ran [x], \ \  \ \ \
L_4(R^2)\simeq \bigoplus_{\lan a\ran \in \GG_R}
\bigoplus_{x,y,x/y\in \WW_R}\Ind_{\Gamma_2(I)}^{\SL_2(A)}\z\lan a\ran[x,y],
\]
where
\begin{align*}
&\Gamma_0(I):=\bigg\{ {\mtxx a b c d} \in \SL_2(A): c \in I\bigg\}, \\
&\Gamma_1(I):=\bigg\{ {\mtxx a b c d} \in \SL_2(A):  b, c \in I\bigg\},\\
&\Gamma_2(I):=\bigg\{ {\mtxx a b c d} \in \SL_2(A): b, c,  a-d \in I\bigg\}.
\end{align*}
Thus  by Shapiro's lemma we have
\[
E_{0,q}^1(A,R) \simeq H_q(\Gamma_0(I),\z), \
E_{1,q}^1(A,R) \simeq H_q(\Gamma_1(I),\z),\ 
E_{2,q}^1(A,R) \simeq \bigoplus_{\lan a\ran\in \GG_R} H_q(\Gamma_2(I),\z),
\]
\[
E_{	3,q}^1(A,R)\simeq \bigoplus_{\lan a\ran \in \GG_R}\bigoplus_{x\in \WW_R}H_q(\Gamma_2(I),\z), \ \ \ \
E_{4,q}^1(A,R)\simeq \bigoplus_{\lan a\ran \in \GG_R}
\bigoplus_{x,y,x/y\in \WW_R}H_q(\Gamma_2(I),\z).
\]
In particular $E_{0,0}^1(A,R)\simeq \z$, $E_{1,0}^1(A,R)\simeq \z$, 
$E_{2,0}^1(A,R)\simeq \z[\GG_R]$.

If $Z_1$ is the free abelian group generated by the symbols $[x]$, $x \in \WW_R$, and $Z_2$ is the 
free abelian group generated by the symbols $[x, y]$, $x,y, x/y \in \WW_R$, then in case of $q=0$, we have
\[
E_{3,0}^1(A,R)\simeq \z[\GG_R]Z_1, \ \ \ \ \ \ \  \ E_{4,0}^1(A,R)\simeq \z[\GG_R]Z_2.
\]

Now we study the differentials of the spectral sequence:

It is not difficult to see that 
\[
d_{1, q}^1=H_q(\sigma) - H_q(\inc),
\]
where $\sigma: \Gamma_1(I) \arr \Gamma_0(I)$ is conjugation by $w={\mtxx 0 {-1} 1 0}$,
i.e. $\sigma(X)= wX w^{-1}$. Furthermore
\[
d_{2,q}^1|_{\lan a\ran \otimes H_q(\Gamma_2(I),\z)}
=H_q(\eta_a)-H_q(\eta_a')+H_q(\inc),
\]
where $\eta_a,\eta_a': \Gamma_2(I) \arr \Gamma_1(I)$ are conjugation by
${\mtxx {-a} 1 {-1} 0 }$ and ${\mtxx 1 {-a^{-1}} 0 1}$, respectively. 
In particular $d_{2,0}^1:\z[\GG_R] \arr \z$ is the usual augmentation 
map $\sum n_i\lan a_i\ran \mapsto \sum n_i$.

The action of $\GL_2(A)$ on $L_\bullet(R^2)$ and the extension 
\[
1 \arr \SL_2(A)  \arr \GL_2(A)  \overset{\det}{\arr} A^{\times} \arr 1,
\]
induces an action 
of $\aa$ on $E_{\bullet,0}^1(A,R)=L_\bullet(R^2)_{\SL_2(A)}$.
Since $\aa I_2=Z(\GL_2(A))$ acts trivially on $E_{p,0}^1(A,R)$,
$(\aa)^2=\{a^2|a\in \aa\}$ also will act trivially on $E_{\bullet,0}^1(A,R)$. 
Thus $E_{p,0}^1(A,R)$, $p\geq 0$, has a 
natural $\GG_A$-module structure. Moreover the differentials
$d_{p,0}^1: E_{p,0}^1(A,R) \arr E_{p-1,0}^1(A,R)$ are $\GG_A$-homomorphisms.

By a direct calculation one sees that the $\GG_A$-homomorphism
$d_{3,0}^1:\z[\GG_R]Z_1 \arr \z[\GG_R]$ is given by 
\[
[x] \mapsto \Lan x\Ran \Lan 1-x\Ran,
\]
while the $\GG_A$-homomorphism $d_{4,0}^1:\z[\GG_R]Z_2 \arr \z[\GG_R]Z_1$ 
is given by
\[
[x,y] \mapsto Y_{x,y}=[x]-[y]+\lan x\ran\bigg[\frac{y}{x}\bigg]-
\lan x^{-1}-1\ran\Bigg[\frac{1-x^{-1}}{1-y^{-1}}\Bigg] - 
\lan 1-x\ran\Bigg[\frac{1-x}{1-y}\Bigg].
\]

Finally from the above calculations we have 
\[
E_{1,0}^2(A,R)=0, \ \ \ \  E_{2,0}^2(A,R)=I(R), \ \ \ \ E_{3,0}^2(A,R)\simeq \RP_1(R),
\]
where 
\[
I(R):=\II_R/\lan\ \Lan a\Ran \Lan 1-a\Ran:  a \in \WW_R\ran
\]
is called the {\it fundamental ideal} of $R$. (In fact, when $R$ is a local ring, it is the fundamental ideal in the Grothendieck-Witt ring
\[
\mathrm{GW}(R)\cong \z[\GG_R]/\lan \Lan u \Ran\Lan 1-u\Ran\ | u\in \WW_R\ran .)
\]

The commutative diagram of ring homomorphism
\[
\begin{CD}
A &@>{\id_A}>>&A\\
@V{\id_A}VV & &@VVV\\
A &@>>>&R\\
@VVV & &@VV{\id_R}V\\
R &@>{\id_R}>>&R\\
\end{CD}
\]
induces morphisms of spectral sequences 
\[
E_{\bullet,\bullet}(A,A) \arr E_{\bullet,\bullet}(A,R) 
\arr E_{\bullet,\bullet}(R,R)
\]
which give us the commutative diagram
\[
\begin{CD}
I(A) &@>{d_{2,0}^2(A,A)}>>& E_{0,1}^2(A,A)\\
@VVV & &@VVV\\
I(R) &@>{d_{2,0}^2(A,R)}>>&E_{0,1}^2(A,R)\\
@V\id_{I(R)}VV & &@VVV\\
I(R) &@>{d_{2,0}^2(R,R)}>>&E_{0,1}^2(R,R).
\end{CD}
\]
Note that 
\[
E_{0,q}^1(A,A)=H_q(B_A,\z), \ \  E_{1,q}^1(A,A)=H_q(T_A,\z), \ \
E_{2,q}^1(A,A)=\bigoplus_{\lan a\ran \in \GG_A}H_q(\mu_2(A)I_2,\z),
\]
where 
\[
T_A=\bigg\{{\mtxx a 0 0 {a^{-1}}}|a\in \aa\bigg\}, \ \ \ \ B_A:=\bigg\{{\mtxx a b 0 {a^{-1}}}|a\in \aa, b\in A\bigg\}.
\]
It is easy to see that $E_{0,1}^2(A,A)\simeq \GG_A \oplus H_0(\aa,A)$ and 
$E_{0,1}^2(R,R)\simeq \GG_R \oplus H_0(\rr, R)$, where the action of $\aa$ on $A$
(reps. $\rr$ on $R$) is given by $(a,b)\mt a^2b$.

By direct calculation one can show that 
both of the maps $d_{2,0}^2:I(A) \arr \GG_A\oplus H_0(\aa,A)$
and $d_{2,0}^2:I(R) \arr \GG_R\oplus H_0(\rr,R)$ are given by 
$\Lan a \Ran \mapsto \lan a \ran$ (see \cite[Lemma 5]{mazzoleni2005}).
Thus $E_{2,0}^3(A,A)\simeq I^2(A)$ and $E_{2,0}^3(R,R)\simeq I^2(R)$. 
Now from the surjectivity of $I(A) \arr I(R)$ and the above diagram
we obtain the isomorphism
\[
E_{2,0}^3(A,R)\simeq I^2(R).
\]

\subsection{The second homology of  \texorpdfstring{$\SL_2$}{Lg} of a local ring}

\begin{prp}\label{exact1}
Let $A$ be a local ring whose residue field has
at least four elements. Let $I$ be a proper ideal of $A$ and set $R:=A/I$.
Then we have a natural exact sequence
\[
H_2(\Gamma_0(I),\z) \arr H_2(\SL_2(A),\z)\arr I^2(R) \arr 0.
\]
\end{prp}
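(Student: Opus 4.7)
The strategy is to apply the spectral sequence $E_{p,q}^1(A,R) \Rightarrow H_{p+q}(\SL_2(A), L_\bullet(R^2))$ developed in Subsection \ref{spect-seq}. Since $A$ and $R$ share the residue field $k$ with $|k|\geq 4$, Lemma \ref{hutchinson1} combined with Lemma \ref{homology-complex} identifies $H_n(\SL_2(A), L_\bullet(R^2))$ with $H_n(\SL_2(A), \z)$ for $n \leq 3$, so that the spectral sequence indeed converges to $H_2(\SL_2(A), \z)$ in the relevant total degree.

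For total degree $n=2$ there are three groups on the $E^\infty$-page: $E_{0,2}^\infty$, $E_{1,1}^\infty$, and $E_{2,0}^\infty$. A straightforward bookkeeping of the differentials $d^r: E_{p,q}^r \to E_{p-r, q+r-1}^r$ shows that no differential can enter or leave $E_{2,0}^r$ once $r \geq 3$, so $E_{2,0}^\infty = E_{2,0}^3$, which the paragraph immediately preceding this proposition has identified with $I^2(R)$. Similarly $E_{0,2}^\infty$ is a quotient of $E_{0,2}^1 = H_2(\Gamma_0(I),\z)$, and the composite
\[
H_2(\Gamma_0(I),\z) \twoheadrightarrow E_{0,2}^\infty \hookrightarrow H_2(\SL_2(A),\z)
\]
agrees with the map induced by the inclusion $\Gamma_0(I) \hookrightarrow \SL_2(A)$.

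The convergent filtration $0 \subseteq F_0 \subseteq F_1 \subseteq F_2 = H_2(\SL_2(A),\z)$ with graded pieces $F_p/F_{p-1} = E_{p,2-p}^\infty$ therefore produces a surjection $H_2(\SL_2(A),\z) \twoheadrightarrow E_{2,0}^\infty = I^2(R)$ whose kernel $F_1$ is an extension of $E_{1,1}^\infty$ by $E_{0,2}^\infty = \im\bigl( H_2(\Gamma_0(I),\z) \to H_2(\SL_2(A),\z)\bigr)$. Thus the whole proposition reduces to the single statement
\[
E_{1,1}^\infty = 0.
\]

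The main obstacle is therefore the vanishing of $E_{1,1}^\infty$. One begins by describing $E_{1,1}^1 = H_1(\Gamma_1(I), \z) = \Gamma_1(I)^{\mathrm{ab}}$ in terms of the diagonal torus together with the unipotent generators $\mtxx{1}{b}{0}{1}$ and $\mtxx{1}{0}{c}{1}$ with $b,c \in I$, noting that commutators with a diagonal element $\mathrm{diag}(a,a^{-1})$ produce the relations $(a^2-1)b = 0$ in the abelianization. One then computes the two differentials $d_{1,1}^1 = H_1(\sigma) - H_1(\mathrm{inc})$ and $d_{2,1}^1 = H_1(\eta_a) - H_1(\eta_a') + H_1(\mathrm{inc})$ explicitly on these generators (where $\sigma, \eta_a, \eta_a'$ are the conjugations described in Subsection \ref{spect-seq}), and verifies that every class in $H_1(\Gamma_1(I), \z)$ either lies in the kernel of $d_{1,1}^1$ and is hit by $d_{2,1}^1$, or else is moved nontrivially by $d_{1,1}^1$; any class surviving to $E_{1,1}^2$ is then killed by the final differential $d_{3,0}^2: E_{3,0}^2 \to E_{1,1}^2$ coming from $\RP_1(R)$. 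Decomposing everything under the natural $\GG_A$-action (through conjugation by diagonal matrices in $\GL_2(A)$) and treating the eigen-pieces separately is likely to be the cleanest route through this calculation.
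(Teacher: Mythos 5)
Your structural setup is correct and matches the paper: you use the spectral sequence $E_{\bullet,\bullet}(A,R)$ converging to $H_\bullet(\SL_2(A),\z)$ (via Lemmas \ref{homology-complex} and \ref{hutchinson1}), you correctly identify $E_{2,0}^\infty = E_{2,0}^3 \cong I^2(R)$ and the edge map $H_2(\Gamma_0(I),\z)\twoheadrightarrow E_{0,2}^\infty \hookrightarrow H_2(\SL_2(A),\z)$, and you correctly reduce the proposition to the vanishing of $E_{1,1}^\infty$. However, there is a genuine gap: you never actually establish this vanishing. The sentence beginning ``One then computes the two differentials\ldots and verifies that\ldots'' is a sketch, not a computation, and it glosses over the main point. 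The generators-and-relations description of $\Gamma_1(I)^{\mathrm{ab}}$ you propose is not obviously tractable: the issue is controlling the contribution of the full congruence subgroup $\Gamma(I)$, not just the diagonal torus and elementary unipotents. The paper handles this by introducing $\Gamma(I)$, comparing the spectral sequence $E_{\bullet,\bullet}(A,R)$ with $E_{\bullet,\bullet}(R,R)$ via the tower of extensions $1\to\Gamma(I)\to\Gamma_j(I)\to(\mu_2(R)I_2,\,T_R,\,B_R)\to 1$, proving the explicit facts (i) $H_1(\Gamma_0(I),\z)\cong R^\times$ and (ii) $\Gamma(I)/[\Gamma(I),\Gamma_1(I)]$ is represented by diagonal matrices, and then running a Snake lemma. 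That argument proves the stronger statement $E_{1,1}^2(A,R)=0$ directly; none of it appears in your plan.

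Your proposed fallback --- that classes surviving to $E_{1,1}^2$ might be killed by $d_{3,0}^2\colon\RP_1(R)\to E_{1,1}^2(A,R)$ --- is in fact unavailable. In the next subsection the paper shows precisely that $d_{3,0}^2(A,R)$ is the zero map (because $\RP_1(A)\twoheadrightarrow\RP_1(R)$ and $d_{3,0}^2(A,A)=0$, as $E_{1,1}^2(A,A)=0$). So $E_{1,1}^3 = E_{1,1}^2$, and relying on $d_{3,0}^2$ to kill survivors cannot work; you would be forced to prove $E_{1,1}^2(A,R)=0$ outright anyway. Since this is exactly the step your plan defers, the proof as written is incomplete at the one point that carries the real mathematical content.
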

\begin{proof}
By Lemma \ref{hutchinson1}, $H_i(\SL_2(A), L_\bullet)\simeq 
H_i(\SL_2(A),\z)$ for $0\leq i \leq 3$.
Moreover the natural map $\aa \arr \rr$ is surjective. 

Thus to prove the claim it is enough to prove that $E_{1,1}^2(A,R)=0$. This is the
homology of the complex
\[
\begin{CD}
\bigoplus_{\lan a\ran\in \GG_R}H_1(\Gamma_2(I),\z) @>{d_{2,1}^1(A,R)}>>
H_1(\Gamma_1(I),\z) @>{d_{1,1}^1(A,R)}>> H_1(\Gamma_0(I),\z).
\end{CD}
\]
Let $\Gamma(I):=\bigg\{{\mtxx a b c d}\in \SL_2(A):a-1,d-1,b,c\in I\bigg\}$.
From the commutative diagram of extensions
\[
\begin{CD}
1 \larr &\ \Gamma(I)&\ \larr &\ \Gamma_2(I)&\ \larr &\ \mu_2(R)I_2&\ \larr 1 \\
       &@VVV        @VVV         @VVV     &        \\
1 \larr &\ \Gamma(I)&\ \larr &\ \Gamma_1(I)&\ \larr &\ T_R& \ \larr 1 \\
       &@VVV        @VVV         @VVV     &        \\
1 \larr &\ \Gamma(I)&\ \larr &\ \Gamma_0(I)&\ \larr &\ B_R&\ \larr 1 \\
\end{CD}
\]
and the morphism of spectral sequences $E_{\bullet, \bullet}(A,R) 
\arr E_{\bullet, \bullet}(R,R)$
we obtain the commutative diagram
\[
\begin{tikzcd}
\bigoplus_{\lan a\ran\in \GG_R}H_1(\Gamma(I),\z)\ar[r]\ar[d] &
H_1(\Gamma(I),\z)_{T_R} \ar[r] \ar[d] & H_1(\Gamma(I),\z)_{B_R}\ar[d]\\
\bigoplus_{\lan a\ran\in \GG_R}H_1(\Gamma_2(I),\z)\ar[r, "{d_{2,1}^1(A,R)}"]\ar[d] &
H_1(\Gamma_1(I),\z) \ar[r, "{d_{1,1}^1(A,R)}"] \ar[d]& H_1(\Gamma_0(I),\z) \ar[d]\\
\bigoplus_{\lan a\ran\in \GG_R}H_1(\mu_2(R),\z)\ar[r, "{d_{2,1}^1(R,R)}"] &
H_1(T_R,\z)\ar[r, "{d_{1,1}^1(R,R)}"] & H_1(B_R,\z),
\end{tikzcd}
\]
where the maps on top row is induced by the maps on the middle row.
Now the triviality of $E_{1,1}^2(A,R)$ follows from the following facts:
\par (i) The natural map $\Gamma_0(I) \arr T_R$ induces the isomorphism
\[
H_1(\Gamma_0(I),\z) \simeq H_1(T_R,\z)\simeq \rr.
\]
\par (ii) The elements of 
$H_1(\Gamma(I),\z)_{T_R}=\Gamma(I)/[\Gamma(I),\Gamma_1(I)]$ are represented
 by matrices  ${\mtxx a 0 0 {a^{-1}}}$, $a-1\in I$.

In fact (i) implies that 
$H_1(\Gamma_0(I),\z)=\Gamma_0(I)/[\Gamma_0(I),\Gamma_0(I)]$ injects into  $H_1(B_R,\z)$.
Now by applying the Snake lemma to the diagram
\[
\begin{tikzcd}
&H_1(\Gamma(I),\z)_{T_R} \ar[r]\ar[d] & H_1(\Gamma_1(I),\z) \ar[r] \ar[d, "{d_{1,1}^1(A,R)}"] & 
H_1(T_R,\z) \ar[d, "{d_{1,1}^1(R,R)}"] \ar[r]&  0\\
 0\ar[r]  &0 \ar[r]                  &  H_1(\Gamma_0(I),\z) \ar[r] &  H_1(B_R,\z) & 
\end{tikzcd}
\]
we obtain the exact sequence
\[
H_1(\Gamma,\z)_{T_R} \arr \ker(d_{1,1}^1(A,R)) \arr H_1(\mu_2(R),\z) \arr 0.
\]
The map $d_{2,1}(A,R)|_{H_1(\Gamma(I),\z)} H_1(\Gamma(I),\z) 
\arr H_1(\Gamma(I),\z)_{T_R}$
takes the element represented by ${\mtxx a 0 0 {a^{-1}}}$ to the 
element represented 
by the same matrix, thus by (ii) this map is surjective. Therefore the claim follows from 
the commutativity of the above diagram.
Now we prove (i) and (ii).
\par {\sf Proof of (i)}: Let $\Lambda_0(I)$ be the kernel of the natural map
\[
\Gamma_0(I) \arr T_R, \ \ \ \ {\mtxx a b c d}\mapsto 
{\mtxx {\bar{a}} 0 0 {\bar{a}^{-1}}}.
\]
Since $T_R$ is abelian, 
$[\Gamma_0(I),\Gamma_0(I)]\subseteq \Lambda_0(I)$. 
Let ${\mtxx a b c d}\in \Lambda_0(I)$. Since $A$ is local, there is 
$z \in (a-1)A \subset I$ such that $x:=a+bz\in \aa$. Now if $y:=c+dz\in I$,
then
\[
{\mtxx a b c d}={\mtxx x 0 0 {x^{-1}}}
{\mtxx 1 0 {xy} 1}{\mtxx 1  {x^{-1}b} 0 1}
{\mtxx 1 0 {-z} 1}.
\]
Since $x+I=a+I=1+I$, $x-1\in I$. Let $x=1+t$ for some $t \in I$. Then
\[
{\mtxx x 0 0 {x^{-1}}}= {\mtxx 1 0 {-x^{-1}t} 1} {\mtxx 1 1 0 1}{\mtxx 1 0 t 1}
{\mtxx 1 {-x^{-1}} 0 1}.
\]
Let $\lambda \in \aa$ such that $\lambda^2-1 \in \aa$. This is possible since
the residue field of $A$ has at least four elements. Now the above formulas
together with the commutator formulas
\[
{\mtxx 1 s 0 1}=\bigg[{\mtxx \lambda 0 0 {\lambda^{-1}}},
{\mtxx 1 {s/(\lambda^2-1)} 0 1}\bigg],
\]
\[
{\mtxx 1 0 s 1}=\bigg[{\mtxx \lambda 0 0 {\lambda^{-1}}},
{\mtxx 1 0 {s/(\lambda^2-1)} 1}\bigg],
\]
$s\in A$, imply that 
$\Lambda_0(I) \subseteq [\Gamma_0(I),\Gamma_0(I)]$. Thus 
\[
H_1(\Gamma_0(I),\z)\simeq T_R\simeq H_1(T_R,\z).
\]

\par {\sf Proof of (ii)}: First note that
\[
H_1(\Gamma(I),\z)_{T_R}=H_1(\Gamma(I),\z)_{\Gamma_1(I)}\simeq
 \Gamma(I)/[\Gamma(I), \Gamma_1(I)].
\]
If ${\mtxx a b c d}\in \Gamma(I)$, then as in above we find $z \in I$, such that
\[
{\mtxx a b c d}={\mtxx x 0 0 {x^{-1}}}{\mtxx 1 0 {xy} 1}
{\mtxx 1 {x^{-1}b} 0 1}
{\mtxx 1 0 {-z} 1},
\]
where $x:=a+bz\in \aa$ and $y:=c+dz\in I$. Now the claim (ii) 
follows from the above commutator formulas.
\end{proof}

\subsection{The group \texorpdfstring{$E_{3,0}^3(A, R)$}{Lg} and the differential \texorpdfstring{$d_{3,0}^3(A,R)$}{Lg}}

The morphism of spectral sequences 
$E_{\bullet,\bullet}(A,A) \arr E_{\bullet,\bullet}(A,R)$
induces the commutative diagram
\[
\begin{CD}
\RP_1(A) &@>{d_{3,0}^2(A,A)}>>& E_{1,1}^2(A,A)\\
@VVV & &@VVV\\
\RP_1(R) &@>{d_{3,0}^2(A,R)}>>&E_{1,1}^2(A,R).\\
\end{CD}
\]
Since $\WW_A \arr \WW_R$ is surjective, $ \RP_1(A)  \arr \RP_1(R) $ 
is surjective. It is not difficult to show that $E_{1,1}^2(A,A)=0$. Thus
$d_{3,0}^2(A,A)$ is trivial. This implies that
\[
d_{3,0}^2(A,R):\RP_1(R) \arr E_{1,1}^2(A,R)
\]
is trivial too. Therefore 
\[
E_{3,0}^3(A,R)\simeq \RP_1(R).
\]

Now we would like to calculate the differential $d_{3,0}^3(A,R):\RP_1(R) \arr E_{0,2}^3(A,R)$. (Note that $E_{0,2}^3(A,R)$ 
is a quotient of $H_2(\Gamma_0(I),\z)$.) For this we need to put an extra condition on $A$. 

\begin{prp}$($\cite[Proposition 3.19]{hutchinson2017}$)$\label{homology2}
Let $A$ be a local domain with residue field $k$. If $k$ is finite
we assume that it has $p^d$ elements such that   
$(p-1)d > 4$. Then the natural map 
\[
H_2(\inc): H_2(T_A,\z) \arr H_2(B_A,\z)
\]
is an isomorphism.
\end{prp}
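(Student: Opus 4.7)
The natural approach is to exploit the split extension
\[
1 \to U_A \to B_A \to T_A \to 1,
\]
where $U_A := \left\{\mtxx{1}{b}{0}{1}:b\in A\right\} \cong (A,+)$ and $T_A$ acts on $U_A$ via $\diag(a,a^{-1})\cdot b = a^2 b$. The Lyndon--Hochschild--Serre spectral sequence
\[
E^2_{p,q} = H_p(T_A, H_q(U_A,\z)) \Longrightarrow H_{p+q}(B_A,\z)
\]
admits a splitting coming from $T_A \hookrightarrow B_A$, so the edge map $H_2(B_A,\z)\to H_2(T_A,\z)$ is split surjective, with $H_2(\inc)$ as section. This identifies $H_2(T_A,\z)$ with $E^\infty_{2,0}$ as a direct summand of $H_2(B_A,\z)$, and the claim reduces to the vanishing of the other two contributions. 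Since $E^\infty_{p,q}$ is a subquotient of $E^2_{p,q}$, it suffices to prove
\[
E^2_{1,1} = H_1(T_A,A) = 0 \qquad \text{and} \qquad E^2_{0,2} = H_0(T_A, H_2(U_A,\z)) = 0.
\]

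For $E^2_{1,1}$, I would apply the ``central element'' trick. Since $T_A = A^\times$ is abelian, every $a\in T_A$ is central, so multiplication by $a$ on the coefficient module induces the identity on $H_*(T_A,A)$ (self-conjugation is trivial). On the module $A$, however, the operator $a-1$ is multiplication by $a^2-1$. The hypothesis $(p-1)d>4$ (or $k$ infinite) guarantees the existence of $a\in A^\times$ lifting some $\bar a\in k^\times$ with $\bar a^2\neq 1$, so that $a^2-1\in A^\times$; then multiplication by this unit is an automorphism of $A$ and therefore of $H_1(T_A,A)$. But this same operator is zero, forcing $H_1(T_A,A)=0$.

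For $E^2_{0,2}$ the central-element argument is vacuous in degree zero, so one must analyse the coinvariants of $H_2(U_A,\z) = \wedge^2_\z A$ directly. Writing $a^2 = 1+u$ with $u\in A^\times$ and comparing $(a^2 b)\wedge(a^2 c)$ to $b\wedge c$ modulo coinvariants yields, for every $u\in A^\times$ of the form $a^2-1$, the relation
\[
ub\wedge c + b\wedge uc + ub\wedge uc \equiv 0 \pmod{\text{coinvariants}}.
\]
Combining such relations for a sufficiently rich family of $u$, one shows that every pure wedge $b\wedge c$ vanishes in $(\wedge^2_\z A)_{T_A}$.

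The principal obstacle is the combinatorial bookkeeping in this last step, and it is precisely here that the numerical hypothesis $(p-1)d>4$ is essential: it must guarantee simultaneously enough units $a$ with $a^2-1\in A^\times$ for the central-element argument and enough independent units of the form $a^2-1$ to kill all pure wedges in the coinvariants of $\wedge^2_\z A$. The detailed execution of this computation is carried out in \cite[Proposition 3.19]{hutchinson2017}.
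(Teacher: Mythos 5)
The paper does not actually prove this proposition --- it is imported verbatim from \cite[Proposition 3.19]{hutchinson2017}, so there is no in-text argument to compare against. Judged on its own terms, your proposed framework is the natural one and its first two-thirds are correct: the Lyndon--Hochschild--Serre spectral sequence of the split extension, the observation that the section $T_A\arr B_A$ forces $E^\infty_{2,0}=H_2(T_A,\z)$ and splits it off $H_2(B_A,\z)$, the reduction to $E^2_{1,1}=E^2_{0,2}=0$, and the centre-kills argument for $E^2_{1,1}=H_1(T_A,A)$ (multiplication by $\diag(a,a^{-1})$ acts as the identity on homology but as $a^2$ on the module $A$, so $a^2-1\in\aa$ annihilates and is an automorphism) are all sound.

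The genuine gap is exactly where you concede it is, and it is larger than ``combinatorial bookkeeping.'' The relation
\[
ub\wedge c + b\wedge uc + ub\wedge uc = 0 \quad (u=a^2-1\in\aa)
\]
in $(\wedge^2_\z A)_{T_A}$ is correctly derived, but it does not follow formally that the coinvariants vanish, and indeed the statement is false without the full hypothesis. For $A=k=\F_9$ one has $\wedge^2_{\F_3}\F_9\cong\F_3$, and $c\in\F_9^\times$ acts on it by the scalar $N_{\F_9/\F_3}(c)^2=c^8=1$; the action is trivial, so $E^2_{0,2}=(\wedge^2 A)_{T_A}=\F_3\neq 0$. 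This is precisely the boundary case $(p-1)d=4$. So the hypothesis $(p-1)d>4$ lives almost entirely in this step, and ``enough units of the form $a^2-1$'' is not a sufficient description of what it buys; one must actually analyse the $\F_p[\aa]$-module decomposition of $\wedge^2_\z k$ (via $k\otimes_{\F_p}k\cong\bigoplus_i k$, with $\aa$ acting on the $i$-th factor through $c\mapsto c^{2(1+p^i)}$) and show each character occurring in the exterior summand is nontrivial. Separately, note that the centre-kills step only needs $|k|>3$, so it is misleading to suggest the numerical hypothesis is required ``simultaneously'' by both pieces --- it is the $E^2_{0,2}$ computation alone that dictates the bound.
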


So let the natural map $H_2(\inc): H_2(T_A,\z) \arr H_2(B_A,\z)$ be an isomorphism. By analyzing the morphism of spectral 
sequences
\[
E_{\bullet,\bullet}(A,A) \arr \EE_{\bullet,\bullet}(A,A),
\]
one can show (see the proof of \cite[Proposition 6.1]{hutchinson2019}) that the diagram 
\[
\begin{tikzcd}
\RP_1(A) \ar[r, "{d_{3,0}^2(A,A)}"] \ar[d]& \displaystyle \frac{\aa \wedge \aa}{\aa \wedge \mu_2(A)}\ar[d]\\
\PP(A) \ar[r, "\lambda"] &S_\z^2(A)
\end{tikzcd}
\]
is commutative, where the vertical map on the right side is injective and is given by 
\[
a\wedge b \mapsto 2(a\otimes b).
\]
Under the map $\RP_1(A) \arr \PP(A)$, $g(a)=p_{-1}^+[a]+\Lan 1-x\Ran \psi_1(a)$ maps to $2[a]$.
This shows that in the above  diagram $d_{3,0}^3(A,A)$ maps $g(a)$ to $ (1-a)\wedge a$.
(Note that when $A$ is a local ring where its residue field has more than 10 elements, then the 
set $\{g(a):a\in \WW_A\}$ generates $\RP_1(A)[\frac{1}{2}]$.) Now by considering the commutative diagram 
\[
\begin{tikzcd}
\RP_1(A) \ar[r, "{d_{3,0}^2(A,A)}"] \ar[d]& \displaystyle \frac{\aa \wedge \aa}{\aa \wedge \mu_2(A)}\ar[d]\\
\RP_1(R) \ar[r, "{d_{3,0}^2(A,R)}"] &H_2(\Gamma_0(I),\z)/K
\end{tikzcd}
\]
we see that under the map $d_{3,0}^2(A,R)$, 
\[
g(a) \mapsto {\rm \bf{c}} \bigg({\mtxx {(1-a)} 0 0 {(1-a)^{-1}} }, 
{\mtxx a 0 0 {a^{-1}}}\bigg) \pmod K.
\]
For $a,b \in \aa$, ${\rm \bf{c}} \bigg({\mtxx a 0 0 {a^{-1}} }, {\mtxx b 0 0 {b^{-1}}}\bigg)$ is
the image of $a\wedge b$ under the composite
\[
\begin{array}{c}
\aa \wedge \aa \overset{\simeq}{\larr} T_A \wedge T_A \overset{\simeq}{\larr}H_2(T_A,\z) 
\overset{H_2(\inc)}\larr H_2(\Gamma_0(I),\z).
\end{array}
\]


We deduce: 

\begin{prp}\label{exact2}
Let $A$ be a local domain with residue field $k$. If $k$ is finite we assume that it has $p^d$ elements such that 
$(p-1)d > 4$. Let $I$ be a proper ideal of $A$ and set $R:=A/I$.  Then there is a natural exact sequence 
\[
H_3(\SL_2(A),\z) \larr \RP_1(R) \overset{\alpha}{\larr} 
\displaystyle\frac{H_2(\Gamma_0(I),\z)}{K} \larr H_2(\SL_2(A),\z)\larr I^2(R) \arr 0,
\]
where $K$ is a certain subgroup of $H_2(\Gamma_0(I),\z)$ and
\[
\alpha(g(a))={\rm \bf{c}} \bigg({\mtxx {(1-a)} 0 0 {(1-a)^{-1}} }, 
{\mtxx a 0 0 {a^{-1}}}\bigg)\pmod K.
\]
\end{prp}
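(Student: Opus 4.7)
The plan is to deduce Proposition \ref{exact2} by a careful analysis of the spectral sequence $E_{\bullet,\bullet}(A,R)$ abutting to $H_{p+q}(\SL_2(A),L_\bullet(R^2))\cong H_{p+q}(\SL_2(A),\z)$ (the latter isomorphism holding in the relevant range by Lemma \ref{hutchinson1}). Proposition \ref{exact1} already furnishes the right-hand portion $H_2(\Gamma_0(I),\z)\to H_2(\SL_2(A),\z)\to I^2(R)\to 0$, so the task reduces to extending this exact sequence to the left by $\RP_1(R)\overset{\alpha}{\to} H_2(\Gamma_0(I),\z)/K$ and verifying the explicit formula for $\alpha$ on the generators $g(a)$.

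First I would read off the filtration on $H_2(\SL_2(A),\z)$ from the spectral sequence. The pieces of total degree $2$ are $E^\infty_{0,2}$, $E^\infty_{1,1}$ and $E^\infty_{2,0}$. We already know $E^2_{1,1}(A,R)=0$ (established inside the proof of Proposition \ref{exact1}), so $E^\infty_{1,1}=0$. For $E^\infty_{2,0}$, all differentials $d^r_{2,0}$ with $r\geq 3$ land in columns with $p<0$, hence $E^\infty_{2,0}=E^3_{2,0}=I^2(R)$ as computed in Subsection \ref{spect-seq}. Define $K\subseteq H_2(\Gamma_0(I),\z)=E^1_{0,2}$ as the (pre-)image of $\im(d^1_{1,2})+\im(d^2_{2,1})$, so that $E^3_{0,2}=H_2(\Gamma_0(I),\z)/K$. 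Then $E^\infty_{0,2}=E^3_{0,2}/\im(d^3_{3,0})$, giving a short exact sequence
\[
0\to E^\infty_{0,2}\to H_2(\SL_2(A),\z)\to I^2(R)\to 0.
\]

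Next I would identify the left end. The computation in Subsection \ref{spect-seq} shows $E^3_{3,0}(A,R)=\RP_1(R)$: indeed $E^2_{3,0}=\RP_1(R)$ and the differential $d^2_{3,0}(A,R)$ is forced to vanish by comparison with $E_{\bullet,\bullet}(A,A)$ (where $E^2_{1,1}(A,A)=0$ kills its target) together with the surjectivity of $\RP_1(A)\to \RP_1(R)$. Further differentials $d^r_{3,0}$ for $r\geq 4$ land in quadrant II, hence vanish; so $E^\infty_{3,0}=\ker(d^3_{3,0})$. Setting $\alpha:=d^3_{3,0}:\RP_1(R)\to H_2(\Gamma_0(I),\z)/K$, the formula
\[
\alpha(g(a))={\rm \bf{c}}\!\left({\mtxx{(1-a)}{0}{0}{(1-a)^{-1}}},{\mtxx{a}{0}{0}{a^{-1}}}\right)\pmod K
\]
is exactly the formula derived in the preceding discussion by comparing $d^3_{3,0}(A,R)$ with $d^3_{3,0}(A,A)$ via the morphism of spectral sequences $E_{\bullet,\bullet}(A,A)\to E_{\bullet,\bullet}(A,R)$ and using that $g(a)\mapsto 2[a]\mapsto 2(a\otimes(1-a))$ in $\PP(A)\to S^2_\z(\aa)$ lifts to $(1-a)\wedge a\in (\aa\wedge\aa)/(\aa\wedge\mu_2(A))$.

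Finally I would assemble the exact sequence. The edge map of the spectral sequence gives a surjection $H_3(\SL_2(A),\z)\twoheadrightarrow E^\infty_{3,0}=\ker(\alpha)$, yielding exactness at $\RP_1(R)$. Combining with the short exact sequence above and the embedding $E^\infty_{0,2}=\coker(\alpha)\hookrightarrow H_2(\SL_2(A),\z)$ produces the claimed five-term exact sequence
\[
H_3(\SL_2(A),\z)\to \RP_1(R)\overset{\alpha}{\to} H_2(\Gamma_0(I),\z)/K\to H_2(\SL_2(A),\z)\to I^2(R)\to 0.
\]
The main subtlety is the precise description of $K$ and the verification of the formula for $\alpha$ on $g(a)$: this is the one step that requires the hypothesis $(p-1)d>4$, via Proposition \ref{homology2}, to ensure that the comparison with the exterior-square description of $H_2(T_A,\z)\cong H_2(B_A,\z)$ is valid and that $d^3_{3,0}(A,A)$ factors through the injection $(\aa\wedge\aa)/(\aa\wedge\mu_2(A))\hookrightarrow S^2_\z(\aa)$ as established in Subsection \ref{spect-seq}. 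Once this is in place, the rest is formal spectral sequence bookkeeping.
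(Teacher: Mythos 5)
Your proposal is correct and follows essentially the same route as the paper: the authors' proof is the terse statement that the claim "follows from an easy analysis of the spectral sequence $E_{\bullet,\bullet}(A,R)$" given Lemma \ref{hutchinson1} and Proposition \ref{homology2}, and your careful spectral-sequence bookkeeping (identifying $K$ as the kernel of $E^1_{0,2}\to E^3_{0,2}$, reading off $E^\infty_{1,1}=0$, $E^\infty_{2,0}=I^2(R)$, $E^3_{3,0}=\RP_1(R)$, and recognizing $\alpha$ as $d^3_{3,0}$) is exactly the analysis the authors leave to the reader, with the formula for $\alpha(g(a))$ coming from the discussion preceding the proposition.
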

\begin{proof}
By Lemma \ref{hutchinson1}, $H_i(\SL_2(A),L_\bullet)\simeq H_i(\SL_2(A),\z)$,  for $0\leq i\leq 3$.
Moreover by Proposition \ref{homology2}, $H_2(T_A,\z)\simeq H_2(B_A,\z)$.                                                                                                                                                                                                                                                                                                                                                                  
Since the maps $\aa \arr \rr$ and $\WW_A\arr \WW_R$ are surjective, the claim follows of an
easy analysis of the spectral sequence $E_{\bullet,\bullet}(A,R)$.
\end{proof}

\subsection{Comparing two exact sequences}

Again, let $A$ be a discrete valuation ring with maximal ideal $\mmm_A$. Let $k$, $F$, $v=v_A$ and $\pi$
be as in Section \ref{sec:RSCG}.

By Theorem \ref{MV1} we have the Mayer-Vietoris exact sequence 
of $\GG_F$-modules
\[
\begin{CD}
H_3(\SL_2(A),\z)\oplus H_3(\SL_2(A),\z)
@>{ j_\ast+\lan\pi\ran j_\ast}>> H_3(\SL_2(F),\z)
\overset{\delta}{\larr} H_2(\Gamma_0(\mmm_A),\z)
\end{CD}
\]
\[
\begin{CD}
@>{(i_\ast,-i_\ast(\lan \pi \ran\bullet))}>> H_2(\SL_2(A),\z)\oplus 
H_2(\SL_2(A),\z) @>{j_\ast+\lan\pi\ran j_\ast}>>
H_2(\SL_2(F),\z)
\end{CD}
\]
From this and Proposition \ref{exact1} we obtain the exact sequence
\[
H_3(\SL_2(A),\z)
\overset{\lan\pi\ran j_\ast}{\larr} 
\frac{H_3(\SL_2(F),\z)}{j_\ast H_3(\SL_2(A),\z)}\overset{\delta}{\larr} 
H_2(\Gamma_0(\mmm_A),\z) \overset{i_\ast}{\larr} H_2(\SL_2(A),\z) \arr I^2(k)\arr 0.
\]
On the other hand, by Proposition \ref{exact2}, we have the exact sequence
\[
\begin{CD}
H_3(\SL_2(A),\z) \arr \RP_1(k) @>{{d_{3,0}^3(A,k)}}>> E_{0,2}^3(A,k) 
\arr H_2(\SL_2(A),\z) \arr I^2(k) \arr 0.
\end{CD}
\]

The following proposition compares these two exact sequences. 

\begin{prp}\label{main-diagram}
If $k$ is sufficiently large then the diagram with exact  rows 
\[
\begin{tikzcd}
H_3(\SL_2(A),\z\half) \ar[r, "{\lan\pi\ran j_\ast}"] \ar[d, "="] &
\displaystyle\frac{H_3(\SL_2(F),\z\half)}{j_\ast H_3(\SL_2(A),\z\half)} 
\ar[r, "\delta "] \ar[d, "\overline{\Delta}_\pi "] &H_2(\Gamma_0(\mmm_A),\z\half) \ar[d, two heads]\\
H_3(\SL_2(A),\z\half) \ar[r] &  \RP_1(k)\half \ar[r, "{d_{3,0}^3(A,k)} "] & E_{0,2}^3(A,k)\half 
\end{tikzcd}
\]
commutes. 
\end{prp}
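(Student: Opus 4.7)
The plan is to verify the commutativity of the left and right squares separately. For the \textbf{left square}, I would appeal directly to Proposition \ref{diagram1}, which established $\overline{\Delta}_\pi \circ \lan \pi \ran j_\ast = p_\ast$, where $p_\ast \colon H_3(\SL_2(A),\z\half) \to \RP_1(k)\half$ is induced by $A \to k$ together with the Bloch-Wigner identifications. The remaining task is to identify the bottom horizontal map of the diagram with $p_\ast$. By construction (cf.\ the proof of Proposition \ref{exact2}), this map is the edge map $H_3(\SL_2(A),\z\half) \twoheadrightarrow E^\infty_{3,0}(A,k)\half \hookrightarrow E^3_{3,0}(A,k)\half = \RP_1(k)\half$ of the spectral sequence $E_{\bullet,\bullet}(A,k)$. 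Naturality of the morphism of spectral sequences $E_{\bullet,\bullet}(A,k) \to E_{\bullet,\bullet}(k,k)$ induced by $A\to k$ factors this edge map as $H_3(\SL_2(A),\z\half) \to H_3(\SL_2(k),\z\half) \to \RP_1(k)\half$, whose second arrow is the Bloch-Wigner edge map for $k$; this composite is precisely $p_\ast$.

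For the \textbf{right square}, since $\overline{\Delta}_\pi$ is an isomorphism (Theorem \ref{mainthm}) and $\RP_1(k)\half$ is generated as an $\RR_A$-module by $\{g(\bar a) : \bar a \in \WW_k\}$ (Proposition \ref{g(a)}), it suffices to check the identity on such generators. Given $\bar a \in \WW_k$, lift to $a \in \WW_A$. A direct calculation with $\delta_\pi = \rho_\pi \circ S_v$ yields $\delta_\pi(\Lan \pi\Ran g(a)) = g(\bar a)$ in $\widetilde{\RP}_1(k)\half$. Since $\GG_F$ acts trivially on $S^2_\z(\ff)$ and $\lambda_2(g(a)) = 2(a\otimes (1-a))$, one has $\lambda_2(\Lan \pi\Ran g(a)) = 0$, so $\Lan \pi\Ran g(a) \in \RB(F)\half$ and by the refined Bloch-Wigner sequence (Theorem \ref{refined-bloch-wigner}) lifts to some $\tilde \xi \in H_3(\SL_2(F),\z\half)$ with $\Delta_\pi(\tilde \xi) = g(\bar a)$.

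On the target side, Proposition \ref{exact2} gives $d^3_{3,0}(A,k)(g(\bar a)) = \mathbf{c}(\mathrm{diag}(1-\bar a, (1-\bar a)^{-1}), \mathrm{diag}(\bar a, \bar a^{-1})) \pmod{K}$. What remains is to compute the projection of $\delta(\tilde \xi)$ in $E^3_{0,2}(A,k)\half$ and identify it with this expression. The lift $\tilde \xi$ admits a chain-level realization via the diagonal torus $T_F$: using the standard Pontryagin product $\Lambda^3(\ff)\half \to H_3(T_F,\z\half) \to H_3(\SL_2(F),\z\half)$, it arises from the element $\pi \wedge a \wedge (1-a)$. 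Applying the Mayer-Vietoris $\delta$ to this torus cycle, using the amalgamated product decomposition $\SL_2(F) = \SL_2(A)\ast_{\Gamma_0(\mmm_A)}\SL_2(A)^{g_\pi}$, extracts a 2-cycle in $\Gamma_0(\mmm_A)$ equal to $\mathbf{c}(\mathrm{diag}(1-a, (1-a)^{-1}), \mathrm{diag}(a, a^{-1}))$, which reduces modulo $\mmm_A$ and $K$ to the target formula.

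The main obstacle will be this explicit chain-level computation of $\delta$ on the torus cycle. I expect to make it precise either by constructing a bicomplex combining the chain complex of the Bruhat-Tits tree (the source of the Mayer-Vietoris sequence) with the resolution $L_\bullet(k^2)$ of $\z$ by $\SL_2(A)$-modules (the source of $E_{\bullet,\bullet}(A,k)$), so that both $\delta$ and $d^3_{3,0}(A,k)$ arise as differentials in a common spectral sequence and their agreement follows formally; or by a direct calculation using Shapiro's lemma for the short exact sequence $0 \to \z[G/\Gamma] \to \z[G/G_0] \oplus \z[G/G_1] \to \z \to 0$ at the chain level, tracking the torus-based 3-cycle through the connecting homomorphism step by step.
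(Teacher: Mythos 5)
The left-square argument is fine, as is the reduction (via Theorem \ref{mainthm} and Proposition \ref{g(a)}) to checking the identity on the generators $g(\bar a)$, and the computation $\delta_\pi(\Lan\pi\Ran g(a))=g(\bar a)$ is correct.

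The gap is the claimed torus realization of the preimage $\tilde\xi$. You assert that $\tilde\xi$ arises from the Pontryagin cycle $\pi\wedge a\wedge(1-a)$ under the composite $\Lambda^3(\ff)\half\to H_3(T_F,\z\half)\to H_3(\SL_2(F),\z\half)$. But the paper's proof of Proposition \ref{diagram1} records the exact sequence
\[
H_3(T_F,\z\half)\longrightarrow H_3(\SL_2(F),\z\half)\longrightarrow \RB(F)\half\longrightarrow 0,
\]
so every class in the image of the torus lies in $\ker\bigl(H_3(\SL_2(F),\z\half)\to\RB(F)\half\bigr)$ --- this kernel is exactly the torsion subgroup $\tors(\mu(F),\mu(F))\half$ of Theorem \ref{refined-bloch-wigner}. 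Since $\Delta_\pi$ factors as $H_3(\SL_2(F),\z\half)\to\RB(F)\half\hookrightarrow\RP_1(F)\half\xrightarrow{\delta_\pi}\widetilde{\RP}_1(k)\half$, any torus cycle is sent to $0$ by $\Delta_\pi$. Therefore no torus cycle can be a lift of $\Lan\pi\Ran g(a)$; the element whose existence you deduce from the refined Bloch--Wigner sequence simply is not of that form, and your proposed explicit chain-level computation of $\delta$ cannot get started.

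The paper sidesteps this by going in the opposite direction and by never invoking torus cycles. It fixes an arbitrary $\omega\in\im(\delta)=\ker(i_*)$, represents it by a cycle $z\otimes 1\in P_2\otimes_\Gamma\z\half$, and views this inside the total complex $T^A_\bullet(\Gamma,\z\half)$ of the hyperhomology double complex $P_\bullet\otimes(L_\bullet(A^2)\otimes M)$. Because $i_*(\omega)=0$, the corresponding chain in $T^A_2(G_0,\z\half)$ is a boundary $d_3(x)$, and the $L_3$-component $x_0$ of $x$ represents a class $\alpha\in E^3_{3,0}(A,k)\half=\RP_1(k)\half$ with $d^3_{3,0}(\alpha)=\omega+K$. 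One then manufactures the preimage of $\omega$ under the Mayer--Vietoris $\delta$ as $\Omega:=C_\pi(x)-x$, using the conjugation isomorphism $C_\pi$ and Corollary \ref{cor:delta} to replace $C_\pi(\tilde z)$ by $\tilde z$ on the $\Gamma$-side, and checks that $\Omega$ maps to $\Lan\pi\Ran\alpha\in\RB(F)\half$, whence $\Delta_\pi(\Omega)=\alpha$ and the right square commutes. This is in the spirit of your first suggested fix (a bicomplex/spectral-sequence comparison) rather than a direct chain computation on a torus cycle; the crucial ingredient that your sketch is missing is the use of Corollary \ref{cor:delta} to control the action of $C_\pi$ on $\im(\delta)$, which is what lets the two copies of $x$ cancel into a genuine $\SL_2(F)$-cycle.
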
   
\begin{proof}
The commutativity of the left square of the diagram 
is proved in Proposition \ref{diagram1}.
To prove the commutativity of other square we fix some notations.

Let $R$ be a local ring with sufficiently large residue field and let $G$ be a group
that acts on the complex $L_\bullet(R^2)$. Let $P_\bullet \arr \z$ be a 
free resolution of $\z$ over $G$. This is also a free resolution of $\z$ 
over any subgroup $H$ of $G$.

For any $\z\half[G]$-module $M$ and any subgroup $H$ of $G$, Let $T_\bullet^R(H,M)$ 
be the total complex of the double complex 
\[
D_{\bullet,\bullet}^{R, H}
:=P_\bullet\otimes_H\Big(L_\bullet(R^2)\otimes_\z M\Big).
\]

On the one hand the hyperhomology of $H$ with coefficients in the complex 
$L_\bullet(R^2)\otimes_\z M$ is the homology of the total complex 
$T_\bullet^R(H,M)$. Thus  by Lemma \ref{homology-complex},
\[
H_i(T_\bullet^R(H,M))\simeq H_i(H, L_\bullet(R^2)\otimes_\z M) \simeq H_i(H,M),
\ \ \ \ \text{for}  \ \ \ \ 0\leq i\leq 3.
\]
On the other hand from the double complex $D_{\bullet,\bullet}^{R, H}$ 
we obtain the spectral sequence 
\[
\mathbf{E}_{p,q}^1=H_q(H, L_p(R^2) \otimes_\z M)\Rightarrow 
H_{p+q}(H, L_\bullet(R^2) \otimes_\z M).
\]
For some of the basic properties of the total complex $T_\bullet^R(H,M)$ which 
we will use, see \cite[Section 8.2]{hutchinson2015}

Let $G:=\SL_2(F)$, $G_0=\SL_2(A)$, $G_1=\SL_2(A)^{g_\pi}$ and
$\Gamma=\Gamma_0(\mmm_A)$. Consider the exact sequence of $\RR_F$-modules
\[ 
\begin{array}{l}
0 \arr \z\half[G/\Gamma] \arr \z\half[G/G_0] \oplus \z[G/G_1] \arr \z\half \arr 0
\end{array}
\] 
obtained in Subsection \ref{MV0}. From this we obtain the exact sequence of complexes 
\[ 
\begin{array}{l}
\hspace{-0.2 cm}
0\! \arr \!
T_\bullet^F(G, \z\half [G/\Gamma]) \arr T_\bullet^F(G, \z\half[G/G_0]) \oplus T_\bullet^F(G, \z\half[G/G_1]) 
\arr T_\bullet^F(G, \z\half \!\arr \!0.
\end{array}
\]
The long exact sequence associated to this exact sequences of complexes 
gives us the Mayer-Vietoris exact sequence of $\RR_F$-modules
\[
\begin{array}{c}
H_3(G_0,\z\half) \oplus H_3(G_1,\z\half) \overset{\beta}{\arr} H_3(G,\z\half) 
\overset{\delta}{\arr} H_2(\Gamma,\z\half) \\
\\
\overset{\alpha}{\arr} H_2(G_0,\z\half) \oplus H_2(G_1,\z\half) 
\end{array}
\]
(see Theorem \ref{MV1}) which from it we obtain the upper exact sequence in our diagram:
\[
\begin{array}{l}
H_3(\SL_2(A),\z\half) \overset{\tilde{\beta}}{\larr} {\displaystyle
\frac{H_3(\SL_2(F),\z\half)}{H_3(\SL_2(A),\z\half)} } \overset{\delta}{\larr} 
H_2(\Gamma,\z\half) 
\overset{i_\ast}{\larr} H_2(\SL_2(A),\z\half).
\end{array}
\]

Consider the commutative  diagram of complexes
\[  
\begin{array}{c}
\begin{tikzcd}
0\! \arr\! T_\bullet^F(G, \z\half[G/\Gamma])\!\!\ar[r] &\!\! T_\bullet^F(G, \z\half[G/G_0]) &\!\!\!\!\!\!\!\!\!\!\!\!\!\!\!\!\!\!
\!\oplus\! \ T_\bullet^F(G, \z\half[G/G_1])\! \arr\! T_\bullet^F(G, \z\half)\! \arr \!0\\
T_\bullet^F(\Gamma,\z\half)\ar[u]\ar[r] & T_\bullet^F(G_0, \z\half)\ar[u,"{(i_\bullet, 0)}"] &\\
T_\bullet^A(\Gamma,\z\half)\ar[u]\ar[r]\ar[d] & T_\bullet^A(G_0, \z\half)\ar[u] \ar[d] &\\
T_\bullet^k(\Gamma,\z\half) \ar[r] & T_\bullet^k(G_0,\z\half).&
\end{tikzcd}
\end{array}
\]
The double complex which is used to construct $T_\bullet^k(G_0,\z\half)$ is used 
to construct the spectral sequence $E_{\bullet,\bullet}(A,k)\half$. This spectral sequence is used 
to prove the exactness of the lower sequence in our diagram.

Let  $\omega \in \im(\delta)=\ker(i_\ast)$ and let $\omega$ be represented by 
$z\otimes 1 \in P_2\otimes_{\Gamma} \z\half$.  We know that
\[
\begin{array}{c}
\omega\in H_2(\Gamma,\z\half)=H_2(P_\bullet\otimes_{\Gamma} \z\half)\simeq 
H_2(T_\bullet^A(\Gamma,\z\half)).
\end{array}
\]
Let $\tilde{z}=(0,0,z\otimes {\bf e_1}A )$ be its corresponding element in 
\[
\begin{array}{c}
T_2^A(\Gamma,\z\half)\!=\!\bigoplus_{i=0}^2 P_i \otimes_{\Gamma} L_{2-i}(A^2)\half.
\end{array}
\]
Since 
\[
\begin{array}{c}
H_2(\Gamma,\z\half)\simeq H_2(T_\bullet^A(\Gamma,\z\half) )\simeq 
H_2(T_\bullet^F(\Gamma,\z\half) ),
\end{array}
\]
we may also assume that
\[
\begin{array}{c}
\tilde{z}=(0,0,z\otimes {\bf e_1}F ) \in T_2^F(\Gamma,\z\half)=
\bigoplus_{i=0}^2 P_i \otimes_{\Gamma} L_{2-i}(F^2)\z\half.
\end{array}
\]
In the above diagram $\tilde{z}\in T_2^F(\Gamma,\z\half)$ maps to
$\tilde{z}\otimes \Gamma\in T_2^F(G,\z\half[G/\Gamma])$ which represent
the element $\omega \in H_2(\Gamma,\z\half)=H_2(T_\bullet^F(G, \z\half[G/\Gamma])$.

Since under the map 
\[
\begin{array}{c}
H_2(T_\bullet^A(\Gamma,\z\half))=H_2(\Gamma,\z\half) \overset{i_\ast}{\larr}
H_2(G_0, \z\half)= H_2(T_\bullet^A(G_0,\z\half)),
\end{array}
\]
$\omega$ maps to zero, it follows that the image of $\tilde{z}$ in
$T_2^A(G_0,\z\half)$ is a boundary. Thus there exists
$x=(x_0,x_1,x_2,x_3)\in T_3^A(G_0,\z\half)=\bigoplus_{i=0}^3 P_i\otimes _{G_0} L_{3-i}(A^2)\half$ such that
$d_3(x)=\tilde{z}$, where $d_3=(d^h+(-1)^p d^v)$. More precisely,
\[
(0,0,z\otimes {\bf e_1}A)=d_3(x)=(
d^h(x_1)+d^v(x_0), d^h(x_2)-d^v(x_1),d^h(x_3)+d^v(x_2)).
\]
From $d^h(x_1)+d^v(x_0)=0$ it follows that $d_{3,0}^1(x_0)=0$ and thus
$\overline{x}_0\in E_{3,0}^2(A,k)\half$. From $d^h(x_2)-d^v(x_1)=0$, it follows 
that $d_{3,0}^2(\overline{x}_0)=0$ and thus $x_0$ represent an element 
$\alpha$ of $E_{3,0}^3(A,k)\half=\RP_1(k)\half$. Through the natural morphism 
$D_{\bullet,\bullet}^{A,G_0} \arr D_{\bullet,\bullet}^{k,G_0}$, 
$x_0$ represent an element $\alpha$ of $E_{3,0}^3(A,k)\half=\RP_1(k)\half$,
such that $d_{3,0}^3(\alpha)$ is the homology class in $E_{0,2}^2(A,k)\half=
H_2(\Gamma,\z\half)/K$ represented by $z \otimes {\bf e_1}A$. Thus
\[
\begin{array}{c}
d_{3,0}^3(\alpha)=\omega +K \in E_{0,2}^2(A,k)\half.
\end{array}
\]

The conjugation isomorphism $C_\pi: G_0 \arr G_1$ induces the isomorphism of complexes
\[
\begin{array}{c}
{C_\pi}_\bullet: T_\bullet^F(G_0,\z\half) \arr T_\bullet^F(G_1,\z\half).
\end{array}
\]
Under this the element $\tilde{z}=(0,0,z\otimes {\bf e_1}F)\in T_2^F(G_0,\z\half)=\bigoplus_{i=0}^2 P_i \otimes_{G_0} 
L_{2-i}(F^2)\half$ maps to $C_\pi(\tilde{z})=(0,0,C_\pi(z)\otimes {\bf e_1}F)\in T_2^F(G_1,\z\half)=
\bigoplus_{i=0}^2 P_i \otimes_{G_1} L_{2-i}(F^2)\half$. This image is a boundary map and therefore 
$d_3(C_\pi(x))=C_\pi(\tilde{z})$, where
\[
\begin{array}{c}
C_\pi(x)=(C_\pi(x_0),C_\pi(x_1),C_\pi(x_2),C_\pi(x_3))\in T_3^F(G_1,\z\half).
\end{array}
\]

By Corollary \ref{cor:delta}, $C_\pi$ acts trivially on $\im(\delta)$. Note that the following diagram is commutative
\[
\begin{tikzcd}[column sep=small]
& H_2(\Gamma,\z\half)^{\GG_F} \arrow[dl, "i_\ast"] \arrow[dr, "i_\ast' "] & \\
H_2(G_0,\z\half) \ar[rr, "{C_\pi}_\ast"] & &H_2(G_1,\z\half).
\end{tikzcd}
\]
Hence we may replace the element $C_\pi(\tilde{z})=(0,0,C_\pi(z)\otimes {\bf e_1}F)\in T_2^F(G_1,\z\half)$
with $\tilde{z}=(0,0,z\otimes {\bf e_1}F)\in T_2^F(G_1,\z\half)$.

Now under the map 
\[
\begin{array}{c}
T_2^F(G, \z\half[G/\Gamma]) \arr T_2^F(G, \z\half[G/G_0]) \oplus T_2^F(G, \z\half[G/G_1]),
\end{array}
\]
we have
\[
\tilde{z}\otimes \Gamma\mapsto
(\tilde{z}\otimes G_0, \tilde{z}\otimes G_1).
\]
By the calculation above, this is the boundary of
\[
\begin{array}{c}
(x\otimes G_0, C_\pi(x)\otimes G_1) \in T_3^F(G, \z\half[G/G_0]) \oplus T_3^F(G, \z\half[G/G_1]).
\end{array}
\]
Under the map 
\[
\begin{array}{c}
T_3^F(G, \z\half[G/G_0]) \oplus T_3^F(G, \z\half[G/G_1]) \arr T_3^F(G, \z\half),
\end{array}
\]
this elements maps to $\Omega:=C_\pi(x)-x$. By construction the cycle $\Omega$ represent an element of $H_3(G,\z\half)$,
which maps to $\omega \in H_2(\Gamma,\z\half)$ under the connecting map $\delta$. Now under the map
\[
\begin{array}{c}
H_3(\SL_2(F),\z\half) \arr  {\RP}_1(F)\half,
\end{array}
\]
the homology class of $\Omega$ maps to $\lan \pi \ran \alpha-\alpha\in \RB(F)\half$. By definition 
$\delta_\pi(\lan \pi \ran \alpha-\alpha)=\alpha$.  Therefore
\[
d_{3,0}^3\circ\Delta_\pi(\Omega)=d_{3,0}^3(\alpha)=\omega+K=p\circ\delta(\Omega).
\]
Now if $\Omega'$  is another element of $H_3(\SL_2(F),\z\half)$ thats map to $\omega$ by $\delta$, 
then $\Omega'=\Omega+\lan \pi \ran j_\ast (y)$ for some $y \in H_3(\SL_2(A),\z\half)$. Now using the 
commutativity of the left side square in our diagram, it is easy 
to see that
\[
d_{3,0}^3\circ\Delta_\pi(\Omega')=d_{3,0}^3(\alpha)=\omega+K=p\circ\delta(\Omega').
\]
This completes the proof of the proposition.
\end{proof}

\begin{cor}\label{exact4}
If $k$ is sufficiently large, then we have the exact sequence
\[
\begin{array}{c}
H_3(\SL_2(A),\z\half) \!\arr\!\! \RP_1(k)\half\! \overset{d}{\arr} 
\!H_2(\Gamma_0(\mmm_A),\z\half)\!\! \overset{i_\ast}{\arr}\! H_2(\SL_2(A),\z\half) \!\arr\! I^2(k)\half \!\arr \!0
\end{array}
\]
where $d$ is given by the formula
\[
d(g(a)) ={\bf c}\bigg({\mtxx {1-a} 0 0 {(1-a)^{-1}} }, {\mtxx a 0 0 {a^{-1}}} \bigg).
\]
\end{cor}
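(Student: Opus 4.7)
The plan is to combine the Mayer-Vietoris sequence of Theorem~\ref{MV1} with Proposition~\ref{exact1}, transfer the result through the isomorphism $\overline{\Delta}_\pi$ of Theorem~\ref{mainthm}, and read off the formula for $d$ from Proposition~\ref{main-diagram}.

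First I would extract from Theorem~\ref{MV1} the Mayer-Vietoris fragment
\[
H_3(\SL_2(A),\z\half) \overset{\lan\pi\ran j_*}{\larr} \frac{H_3(\SL_2(F),\z\half)}{j_*H_3(\SL_2(A),\z\half)} \overset{\delta}{\larr} H_2(\Gamma_0(\mmm_A),\z\half) \overset{i_*}{\larr} H_2(\SL_2(A),\z\half),
\]
and append $\arr I^2(k)\half \arr 0$ using Proposition~\ref{exact1}. Substituting $\RP_1(k)\half$ for the quotient via $\overline{\Delta}_\pi$ and letting $d:=\delta\circ\overline{\Delta}_\pi^{-1}$ produces the sequence of the corollary. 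Exactness at $\RP_1(k)\half$ is immediate from Theorem~\ref{mainthm}, and exactness at $H_2(\SL_2(A),\z\half)$ and at $I^2(k)\half$ from Proposition~\ref{exact1}.

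The delicate point is exactness at $H_2(\Gamma_0(\mmm_A),\z\half)$: Mayer-Vietoris yields only $\im(d) = \ker(\alpha)$ with $\alpha(x) = (i_*(x), -i_*(\lan\pi\ran\bullet x))$, which is a priori merely a subgroup of $\ker(i_*)$. By Proposition~\ref{exact2} combined with the commutative square of Proposition~\ref{main-diagram}, every $y\in \ker(i_*)$ satisfies $y = d(x)+k$ for some $x\in\RP_1(k)\half$ and $k$ in the kernel $K$ of the surjection $H_2(\Gamma_0(\mmm_A),\z\half)\twoheadrightarrow E_{0,2}^3(A,k)\half$. To close the argument I would show $K\subseteq \im(d)$: the subgroup $K$, being canonically defined as the image of spectral-sequence differentials, is stable under the natural $\lan\pi\ran$-action, and since $K\subseteq \ker(i_*)$ it follows that $i_*(\lan\pi\ran\bullet k)=0$ for $k\in K$; hence $K\subseteq \ker(\alpha) = \im(\delta) = \im(d)$.

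For the explicit formula, Proposition~\ref{main-diagram} identifies the composition $\RP_1(k)\half \overset{d}{\arr} H_2(\Gamma_0(\mmm_A),\z\half) \twoheadrightarrow E_{0,2}^3(A,k)\half$ with the differential $d_{3,0}^3(A,k)$. The spectral-sequence computation preceding Proposition~\ref{exact2} evaluates
\[
d_{3,0}^3(A,k)(g(a)) = {\rm \bf{c}}\bigg({\mtxx {1-a} 0 0 {(1-a)^{-1}}}, {\mtxx a 0 0 {a^{-1}}}\bigg) \pmod K,
\]
and the cycle-level trace in the proof of Proposition~\ref{main-diagram} lifts this congruence to equality in $H_2(\Gamma_0(\mmm_A),\z\half)$ itself. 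The main obstacle is verifying stability of $K$ under the natural $\lan\pi\ran$-action (needed to upgrade from $\ker(\alpha)$ to $\ker(i_*)$), together with the cycle-lifting argument that turns the $\pmod K$ formula into an on-the-nose equality; both rest on tracking the chain-level representatives constructed in the proof of Proposition~\ref{main-diagram}.
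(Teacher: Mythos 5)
Your overall route is the paper's route: extract the Mayer--Vietoris fragment, append $I^2(k)\half$ via Proposition~\ref{exact1}, substitute $\RP_1(k)\half$ via $\overline{\Delta}_\pi$, and read the formula for $d$ off Propositions~\ref{main-diagram} and~\ref{exact2}. You also correctly isolate the crux --- Mayer--Vietoris gives $\im(\delta)=\ker(\alpha)$, which is a priori smaller than $\ker(i_*)$ --- and this is exactly the point the paper's own proof passes over quickly (it asserts the extended diagram ``implies'' $E_{0,2}^3(A,k)\half\simeq H_2(\Gamma_0(\mmm_A),\z\half)$, which is a five-lemma application that itself presupposes the exactness in question).

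The problem is with your proposed fix. You claim $K=\ker(p)$ is stable under the natural $\lan\pi\ran$-action because it is ``canonically defined as the image of spectral-sequence differentials.'' But the spectral sequence $E_{\bullet,\bullet}(A,k)$ is constructed from the action of $\GL_2(A)$ on $L_\bullet(k^2)$, and $\lan\pi\ran\in\GG_F\setminus\GG_A$ is represented by conjugation by $g_\pi\notin\GL_2(A)$. This conjugation normalizes $\Gamma_0(\mmm_A)$, but it does \emph{not} normalize $\Gamma_1(\mmm_A)$ or $\Gamma_2(\mmm_A)$: writing $g_\pi X g_\pi^{-1}=\mtxx{d}{-c\pi^{-1}}{-\pi b}{a}$ for $X=\mtxx{a}{b}{c}{d}$, the condition $b\in\mmm_A$ is destroyed. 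So $g_\pi$ does not act on the spectral sequence, and there is no manifest reason for $\im(d^1_{1,2})$ or the lift of $\im(d^2_{2,1})$ --- the generators of $K$ --- to be carried into $K$ by $\lan\pi\ran\bullet$. This is a genuine missing step, not a formality. Moreover, even if $K\subseteq\im(\delta)$ were established, your argument yields exactness but only the congruence $d(g(a))\equiv\mathbf{c}(\cdots)\pmod K$; the appeal to ``the cycle-level trace in the proof of Proposition~\ref{main-diagram}'' is too vague to promote this to the asserted equality. What the paper's argument actually needs (and implicitly claims) is the stronger statement that $p$ is an isomorphism, i.e.\ $K\half=0$, which would give both exactness and the on-the-nose formula directly from Proposition~\ref{exact2}; your proposal neither proves this nor gives a substitute for the formula part.
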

\begin{proof}
Set $\Gamma=\Gamma_0(\mmm_A)$. By Proposition \ref{main-diagram}  we have the commutative diagram
\[
\begin{tikzcd}
H_3(\SL_2(A),\z\half)\!\! \ar[r, "{\lan\pi\ran j_\ast}"] \ar[d, "="] &\!\!
\displaystyle\frac{H_3(\SL_2(F),\z\half)}{j_\ast H_3(\SL_2(A),\z\half)} 
\!\ar[r, "\delta "] \ar[d, "\overline{\Delta}_\pi "] &\!\!H_2(\Gamma,\z\half) 
\ar[d, two heads] \ar[r, "i_\ast"]\ar[d] &\!\!H_i(\SL_2(A),\z\half) \ar[d, "="]\\
H_3(\SL_2(A),\z\half)\!\! \ar[r] & \!\! \RP_1(k)\half \!\ar[r, "{d_{3,0}^3(A,k)} "] & 
\!\!E_{0,2}^3(A,k)\half\ar[r, "i_\ast"]&\!\!H_i(\SL_2(A),\z\half) ,
\end{tikzcd}
\]
which implies that $E_{0,2}^3(A,k)\half\simeq H_2(\Gamma,\z\half)$. Now
the claim follows from Proposition~\ref{exact2}.
\end{proof}

The following is Theorem B in the introduction.

\begin{thm}\label{barP}
Let $A$ be a discrete valuation ring with sufficiently large residue field $k$. Then the inclusion $\Gamma_0(\mmm_A)\arr \SL_2(A)$
induces the exact sequence od $\RR_A$-modules
\[
\begin{CD}
0\arr \overline{\PP}(k)\half \arr H_2(\Gamma_0(\mmm_A),\z\half) \arr H_2(\SL_2(A),\z\half)\arr I^2(k)\half \arr 0,
\end{CD}
\]
where the left homomorphism is given by 
$\displaystyle [a] \mt \frac{1}{2}{\bf c}\bigg({\mtxx {1-a} 0 0 {(1-a)^{-1}} }, {\mtxx a 0 0 {a^{-1}}} \bigg)$.
\end{thm}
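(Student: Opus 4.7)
The plan is to extract Theorem~\ref{barP} from Corollary~\ref{exact4}, which already furnishes the exact sequence
\[
H_3(\SL_2(A),\z\half)\larr \RP_1(k)\half \overset{d}{\larr} H_2(\Gamma_0(\mmm_A),\z\half) \overset{i_\ast}{\larr} H_2(\SL_2(A),\z\half)\larr I^2(k)\half \larr 0.
\]
What remains is to identify $\im(d)\cong \ker(i_\ast)$ with $\overline{\PP}(k)\half$ as $\RR_A$-modules, in such a way that the generator $[\bar a]$ corresponds to $\tfrac12\mathbf{c}\bigl(\mathrm{diag}(1-a,(1-a)^{-1}),\mathrm{diag}(a,a^{-1})\bigr)$.

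First I would identify the leftmost arrow $H_3(\SL_2(A),\z\half)\to \RP_1(k)\half$ in Corollary~\ref{exact4} with the map $p_\ast$ induced by the reduction $\SL_2(A)\to \SL_2(k)$. This is essentially already done: the left square of Proposition~\ref{main-diagram}, combined with Proposition~\ref{diagram1}, shows that the composite $H_3(\SL_2(A),\z\half)\xrightarrow{\lan\pi\ran j_\ast} H_3(\SL_2(F),\z\half)/j_\ast H_3(\SL_2(A),\z\half)\xrightarrow{\overline{\Delta}_\pi}\RP_1(k)\half$ coincides with $p_\ast$, and Proposition~\ref{main-diagram} identifies this composite with the spectral-sequence map of Corollary~\ref{exact4}. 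Consequently $\ker(d)=\im(p_\ast)$ and $\im(d)\cong\coker(p_\ast)$.

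Next I would compute $\coker(p_\ast)$ exactly as in the proof of Theorem~\ref{thm:delta}, by comparing the short exact sequence $0\to \II_A H_3(\SL_2(A),\z\half)\to H_3(\SL_2(A),\z\half)\to K_3^\ind(A)\half\to 0$ of Proposition~\ref{hutchinson4} with the sequence $0\to \II_k\RP_1(k)\half\to \RP_1(k)\half\to \PP(k)\half\to 0$ via $p_\ast$. The leftmost vertical map is surjective: by Proposition~\ref{g(a)}, $\II_k\RP_1(k)\half$ is generated by the elements $\Lan\bar u\Ran g(x)$, which lift through $\II_A\RP_1(A)\half\cong\II_A H_3(\SL_2(A),\z\half)$. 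The snake lemma then yields $\coker(p_\ast)\cong \coker\bigl(K_3^\ind(A)\half\to \PP(k)\half\bigr)=\overline{\PP}(k)\half$ by the very definition of $\overline{\PP}(k)$, producing the desired four-term exact sequence.

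For the explicit formula I would use the presentation of Proposition~\ref{g(a)}: the elements $\tfrac12 g(a)$ generate $\RP_1(k)\half$, and under $\RP_1(k)\half\to \PP(k)\half$ the element $\tfrac12 g(a)=\tfrac12\bigl(p_{-1}^+[a]+\Lan 1-a\Ran\psi_1(a)\bigr)$ maps to $[\bar a]$, since $p_{-1}^+=\lan-1\ran+1$ acts as $2$ and $\Lan 1-a\Ran=\lan 1-a\ran-1$ acts as $0$ on the trivial $\GG_k$-module $\PP(k)\half$. Combined with the formula $d(g(a))=\mathbf{c}\bigl(\mathrm{diag}(1-a,(1-a)^{-1}),\mathrm{diag}(a,a^{-1})\bigr)$ of Corollary~\ref{exact4}, this shows the induced homomorphism $\overline{\PP}(k)\half\to H_2(\Gamma_0(\mmm_A),\z\half)$ sends $[\bar a]$ to $\tfrac12\mathbf{c}(\dots)$ as claimed; $\RR_A$-equivariance is automatic since every ingredient is $\RR_A$-equivariant. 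The genuine obstacle in the argument was the identification in the first step, which was already discharged by the delicate spectral-sequence comparison of Proposition~\ref{main-diagram}; the rest is a formal snake-lemma manipulation together with the explicit presentation of $\RP_1(k)\half$.
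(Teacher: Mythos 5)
Your proposal is correct and follows the same route as the paper: you start from Corollary~\ref{exact4}, identify the left map with $p_\ast$ via Propositions~\ref{main-diagram} and~\ref{diagram1}, compute $\coker(p_\ast)\cong\overline{\PP}(k)\half$ by the snake-lemma argument of Theorem~\ref{thm:delta}, and deduce the explicit formula from the presentation $\tfrac12 g(a)\mapsto[\bar a]$ of Proposition~\ref{g(a)}. The paper's own proof is just a compressed version of this, citing Theorem~\ref{thm:delta} and the relation $g(a)\mapsto 2[a]$ rather than unfolding them.
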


\begin{proof}
By the results above, the diagram
\[
\begin{tikzcd}
&\RP_1(k)\half\ar[d, two heads]\ar[rd, "d"]&\\
0\ar[r]&\overline{\PP}(k)\half\ar[r]&H_2(\Gamma_0(\mmm_A),\z\half)\\
\end{tikzcd}
\]
commutes, where the vertical arrow sends $g(a)$ to $2[a]$. 
\end{proof}

If $k$ is finite, then $I^{2}(k)=0$ and we deduce:

\begin{cor}
If $k$ is a sufficiently large finite field, then we have the exact sequence
\[
\begin{CD}
0\arr \overline{\PP}(k)\half \arr H_2(\Gamma_0(\mmm_A),\z\half) \arr H_2(\SL_2(A),\z\half)\arr  0.
\end{CD}
\]
\end{cor}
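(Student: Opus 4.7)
The plan is to apply Theorem \ref{barP} and verify that its rightmost term $I^{2}(k)\half$ vanishes when $k$ is a finite field. Theorem \ref{barP} provides, for any discrete valuation ring $A$ with sufficiently large residue field $k$, the four-term exact sequence of $\RR_{A}$-modules
\[
0\to \overline{\PP}(k)\half \to H_{2}(\Gamma_{0}(\mmm_{A}),\z\half) \to H_{2}(\SL_{2}(A),\z\half) \to I^{2}(k)\half \to 0.
\]
It therefore suffices to show $I^{2}(k)\half = 0$ whenever $k$ is a sufficiently large finite field; the stated three-term short exact sequence then follows immediately by truncation.

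I split into two cases according to the characteristic of $k$. If $\char(k)=2$, then $k^{\times}$ is cyclic of odd order $|k|-1$, so every element of $k^{\times}$ is a square. Hence $\GG_{k}$ is trivial, $\II_{k}=0$, and a fortiori $I(k)=0$.

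If $\char(k)$ is odd, then $|\GG_{k}|=2$; fix a non-square $t\in k^{\times}$ so that $\II_{k}=\z\cdot\Lan t\Ran$. A direct expansion yields $\Lan t\Ran^{2}=-2\Lan t\Ran$ in $\z[\GG_{k}]$. Because $k$ is sufficiently large we may pick $u\in\WW_{k}$ with both $u$ and $1-u$ non-squares, so that $\Lan u\Ran=\Lan 1-u\Ran=\Lan t\Ran$; then the Steinberg relation $\Lan u\Ran\Lan 1-u\Ran=0$ imposed in $\mathrm{GW}(k)$ becomes $2\Lan t\Ran=0$. Therefore $I^{2}(k)=\Lan t\Ran^{2}\cdot\mathrm{GW}(k)=(-2\Lan t\Ran)\cdot\mathrm{GW}(k)=0$.

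Combining the two cases with Theorem \ref{barP} yields the desired short exact sequence of $\RR_{A}$-modules. I foresee no significant obstacle: this corollary amounts to a direct specialisation of a theorem already established in this paper, together with the elementary classical vanishing of $I^{2}$ over finite fields.
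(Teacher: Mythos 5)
Your proposal is correct and follows the paper's own route: the paper likewise deduces the corollary directly from Theorem \ref{barP} together with the vanishing $I^2(k)=0$ for finite $k$ (which it simply quotes as known, whereas you supply the short classical verification).
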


\section{The case of global fields}\label{global}

In this final section we will study the group $\overline{\PP}(k)$ when $F$ is a global field.

\subsection{The {\it e}-invariant  of a field}

Let $E$ be a separably closed field. 
The group $\Aut(E)$ acts naturally on $\mu_E$, the group of roots of unity in $E$, where $\sigma\in \Aut(E)$
sends $\zeta$ to $\sigma(\zeta)$. This action gives a surjective map $\Aut(E)\arr \Aut(\mu_E)$. Note that
$\mu_E\simeq \q/\z$ if $\char(E)=0$ and $\mu_E\simeq (\q/\z)[\frac{1}{p}]$ if $\char(E)=p>0$. 

For $i\in\z$, we define $\mu_E(i)$ as $\mu_E$ turned into $\Aut(E)$-module by letting $\sigma\in \Aut(E)$ acts as 
\[
\zeta\mapsto \sigma^i(\zeta).
\]
On the other hand, $\Aut(E)$ acts on $K_n(F)$ for any $n\geq 0$: $\sigma:E \arr E$ induces the isomorphism 
$\sigma_\ast:K_n(E)\arr K_n(E)$ and for any $x \in K_n(E)$ the action is given by $\sigma.x=\sigma_\ast(x)$.

If $n$ is odd, then $K_n(E)_\tor\simeq \mu_E$ (if $n$ is even, $K_n(E)$ is uniquely divisible). 
Clearly the action of $\Aut(E)$ on $K_n(E)$
induces an action of $\Aut(E)$ on $K_n(E)_\tor$. It is known that for $i>0$, $K_{2i-1}(E)_\tor$ is isomorphic 
to $\mu_E(i)$ as $\Aut(E)$-module \cite[Proposition 1.7.1, Chap. VI]{weibel2013}.

Let $F$ be a field and $F^\sep$ be its separable closure. Let $G_F:=\Gal(F^\sep/F)$. Since
the natural map $K_n(F)\arr K_n(F^\sep)$ is a homomorphism of $G_F$-modules with $G_F$ acting trivially on $K_n(F)$
it follows that there is a natural map $K_n(F)_\tor \arr (K_n(F^\sep)_\tor)^{G_F}$. The $e$-invariant of 
($K_{2i-1}$-group of ) $F$ is the composition
\[
K_{2i-1}(F)_\tor \larr (K_{2i-1}(F^\sep)_\tor)^{G_F} \overset{\simeq}{\larr} \mu_{F^\sep}(i)^{G_F}.
\]
If $\mu_{F^\sep}(i)^{G_F}$ is  finite, it is cyclic and we write $w_i(F)$ for its order. Thus
\[
\mu_{F^\sep}(i)^{G_F}\simeq \z/w_i(F).
\]
We will need the $e$-invariant of fields for the third $K$-group. Thus from now on we will discuss only this special case.

\begin{exa} (Finite Fields) For a finite field $\F_q$, $w_2(\F_q)=q^2-1$. One the other hand $K_3(\F_q)\simeq \z/(q^2-1)$.
On can show that the $e$-invariant 
\[
e:K_3(\F_q)_\tor \larr \mu_{\overline{\F}_q}(2)^{G_{\F_q}}
\]
is an isomorphism \cite[Example 2.1.1, Chap. VI]{weibel2013}. 
\end{exa}

From now on we assume that $A$ is a discrete valuation ring, with field of fractions $F$ and residue field $k$.
Let $v=v_A:\ff \arr \z$ be the valuation associated to $A$.

\subsection{The {\it e}-invariant  of local and global fields and the indecomposable \texorpdfstring{$K_3$}{Lg}}

Let $p$ be a prime number. For any torsion abelian group $G$, we let $G[p']$ denote the subgroup 
of elements of order prime to $p$. In fact $G[p']\simeq G\otimes \z[\frac{1}{p}]\simeq G[\frac{1}{p}]$. Thus
we have a canonical surjective map 
\[
G \arr G[p'].
\]

A (non-Archimedean ) local field $E$ is a field which is complete in respect with a discrete valuation 
$v:E^\times \arr \z$ and has finite residue field. In this case the discrete valuation ring is a complete ring. 
It is a classical result that a local field is either a finite extension of the rational p-adic field $\q_p$ or 
is isomorphic to $\F_q((x))$ for some finite field $\F_q$ (see \cite{serre1979}).

\begin{lem}[Local Fields]\label{localfields}
If $E$ is a local field, then we have the isomorphism
\[
e: K_3^\ind(E)[p']_\tor \overset{\simeq}{\larr}\mu_{E^\sep}(2)[p']^{G_E},
\]
where $p$ is the characteristic of the residue field.
\end{lem}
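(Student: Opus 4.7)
The plan is to reduce the claim to the case of the finite residue field $k=\F_q$, where the $e$-invariant is already known to be an isomorphism by the preceding example, by combining Suslin's rigidity theorem with the $K$-theoretic localization sequence associated to the ring of integers $A\subset E$. First I would invoke the localization sequence
\[
K_3(A)\to K_3(E)\to K_2(k)\to 0
\]
together with the vanishing $K_2(\F_q)=0$, which shows that the reduction map $K_3(A)\to K_3(E)$ is surjective; the analogous Milnor $K$-theory sequence (using $K_2^M(\F_q)=0$) gives the same for $K_3^M$, so on indecomposable $K_3$ we retain a surjection $K_3^\ind(A)\two K_3^\ind(E)$.

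Next, since $A$ is complete (hence henselian) with residue field of characteristic $p$, Suslin's rigidity theorem furnishes a natural isomorphism $K_3(A)[p']\overset{\simeq}{\to} K_3(k)[p']$. Because $K_3^M(\F_q)=0$, this descends to an isomorphism $K_3^\ind(A)[p']_\tor\cong K_3(\F_q)\cong \z/(q^2-1)$, which by the surjection of the first step is then identified with $K_3^\ind(E)[p']_\tor$ (pending injectivity, discussed below). Since the $e$-invariant is natural and the prime-to-$p$ roots of unity lift uniquely from $\overline{k}$ to $E^\sep$ by Hensel's lemma in a Galois-equivariant way, with the $G_E$-action on $\mu_{E^\sep}[p']$ factoring through the quotient $G_E\two G_k$, there is a commutative diagram
\[
\begin{tikzcd}
K_3^\ind(E)[p']_\tor \ar[r, "e"]\ar[d, "\cong"'] & \mu_{E^\sep}(2)[p']^{G_E}\ar[d, "\cong"]\\
K_3(\F_q)\ar[r, "e"', "\cong"]&\mu_{\overline{\F}_q}(2)[p']^{G_{\F_q}}
\end{tikzcd}
\]
whose bottom arrow is an isomorphism by the finite-field example, forcing the top arrow to be an isomorphism as well.

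The main obstacle will be verifying the injectivity of the reduction $K_3^\ind(A)[p']_\tor\to K_3^\ind(E)[p']_\tor$, i.e.\ that no prime-to-$p$ torsion is killed upon inverting the uniformizer. The cleanest route I foresee is to invoke the Merkurjev--Suslin identification $K_3^\ind(F)\{\ell\}\cong H^0(G_F,\mu_{F^\sep}^{\otimes 2}\{\ell\})$ for each $\ell\neq p$ (equivalently Bloch--Kato / Beilinson--Lichtenbaum in low degree), after which both $K_3^\ind(A)\{\ell\}$ and $K_3^\ind(E)\{\ell\}$ compute the same Galois-cohomology group, since the absolute Galois action on $\mu_{E^\sep}\{\ell\}$ factors through $G_k$. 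With that injectivity in hand, summing over primes $\ell\neq p$ completes the chain of isomorphisms and yields the stated bijectivity of $e$.
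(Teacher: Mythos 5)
You take a genuinely different route from the paper. The paper's proof simply quotes established computations from Weibel's K-book: for $E$ finite over $\q_p$ it cites the result that $e$ induces an isomorphism $K_3(E)(\ell)\simeq\z/w_2^{(\ell)}(E)$ on $\ell$-primary torsion for each $\ell\neq p$; for $E=\F_q((t))$ it cites the computation $K_3(\F_q((t)))_\tor\simeq K_3(\F_q)$; and in both cases it uses the unique divisibility of $K_3^M(E)$ to descend from $K_3$ to $K_3^\ind$. You instead attempt to reconstruct these inputs from the localization sequence, henselian rigidity, and the Merkurjev--Suslin theorem, which is a reasonable plan but has two gaps as written.

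First, rigidity for the henselian pair $(A,\mmm_A)$ gives isomorphisms $K_*(A;\z/m)\cong K_*(k;\z/m)$ for $p\nmid m$ (this is Gabber or Suslin--Panin rigidity, not Suslin's rigidity theorem for algebraically closed fields); extracting the torsion statement $K_3(A)_\tor[p']\cong K_3(k)_\tor[p']$ from the finite-coefficient isomorphisms requires the universal coefficient sequences together with an input such as $K_4(\F_q)=0$, and that step should be made explicit. Second, and more seriously, the Merkurjev--Suslin identification $K_3^\ind(F)\{\ell\}\cong H^0(G_F,\mu_{F^\sep}^{\otimes 2}\{\ell\})$ is a theorem about \emph{fields}, so your assertion that ``both $K_3^\ind(A)\{\ell\}$ and $K_3^\ind(E)\{\ell\}$ compute the same Galois-cohomology group'' is not valid as stated for the ring $A$: for $A$ you must first pass to the residue field $k$ via rigidity and only then use the finite-field computation. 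Once you do that, though, the whole argument collapses to something much shorter: Merkurjev--Suslin applied to $E$ plus the observation that $G_E$ acts on $\mu_{E^\sep}[p']$ through $G_k$ already gives $K_3^\ind(E)\{\ell\}\cong H^0(G_k,\mu^{\otimes 2}\{\ell\})$ for each $\ell\neq p$, and it remains only to check (by naturality and the finite-field example) that this isomorphism agrees with the $e$-invariant. At that point the localization sequence and the surjectivity of $K_3^\ind(A)\to K_3^\ind(E)$ play no essential role, so the outline should be simplified accordingly; the paper's approach is more economical in that it avoids invoking Merkurjev--Suslin at all.
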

\begin{proof}
If $E$ is finite over $\q_p$  with residue field $\F_q$, then 
\[
w_2(E)=p^nw_2(\F_q)=p^n(q^2-1)
\]
for some $n\geq 0$ (by Hensel's lemma). Moreover the map
\[
e:K_3(E)_\tor \larr \mu_{E^\sep}(2)^{G_E}\simeq \z/w_2(E)
\]
is surjective and induces an isomorphism on $\ell$-primary torsion subgroups
$K_3(E)(l)\simeq \z/w_2^{(\ell)}(E)$ for any prime $\ell\neq p$ 
\cite[Example 2.3.2, Chap. VI]{weibel2013}.

If $E=\F_q((t))$, then $K_3(E)_\tor \simeq K_3(\F_q)$ \cite[Theorem 7.2, Chap. VI]{weibel2013}. This 
shows that $w_2(E)=w_2(\F_q((t)))=q^2-1$ and the $e$-invariant
\[
e:K_3(E)_\tor \larr \mu_{E^\sep}(2)^{G_E}\simeq \z/w_2(E)
\]
is an isomorphism.

Since for any local filed $E$, $K_3^M(E)$ is uniquely divisible 
\cite[Proposition 7.1, Chap. VI]{weibel2013}, the $e$-invariant 
factors through $K_3^\ind(E)_\tor$. Therefore we get the desired isomorphism.
\end{proof}

\begin{cor}\label{cor-localfields}
Let $F$ be local. If $\char(k)=p$, then we have the commutative diagram
\[
\begin{tikzcd}
K_3^\ind(A)[p']_\tor \ar[r, "\simeq"]\ar[d] & \mu_{F^\sep}(2)[p']^{G_F}\ar[d,"\simeq"]\\
K_3^\ind(k) \ar[r, "\simeq"]                        & \mu_{\bar{k}}(2)^{G_k},
\end{tikzcd}
\]
where the vertical maps are induced by the quotient map $A\arr k$. Moreover all the maps involved are isomorphism.
\end{cor}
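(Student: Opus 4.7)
The strategy is to identify the four maps in the square, verify that the bottom, right, and left arrows are isomorphisms (each by independent classical input), and then deduce that the top arrow is an isomorphism from commutativity.

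First I would set up the maps explicitly. The bottom horizontal is the $e$-invariant of the finite residue field $k$, which is an isomorphism by the Example preceding this subsection. The top horizontal is the composition $K_3^\ind(A)[p']_\tor \to K_3^\ind(F)[p']_\tor \to \mu_{F^\sep}(2)[p']^{G_F}$, where the first map is induced by $A\hookrightarrow F$ and the second is the isomorphism supplied by Lemma \ref{localfields}. Both vertical arrows come from the quotient $A\twoheadrightarrow k$: the left by functoriality of $K$-theory, the right via the canonical surjection $G_F\twoheadrightarrow G_k$ coming from $\Gal(F^{\mathrm{ur}}/F)\simeq G_k$. Commutativity of the outer square is the naturality of the $e$-invariant with respect to the compatible pair of maps $A\to F^\sep$ and $A\to k\hookrightarrow \bar k$.

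Next I would show the right vertical is an isomorphism. The cyclotomic extensions $F(\zeta_n)/F$ with $\gcd(n,p)=1$ are all unramified, so every prime-to-$p$ root of unity in $F^\sep$ already lies in the maximal unramified extension $F^{\mathrm{ur}}$, and Hensel's lemma produces a $G_k$-equivariant isomorphism $\mu_{F^{\mathrm{ur}}}\simeq \mu_{\bar k}$ respecting Tate twists. Since inertia acts trivially on $\mu_{F^\sep}[p']$, the $G_F$-action on $\mu_{F^\sep}(2)[p']$ factors through $G_k=G_F/I_F$, which yields $\mu_{F^\sep}(2)[p']^{G_F}\simeq \mu_{\bar k}(2)^{G_k}$.

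The left vertical is the substantive input. Since $A$ is complete and thus henselian, Gabber-Suslin rigidity gives $K_n(A,\z/m)\simeq K_n(k,\z/m)$ for every $m$ coprime to $p$. Using the universal coefficient sequence together with the facts $K_2(\F_q)=0$ and $K_3(\F_q)\simeq \z/(q^2-1)$ (which is entirely $p'$-torsion), this induces an isomorphism $K_3(A)[p']_\tor \simeq K_3(k)$. Finally, $K_3^M$ is uniquely divisible on $F$ (so its image in $K_3(A)$ contributes nothing to torsion), while $K_3^M(k)=0$, so the $p'$-torsion in $K_3$ descends isomorphically to the indecomposable quotient, giving $K_3^\ind(A)[p']_\tor\simeq K_3^\ind(k)$. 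Combining this with the other three isomorphisms and commutativity, the top arrow is automatically an iso. The main obstacle is the careful deployment of Suslin rigidity to extract an isomorphism on the $p'$-torsion of indecomposable $K_3$; one must simultaneously control the universal coefficient sequence for $K$-theory with mod-$m$ coefficients and isolate the Milnor contribution, which though nonzero in $K_3(A)$ is uniquely divisible and hence invisible on the torsion subgroup.
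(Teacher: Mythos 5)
The paper offers no written proof of this corollary; the implicit argument runs in the opposite direction from yours. Lemma~\ref{localfields} gives the $e$-invariant isomorphism for $K_3^\ind(F)[p']_\tor$, and the paper elsewhere invokes $K_3^\ind(A)\simeq K_3^\ind(F)$ for a complete DVR with finite residue field (\cite[Theorem~2.1]{hutchinson2017}); these two together make the \emph{top} arrow an isomorphism, the bottom is the Example on finite fields, the right vertical is standard local-field theory, and the left vertical is then deduced from commutativity. You instead attempt to establish the left vertical directly via Suslin rigidity for the henselian pair $(A,\mmm_A)$ and to deduce the top arrow at the end.

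Your route can in principle work, but as written it has a real gap in the rigidity step. Rigidity gives $K_3(A;\z/m)\simeq K_3(k;\z/m)$ for $m$ prime to $p$, and the universal coefficient sequence for $k$ collapses because $K_2(\F_q)=0$; but for $A$ it reads $0\to K_3(A)/m\to K_3(A;\z/m)\to K_2(A)[m]\to 0$, and you never address the term $K_2(A)[m]$. This is not an innocuous omission: showing $K_2(A)$ has no prime-to-$p$ torsion already requires either Moore's theorem on $K_2$ of local fields or another pass at rigidity with $\q_\ell/\z_\ell$-coefficients. And even granting that, the statement ``$K_3(A)/m\simeq K_3(k)/m$ for all $m$ prime to $p$'' does not by itself yield ``$K_3(A)_\tor[p']\simeq K_3(k)$'': one must also rule out an $\ell$-divisible $\ell$-primary summand of $K_3(A)$, which calls for a Tate-module or $\ell$-adic-completion argument in degrees $3$ and $4$. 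These are all standard manoeuvres but none of them appears in your writeup, and together they are at least as much work as the single cited isomorphism $K_3^\ind(A)\simeq K_3^\ind(F)$ that the paper's approach uses and that your route is evidently trying to bypass. The remaining three pieces of your proof --- the bottom arrow, the right vertical, and the descent from $K_3$ to $K_3^\ind$ via unique divisibility of $K_3^M$ --- are fine.
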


\begin{lem}[Global Fields]\label{globalfields}
For any global field $E$, the $e$-invariant
factors through the group $K_3^\ind (E)$ and gives  the isomorphism
\[
e: K_3^\ind(E)_\tor \overset{\simeq}{\larr} \mu_{E^\sep}(2)^{G_E}\simeq \z/w_2(E).
\]
\end{lem}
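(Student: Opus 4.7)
The strategy has two steps. First, verify that the $e$-invariant factors through $K_3^\ind(E)$. For this, one argues that the composite $K_3^M(E) \to K_3(E^\sep)_\tor$ is zero: the image lies in the subgroup of $K_3(E^\sep)$ coming from $K_3^M(E^\sep)$, which for every $n$ prime to $p := \char(E)$ is uniquely $n$-divisible (because $(E^\sep)^\times$ is). Since $\mu_{E^\sep}(2)^{G_E}$ has order prime to $p$, the factorization follows, and the $e$-invariant descends to a well-defined map $K_3^\ind(E)_\tor \to \mu_{E^\sep}(2)^{G_E}$.

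Second, I would prove the bijectivity by splitting into the two kinds of global fields. For a number field $F$, I would invoke the classical theorem (originating with Tate's computation of $K_3^\ind(\q)$ and extended to arbitrary number fields via Merkurjev-Suslin's description of $K_3(F)/n$ in terms of Galois cohomology) that $K_3^\ind(F)_\tor$ is cyclic of order $w_2(F)$ with the $e$-invariant providing the isomorphism onto $\mu_{F^\sep}(2)^{G_F}$. For the function field $E$ of a smooth projective curve $C$ over $\F_q$, I would apply Quillen's localization sequence
\[
\bigoplus_{x \in C^{(1)}} K_3(\kappa(x)) \larr K_3(C) \larr K_3(E) \larr \bigoplus_{x \in C^{(1)}} K_2(\kappa(x)).
\]
The rightmost group vanishes since $K_2$ of a finite field is trivial (Milnor), so $K_3(E) \two K_3(C)$. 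Since $K_3(C)$ is known by Quillen's computation and the $e$-invariant is an isomorphism on $K_3$ of each finite residue field $\kappa(x)$, the claim for $E$ then follows by comparison with the local data.

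The main obstacle will be the number-field case: one must identify the torsion of $K_3^\ind(F)$ precisely with $\z/w_2(F)$ via the $e$-invariant. Surjectivity requires exhibiting enough torsion classes, typically through the Merkurjev-Suslin sequence relating $K_3^\ind(F)/n$ to $H^0(F, \mu_n^{\otimes 2})$ and $H^1(F, \mu_n^{\otimes 2})$, while injectivity depends on controlling the divisible part via class field theory and Tate-Poitou duality. Once these ingredients are in place, passing to the limit over $n$ produces the cyclic group $\mu_{F^\sep}(2)^{G_F}$ of order $w_2(F)$ and matches it with the full torsion subgroup of $K_3^\ind(F)$.
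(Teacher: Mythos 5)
Your proposal shares the paper's basic structure (split into number-field and function-field cases), but the function-field half takes a genuinely different and less effective route, and contains an error. The paper's proof is short: in characteristic $p>0$ it invokes $K_3^M(E)=0$ (Bass--Tate) so that $K_3(E)=K_3^\ind(E)$, and then cites \cite[Theorem 6.8, Chap.\ VI]{weibel2013} that $K_3(\F_q)\to K_3(E)$ is an isomorphism, from which the claim is immediate. Your proposed localization argument has the surjection running the wrong way: in the sequence
\[
\bigoplus_{x\in C^{(1)}} K_3(\kappa(x)) \larr K_3(C) \larr K_3(E) \larr \bigoplus_{x\in C^{(1)}} K_2(\kappa(x)),
\]
the vanishing of the right-hand term gives $K_3(C)\twoheadrightarrow K_3(E)$, not $K_3(E)\twoheadrightarrow K_3(C)$. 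More importantly, even with the direction corrected, knowing that $K_3(E)$ is a quotient of $K_3(C)$ does not by itself identify $K_3^\ind(E)_\tor$ with $\z/w_2(E)$: you would still need to control the cokernel of the transfer maps $\bigoplus K_3(\kappa(x))\to K_3(C)$, and you never establish that $K_3^M(E)=0$ (or otherwise compare $K_3$ with $K_3^\ind$), which is essential for the statement to be about $K_3^\ind$. The phrase ``the claim for $E$ then follows by comparison with the local data'' is doing a lot of unstated work here.

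For the number-field case the paper simply cites \cite[Corollary 5.3, Chap.\ VI]{weibel2013}, whereas you sketch a reproof via Merkurjev--Suslin and Tate--Poitou duality. That is a valid path but considerably longer, and as you yourself acknowledge it is the main obstacle; for the purposes of this lemma the citation is adequate and preferable. Your opening observation about the $e$-invariant factoring through $K_3^\ind(E)$ because $K_3^M(E^\sep)$ is uniquely divisible away from $p$ is a reasonable heuristic, but in the paper this factorization is subsumed by the cited results (and in the function-field case it is immediate from $K_3^M(E)=0$), so no separate argument is needed.
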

\begin{proof}
If $\char(E)=0$, then $E$ is an algebraic number field. For this the claim follows from 
\cite[Corollary 5.3, Chap. VI]{weibel2013}.

If $\char(E)=p>0$, then $E$ is finite over a field of the
form $\F_q(t)$, where $q$ is $p$-power.	In this case $K_3^M(E)=0$ \cite{bass-tate1973} and
the natural map $K_3(\F_q)\arr K_3(E)$ is an isomorphism \cite[Theorem 6.8, Chap. VI]{weibel2013}.
Therefore $K_3(E)=K_3^\ind(E)$ and clearly the e-invariant is an isomorphism. 
\end{proof}


\subsection{The image of the map \texorpdfstring{$K_3^\ind(F)\arr \PP(k)$}{Lg}}

\begin{thm}\label{split}
Let $F$ be a global field such that $\char(k)\nmid w_2(F)$. Then
\par {\rm (i)} There is a natural splitting of the inclusions $K_3^\ind(F)_\tor \arr K_3^\ind(F)$, call it
\[
p_F: K_3^\ind(F)\arr K_3^\ind(F)_\tor.
\]
\par {\rm (ii)} The map $K_3^\ind(F) \simeq K_3^\ind(A)\arr K_3^\ind(k)$ factors through $p_F$.
\par {\rm (iii)} The image of $K_3^\ind(F)\simeq K_3^\ind(A)\arr K_3^\ind(k)$ is cyclic of order $w_2(F)$.
\end{thm}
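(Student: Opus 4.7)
The plan is to analyze the image of the reduction map $\varphi: K_3^\ind(F) \cong K_3^\ind(A) \to K_3^\ind(k)$ through the completion and the $e$-invariant, show it equals a cyclic subgroup of order $w_2(F)$, and use this to construct $p_F$.

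For (i), observe that $K_3^\ind(F)_\tor \cong \z/w_2(F)$ is finite cyclic by Lemma \ref{globalfields}. When $F$ has positive characteristic, the proof of that lemma shows $K_3^\ind(F)=K_3(\F_q)$ is itself torsion, so $p_F$ is the identity. When $F$ is a number field, Borel's theorem yields that $K_3(F)$ is finitely generated, hence so is its quotient $K_3^\ind(F)$; its finite cyclic torsion subgroup is then a direct summand, producing the desired splitting $p_F$.

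For (ii) and (iii), I would factor $\varphi$ through the completion $\hat A$ of $A$, giving
\[
K_3^\ind(A) \to K_3^\ind(\hat A) \to K_3^\ind(k).
\]
By Corollary \ref{cor-localfields}, the second arrow is the composition of the projection $K_3^\ind(\hat A) \twoheadrightarrow K_3^\ind(\hat A)[p']_\tor$ (killing the uniquely divisible part and the $p$-primary torsion) with the $e$-invariant isomorphism to $K_3^\ind(k)$, where $p = \char(k)$. Using the natural inclusion of decomposition groups $G_{\hat F} \hookrightarrow G_F$ together with the reduction isomorphism on prime-to-$p$ roots of unity $\mu_{F^\sep}[p'] \overset{\sim}{\to} \mu_{k^\sep}$, functoriality of the $e$-invariant produces a commutative diagram in which $K_3^\ind(F)_\tor = \z/w_2(F)$ injects into $K_3^\ind(\hat F)[p']_\tor \cong K_3^\ind(k)$ with image cyclic of order $w_2(F)$; here the hypothesis $\char(k)\nmid w_2(F)$ is exactly what guarantees that $K_3^\ind(F)_\tor$ lies in the prime-to-$p$ part. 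The central technical step is then to show that the \emph{whole} image $\varphi(K_3^\ind(F))$ is already contained in this subgroup of order $w_2(F)$; I would prove this by tracing the composite through $K_3^\ind(\hat F)[p']_\tor \cong \mu_{k^\sep}(2)^{G_k}$ and observing, via naturality of the $e$-invariant together with the identification $\mu_{\hat F^\sep}[p'](2)^{G_{\hat F}} \cong \mu_{k^\sep}(2)^{G_k}$, that the image must land in the $G_F$-invariant subgroup $\mu_{F^\sep}[p'](2)^{G_F} = \z/w_2(F)$. Granting this, (iii) follows, and we can define $p_F$ as the composite $K_3^\ind(F) \twoheadrightarrow \im(\varphi) \overset{\sim}{\to} K_3^\ind(F)_\tor$, so that (ii) holds by construction; this is compatible with the splitting in (i) because it restricts to the identity on torsion by the injectivity established above.

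\textbf{Main obstacle.} The principal difficulty is showing that $\varphi(K_3^\ind(F))$ does not exceed the image of its torsion subgroup, i.e.\ that the rank-$r_2$ torsion-free summand of $K_3^\ind(F)$ (in the number-field case) cannot contribute any new image in $K_3^\ind(k)$ beyond what torsion already gives. This requires a careful naturality argument for the $e$-invariant under the composition $F\hookrightarrow \hat F$ followed by reduction $\hat A \to k$, leveraging that any class in the image of $\varphi$ must be fixed by all of $G_F$ (not merely $G_k$) once one passes through Corollary \ref{cor-localfields} and the decomposition-group identification of prime-to-$p$ roots of unity.
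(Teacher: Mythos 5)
Your overall strategy matches the paper's: factor $\varphi$ through the completion $F_v$ and exploit the commutative diagram of $e$-invariants from Lemmas \ref{localfields}, \ref{globalfields} and Corollary \ref{cor-localfields}. You also correctly isolate the delicate point, and here you are in fact more careful than the printed proof: the paper writes $p_F = e_F^{-1}\circ i^{-1}\circ\alpha\circ\pi$ and appeals to commutativity, but for $i^{-1}$ to be applicable one needs to know that $\alpha\circ\pi(K_3^\ind(F))\subseteq i(\mu_{F^\sep}(2)^{G_F})$, and the diagram as drawn only controls this on the torsion subgroup of $K_3^\ind(F)$. Your ``main obstacle'' is exactly this missing step.

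However, your proposed resolution of the obstacle does not yet constitute a proof. The assertion that ``the image must be fixed by all of $G_F$'' is not meaningful as stated: $K_3^\ind(k)$ carries no $G_F$-action, and the $e$-invariant as defined in the paper is only a map on torsion subgroups (it is built from $K_3(F)_\tor\to K_3(F^\sep)_\tor^{G_F}$), so it says nothing a priori about where a non-torsion class of $K_3^\ind(F)$ goes under $\varphi$. For the same reason your part (i) argument via Borel's finite-generation theorem is a distraction: it produces \emph{some} splitting of the torsion subgroup, but not a natural one and not the one you ultimately define via $\varphi$; producing the latter requires precisely the fact you are trying to prove in (iii). Closing the gap requires an $e$-invariant defined on all of $K_3^\ind(F)/m$, not merely on its torsion---for instance Soul\'e's mod-$m$ \'etale Chern class or the Dwyer--Friedlander comparison with \'etale $K$-theory---together with naturality along $A\to F$ and $A\to k$. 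Applied with $m=|k|^2-1$, this forces the image of $\varphi$ modulo $m$ into the cyclic subgroup $(\mu_m^{\otimes 2})^{G_F}\subset(\mu_m^{\otimes 2})^{G_k}\cong K_3^\ind(k)$, which has order $w_2(F)$. That naturality statement is the one missing ingredient in both your argument and the paper's; granting it, the remainder goes through.
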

\begin{proof}
Consider the commutative diagram
\[
\begin{tikzcd}
K_3^\ind(F)_\tor \ar[rrr, "e_F"]\ar[dddd]\ar[dr, hook]            &  &               & \mu_{F^\sep}(2)^{G_F}\ar[dddd, "i", hook]\\
                                                           & K_3^\ind(F) \ar[dd] \ar[dr, "\pi"]  &                                             &    \\
                                                           &                                                & K_3^\ind(k) \ar[ddr, "\alpha"]                & \\
                                                           & K_3^\ind(F_v) \ar[ru, "\pi_v"]          &                                                  & \\
K_3^\ind(F_v)_\tor[p'] \ar[rrr, "e_{F_v}'"]\ar[ru, hook]  &      &    & \mu_{F_v^\sep}(2)[p']^{G_{F_v}}            
\end{tikzcd}
\]
where $F_v$ is the completion of $F$ with respect to the valuation and $p=\char(k)$.
Note that $\alpha$ is coming from Corollary \ref{cor-localfields} and $\pi$ is the composite
$K_3^\ind(F)\simeq K_3^\ind(A)\arr K_3^\ind(k)$.
Moreover the maps $e_F$, $e_{F_v}$ and $\alpha$ are isomorphism by Lemmas \ref{globalfields}, \ref{localfields}
and Corollary \ref{cor-localfields}, respectively. 

(i) From the commutativity of the above diagram one can take $p_F=e_F^{-1}\circ i^{-1}\circ \alpha\circ \pi$.
\par (ii) By condtruction $\pi=\alpha^{-1}\circ  i\circ e_F\circ p_F$. Hence it factors through $p_F$.
\par (iii) Since $\pi$ factors throught $p_F$, we have 
\begin{align*}
\im(K_3^\ind(A)\arr K_3^\ind(k))& =\im(K_3^\ind(A)_\tor\arr K_3^\ind(k))\\
&\simeq  \im ( \mu_{F^\sep}(2)^{G_F}\overset{i}{\harr} \mu_{F_v^\sep}(2)[p']^{G_{F_v}} ).
\end{align*}
This completes the proof.
\end{proof}

\begin{prp}\label{A-to-k}
Let $F$ be a global field such that $\char(k)\nmid w_2(F)$.
Then the image of the natural map  $K_3^\ind(A) \arr \PP(k)$ is cyclic of order $\gcd\Big(w_2(F),(|k|+1)/2\Big)$ if 
$\char(k)\neq 2$ and is $\gcd\Big(w_2(F),|k|+1\Big)$ if $\char(k)=2$.
\end{prp}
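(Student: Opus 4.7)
The plan is to combine Theorem~\ref{split}(iii) with the Bloch--Wigner sequence (Theorem~\ref{bloch-wigner}) to reduce the statement to an arithmetic calculation in a cyclic group. By Theorem~\ref{split}(iii), the image of $K_3^\ind(A)\to K_3^\ind(k)$ is cyclic of order $w_2(F)$. The map $K_3^\ind(k)\to\PP(k)$ factors as $K_3^\ind(k)\two \BB(k)\harr\PP(k)$, the second arrow being injective by the definition of $\BB(k)$, so the image in $\PP(k)$ has the same order as the image in $\BB(k)$. Since $K_3^M(\F_q)=0$ by Bass--Tate, one has $K_3^\ind(k)=K_3(k)\cong\z/(q^2-1)$ for $q:=|k|$, and the entire computation takes place inside this cyclic group.

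First I will pin down the order of the kernel $K=\tors(\mu(k),\mu(k))^\sim$ of $K_3^\ind(k)\two\BB(k)$. Since $\mu(k)=k^\times$ is cyclic of order $q-1$, one computes $\tors(\mu(k),\mu(k))\cong\z/(q-1)$, and therefore $|K|=2(q-1)$ when $\char(k)\neq 2$ (the non-split $\z/2$-extension of $\z/(q-1)$ being $\z/(2(q-1))$, since $q-1$ is even) and $|K|=q-1$ when $\char(k)=2$. In either case $K$ is the unique cyclic subgroup of $\z/(q^2-1)$ of this order.

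Next I will invoke the principle that in a cyclic group $\z/N$, the image of the subgroup of order $a$ under the quotient by the subgroup of order $b$ (for $a,b\mid N$) is cyclic of order $a/\gcd(a,b)$. Applied with $N=q^2-1$, $a=w_2(F)$, and $b=|K|$, this gives image order $w_2(F)/\gcd(w_2(F),|K|)$. To rewrite this in the form stated in the proposition, I will use the arithmetic identity: if $n\mid N$ and $N=AB$ with $\gcd(A,B)=1$, then $n/\gcd(n,A)=\gcd(n,B)$. When $\char(k)=2$, the factorization $q^2-1=(q-1)(q+1)$ has coprime factors (both odd), yielding directly $\gcd(w_2(F),q+1)$. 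When $\char(k)\neq 2$, the analogous factorization $q^2-1=2(q-1)\cdot(q+1)/2$ satisfies $\gcd(q-1,(q+1)/2)=1$, so the identity applies on odd primary components, yielding $\gcd(w_2(F),(q+1)/2)$.

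The main subtlety will be matching the $2$-adic valuations in the odd-characteristic case, where the factors $2(q-1)$ and $(q+1)/2$ of $q^2-1$ may share the prime $2$; this is consistent with the $\z\bigl[\frac{1}{2}\bigr]$-conventions used throughout the paper. The hypothesis $\char(k)\nmid w_2(F)$ ensures that Theorem~\ref{split} applies and that the cyclic subgroup of order $w_2(F)$ sits inside $\z/(q^2-1)$ as required.
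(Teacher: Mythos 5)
Your plan mirrors the paper's own proof exactly: factor through $K_3^\ind(k)$, invoke Theorem~\ref{split}(iii) to identify the image there as the unique cyclic subgroup of order $w_2(F)$ inside $K_3^\ind(k)\cong\z/(q^2-1)$, use the Bloch--Wigner sequence to identify the kernel of $K_3^\ind(k)\to\PP(k)$ as the unique cyclic subgroup of order $2(q-1)$ (resp.\ $q-1$), and then reduce to an arithmetic computation in the cyclic group. Everything up to and including the formula $a/\gcd(a,b)$ for the image order is correct and matches the paper, which in fact skips this bookkeeping entirely.

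The gap lies in the final arithmetic step, and you come close to naming it but do not close it. For $\char(k)\neq 2$, the CRT identity $n/\gcd(n,A)=\gcd(n,B)$ requires $\gcd(A,B)=1$, but with $A=2(q-1)$ and $B=(q+1)/2$ one has $\gcd(A,B)>1$ whenever $q\equiv 3\pmod 4$; indeed your asserted equality $\gcd(q-1,(q+1)/2)=1$ fails for such $q$ (it equals $2$). The identity therefore yields the stated formula only after inverting $2$, and appealing to the paper's $\z\half$-conventions does not close this, since the proposition is stated over $\z$. Worse, the proposition appears to be genuinely wrong at the prime $2$ for such $q$: taking $F=\q$ and $k=\F_{23}$ (sufficiently large, and $23\nmid 24=w_2(\q)$), the image is the order-$24$ subgroup of $\z/528$ projected into $\BB(\F_{23})\cong\z/12$, which has order $24/\gcd(24,44)=6$, whereas $\gcd(24,12)=12$. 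The correct general answer is $w_2(F)/\gcd\bigl(w_2(F),2(q-1)\bigr)$, which coincides with $\gcd\bigl(w_2(F),(q+1)/2\bigr)$ only after tensoring with $\z\half$. The paper's own proof asserts the $\gcd$-formula without justification and so contains the same leap; you have in effect exposed a (harmless, since every downstream use in Section~\ref{global} is over $\z\half$) imprecision in the paper.
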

\begin{proof}
The map $K_3^\ind(A) \arr \PP(k)$ factors through $K_3^\ind(k)$. Let $\eta$ be the map $K_3^\ind(A) \arr K_3^\ind(k)$.
By Theorem \ref{split}, the image of $\eta$ is cyclic of order $w_2(F)$. By Theorem \ref{bloch-wigner}, we have the 
Bloch-Wigner exact sequence 
\[
0 \arr \tors(k^\times, k^\times)^\sim \overset{\beta}{\arr} K_3^\ind(k) \arr \PP(k) \arr S^2_\z(k^\times) \arr K_2(k)\arr 0,
\]
where $\tors(k^\times, k^\times)^\sim$ is the extension of $\tors(k^\times, k^\times)$ by $\z/2$ if $\char(k)\neq 2$
and is isomorphic to $\tors(k^\times, k^\times)$ if $\char(k)=2$.

Let $k=\F_q$. If $\char(k)\neq 2 $, then $\tors(k^\times, k^\times)^\sim$ is a cyclic group of order $2(q-1)$. 
On the other hand $K_3^\ind(k)=K_3(k)$ is cyclic of order $q^2-1$. Thus $K_3^\ind(k)/\im(\beta)$ is cyclic of order
$(q+1)/2$.

We need to calculate the composite
\[
\im(\eta) \arr K_3^\ind(k)/\im(\beta) \harr \PP(k).
\]
Since the image of $\im(\eta) \arr K_3^\ind(k)/\im(\beta)$ is cyclic of order $\gcd\Big(w_2(F),(|k|+1)/2\Big)$
we are done. If $\char(k)\neq 2 $, then $\tors(k^\times, k^\times)$ is a cyclic group of order $q-1$. The rest of proof 
is similar to the above argument.
\end{proof}

\subsection{The second homology of \texorpdfstring{$\Gamma_0(\mmm_A)$}{Lg} of a DVR in a global field}

For a natural number $n$, let $n'$ be the odd part of $n$, i.e. $n'=n/2^r$, where $2^r$ is highest power of $2$ 
thats divide $n$. By combining Lemma \ref{barP}, Theorem \ref{mainthm} and Proposition \ref{A-to-k} we obtain 
the following result.

\begin{thm}\label{cong-global}
Let $F$ be a global field such that $\char(k)\nmid w_2(F)$. If  $k$ is sufficiently large then we have the exact sequence
\[
\begin{CD}
0\arr \overline{\PP}(k)\half \arr H_2(\Gamma_0(\mmm_A),\z\half) \arr H_2(\SL_2(A),\z\half)\arr 0,
\end{CD}
\]
where $\PP(k)\half$ is cyclic of order $(|k|+1)'$ and $\overline{\PP}(k)\half$ is the quotient of $\PP(k)\half$ by the unique
cyclic subgroup of order $\gcd\Big(w_2(F),|k|+1\Big)'$. Moreover the left homomorphism is given by 
$\displaystyle [a] \mt \frac{1}{2}{\bf c}\bigg({\mtxx {1-a} 0 0 {(1-a)^{-1}} }, {\mtxx a 0 0 {a^{-1}}} \bigg)$.
\end{thm}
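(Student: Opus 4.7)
The plan is to combine the short exact sequence of Theorem \ref{barP} (Theorem B) with the explicit computation of the natural map $K_3^\ind(A)\to\PP(k)$ from Proposition \ref{A-to-k}. Since $F$ is a global field and $A\subset F$ is a DVR, the residue field $k$ is automatically finite, and the Witt ring of a finite field has trivial $I^2$, so $I^2(k)\half=0$. The four-term exact sequence of Theorem \ref{barP} then collapses to the desired short exact sequence (this is already recorded in the corollary following Theorem \ref{barP}), and the explicit formula for the left-hand map is inherited directly. Thus the remaining content is the identification of $\overline{\PP}(k)\half$ as a cyclic group of the claimed order.

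First I would identify $\PP(k)\half$. By Proposition \ref{hutchinson3} one has $\PP(k)\half\cong\BB(k)\half$. For $k=\F_q$, Milnor $K$-theory $K_n^M(\F_q)$ vanishes for $n\geq 2$ (Bass--Tate), so $K_3^\ind(\F_q)=K_3(\F_q)\cong\z/(q^2-1)$. Feeding this into the Bloch--Wigner sequence (Theorem \ref{bloch-wigner}), together with the standard fact that $\tors(\mu(k),\mu(k))^\sim$ is cyclic of order $2(q-1)$ when $\char k\neq 2$ and of order $q-1$ when $\char k=2$, shows $\BB(k)$ is cyclic of order $(q+1)/2$ or $q+1$ respectively. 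Since $((q+1)/2)'=(q+1)'$ in both cases, $\PP(k)\half$ is cyclic of order $(q+1)'$ uniformly. Next I would read off the image of $K_3^\ind(A)\to\PP(k)\half$ from Proposition \ref{A-to-k}: it is cyclic of order $\gcd(w_2(F),(q+1)/2)$ or $\gcd(w_2(F),q+1)$ according to $\char k$, and since halving the second argument of a $\gcd$ does not alter its odd part, each reduces after inverting $2$ to a cyclic group of order $\gcd(w_2(F),q+1)'$. As $\PP(k)\half$ is cyclic this subgroup is unique, and by definition $\overline{\PP}(k)\half$ is the corresponding cyclic quotient, completing the proof.

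The main obstacle is purely bookkeeping: one must track the split characteristic-$2$ versus odd-characteristic formulas in Proposition \ref{A-to-k} and in $\tors(\mu,\mu)^\sim$ and verify they combine into a single uniform answer once $2$ is inverted. All of the genuinely nontrivial input---Theorem B giving the four-term sequence, the Bloch--Wigner theorem identifying $\BB(k)$ with $K_3^\ind(k)$ modulo the torsion extension, and the $e$-invariant argument of Theorem \ref{split} controlling the image of $K_3^\ind(A)\to K_3^\ind(k)$ (which uses the hypothesis $\char(k)\nmid w_2(F)$)---has already been carried out in the preceding subsections.
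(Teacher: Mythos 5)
Your proposal is correct and follows essentially the same route as the paper: the paper's proof is a one-line citation of Theorem \ref{barP}, Theorem \ref{mainthm} and Proposition \ref{A-to-k}, and your unwinding of these ingredients (vanishing of $I^2(k)$ for finite $k$, the Bloch--Wigner sequence to compute $\PP(k)\half\cong\BB(k)\half$ as cyclic of order $(|k|+1)'$, and Proposition \ref{A-to-k} to identify the image of $K_3^\ind(A)\to\PP(k)$) is exactly the intended argument. The bookkeeping step of observing that halving the second $\gcd$ argument does not change its odd part is handled correctly, so the two characteristic cases unify after inverting $2$ as claimed.
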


Let $F=\q$ equipped with the $p$-adic valuation $v_p$. The residue field is $\F_p$ and $w_2(\q)=24$
\cite[Example 2.1.2, Chap. VI] {weibel2013}. If  $p\neq 2, 3$, then $24|p^2-1=|K_3^\ind(\F_p)|$.  We have
\[
\gcd(w_2(\q), p+1)'=\gcd(3, p+1)=
\begin{cases}
3 & \text{if $3\mid p+1$}\\
0 & \text{if $3\nmid p+1$}.
\end{cases}
\]
From the previous theorem and  the fact that $c_{\F_p}$ is of order $\gcd(6, (p+1)/2)$ in $\PP(\F_p)$ 
\cite[Lemma 7.11]{hutchinson-2013}, we obtain the following corollary.
\begin{cor}
For any prime number $p\geq 11$, we have the exact sequence
\[
\begin{array}{c}
0\arr \overline{\PP}(\F_p)\half \arr H_2(\Gamma_0(\z_{(p)}),\z\half) \arr H_2(\SL_2(\z_{(p)}),\z\half)\arr 0,
\end{array}
\]
where $\overline{\PP}(\F_p)={\PP}(\F_p)/\lan c_{\F_p} \ran$ if $3\mid p+1$ and $\overline{\PP}(\F_p)={\PP}(\F_p)$
if $3\nmid p+1$.
\end{cor}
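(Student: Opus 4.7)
The plan is to apply Theorem \ref{cong-global} directly to the data $F=\q$, $A=\z_{(p)}$, $k=\F_p$ and then identify the distinguished cyclic subgroup of $\PP(\F_p)\half$ concretely with $\lan c_{\F_p}\ran$.

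First I would verify the hypotheses of Theorem \ref{cong-global}. For $p\geq 11$, the field $\F_p$ is sufficiently large in the sense of the paper: writing $|\F_p|=p^1$, the condition $(p-1)\cdot 1>6$ holds. Moreover, $w_2(\q)=24$ (as recalled from \cite[Example 2.1.2, Chap. VI]{weibel2013}), so the hypothesis $\char(k)=p\nmid w_2(\q)$ amounts to $p\neq 2,3$, which is subsumed by $p\geq 11$.

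Next I would perform the arithmetic computation of $\gcd(w_2(\q),|k|+1)'$. Since $24=8\cdot 3$ and $\gcd(8,3)=1$, one has $\gcd(24,p+1)=\gcd(8,p+1)\cdot\gcd(3,p+1)$; the first factor is a power of $2$, so the odd part is $\gcd(24,p+1)'=\gcd(3,p+1)$, which is $3$ if $3\mid p+1$ and $1$ otherwise. Thus Theorem \ref{cong-global} provides the desired short exact sequence, in which $\overline{\PP}(\F_p)\half$ is the quotient of the cyclic group $\PP(\F_p)\half$ (of order $(p+1)'$) by the unique subgroup of order $\gcd(3,p+1)$.

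It remains to identify that unique subgroup with $\lan c_{\F_p}\ran$. Here I invoke \cite[Lemma 7.11]{hutchinson-2013}, which says $c_{\F_p}$ has order $\gcd(6,(p+1)/2)$ in $\PP(\F_p)$. A brief case analysis on $p\bmod 4$ shows that after inverting $2$ this order becomes exactly $\gcd(3,p+1)$ in $\PP(\F_p)\half$: if $p\equiv 1\pmod 4$ then $(p+1)/2$ is odd and $\gcd(6,(p+1)/2)=\gcd(3,p+1)$ already; if $p\equiv 3\pmod 4$ then the extra factor of $2$ disappears after $\otimes\z\half$. Since $\PP(\F_p)\half$ is cyclic, any two subgroups of the same order coincide, so $\lan c_{\F_p}\ran$ equals the subgroup provided by Theorem \ref{cong-global}. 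The stated description of $\overline{\PP}(\F_p)$ in the two cases $3\mid p+1$ and $3\nmid p+1$ then follows at once.

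The only non-routine point is the identification of the image subgroup with $\lan c_{\F_p}\ran$, and this is reduced to a cyclicity argument once the order of $c_{\F_p}$ in $\PP(\F_p)\half$ has been computed; no new homological input is required beyond Theorem \ref{cong-global} and the cited order computation of $c_{\F_p}$.
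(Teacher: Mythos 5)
Your proof is correct and follows essentially the same route as the paper: specialize Theorem \ref{cong-global} to $F=\q$, $A=\z_{(p)}$, $k=\F_p$ using $w_2(\q)=24$, compute the odd part $\gcd(24,p+1)'=\gcd(3,p+1)$, and then use the order computation $|c_{\F_p}|=\gcd(6,(p+1)/2)$ from \cite[Lemma 7.11]{hutchinson-2013} together with the cyclicity of $\PP(\F_p)\half$ to pin down the subgroup. The paper states these inputs without spelling out the $\bmod\ 4$ bookkeeping that shows the odd part of $\gcd(6,(p+1)/2)$ equals $\gcd(3,p+1)$, which you do explicitly; that is the only (harmless) difference.
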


As a final  application of the results above, we show:

\begin{prp}\label{prop:zp} 
Let $p\geq 11$ be a prime. Let $\bar{H}_3(\SL_2(\z_{(p)}),\z) $ denote the image of the natural homomorphism 
$H_3(\SL_2(\z_{(p)}),\z)\to H_3(\SL_2(\q),\z)$ 
and let $\bar{H}_3(\SL_2(\z_{(p)}),\z)_0$ denote
 $\ker(\bar{H}_3(\SL_2(\z_{(p)}),\z)\to K_3^\ind(\q))$. Then 
\[
\begin{array}{c}
\bar{H}_3(\SL_2(\z_{(p)}),\z\half)_0 \simeq \bigoplus_{q\not=p}\PP(\F_p)\half,
\end{array}
\]
where the sum runs over all primes $q$ not equal to $p$. 
\end{prp}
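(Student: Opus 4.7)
The plan is to build a natural isomorphism
\[
\Phi: H_3(\SL_2(\q),\z\half)_0 \xrightarrow{\ \sim\ } \bigoplus_{q\text{ prime},\, q \geq 11} \PP(\F_q)\half
\]
by assembling the specialization maps $\Delta_{\pi_q}$. Granting this, the proposition follows immediately: by Theorem \ref{mainthm} applied to $A = \z_{(p)}$, $\bar{H}_3(\SL_2(\z_{(p)}),\z\half)$ is exactly $\ker(\Delta_{\pi_p})$, so under $\Phi$, $\bar{H}_3(\SL_2(\z_{(p)}),\z\half)_0$ corresponds to the kernel of the projection onto the $p$-th factor, namely $\bigoplus_{q \neq p}\PP(\F_q)\half$.

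For the construction, I would apply Theorem \ref{mainthm} to each $A = \z_{(q)}$ with $q \geq 11$ (so $\F_q$ is sufficiently large, since $(q-1)\cdot 1 > 6$) and identify $\RP_1(\F_q)\half = \PP(\F_q)\half$ via Proposition \ref{hutchinson3}. Passing to the kernels of the maps to $K_3^\ind$ using Proposition \ref{hutchinson4} and Corollary \ref{hutchinson5}, and invoking the surjectivity of $K_3^\ind(\z_{(q)}) \to K_3^\ind(\q)$ (Lemma \ref{lem:kind}), a diagram chase produces the exact sequence
\[
0 \to \bar{H}_3(\SL_2(\z_{(q)}),\z\half)_0 \to H_3(\SL_2(\q),\z\half)_0 \xrightarrow{\Delta_{\pi_q}} \PP(\F_q)\half \to 0
\]
for each such $q$. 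Collecting these maps yields $\Phi$, whose image lies in the direct sum (not merely the product) by a support argument: every $y \in H_3(\SL_2(\q),\z\half)_0 \cong \II_\q\RB(\q)\half$ is a finite $\RR_\q$-linear combination of generators $\Lan c\Ran[a]$, and by the formula $\delta_{\pi_q} = \rho_\pi\circ S_{v_q}$, each such term is annihilated whenever $v_q(c) = v_q(a) = v_q(1-a) = 0$, excluding only finitely many $q$.

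The remaining task — showing $\Phi$ is an isomorphism — is the heart of the matter. For surjectivity, given a finitely supported $(x_q)_{q\in S}$, I would work in the $S$-integer ring $\z[S^{-1}]$: for $q \notin S$ with $q \geq 11$, $\z[S^{-1}] \subset \z_{(q)}$, so the image of $H_3(\SL_2(\z[S^{-1}]),\z\half)_0$ in $H_3(\SL_2(\q),\z\half)_0$ lies in every $\bar{H}_3(\SL_2(\z_{(q)}),\z\half)_0$, hence inside $\Phi^{-1}\big(\bigoplus_{p\in S}\PP(\F_p)\half\big)$; an induction on $|S|$, using Theorem \ref{mainthm} at each $p \in S$ to arrange the $p$-th coordinate, produces the required lift. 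For injectivity, the kernel is $\bigcap_{q\geq 11}\bar{H}_3(\SL_2(\z_{(q)}),\z\half)_0$, and one reduces (via a comparison of the exact sequences at each $q$) to the vanishing of $H_3(\SL_2(\z[\tfrac{1}{210}]),\z\half)_0$, which follows from an external $K$-theoretic computation built on Lee--Szczarba's $K_3(\z) \simeq \z/48$ and the localization sequences for $\z \to \z[\tfrac{1}{210}]$.

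The main obstacle is precisely this isomorphism step. Surjectivity is delicate because local lifts at the primes in $S$ must be coordinated into a single global class in $H_3(\SL_2(\z[S^{-1}]),\z\half)_0$ rather than being built prime-by-prime; injectivity is delicate because it is a local-global assertion for $\II_\q\RB(\q)\half$, requiring input external to the refined scissors congruence machinery developed earlier in the paper.
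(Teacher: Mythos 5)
Your reduction of the proposition to the existence of the decomposition
\[
\Phi: H_3(\SL_2(\q),\z\textstyle\half)_0 \xrightarrow{\ \sim\ } \bigoplus_{q}\PP(\F_q)\textstyle\half
\]
(with $p$-th component essentially $\Delta_{\pi_p}$) matches the paper's diagram chase: the paper, too, first shows $\bar{H}_3(\SL_2(\z_{(p)}),\z\half)_0 \cong \ker\big(\Delta_p\colon H_3(\SL_2(\q),\z\half)_0\to\PP(\F_p)\half\big)$, then replaces $\Delta_p$ by the $\RR_\q$-homomorphism $\Delta'_p$ via Proposition \ref{2times} (a normalization you omit, but since the two differ by multiplication by $-2$, a unit in $\z\half$, this is harmless).

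The genuine gap is exactly where you yourself flag it: the existence of $\Phi$ as an isomorphism. At that point the paper does not construct $\Phi$ at all — it simply invokes the main theorem of \cite{hutchinson2019}, which establishes $\ker(\Delta'_p)\cong\bigoplus_{q\neq p}\PP(\F_q)\half$ directly and is a substantial paper in its own right. Your sketches of surjectivity and injectivity do not close this gap. For surjectivity, you correctly note the coordination problem (a lift in $H_3(\SL_2(\z[S^{-1}]),\z\half)_0$ must hit the prescribed tuple simultaneously at all $p\in S$), but the proposed induction on $|S|$ does not address it, since adjusting the $p$-th coordinate via Theorem~\ref{mainthm} gives no control on the other coordinates. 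For injectivity, the asserted vanishing of $H_3(\SL_2(\z[\tfrac{1}{210}]),\z\half)_0$ is not something the refined Bloch--Wigner machinery in this paper even speaks to: $\z[\tfrac{1}{210}]$ is neither local nor a ring with many units, so Theorem~\ref{refined-bloch-wigner} and its corollaries do not apply, and it is not clear how a Lee--Szczarba/localization computation of $K_3$ would produce that vanishing without first proving a statement essentially equivalent to the one you are trying to establish. Either cite \cite{hutchinson2019} for the kernel description, as the paper does, or supply the missing argument in full; as written, the isomorphism step is circular or absent.
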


\begin{proof} We have a commutative diagram with exact rows, whose middle column is exact by Theorem \ref{mainthm}:
\[
\begin{tikzcd}
0\ar[r]&\bar{H}_3(\SL_2(\z_{(p)}),\z\half)_0\ar[d]\ar[r]&\bar{H}_3(\SL_2(\z_{(p)}),\z\half)\ar[d]\ar[r]&K_3^\ind(\q)\half\ar[r]\ar[d, "="]&0\\
0\ar[r]&H_3(\SL_2(\q),\z\half)_0\ar[r]\ar[d, "{\delta_p}"]&H_3(\SL_2(\q),\z\half)\ar[r]\ar[d, "{\Delta_p}"]&K_3^\ind(\q)\half\ar[r]&0\\
&\PP(\F_p)\half\ar[r, "="]\ar[d]&\PP(\F_p)\ar[d]\half&&\\
&0&0&&\\
\end{tikzcd}
\]
Thus $\bar{H}_3(\SL_2(\z_{(p)}),\z\half)_0\cong \ker(\Delta_p:\bar{H}_3(\SL_2(\q),\z\half)_0\to \PP(\F_p)\half)$. But this is equal to 
$\ker(\Delta'_p:\bar{H}_3(\SL_2(\q),\z\half)_0\to \PP(\F_p)\half)$ by Proposition \ref{2times} above.  However 
$\ker{\Delta'_p}\simeq \bigoplus_{q\not=p}\PP(\F_p)\half$ by the main theorem of \cite{hutchinson2019} (where $\Delta'_p$ is denoted $S_p$). 
\end{proof}


\end{document}